\DeclareMathOperator*{\EE}{\mathbb{E}}
\newcommand{\RR}{\mathbb{R}}
\newcommand{\CC}{\mathbb{C}}
\newcommand{\NN}{\mathbb{N}}
\newcommand{\ZZ}{\mathbb{Z}}
\newcommand{\QQ}{\mathbb{Q}}
\newcommand{\TT}{\mathbb{T}}
\newcommand{\A}{\mathcal{A}}
\newcommand{\C}{\mathcal{C}}
\newcommand{\F}{\mathcal{F}}
\newcommand{\G}{\mathcal{G}}
\renewcommand{\L}{\mathcal{L}}
\newcommand{\Z}{\mathcal{Z}}
\newcommand{\I}{\mathcal{I}}
\newcommand{\W}{\mathcal{W}}
\newcommand{\T}{\mathcal{T}}
\newcommand{\HK}{\mathcal{HK}}
\renewcommand{\P}{\mathcal{P}}
\newcommand{\X}{\mathcal{X}}
\newcommand{\K}{\mathcal{K}}
\newcommand{\Y}{\mathcal{Y}}
\newcommand{\Lip}{{\rm{Lip}}}
\newcommand{\poly}{{\rm{poly}}}
\newcommand{\Span}{{\rm{Span}}}
\newcommand{\norm}[1]{\left\Vert #1\right\Vert}
\newcommand{\nnorm}[1]{\lvert\!|\!| #1|\!|\!\rvert}
\theoremstyle{theorem}
\newtheorem{theorem}{Theorem}[section]
\newtheorem{definition}[theorem]{Definition}
\newtheorem{proposition}[theorem]{Proposition}
\newtheorem{corollary}[theorem]{Corollary}
\newtheorem{conjecture}[theorem]{Conjecture}
\newtheorem{lemma}[theorem]{Lemma}
\theoremstyle{definition}
\title{On several notions of complexity of polynomial progressions}
\author{Borys Kuca}
\date{}
\begin{document}

\maketitle

\begin{abstract}

For a polynomial progression $$(x,\; x+P_1(y),\; \ldots,\; x+P_{t}(y)),$$ we define four notions of complexity: Host-Kra complexity, Weyl complexity, true complexity and algebraic complexity. The first two describe the smallest characteristic factor of the progression, the third one refers to the smallest-degree Gowers norm controlling the progression, and the fourth one concerns algebraic relations between terms of the progressions. We conjecture that these four notions are equivalent, which would give a purely algebraic criterion for determining the smallest Host-Kra factor or the smallest Gowers norm controlling a given progression. We prove this conjecture for all progressions whose terms only satisfy homogeneous algebraic relations and linear combinations thereof. This family of polynomial progressions includes, but is not limited to, arithmetic progressions, progressions with linearly independent polynomials $P_1, \ldots, P_t$ and progressions whose terms satisfy no quadratic relations. For progressions that satisfy only linear relations, such as $$(x,\; x+y^2,\; x+2y^2,\; x+y^3,\; x+2y^3),$$ we derive several combinatorial and dynamical corollaries: (1) an estimate for the count of such progressions in subsets of $\ZZ/N\ZZ$ or totally ergodic dynamical systems; (2) a lower bound for multiple recurrence; (3) and a popular common difference result in $\ZZ/N\ZZ$. Lastly, we show that Weyl complexity and algebraic complexity always agree, which gives a straightforward algebraic description of Weyl complexity.

\end{abstract}


\section{Introduction}

A polynomial $P\in\RR[y]$ is \emph{integral} if $P(\ZZ)\subset \ZZ$ and $P(0)=0$. For $t\in\NN_+$, an \emph{integral polynomial progression} of length $t+1$ is a tuple $\vec{P}\in\RR[x,y]^{t+1}$ given by
\begin{align*}
    \vec{P}(x,y) = (x, \; x+P_1(y), \; \ldots, \; x+P_{t}(y))
\end{align*}
for distinct integral polynomials $P_1, \ldots, P_{t}$. We moreover say that a set $A\subset\NN$ contains $\vec{P}(x,y)$ for some $x,y\in\NN$ if $\vec{P}(x,y)\in A^{t+1}$. 
A major result on integral polynomial progressions is the polynomial Szemer\'{e}di theorem by Bergelson and Leibman, which extends the famous theorem of Szemer\'{e}di on arithmetic progressions.

\begin{theorem}[Polynomial Szemer\'{e}di theorem]\cite{bergelson_leibman_1996}\label{polynomial Szemeredi theorem}
Let $t\in\NN_+$ and $\vec{P}\in\RR[x,y]^{t+1}$ be an integral polynomial progression, and suppose that $A\subseteq\NN_+$ is dense\footnote{Meaning that $\limsup\limits_{N\to\infty}\frac{|A\cap[N]|}{N}>0$, where $[N]=\{1, \ldots, N\}$.}. Then $A$ contains $\vec{P}(x,y)$ for some $x,y\in\NN_+$. 
\end{theorem}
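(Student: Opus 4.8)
This statement is quoted from \cite{bergelson_leibman_1996} and not reproved here, but the natural route is to pass from combinatorics to ergodic theory via the Furstenberg correspondence principle and then establish a polynomial multiple recurrence theorem by Bergelson's polynomial exhaustion technique (PET induction). Concretely: given a dense $A\subseteq\NN_+$, one constructs an invertible measure-preserving system $(X,\B,\mu,T)$ and a set $E\in\B$ with $\mu(E)=\limsup_{N}|A\cap[N]|/N>0$ such that, for all integers $m_0,\ldots,m_t$,
\begin{align*}
\overline{d}\big(\{n\in\NN_+ : n+m_i\in A\text{ for }i=0,\ldots,t\}\big)\ \ge\ \mu\big(T^{-m_0}E\cap\cdots\cap T^{-m_t}E\big).
\end{align*}
Since all terms of $\vec P$ are shifts of $x$ by the integers $P_i(n)$ (setting $P_0:=0$), it then suffices to prove
\begin{align*}
\liminf_{N\to\infty}\ \frac1N\sum_{n=1}^N\mu\big(E\cap T^{-P_1(n)}E\cap\cdots\cap T^{-P_t(n)}E\big)\ >\ 0,
\end{align*}
which forces some $n\ge1$ with positive intersection, hence some $x\in A$ with $x+P_i(n)\in A$ for all $i$.

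\emph{PET induction.} To each finite system of integral polynomials one attaches a \emph{weight} --- roughly, the multiset of degrees together with a count of distinct ``leading classes'' --- lying in a well-ordered set. One studies the $L^2$ averages $\A_N(f_0,\ldots,f_t)=\frac1N\sum_{n\le N}T^{P_0(n)}f_0\cdots T^{P_t(n)}f_t$. Choosing a pivot index $j$, replacing each $P_i$ by $P_i-P_j$, and applying van der Corput's inequality bounds $\|\A_N\|_{L^2}$ (more precisely, the deviation of $\A_N$ from its expected value) by a double average, over $h$ and $n$, attached to the shifted family $\{P_i(n+h)-P_j(n)\}_{i\neq j}\cup\{P_j(n+h)-P_j(n)\}$, whose weight is strictly smaller. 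Iterating drives the weight down to the base case of a single degree-one polynomial $cn$, where $\frac1N\sum_{n\le N}\mu(E\cap T^{-cn}E)$ converges to $\|\Pi_c \mathbf 1_E\|_{L^2}^2\ge\mu(E)^2>0$ by the mean ergodic theorem, with $\Pi_c$ the orthogonal projection onto $T^c$-invariant functions (using $\Pi_c\mathbf 1=\mathbf 1$ and self-adjointness). Because the differenced polynomials need not vanish at $0$, one works throughout with averages over intervals $[M,N]$ and proves uniform van der Corput estimates so the induction closes.

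\emph{Main obstacle.} The bookkeeping that makes the weight decrease under van der Corput is essentially mechanical, but van der Corput alone yields only \emph{norm control}: it shows $\A_N$ is, up to a small error, governed by a lower-weight average, not that $\liminf\int \A_N(\mathbf 1_E,\ldots,\mathbf 1_E)\,d\mu$ is positive. Upgrading this to \emph{strict positivity} is the crux of Bergelson--Leibman: one shows that the averages converge in $L^2$ and proves by induction on the weight that the limit has positive integral, propagating the positivity of the base case through each van der Corput step via Cauchy--Schwarz together with a careful positivity/averaging argument. A more modern alternative would be to identify the characteristic factor of $\A_N$ (a nilsystem, by Host--Kra--Leibman theory) and reduce to equidistribution on nilmanifolds, but the original PET proof deliberately avoids any structure theory and is the cleaner historical route; the combinatorial/finitary analogue, by contrast, requires substantially more than the hypergraph removal lemma and only recovers special cases, so the ergodic argument remains the route of choice.
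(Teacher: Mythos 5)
The paper does not prove this statement: it is quoted from \cite{bergelson_leibman_1996}, and the only "proof content" in the paper is the remark that Theorem \ref{polynomial Szemeredi theorem} follows from the ergodic statement (Theorem \ref{theorem on multiple ergodic averages are positive}) via the Furstenberg correspondence principle. Your proposal does exactly the same reduction, so at the level relevant to this paper you are aligned with the authors' treatment.

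One caveat about your sketch of the external Bergelson--Leibman argument: the positivity in their proof is not obtained by "propagating the positivity of the base case through each van der Corput step" by induction on the PET weight, and $L^2$ convergence of the polynomial averages was not available in 1996 (that is Host--Kra and Leibman, a decade later) nor is it needed. Van der Corput/PET only shows that certain terms are negligible relative to suitable factors; the positivity itself is driven by the Furstenberg--Katznelson structure theory, i.e.\ by showing that the multiple recurrence property lifts through a transfinite tower of primitive extensions, with PET handling the relatively weak-mixing behaviour and a polynomial van der Waerden/colouring argument handling compact extensions. Since you explicitly defer to \cite{bergelson_leibman_1996} rather than claim a complete proof, this is a misdescription of the cited argument rather than a gap in what the paper requires, but as written the "upgrade norm control to positivity along the PET induction" step would not go through.
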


Theorem \ref{polynomial Szemeredi theorem} can be deduced from the following ergodic theoretic statement using the Furstenberg correspondence principle.

\begin{theorem}\label{theorem on multiple ergodic averages are positive}\cite{bergelson_leibman_1996, host_kra_2005a}
Let $(X,\X,\mu, T)$ be an invertible measure-preserving dynamical system, $t\in\NN_+$ and $\vec{P}\in\RR[x,y]^{t+1}$ be an integral polynomial progression. If $\mu(A)>0$ for $A\in\X$, then
\begin{align}\label{multiple ergodic averages are positive}
    \lim_{N\to\infty}\EE_{n\in [N]}\mu(A\cap T^{P_1(n)}A \cap \cdots \cap T^{P_{t}(n)}A) > 0,
\end{align}
where $[N]=\{1, \ldots, N\}$ and $\EE_{x\in X} = \frac{1}{|X|}\sum_{x\in X}$ for any set $X$.
\end{theorem}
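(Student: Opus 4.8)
\emph{Proof proposal.} Since Theorem~\ref{polynomial Szemeredi theorem} is obtained from \eqref{multiple ergodic averages are positive} via the Furstenberg correspondence principle and not the other way around, I would prove the ergodic statement directly. First reduce to an ergodic system: by the ergodic decomposition $\mu=\int\mu_\omega\,d\nu(\omega)$ one has $\mu_\omega(A)>0$ on a set of positive $\nu$-measure, so, once the limit is known to exist for each component, it is enough to prove positivity componentwise and integrate (the scalar averages are bounded, so dominated convergence applies). I would then separate the claim into (A) the averages $\EE_{n\in[N]}\prod_{i=1}^{t}T^{P_i(n)}1_A$ converge in $L^2(\mu)$, so that the $\limsup$ in \eqref{multiple ergodic averages are positive} is a genuine limit, and (B) $\liminf_{N\to\infty}\EE_{n\in[N]}\mu\big(A\cap T^{P_1(n)}A\cap\cdots\cap T^{P_t(n)}A\big)>0$.

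For (B) I would run Furstenberg's structure theorem: an ergodic system is an inverse limit of a transfinite tower of extensions over the one-point system, each step being either a compact (isometric) or a weakly mixing extension. One verifies that the property ``every positive-measure set has positive $\liminf$ in (B)'' holds for the trivial system and is inherited under inverse limits, under weakly mixing extensions, and under compact extensions. The weakly mixing step is where polynomials force extra work compared with Furstenberg's original proof of Szemer\'edi: van der Corput's inequality, organised by Bergelson's PET (polynomial exhaustion) induction, shows that relative to a weakly mixing extension the averages converge to the base value, so positivity is pulled up from below. The compact step is where genuine combinatorics enters: approximating $1_A$ by finitely many almost periodic fibre functions and invoking the polynomial van der Waerden theorem (a colouring statement proved separately by a topological-dynamics argument) to align a polynomial progression of fibres.

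For (A) I would go through characteristic factors and nilsystems. A second PET induction together with the theory of Gowers--Host--Kra seminorms shows that for some finite $s$ depending only on $\deg P_1,\dots,\deg P_t$ (and $t$) the factor $\Z_s$ is characteristic for the average in \eqref{multiple ergodic averages are positive}; since $\Z_s$ is an inverse limit of $s$-step nilsystems and the averages are Cauchy along these approximations, it suffices to establish convergence when $X$ is an $s$-step nilsystem $G/\Gamma$ and $T$ a translation. There Leibman's equidistribution theorem for polynomial orbits on nilmanifolds identifies the limit as the integral of $1_A^{\otimes(t+1)}$ against Haar measure on the sub-nilmanifold of $(G/\Gamma)^{t+1}$ obtained as the orbit closure of $\{(x,T^{P_1(n)}x,\dots,T^{P_t(n)}x):x\in G/\Gamma,\ n\in\ZZ\}$; this sub-nilmanifold contains the diagonal precisely because every $P_i$ satisfies $P_i(0)=0$, which is also the algebraic fact ultimately responsible for positivity in (B), and determining the least admissible $s$ is exactly the Host--Kra complexity studied in this paper.

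I expect the main obstacle, in both (A) and (B), to be making the PET induction actually work: one must attach to a finite family of integer polynomials a well-founded complexity (a weight vector) and check that a single differencing/van der Corput step $P_i(n)\rightsquigarrow P_i(n+h)-P_j(n)$ strictly lowers it while keeping the $L^2$ error terms controlled, so that the induction terminates at a family one can finish by hand. A secondary difficulty, specific to (B), is matching the functional-analytic structure of isometric extensions to the purely combinatorial polynomial van der Waerden input in the compact step; and the totally disconnected (profinite group rotation) pieces sitting inside the Host--Kra factors, or at the bottom of the Furstenberg tower, need a separate but elementary finite treatment.
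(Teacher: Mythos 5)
You have set out to prove a statement that the paper itself does not prove: Theorem \ref{theorem on multiple ergodic averages are positive} is quoted as a black box, with the positivity attributed to Bergelson--Leibman and the existence of the limit to Host--Kra (and Leibman), and nothing in the paper reproduces either argument. Measured against that, your outline is a faithful reconstruction of the literature proofs rather than a new route: your split into (A) $L^2$-convergence and (B) positivity of the $\liminf$ is exactly the division of labour between the two cited sources, your treatment of (B) (ergodic decomposition with Fatou, Furstenberg's tower of compact and weakly mixing extensions, van der Corput/PET for the weakly mixing steps, almost periodic functions plus polynomial van der Waerden for the compact steps) is the Bergelson--Leibman argument, and your treatment of (A) (PET plus Gowers--Host--Kra seminorms to show some $\Z_s$ is characteristic, the inverse-limit description of $\Z_s$ by nilsystems, and Leibman's equidistribution theorem to identify the limit on the orbit closure in $(G/\Gamma)^{t+1}$) is the Host--Kra/Leibman argument that the present paper also leans on later (compare its use of Theorem \ref{Host-Kra factors are characteristic} and Theorem \ref{Leibman's equidistribution theorem}). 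I see no step that is wrong in principle; the caveat is only that what you have written is a roadmap whose hard points --- termination of the PET induction with controlled error terms, lifting recurrence through isometric extensions via polynomial van der Waerden, and the structure theorem identifying $\Z_s$ with an inverse limit of nilsystems --- are precisely the content of the cited papers, so within this paper the honest options are either to cite as the author does or to import those arguments wholesale; your sketch correctly identifies where each ingredient is needed, including the role of $P_i(0)=0$, which is indeed what makes the recurrence statement true (and is the reason the paper restricts to integral progressions with zero constant term).
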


To prove Theorem \ref{polynomial Szemeredi theorem}, one thus needs to understand limits of multiple ergodic averages of the form
\begin{align}\label{multiple ergodic averages}
    \EE_{n\in [N]}T^{P_1(n)} f_1 \cdots T^{P_{t}(n)} f_{t}
\end{align}
for $f_1, \ldots, f_{t}\in L^\infty(\mu)$. By a remarkable result of Host and Kra \cite{host_kra_2005a, host_kra_2005b}, there exists a family of {factors}\footnote{The definitions of factors, Weyl systems, nilsystems, and other concepts from ergodic theory and higher order Fourier analysis used in the introduction will be provided in subsequent sections.} 
$(\Z_s)_{s\in\NN}$, 
called henceforth \emph{Host-Kra factors}, with the property that weak or $L^2$ limits of expressions of the form (\ref{multiple ergodic averages}) remain unchanged if we project any of the functions $f_i$ onto one of the factors $\Z_s$ for some $s$ dependent on $\vec{P}$ and $i$. 
\begin{definition}[Characteristic factors]\label{characteristic factors}
Let $(X,\X,\mu,T)$ be an invertible measure-preserving dynamical system, $t\in\NN_+$ and ${\vec{P}\in\RR[x,y]^{t+1}}$ be an integral polynomial progression.

Suppose that $1\leqslant i\leqslant t$. A factor $\Y$ of $\X$ is \emph{characteristic for the $L^2$-convergence of $\vec{P}$ at $i$} if for all choices of $f_1, \ldots, f_{t}\in L^\infty(\mu)$, the $L^2$-limit of (\ref{multiple ergodic averages}) is 0 whenever $\EE(f_i|\Y)=0$.

Similarly, suppose that $0\leqslant i\leqslant t$. A factor $\Y$ of $\X$ is \emph{characteristic for the weak convergence of $\vec{P}$ at $i$} if for all choices of $f_0, \ldots, f_{t}\in L^\infty(\mu)$, the weak limit of (\ref{multiple ergodic averages}), i.e. the expression
\begin{align}\label{weak limit}
    \lim_{N\to\infty}\EE_{n\in [N]}\int_X f_0 \cdot T^{P_1(n)} f_1 \cdots T^{P_{t}(n)} f_{t} d\mu,
\end{align}
is 0 whenever $\EE(f_i|\Y)=0$.
\end{definition}

\begin{theorem}[\cite{host_kra_2005a, leibman_2005a}]\label{Host-Kra factors are characteristic}
Let $t\in\NN_+$. For each integral polynomial progression $\vec{P}\in\RR[x,y]^{t+1}$, there is $s\in\NN$ such that for all invertible ergodic systems $(X,\X,\mu,T)$, the factor $\Z_s$ is characteristic for the $L^2$ convergence of $\vec{P}$ at $i$ for all $0\leqslant i\leqslant t$.
\end{theorem}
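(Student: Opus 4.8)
The plan is to prove the quantitative estimate that underlies the theorem: for every integral polynomial progression $\vec P\in\RR[x,y]^{t+1}$ there is $s=s(\vec P)\in\NN$ such that for every invertible ergodic system $(X,\X,\mu,T)$ and all $f_1,\dots,f_t\in L^\infty(\mu)$ bounded by $1$,
\begin{align*}
    \limsup_{N\to\infty}\,\Big\|\,\EE_{n\in[N]}T^{P_1(n)}f_1\cdots T^{P_t(n)}f_t\,\Big\|_{L^2(\mu)}\ \ll_{\vec P}\ \nnorm{f_1}_{s+1},
\end{align*}
where $\nnorm{\cdot}_{s+1}$ is the Host--Kra seminorm of order $s+1$. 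Two quick reductions come first. The average (\ref{multiple ergodic averages}) is a pointwise product of functions composed with commuting powers of $T$, hence invariant under permuting the pairs $(P_j,f_j)$; so once this estimate is known for every ordered progression, reordering gives control by $\nnorm{f_i}_{s+1}$ for each $1\le i\le t$, and we let $s$ be the (finite) maximum over the reorderings. Also, the $L^2$-limit in question exists by the cited polynomial convergence results, so bounding the $\limsup$ suffices.

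The heart of the proof is Bergelson's \emph{polynomial exhaustion technique} (PET) induction. Put $u_n:=T^{P_1(n)}f_1\cdots T^{P_t(n)}f_t\in L^2(\mu)$. One pass of the van der Corput inequality bounds $\limsup_N\norm{\EE_{n\in[N]}u_n}_{L^2}^2$ by $\limsup_H\EE_{h\in[H]}\limsup_N\,\mathrm{Re}\,\EE_{n\in[N]}\langle u_{n+h},u_n\rangle$, and each inner product equals $\int_X\prod_j T^{P_j(n+h)}f_j\cdot\prod_j T^{P_j(n)}\overline{f_j}\,d\mu$. Multiplying the integrand by $T^{-P_1(n)}$ — harmless since $\mu$ is $T$-invariant — turns one factor into the constant $\overline{f_1}$ and displays the inner product as the integral of a polynomial progression average in roughly $2t$ functions whose ``weight vector'' is strictly smaller than that of $(P_1,\dots,P_t)$. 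Because the weight vectors are well-ordered, iteration terminates after $r=r(\vec P)$ steps, the number depending only on the degrees and leading data of the $P_j$, in a degree-one average; the classical (linear) Host--Kra theory then bounds the accumulated quantity by $\nnorm{f_1}_{s+1}$ with $s$ determined by $r$, which is the displayed estimate.

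It remains to translate the seminorm bound into the language of factors, which is exactly what the Host--Kra structure theory provides: $\Z_s$ is characterized by the equivalence $\nnorm{f}_{s+1}=0\iff\EE(f\mid\Z_s)=0$. Hence for $1\le i\le t$, whenever $\EE(f_i\mid\Z_s)=0$ the estimate forces the $L^2$-limit of (\ref{multiple ergodic averages}) to vanish, so $\Z_s$ is characteristic at $i$. For $i=0$: with characteristicity established at every $i\ge1$, a telescoping argument shows that replacing each $f_j$ by $\EE(f_j\mid\Z_s)$ does not change the $L^2$-limit of (\ref{multiple ergodic averages}); that limit is therefore $\Z_s$-measurable (being an $L^2$-limit of products and $T$-shifts of $\Z_s$-measurable functions), and so it pairs to zero against any $f_0$ with $\EE(f_0\mid\Z_s)=0$, which is precisely the characteristic property for the weak limit (\ref{weak limit}).

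The main obstacle is the PET induction itself, or rather its bookkeeping: isolating the correct complexity statistic (the weight vector) on families of polynomials, showing that a single van der Corput step strictly decreases it in the well-order whatever family one starts from, and bounding in terms of $\vec P$ the length $r$ of the resulting descending chain — hence the terminal seminorm order $s+1$. The estimates needed to keep everything uniform in the system, and the passage between the $L^2$- and weak-convergence formulations, are comparatively routine.
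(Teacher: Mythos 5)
The paper does not prove this statement at all: it is quoted from Host--Kra and Leibman, so there is no internal proof to compare against. Your proposal is essentially a roadmap of how those cited papers argue: van der Corput plus PET induction to reduce a polynomial average to seminorm control, then the Host--Kra structure theory (the equivalence $\nnorm{f}_{s+1}=0\iff\EE(f\mid\Z_s)=0$) to translate seminorm control into the characteristic-factor statement, with the $i=0$ case handled by the routine observation that once each $f_j$, $j\geqslant 1$, may be replaced by $\EE(f_j\mid\Z_s)$, the limit is $\Z_s$-measurable and hence pairs to zero against any $f_0$ with $\EE(f_0\mid\Z_s)=0$. That outline, including the symmetry reduction to controlling $f_1$ and the telescoping step, is sound and is the standard route.

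Two cautions about the way you describe the core step. First, composing with $T^{-P_1(n)}$ is not always the right van der Corput move: to make the PET weight strictly decrease one must subtract a polynomial chosen appropriately from the current (generally $h$-dependent) family, typically one of minimal degree among the nonconstant polynomials; subtracting a top-degree polynomial such as $P_1=n^2$ in the family $\{n,n^2\}$ leaves the weight unchanged, so the assertion that a single step decreases the weight ``whatever family one starts from'' is false as stated and is precisely where the choice matters. Second, the iterated Cauchy--Schwarz/van der Corput passes produce a bound by a power of $\nnorm{f_1}_{s+1}$ (with exponents accumulating), not the linear bound you display; this is harmless for the qualitative conclusion but should be stated correctly, and arranging the induction so that the surviving seminorm is that of the targeted function (uniformly in the system, with $s$ depending only on $\vec{P}$) is exactly the nontrivial bookkeeping that constitutes the content of the cited theorem. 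As it stands, your text is a correct high-level reconstruction of the literature's proof rather than a self-contained argument, which is consistent with the fact that the paper itself simply cites the result.
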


The utility of Host-Kra factors comes from the fact that they are inverse limits of \emph{nilsystems} \cite{host_kra_2005b}, and so understanding (\ref{multiple ergodic averages}) for arbitrary systems comes down to proving certain equidistribution results on spaces called nilmanifolds that possess rich algebraic structure.
Importantly, $\Z_s$ is a factor of $\Z_{s+1}$ for each $s\in\NN$, hence it is natural to inquire about the smallest value of $s$ for which the factor $\Z_s$ is characteristic for $\vec{P}$ at $i$. 


\begin{definition}[Host-Kra complexity]\label{Host-Kra complexity}
Let $t\in\NN_+$ and $\vec{P}\in\RR[x,y]^{t+1}$ be an integral polynomial progression. Fix $0\leqslant i\leqslant t$. The progression $\vec{P}$ has \emph{Host-Kra complexity} $s$ at $i$, denoted $\HK_i(\vec{P})$, if $s$ is the smallest natural number such that the factor $\Z_s$ is characteristic for the weak convergence of $\vec{P}$ at $i$ for all invertible totally ergodic dynamical systems $(X,\X,\mu, T)$. We say $\vec{P}$ has \emph{Host-Kra complexity} $s$ if $\max_i \HK_i(\vec{P}) = s$.

\end{definition}
Investigating complexity has been of particular interest for a class of dynamical systems called Weyl systems, leading to another notion of complexity, a variant of which is given below.

\begin{definition}[Weyl complexity]\label{Weyl complexity}
Let $t\in\NN_+$ and $\vec{P}\in\RR[x,y]^{t+1}$ be an integral polynomial progression. Fix $0\leqslant i\leqslant t$. The progression $\vec{P}$ has \emph{Weyl complexity} $s$ at $i$, denoted $\W_i(\vec{P})$, if $s$ is the smallest natural number such that the factor $\Z_s$ is characteristic for the weak convergence of $\vec{P}$ at $i$ for all Weyl systems $(X,\X,\mu, T)$.
We say $\vec{P}$ has \emph{Weyl complexity} $s$ if $\max_i \W_i(\vec{P})=s$.
\end{definition}
In previous works \cite{bergelson_leibman_lesigne_2007, leibman_2009, frantzikinakis_2008, frantzikinakis_2016}, the aforementioned notions of complexity have been defined for a polynomial family $\P=\{P_1, \ldots, P_t\}$ rather than for a progression $\vec{P}$. However, we want to extend the definitions of complexity to ``index 0", i.e. the $x$ term in $\vec{P}$, which is why we prefer to define it for $\vec{P}$ rather than $\P$. Similarly, complexity has previously been defined for $L^2$ convergence rather than weak convergence. However, the existence of $L^2$ limit (Theorem \ref{Host-Kra factors are characteristic}) and basic functional analysis imply that weak and $L^2$ limits are identical.

Host-Kra factors are deeply related to a family of seminorms called \emph{Gowers-Host-Kra seminorms}. For $s\in\NN_+$ and $f\in L^\infty(\mu)$, the Gowers-Host-Kra seminorm of $f$ of degree $s$ is denoted by $\nnorm{f}_s$ and satisfies the property
\begin{align}\label{Gowers-Host-Kra seminorms vs. Host-Kra factors}
    \nnorm{f}_{s+1} = 0 \iff \EE(f|\Z_s) = 0
\end{align}
as well as the monotonicity property
\begin{align}\label{monotonicity property}
    \nnorm{f}_1 \leqslant \nnorm{f}_2 \leqslant \nnorm{f}_3 \leqslant \ldots
\end{align}

Gowers-Host-Kra seminorms have natural finitary analogues. For the transformation $Tx = x+1$ on $X=\ZZ/N\ZZ$ with $N$ prime and the uniform probability measure $\mu$, the weak limit (\ref{weak limit}) becomes
\begin{align}\label{asymptotic count}
    \EE_{x,y\in\ZZ/N\ZZ}f_0(x) f_1(x+P_1(y)) \cdots f_{t}(x+P_{t}(y)).
\end{align}
The Gowers-Host-Kra seminorm of any $f:\ZZ/N\ZZ\to\CC$ is a norm (for $s>1$) called the \emph{Gowers norm} and denoted by $U^s$, and it takes the form
\begin{align}\label{Gowers norm}
    \norm{f}_{U^s}&=\left(\EE_{x, h_1, ..., h_s\in\ZZ/N\ZZ}\prod_{w\in\{0,1\}^s} \C^{|w|}f(x+w_1 h_1 + \ldots + w_s h_s)\right)^\frac{1}{2^s},
\end{align}
where $\C: z\mapsto \overline{z}$ is the conjugation operator and $|w|=w_1+ \cdots+w_s$. As a result, $\norm{f}_{U^s} = 0$ for some $s>1$ if and only if $\norm{f}_{U^2} = 0$ if and only if $f = 0$, and so inquiring about the smallest characteristic factor of this system in the sense of Definition \ref{characteristic factors} makes little sense. We can however ask which Gowers norm ``controls" $\vec{P}$ in a more finitary way, and this leads to another notion of complexity.

\begin{definition}[True complexity]\label{true complexity}
Let $t\in\NN_+$ and $\vec{P}\in\RR[x,y]^{t+1}$ be an integral polynomial progression. Fix $0\leqslant i\leqslant t$. The progression $\vec{P}$ has \emph{true complexity} $s$ at $i$, denoted $\T_i(\vec{P})$, if $s$ is the smallest natural number with the following property: for every $\epsilon>0$, there exist $\delta>0$ and $N_0\in\NN$ such that for all primes $N>N_0$ and all functions $f_0, \ldots, f_{t}:\ZZ/N\ZZ\to\CC$ satisfying $\max_i\norm{f_i}_\infty\leqslant 1$, we have
\begin{align*}
    \left|\EE_{x,y\in\ZZ/N\ZZ}f_0(x)f_1(x+P_1(y)) \cdots f_{t}(x+P_{t}(y))\right| < \epsilon
\end{align*}
whenever $\norm{f_i}_{U^{s+1}}<\delta$. We say $\vec{P}$ has \emph{true complexity} $s$ if $\max_i \T_i(\vec{P})=s$.
\end{definition}

We have so far defined three notions of complexity, that of Host-Kra, Weyl and true complexity. They are all defined in terms of ergodic theory or higher order Fourier analysis and have to do with ``controlling" expressions like (\ref{multiple ergodic averages}) and (\ref{asymptotic count}) by characteristic factors, Gowers-Host-Kra seminorms and Gowers norms. We shall now introduce one more notion, defined purely in terms of algebraic properties of polynomial progressions, and conjecture that all four concepts of complexity are in fact the same.

\begin{definition}[Algebraic relations and algebraic complexity]\label{algebraic complexity}
Let $t\in\NN_+$ and $\vec{P}\in\RR[x,y]^{t+1}$ be an integral polynomial progression. An \emph{algebraic relation} of degree $(j_0, \ldots, j_{t})$ satisfied by $\vec{P}$ is a tuple $(Q_0, \ldots, Q_{t})\in\RR[u]^{t+1}$ such that 
\begin{align}\label{algebraic relation}
    Q_0(x)+ Q_1(x+P_1(y)) + \ldots + Q_{t}(P_{t}(y)) = 0,
\end{align}
where $\deg Q_i = j_i$ for each $0\leqslant i\leqslant t$. The progression $\vec{P}$ has \emph{algebraic complexity} $s$ at $i$ for some $0\leqslant i\leqslant t$, denoted $\A_i(\vec{P})$, if $s$ is the smallest natural number such that for any algebraic relation $(Q_0, \ldots, Q_{t})$ satisfied by $\vec{P}$, the degree of $Q_i$ is at most $s$. It has \emph{algebraic complexity} $s$ if $\max_i \A_i(\vec{P}) = s$.
\end{definition}

\begin{conjecture}[Four notions of complexity are the same]\label{main conjecture}
Let $t\in\NN_+$ and $\vec{P}\in\RR[x,y]^{t+1}$ be an integral polynomial progression. Fix $0\leqslant i\leqslant t$. Then
\begin{align*}
    \HK_i(\vec{P}) = \W_i(\vec{P}) = \T_i(\vec{P}) = \A_i(\vec{P})\leqslant t-1.
\end{align*}
\end{conjecture}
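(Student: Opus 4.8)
The plan is to prove the stated equalities by a circle of inequalities, two of whose ingredients are unconditional. First, $\W_i(\vec P)\le\HK_i(\vec P)$ is immediate once one observes that Weyl systems are totally ergodic: a factor characteristic for all totally ergodic systems is then characteristic for all Weyl systems. Second, $\A_i(\vec P)\le t-1$ is a self-contained algebraic computation. If $(Q_0,\dots,Q_t)$ is an algebraic relation with $\deg Q_i=d$, write $q_j$ for the leading coefficient of $Q_j$ (so $q_j=0$ when $\deg Q_j<d$, and $q_i\neq 0$); extracting from \eqref{algebraic relation} first its degree-$d$ homogeneous part and then, inside that, the coefficient of $x^{d-k}$, one gets $\sum_{j=1}^{t}q_jP_j(y)^k=0$ for $1\le k\le d$ (using $P_0=0$). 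When $d\ge t$ the $t\times t$ matrix $(P_j(y)^k)_{1\le k,j\le t}$ has determinant $\pm\prod_{j}P_j(y)\prod_{i<j}(P_j(y)-P_i(y))$, a nonzero element of $\RR(y)$ since $P_1,\dots,P_t$ are distinct and nonzero, so $q_1=\dots=q_t=0$, contradicting $q_i\neq0$. Hence $d\le t-1$ for every relation, i.e. $\A_i(\vec P)\le t-1$.

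Next come the lower bounds $\A_i(\vec P)\le\T_i(\vec P)$ and $\A_i(\vec P)\le\W_i(\vec P)$, obtained by converting an algebraic relation into an explicit obstruction. Fix a relation $(Q_0,\dots,Q_t)$ with $\deg Q_i=s$; since integral polynomials have rational coefficients the relation space is defined over $\QQ$, so after clearing denominators we may take all $Q_j\in\ZZ[u]$. For prime $N$ the maps $n\mapsto e(Q_j(n)/N)$, with $e(\theta)=e^{2\pi i\theta}$, descend to well-defined functions on $\ZZ/N\ZZ$ of modulus $1$, and by \eqref{algebraic relation} their product along the progression is identically $1$, so the corresponding average in Definition~\ref{true complexity} equals $1$. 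On the other hand a routine Weyl-sum estimate gives $\norm{e(Q_i(\cdot)/N)}_{U^{s}}\to 0$ as $N\to\infty$ (for $N$ not dividing the leading coefficient of $Q_i$), so by monotonicity $U^{s}$ and all lower Gowers norms fail to control $\vec P$ at $i$; hence $\T_i(\vec P)\ge s$, and maximising over relations yields $\A_i(\vec P)\le\T_i(\vec P)$. The inequality $\A_i(\vec P)\le\W_i(\vec P)$ is the same construction in the ergodic category: one realizes the polynomial phases $Q_j$ inside a suitable Weyl system (an inverse limit of unipotent affine systems on tori, along whose orbits characters become polynomial phases), where \eqref{algebraic relation} again makes the weak limit nonzero while $\EE(f_i|\Z_{s-1})=0$, so $\Z_{s-1}$ is not characteristic for $\vec P$ at $i$ and $\W_i(\vec P)\ge s$.

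The substance of the argument is the two hard upper bounds $\T_i(\vec P)\le\A_i(\vec P)$ and $\HK_i(\vec P)\le\A_i(\vec P)$; with the above they close the loops $\A_i\le\W_i\le\HK_i\le\A_i$ and $\A_i\le\T_i\le\A_i$, giving the full chain of equalities. For the finitary bound the route is a van der Corput (PET) reduction followed by degree lowering: a standard finitary polynomial Szemer\'edi argument shows $U^{K}$ controls $\vec P$ at $i$ for some finite $K=K(\vec P)$, and one proves that if $U^{d+1}$ controls $\vec P$ at $i$ with $d>\A_i(\vec P)$ then so does $U^{d}$, iterating down to $U^{\A_i(\vec P)+1}$. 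In the inductive step, a large average together with large $\norm{f_i}_{U^{d+1}}$ lets the $U^{d+1}$-inverse theorem replace $f_i$ by a degree-$d$ nilsequence and, after standard reductions, by a degree-$d$ polynomial phase $e(g(\cdot))$; a further van der Corput round shows such a phase can survive the averaging only if the degree-$d$ parts of $g(x+P_i(y))$ and of the phases appearing at the other indices satisfy a polynomial identity --- an algebraic relation of degree $d$ at $i$ --- which is impossible since $d>\A_i(\vec P)$. Hence $g$ may be taken of degree $d-1$, so $\norm{f_i}_{U^{d}}$ was already large, completing the step. The ergodic bound $\HK_i(\vec P)\le\A_i(\vec P)$ is the parallel argument inside Host--Kra theory: PET gives a characteristic $\Z_K$, whose order is lowered using that Host--Kra factors are inverse limits of nilsystems (so one may work on an honest nilsystem, and after reductions that exploit total ergodicity to discard rational spectrum, on a Weyl-type system), the obstruction to each reduction again being an algebraic relation.

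The main obstacle is precisely this obstruction analysis. One must show that the only polynomial identities capable of blocking a degree-lowering step are genuine algebraic relations of $\vec P$, and --- harder --- keep exact track of which identities the van der Corput manipulations can actually produce. This is where the hypothesis that every algebraic relation of $\vec P$ is a linear combination of homogeneous ones enters: homogeneous relations are exactly those visible from the top-degree behaviour after van der Corput, so in that case the classification of potential obstructions is complete and every reduction goes through down to degree $\A_i(\vec P)$. For progressions admitting genuinely inhomogeneous relations the reduction can stall at a mixed-degree obstruction that this method does not resolve, which is why Conjecture~\ref{main conjecture} is, at present, proved only under that structural assumption.
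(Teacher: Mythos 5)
There is a genuine gap, and in fact two separate issues. First, the statement you are proving is Conjecture~\ref{main conjecture}, which the paper itself does not prove in general: it establishes the full chain of equalities only for homogeneous progressions (Theorem~\ref{Main theorem}) and, unconditionally, only $\W_i(\vec P)=\A_i(\vec P)\leqslant\min(\T_i(\vec P),\HK_i(\vec P))$ (Theorem~\ref{relations between notions of complexity}). Your proposal ends by conceding exactly this: the two hard upper bounds $\T_i\leqslant\A_i$ and $\HK_i\leqslant\A_i$ are presented only as a PET/inverse-theorem degree-lowering strategy whose decisive step --- showing that the only identities the van der Corput manipulations can produce are genuine algebraic relations of $\vec P$ --- is precisely the content that is missing, so nothing beyond a plan is proved there. (For the homogeneous case the paper's actual route is different: it does not lower Gowers degrees via the inverse theorem, but proves equidistribution of $g^P$ on the Leibman group $G^P/\Gamma^P$ (Theorems~\ref{infinitary equidistribution on nilmanifolds} and~\ref{finitary equidistribution on nilmanifolds}), and feeds this into the Host--Kra structure theory on the ergodic side and the regularity lemma on the finitary side.)

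Second, one of the steps you present as unconditional is wrong as stated: the bound $\A_i(\vec P)\leqslant t-1$ for arbitrary progressions. From a relation $(Q_0,\dots,Q_t)$ of degree $d$ you cannot extract ``its degree-$d$ homogeneous part'' $\sum_j q_j(x+P_j(y))^d=0$: the lower-degree terms of the $Q_j$ contribute to the coefficient of $x^{d-k}$ for $k\geqslant 2$, and whether the top-degree part of every relation is itself a relation is exactly the homogeneous/inhomogeneous dichotomy of Definition~\ref{homogeneity} and Lemma~\ref{equivalent descriptions of algebraic relations}. Concretely, for the paper's running example \eqref{algebraic relation for x, x+y, x+2y, x+y^2} one has $q=(1,-2,1,0)$ and
\begin{align*}
\sum_{j=1}^{t} q_j P_j(y)^2 \;=\; -2y^2+4y^2 \;=\; 2y^2\neq 0,
\end{align*}
so the identity $\sum_j q_jP_j(y)^k=0$ for all $1\leqslant k\leqslant d$ fails; your Vandermonde argument therefore only works when the progression is homogeneous, which is exactly Proposition~\ref{bound on algebraic complexity} of the paper, and the unconditional bound $\A_i\leqslant t-1$ remains part of the conjecture. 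The remaining unconditional pieces of your proposal are fine and match the paper: $\W_i\leqslant\HK_i$ because Weyl systems are totally ergodic (Proposition~\ref{Weyl complexity smaller than Host-Kra complexity}), $\A_i\leqslant\W_i$ via polynomial phases on standard Weyl systems (Proposition~\ref{Weyl complexity no smaller than algebraic complexity}), and $\A_i\leqslant\T_i$, which the paper quotes from earlier work rather than reproving.
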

The heuristic for Conjecture \ref{main conjecture} is as follows: evaluating expressions like (\ref{weak limit}) and (\ref{asymptotic count}) comes down to understanding the distribution of certain polynomial sequences on nilmanifolds, and the only obstructions to equidistribution come from algebraic relations of the form (\ref{algebraic relation}).

Several substatements of Conjecture \ref{main conjecture}, such as the equivalence of Weyl and Host-Kra complexity and the upper bound on complexities, have previously been conjectured in \cite{bergelson_leibman_lesigne_2007, leibman_2009, frantzikinakis_2008, frantzikinakis_2016}. Similarly, the equivalence of true and algebraic complexity has been studied and proved for linear forms \cite{gowers_wolf_2010, gowers_wolf_2011a, gowers_wolf_2011b, gowers_wolf_2011c} as well as certain subclasses of polynomial progressions \cite{peluse_2019, kuca_2020a, kuca_2020b}. However, we have not seen the full statement of Conjecture \ref{main conjecture} anywhere in the literature. In particular, we have not found a conjecture relating Host-Kra and Weyl complexity to algebraic complexity, even though the aforementioned papers researching the topic mention that algebraic relations form a source of obstructions preventing a progression from having a characteristic small-degree Host-Kra factor.

Before we state our main result, we have to distinguish between two large families of progressions.
\begin{definition}[Homogeneous and inhomogeneous relations and progressions]\label{homogeneity}
Let $t\in\NN_+$ and ${\vec{P}\in\RR[x,y]^{t+1}}$ be an integral polynomial progression. An \emph{algebraic relation} $(Q_0, \ldots, Q_{t})\in\RR[u]^{t+1}$ is \emph{homogeneous} of degree $d$ if it is of the form
\begin{align*}
    (Q_0(u), \ldots, Q_{t}(u)) = (a_0 u^d, \ldots, a_{t} u^{d})
\end{align*}
for some $a_0, \ldots, a_{t}\in\RR$ (some but not all of which may be zero), and \emph{inhomogeneous} otherwise. The progression $\vec{P}$ is \emph{homogeneous} if all the algebraic relations that it satisfies are linear combinations of its homogeneous algebraic relations, and it is called \emph{inhomogeneous} otherwise.
\end{definition}
An example of a homogeneous progression is $(x, \; x+y,\; x+2y,\; x+y^3)$, which only satisfies a homogeneous relation
\begin{align}\label{relation in 3APs}
    x - 2(x+y) + (x+2y) = 0.
\end{align}
Other examples include arithmetic progressions, progressions with $P_1, \ldots, P_t$ being linearly independent such as $(x,\; x+y,\; x+y^2)$, or progressions whose terms satisfy no quadratic relations, such as $(x,\; x+y^2,\; x+2y^2,\; x+y^3,\; x+2y^3)$.
By contrast, the progression $(x, \; x+y,\; x+2y,\; x+y^2)$ is inhomogeneous because it satisfies both (\ref{relation in 3APs}) and the inhomogeneous relation
\begin{align}\label{algebraic relation for x, x+y, x+2y, x+y^2}
    x^2 + 2x - 2(x+y)^2 + (x+2y)^2 - 2(x+y^2) = 0
\end{align}
that cannot be broken down into a sum of homogeneous relations. These two progressions will accompany us as running examples throughout the paper.

Our main result is the following.
\begin{theorem}[Conjecture \ref{main conjecture} holds for homogeneous progressions]\label{Main theorem}
Let $t\in\NN_+ $. If $\vec{P}\in\RR[x,y]^{t+1}$ is a homogeneous polynomial progression, then it satisfies Conjecture \ref{main conjecture}.
\end{theorem}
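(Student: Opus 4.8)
The plan is to prove the chain of (in)equalities $\A_i(\vec P) \leqslant \HK_i(\vec P) \leqslant \W_i(\vec P)$ and $\A_i(\vec P) \leqslant \T_i(\vec P)$ unconditionally (the first two being essentially definitional monotonicity, the third following because algebraic relations produce genuine obstructions to $U^{s+1}$-control), and then to close the loop by showing, \emph{for homogeneous progressions}, that $\W_i(\vec P) \leqslant \A_i(\vec P)$ and $\T_i(\vec P) \leqslant \A_i(\vec P)$, together with $\A_i(\vec P) \leqslant t-1$. The lower bounds $\A_i \leqslant \HK_i, \T_i$ come from the standard ``obstruction'' construction: given a homogeneous algebraic relation of degree $d$ with $a_i \neq 0$, one builds (on a suitable Weyl system, or on $\ZZ/N\ZZ$) functions of the form $e(\alpha x^{d})$-type characters whose correlation along $\vec P$ is large but which have vanishing projection to $\Z_{d-1}$ (resp. small $U^{d}$ norm); this forces any characteristic factor to have level at least $\deg Q_i$. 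The bound $\A_i(\vec P) \leqslant t-1$ follows by a dimension count: a relation $(Q_0, \ldots, Q_t)$ with all $\deg Q_i \geqslant t$ would, after expanding in the basis $\{x^a y^b\}$, give too many independent linear constraints to be satisfiable while respecting that $P_1, \ldots, P_t$ are distinct.

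The heart of the matter is the upper bound $\W_i(\vec P) \leqslant \A_i(\vec P)$ (and its finitary cousin $\T_i(\vec P) \leqslant \A_i(\vec P)$) for homogeneous progressions. For Weyl systems, a function $f_i$ with $\EE(f_i \mid \Z_s) = 0$ for $s = \A_i(\vec P)$ can be decomposed into (characters given by) polynomial phases of degree $> s$; substituting such a phase into the average \eqref{weak limit} produces an exponential sum $\EE_{n \in [N]} e(\phi(n))$ where $\phi$ is a polynomial built from $Q_i \circ (x + P_i(y))$-type expressions, and Weyl's inequality / van der Corput gives cancellation \emph{unless} the top-degree part of $\phi$ degenerates --- which is precisely the event that a homogeneous algebraic relation of degree $> s$ exists. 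Here the homogeneity hypothesis is essential: it guarantees that the space of all algebraic relations is \emph{spanned} by homogeneous ones degree by degree, so that controlling the highest-degree obstruction controls all of them, and no ``mixed'' cancellation of the type exhibited by \eqref{algebraic relation for x, x+y, x+2y, x+y^2} can occur. For the finitary statement I would run the PET / polynomial-exhaustion induction scheme (as in Bergelson--Leibman and Peluse's work), repeatedly applying van der Corput to reduce the number of genuinely nonlinear terms, tracking that the leading behaviour is governed by the same homogeneous relations, and concluding via a $U^{s+1}$ inverse-type argument that $\norm{f_i}_{U^{s+1}}$ small forces the average small.

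The main obstacle I expect is the $\T_i(\vec P) \leqslant \A_i(\vec P)$ direction: making the PET induction interact cleanly with the homogeneity hypothesis requires showing that at every stage of the van der Corput descent the ``leading system'' of the differenced progression still only satisfies homogeneous relations (or that the relevant degeneracies are accounted for by homogeneous relations of the original $\vec P$), and then extracting the correct Gowers norm $U^{\A_i + 1}$ rather than something weaker. A secondary difficulty is bookkeeping the index $i = 0$ uniformly with $i \geqslant 1$, since the $x$-term has no $P_i$ attached; I would handle this by the symmetrising substitution $x \mapsto x - P_i(y)$ which turns index $i$ into a variant of index $0$. Once the $U^{s+1}$-control is in place for all $i$, the equality $\T_i = \A_i$ follows, and combined with the already-established $\W_i = \HK_i = \A_i$ (via \eqref{Gowers-Host-Kra seminorms vs. Host-Kra factors} and the fact that Weyl systems suffice to detect the obstructions), Theorem \ref{Main theorem} is proved.
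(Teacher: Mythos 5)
There is a genuine gap, and it sits at the load-bearing point of your plan: the inequality $\HK_i(\vec P)\leqslant\W_i(\vec P)$ is \emph{not} definitional monotonicity; the definitional direction is the opposite one, $\W_i(\vec P)\leqslant\HK_i(\vec P)$, because (after reduction to standard Weyl systems) Weyl systems form a subclass of totally ergodic systems, so a factor characteristic for all totally ergodic systems is in particular characteristic for Weyl systems. Knowing that $\Z_s$ is characteristic on every Weyl system says nothing a priori about a general totally ergodic system, and your route closes the loop exactly through the step that carries all the difficulty. To bound $\HK_i$ by $\A_i$ one must handle arbitrary totally ergodic systems, which by the Host--Kra structure theorem means arbitrary totally ergodic \emph{nilsystems}, not just toral affine ones; your Weyl-sum/van der Corput analysis of polynomial phases only reaches the abelian (Weyl) case, i.e.\ it proves $\W_i(\vec P)\leqslant\A_i(\vec P)$ (which in fact holds for \emph{all} integral progressions, homogeneous or not, cf.\ Theorem \ref{relations between notions of complexity}), but not $\HK_i(\vec P)\leqslant\A_i(\vec P)$. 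In the paper this hard step is Corollary \ref{Host-Kra complexity equals algebraic complexity}, deduced from an equidistribution theorem on nilmanifolds: one replaces the linear orbit $a^nb\Gamma$ by an irrational polynomial sequence on the connected component (Proposition \ref{replacing a linear sequence by a polynomial one}) and proves that $g^P$ equidistributes on the Leibman group nilmanifold $G^P/\Gamma^P$ (Theorem \ref{infinitary equidistribution on nilmanifolds}); homogeneity enters precisely there, through $W_k=W_k'$ and the isomorphism $W_k'\cong\P_k$, which lets one annihilate horizontal characters degree by degree. Nothing in your sketch substitutes for this nonabelian input.

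A second, related gap is your route to $\T_i(\vec P)\leqslant\A_i(\vec P)$. PET/van der Corput exhaustion yields control by \emph{some} Gowers norm (this is Proposition \ref{control by some Gowers norm}), but it is not known to yield the optimal degree $U^{\A_i+1}$: lowering the degree is exactly the true-complexity problem, and your key claim --- that at every stage of the descent the differenced configuration still satisfies only homogeneous relations governed by those of $\vec P$ --- is asserted, not proved, and is where such arguments break down (the inhomogeneous example $(x,\,x+y,\,x+2y,\,x+y^2)$ shows the phenomenon you must exclude is real). The paper instead combines the irrational arithmetic regularity lemma with a quantitative version of the same equidistribution statement (Theorem \ref{finitary equidistribution on nilmanifolds}), proved via the Green--Tao quantitative Leibman theorem, $i$-th level characters and downward induction on the filtration, so the infinitary and finitary halves share one mechanism. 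Your remaining ingredients are broadly sound in spirit ($\A_i\leqslant\W_i$ and $\A_i\leqslant\T_i$ via explicit phase obstructions; $\A_i\leqslant t-1$, though note the paper's Vandermonde argument in Proposition \ref{bound on algebraic complexity} uses homogeneity, which your dimension count does not visibly invoke), but as it stands the proposal proves only the Weyl and lower-bound portions of the theorem.
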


Having defined Host-Kra complexity using totally ergodic systems, we would like to extend our results to ergodic systems. We have however encountered an algebraic obstacle in doing so that prevents us from performing this generalisation for all homogeneous progressions. We introduce a subfamily of homogeneous polynomial progressions for which this extension is possible, borrowing the terminology of Frantzikinakis from \cite{frantzikinakis_2008}.
\begin{definition}[Eligible progressions]\label{eligible progressions}
A homogeneous polynomial progression $\vec{P}\in\RR[x,y]^{t+1}$ is \emph{eligible} if for every $r\in \NN_+$ and every $0\leq j\leq r-1$, the family
\begin{align*}
\vec{\tilde{P}}(x,y) = (x,\; x+\tilde{P}_{1,j}(y),\; ...,\; x+\tilde{P}_{t,j}(y)),
\end{align*}
where $\tilde{P}_{i,j}(y) = \frac{P_i(r(y-1)+j) - P_i(j)}{r}$, is homogeneous, and $\A_i(\vec{P}) = \A_i(\vec{\tilde{P}})$ for every $0\leq i \leq t$. 
\end{definition}
The condition in Definition \ref{eligible progressions} may seem artificial at first glance, but this turns out to be the condition that we need to pass from totally ergodic to ergodic systems. While we believe that all homogeneous progressions satisfy this condition, we have not been able to prove this.

We now state the corollary that gives us the smallest characteristic Host-Kra factor for eligible progressions on ergodic systems. The main difference is that if a system has complexity 0, then the $\Z_0$ factor has to be replaced by the rational Kronecker factor $\K_{rat}$.

\begin{corollary}\label{factors for ergodic systems}
Let $t\in\NN_+$ and $\vec{P}\in\RR[x,y]^{t+1}$ be an eligible homogeneous polynomial progression, and suppose that $\A_i(\vec{P}) = s$ for some $0\leqslant i\leqslant t$ and $s\in\NN$. For all invertible ergodic dynamical systems $(X, \X, \mu, T)$, the factor $\Z_s$ is characteristic for the weak and $L^2$ convergence of $\vec{P}$ at $i$ if $s>0$, and $\K_{rat}$ is characteristic for the weak and $L^2$ convergence of $\vec{P}$ at $i$ if $s=0$.
\end{corollary}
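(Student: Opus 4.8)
The plan is to deduce Corollary \ref{factors for ergodic systems} from Theorem \ref{Main theorem} by the standard decomposition of an ergodic system into its totally ergodic ``pieces''. Given an invertible ergodic system $(X,\X,\mu,T)$, recall that for each $r\in\NN_+$ the transformation $T^r$ decomposes $X$ into $r'$ ergodic components for some $r'\mid r$, each of which is totally ergodic once $r$ is taken to be (a multiple of) the relevant period; more precisely, one works with the inverse limit over all $r$ and uses that the Kronecker factor of $X$ contains a (possibly trivial) rational part $\K_{rat}$ isomorphic to an inverse limit of finite cyclic rotations. The first step is therefore to pass from the average $\EE_{n\in[N]}T^{P_1(n)}f_1\cdots T^{P_t(n)}f_t$ to averages along residue classes $n\equiv j\pmod r$: writing $n=r(m-1)+j$, the progression $(P_i(n))_i$ becomes $(P_i(j) + r\tilde P_{i,j}(m))_i$ with $\tilde P_{i,j}$ as in Definition \ref{eligible progressions}, and the shift by $P_i(j)$ can be absorbed into the functions $f_i$ (replacing $f_i$ by $T^{P_i(j)}f_i$). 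The point of the eligibility hypothesis is precisely that $\vec{\tilde P}$ is again homogeneous with the same algebraic complexity at each index, so Theorem \ref{Main theorem} applies to it on each totally ergodic component of $(X,T^r)$.

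The second step is to transfer the conclusion back. On each totally ergodic component, Theorem \ref{Main theorem} gives that $\Z_{s}$ (the $s$-th Host–Kra factor of that component, with respect to $T^r$) is characteristic for $\vec{\tilde P}$ at $i$. One then uses the compatibility of Host–Kra factors with ergodic decomposition and with powers of $T$: the $s$-th Host–Kra factor of $(X,T^r)$ restricted to a component agrees, up to the rational part, with $\Z_s$ of $(X,T)$ — this is where the case distinction $s>0$ versus $s=0$ enters. For $s\geqslant 1$ one has $\nnorm{f}_{s+1,T^r}=0 \iff \nnorm{f}_{s+1,T}=0$ on each component after accounting for rational eigenfunctions, so projecting onto $\Z_s$ of $(X,T)$ annihilates the limit; for $s=0$ the factor $\Z_0$ is trivial on a totally ergodic system but the averaging over $j\in\{0,\ldots,r-1\}$ reintroduces exactly the rational Kronecker factor $\K_{rat}$, which is why $\K_{rat}$ (rather than $\Z_0$) is the characteristic factor in that case. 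Summing the $r$ residue-class averages and letting $r\to\infty$ along a suitable multiplicative sequence, while controlling the error via the fact that functions with $\EE(f_i\mid\Z_s)=0$ (resp. $\EE(f_i\mid\K_{rat})=0$) have vanishing contribution on all but finitely many components, yields that the same factor is characteristic for $\vec{P}$ at $i$ on $(X,T)$.

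Finally, the equivalence of weak and $L^2$ convergence is handled exactly as in the remark following Definition \ref{Weyl complexity}: the $L^2$-limit of \eqref{multiple ergodic averages} exists (Theorem \ref{Host-Kra factors are characteristic}), so a factor is characteristic for weak convergence if and only if it is characteristic for $L^2$ convergence, and the statement for both modes of convergence follows simultaneously.

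The main obstacle I expect is the bookkeeping in the second step: making rigorous the claim that the Host–Kra factor $\Z_s$ of $(X,T^r)$, restricted to an ergodic component of $T^r$ and then reassembled over all $r$, recombines into $\Z_s$ of $(X,T)$ for $s\geqslant 1$ and into $\Z_s\vee\K_{rat}=\K_{rat}$ for $s=0$. This requires the structure theory of Host–Kra factors under taking powers of the transformation (so that eligibility, which is stated purely in terms of the algebraic data $\tilde P_{i,j}$, genuinely suffices), together with a uniform-in-$r$ control of the error terms so that the limit over residue classes can be interchanged with $N\to\infty$. The homogeneity and eligibility hypotheses are exactly what guarantee that no new algebraic relations — and hence no larger characteristic factor — appear for any of the auxiliary progressions $\vec{\tilde P}$, so that the complexity does not jump when passing to residue classes.
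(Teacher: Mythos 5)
Your overall skeleton (split the average into residue classes mod $r$, use eligibility to keep the auxiliary progressions $\vec{\tilde P}_j$ homogeneous with the same algebraic complexity, apply the totally ergodic result to $T^r$, and transfer back via relations between the factors of $T$ and of $T^r$) is the same as the paper's, but there is a genuine gap in your first step. For a general ergodic system there is no finite $r$ for which the ergodic components of $T^r$ are totally ergodic: any system whose rational Kronecker factor is infinite (e.g.\ an odometer, or any system with an odometer factor) is a counterexample, since every component of every power still has nontrivial rational spectrum. So the claim that the components are ``totally ergodic once $r$ is taken to be (a multiple of) the relevant period'' fails, and without total ergodicity the complexity-$s$ result (Corollary \ref{Host-Kra complexity equals algebraic complexity}) cannot be applied to the residue-class averages at all. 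Your proposed repair --- letting $r\to\infty$ along a multiplicative sequence and asserting that functions with $\EE(f_i\mid\Z_s)=0$ (resp.\ $\EE(f_i\mid\K_{rat})=0$) contribute negligibly on ``all but finitely many components'' --- is not substantiated and does not obviously interchange with the inner limit $N\to\infty$.

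The missing idea is to first reduce to nilsystems: by Theorem \ref{Host-Kra factors are characteristic} some $\Z_{s_0}$ is characteristic, and since $\Z_{s_0}$ is an inverse limit of ergodic nilsystems one may project the $f_i$ and work on an ergodic nilsystem $(G/\Gamma,\G/\Gamma,\nu,T_a)$. There a single finite $r$ does exist ($a^r\in G^0$), Proposition \ref{totally ergodic nilsystems} gives total ergodicity of $T_a^r$ on each component, and the transfer identities you worry about in your ``main obstacle'' paragraph become transparent from the explicit description of the factors: $\Z_s(T_a)=\Z_s(T_a^r)$ for $s>0$ and $\K_{rat}(T_a)=\Z_0(T_a^r)$ for $s=0$, so $\EE(f_i\mid\Z_s(T_a))=0$ (resp.\ $\EE(f_i\mid\K_{rat}(T_a))=0$) passes to $\EE(T_a^{P_i(j)}f_i\mid\Z_s(T_a^r))=0$ (resp.\ $\Z_0(T_a^r)$) for every residue class $j$. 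With that reduction in place your argument closes, and it then coincides with the paper's proof (which follows Proposition 4.1 of \cite{frantzikinakis_2008}); no limit in $r$ is needed. Your remark on weak versus $L^2$ convergence is fine.
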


Since all polynomial progressions of algebraic complexity at most 1 are homogeneous and eligible, the following corollary follows.

\begin{corollary}\label{factors for ergodic systems for progressions of complexity 1}
Let $t\in\NN_+$ and $\vec{P}\in\RR[x,y]^{t+1}$ be polynomial progression of algebraic complexity at most 1. For all invertible ergodic dynamical systems $(X, \X, \mu, T)$, the factor $\Z_1$ is characteristic for the weak and $L^2$ convergence of $\vec{P}$ at $i$ if $\A_i(\vec{P}) = 1$, and $\K_{rat}$ is characteristic for the weak and $L^2$ convergence of $\vec{P}$ at $i$ if $\A_i(\vec{P}) = 0$.
\end{corollary}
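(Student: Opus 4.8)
The plan is to derive Corollary \ref{factors for ergodic systems for progressions of complexity 1} as an immediate consequence of Corollary \ref{factors for ergodic systems}, so the only real work is to verify that every integral polynomial progression $\vec{P}$ of algebraic complexity at most $1$ is both homogeneous and eligible in the sense of Definitions \ref{homogeneity} and \ref{eligible progressions}. Once this is established, we apply Corollary \ref{factors for ergodic systems} with $s = \A_i(\vec{P}) \in \{0,1\}$: if $\A_i(\vec{P}) = 1$ then $\Z_1$ is characteristic at $i$, and if $\A_i(\vec{P}) = 0$ then $\K_{rat}$ is characteristic at $i$, which is exactly the assertion.

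First I would show homogeneity. Let $(Q_0, \ldots, Q_t)$ be any algebraic relation satisfied by $\vec{P}$. By hypothesis $\deg Q_i \leqslant 1$ for every $i$, so each $Q_i(u) = a_i u + b_i$ with $a_i, b_i \in \RR$. Substituting into (\ref{algebraic relation}) and expanding,
\begin{align*}
    \sum_{i=0}^{t} a_i\bigl(x + P_i(y)\bigr) + \sum_{i=0}^{t} b_i = 0,
\end{align*}
with the convention $P_0 = 0$. Grouping by the ``homogeneous degree'' in the formal variables, the coefficient of $x$ forces $\sum_i a_i = 0$, the part involving $y$ forces $\sum_i a_i P_i(y) = 0$, and the constants force $\sum_i b_i = 0$. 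Hence $(Q_0, \ldots, Q_t) = (a_0 u, \ldots, a_t u) + (b_0, \ldots, b_t)$ is the sum of the degree-$1$ homogeneous relation $(a_i u)_i$ (valid because $\sum_i a_i = 0$ and $\sum_i a_i P_i = 0$) and the degree-$0$ homogeneous relation $(b_i)_i$ (valid because $\sum_i b_i = 0$). Thus every algebraic relation satisfied by $\vec{P}$ is a linear combination of homogeneous relations, so $\vec{P}$ is homogeneous.

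Next I would verify eligibility. Fix $r \in \NN_+$ and $0 \leqslant j \leqslant r-1$, and set $\tilde{P}_{i,j}(y) = \frac{P_i(r(y-1)+j) - P_i(j)}{r}$, which is again an integral polynomial of the same degree as $P_i$ with $\tilde{P}_{i,j}(0) = 0$; write $\vec{\tilde P}$ for the corresponding progression. The key observation is that the affine substitution $x \mapsto x$, $y \mapsto r(y-1)+j$ together with the shift of the $x$-coordinate sets up a degree-preserving correspondence between algebraic relations of $\vec{P}$ and those of $\vec{\tilde P}$: given a relation $(Q_i)_i$ of $\vec{P}$, the tuple $\bigl(Q_i(r\,\cdot + P_i(j) + c_i)\bigr)_i$ for suitable constants $c_i$ (absorbing the shift $x \mapsto x + \sum$-terms) is a relation of $\vec{\tilde P}$ of the same multidegree, and this map is invertible. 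Concretely, since both progressions have algebraic complexity at most $1$, the analysis of the previous paragraph applies verbatim to $\vec{\tilde P}$, so $\vec{\tilde P}$ is automatically homogeneous; and the linear change of variables sends a degree-$1$ homogeneous relation $(a_i u)_i$ of $\vec{P}$ to a relation of $\vec{\tilde P}$ whose $i$-th component still has degree exactly $1$ (as $r \neq 0$), and conversely, so $\A_i(\vec{P}) = \A_i(\vec{\tilde P})$ for each $i$. Hence $\vec{P}$ is eligible.

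The main obstacle, such as it is, lies in the bookkeeping of the affine substitution in the eligibility step: one must check that passing from $P_i(y)$ to $\tilde{P}_{i,j}(y)$ does not accidentally lower the degree of some component of a relation (which would make $\A_i$ drop) or introduce a new, non-homogeneous relation. Both are ruled out because $r$ is a nonzero scalar — scaling and translating the argument of a polynomial of degree $d$ yields a polynomial of degree exactly $d$ — and because the complexity-at-most-$1$ hypothesis is inherited by $\vec{\tilde P}$, so there are no relations of degree $\geqslant 2$ to worry about in the first place. With homogeneity and eligibility in hand, Corollary \ref{factors for ergodic systems} applies directly and yields the claim.
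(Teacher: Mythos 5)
Your proposal is correct and takes essentially the same route as the paper: the paper proves exactly this reduction in Lemma \ref{progressions of complexity 1 are homogeneous and eligible} (complexity $\leqslant 1$ implies homogeneous and eligible) and then invokes Corollary \ref{factors for ergodic systems}, with the eligibility step likewise resting on the affine change of variables $y\mapsto r(y-1)+j$, $x\mapsto rx$ matching linear relations of $\vec{P}$ and $\vec{\tilde{P}}$ one-to-one. One small inaccuracy to repair: it is not true that $\tilde{P}_{i,j}(0)=0$ (e.g.\ $P_i(y)=y$ gives $\tilde{P}_{i,j}(y)=y-1$), so the homogeneity argument for $\vec{\tilde{P}}$ does not apply literally ``verbatim''; however, your correspondence closes this gap, since pulling a linear relation $(a_iu+b_i)_i$ of $\vec{\tilde{P}}$ back to $\vec{P}$ yields $\sum_i a_i=0$ and $\sum_i a_iP_i\equiv 0$, whence $\sum_i a_i\tilde{P}_{i,j}\equiv 0$ and $\sum_i b_i=0$, so the relation still splits into homogeneous pieces.
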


Theorem \ref{Main theorem} as well as Corollaries \ref{factors for ergodic systems} and \ref{factors for ergodic systems for progressions of complexity 1} can be viewed as extensions of \cite{host_kra_2005a, host_kra_2005b, frantzikinakis_kra_2005, frantzikinakis_kra_2006, frantzikinakis_2008, bergelson_leibman_lesigne_2007, leibman_2009}, which find characteristic factors for linear configurations, linearly independent polynomials, progressions of length 4, examine Weyl complexity for arbitrary integral polynomial progression, and give an upper bound for Host-Kra complexity for general integral progressions. Theorem \ref{Main theorem} also partly extends \cite{gowers_wolf_2010, gowers_wolf_2011a, gowers_wolf_2011b, gowers_wolf_2011c, green_tao_2010a, altman_2021, manners_2018, manners_2021, peluse_2019, kuca_2020a, kuca_2020b}, which among other things determine true complexity for certain families of linear forms and integral polynomial progressions.

In particular, we extend our earlier work from \cite{kuca_2020b}. In that paper, we prove equidistribution results on nilmanifolds for progressions of the form $(x, \; x+Q(y),\; x+R(y),\; x+Q(y)+R(y))$ with $\deg Q < \deg R$, or $(x,\; x+Q(y),\; x+2Q(y),\; x+R(y),\; x+2R(y))$ with $\deg Q < (\deg R)/2$, both of which are homogeneous. These equidistribution results follow from inducting on the filtration of a certain nilmanifold associated with the progression; the induction scheme involved is quite sensitive to the progression in question. Here, we achieve a much more general equidistribution result (part (i) of Theorem \ref{dichotomy}) by obtaining a solid understanding of the algebra behind homogeneous progressions and introducing a more flexible induction scheme.

From the fact that all progressions of algebraic complexity 1 are homogeneous and eligible, we deduce the following counting result.
\begin{corollary}\label{counting result}
Let $t\in\NN_+$ and $\vec{P}\in\RR[x,y]^{t+1}$ be an integral polynomial progression of algebraic complexity at most 1. Suppose that $Q_1, \ldots, Q_d\in\RR[y]$ are integral polynomials such that $P_i(y) = \sum_{j=1}^d a_{ij} Q_j(y)$ for $a_{ij}\in\ZZ$ for each $0\leqslant i\leqslant t$ and $1\leqslant j\leqslant d$. Let $L_i(y_1, \ldots y_d) = \sum_{j=1}^d a_{ij} y_j$. Then the following is true.
\begin{enumerate}
\item For any $f_0, \ldots, f_t:\ZZ/N\ZZ\to\CC$ with $\max_i \norm{f_i}_\infty\leqslant 1$, we have

\begin{align*}
\EE_{x,y\in\ZZ/N\ZZ} \prod_{i=0}^t f_i(x+P_i(y)) = \EE_{x,y_1, ..., y_d\in\ZZ/N\ZZ} \prod_{i=0}^t f_i(x+L_i(y_1, \ldots, y_d)) + o(1),
\end{align*}
where the error term $o(1)$ is taken as $N\to\infty$ over primes and does not depend on the choice of $f_0, \ldots, f_t$.

\item For any invertible totally ergodic dynamical system $(X, \X, \mu, T)$ and $f_0, \ldots, f_t\in L^\infty(\mu)$, we have

\begin{align*}
\lim_{N\to\infty}\EE_{n\in [N]} \int_X \prod_{i=0}^t T^{P_i(n)} f_i d\mu= \lim_{N\to\infty}\EE_{n_1, ..., n_d\in [N]} \int_X \prod_{i=0}^t T^{L_i(n_1, \ldots, n_d)} f_i d\mu.
\end{align*}
\end{enumerate}
\end{corollary}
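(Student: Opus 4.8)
The plan is to deduce Corollary \ref{counting result} from Theorem \ref{Main theorem} together with a standard Gowers-norm argument for multilinear control of linear forms. The key observation is that both sides of the claimed identities are expressions of the same shape: a polynomial progression $\vec{P}$ on the left, and the linear configuration $\vec{L} = (x, x+L_1(y_1,\dots,y_d),\dots,x+L_t(y_1,\dots,y_d))$ on the right, and the hypothesis $P_i(y) = \sum_j a_{ij} Q_j(y)$ means the two are related by the substitution $y_j = Q_j(y)$. So first I would verify the algebraic bookkeeping: since $\vec{P}$ has algebraic complexity at most $1$, it is homogeneous by Definition \ref{homogeneity} (any relation of degree $\le 1$ in each $Q_i$ is a linear combination of homogeneous relations of degrees $0$ and $1$), and hence eligible by the remark preceding Corollary \ref{factors for ergodic systems for progressions of complexity 1}; thus Theorem \ref{Main theorem} applies and gives $\T_i(\vec{P}) = \A_i(\vec{P}) \le 1$ for all $i$. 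One must also check that the linear configuration $\vec{L}$ has true complexity at most $1$ at every index --- this follows from the Gowers--Wolf theory of linear forms, since the Cauchy--Schwarz complexity of $\vec{L}$ is at most $1$: after removing the $i$-th form, the remaining forms (together with $x$) stay pairwise linearly independent modulo $x$, or more precisely the squares of the linear parts of the other forms span a space not containing $L_i^2$ exactly when $\A_i(\vec{P}) \le 1$; I would phrase this so that the algebraic data $\{a_{ij}\}$ controlling $\vec{P}$ controls $\vec{L}$ identically.

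The heart of part (i) is then the following: since $\T_i(\vec{P})\le 1$, the count $\EE_{x,y}\prod_i f_i(x+P_i(y))$ is, up to $o(1)$, unchanged if we replace each $f_i$ by its projection onto its degree-$2$ Gowers "cut", and similarly for $\vec{L}$ since $\T_i(\vec{L})\le 1$. More concretely, I would run a telescoping/hybrid argument swapping $f_i$ one at a time: by the definition of true complexity and the density of the argument, it suffices to prove the identity when each $f_i$ is replaced by a nilsequence of degree $\le 1$, i.e.\ a function of the form $e(\alpha_i x^{(i)})$ where $x^{(i)} = x + P_i(y)$ (or $x + L_i(\vec y)$), times a bounded error. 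For such characters the left side becomes $\EE_{x,y} e\big(\beta x + \sum_i \alpha_i P_i(y)\big)$ and the right side $\EE_{x,\vec y} e\big(\beta x + \sum_i \alpha_i L_i(\vec y)\big)$; both vanish unless $\beta = 0$ and the linear-in-$P$ (resp.\ linear-in-$L$) combination is identically zero, and by the substitution $y_j = Q_j(y)$ plus the homogeneity/linear-independence structure of the $Q_j$ these vanishing conditions coincide, giving the same main term on both sides. The reduction to characters is the point where the equivalence $\T_i = \A_i \le 1$ is genuinely used: degree-$\le 1$ Gowers control means exactly degree-$1$-nilsequence (i.e.\ linear phase) control for these averages.

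Part (ii) is the ergodic-theoretic mirror of part (i) and I would prove it the same way, using Corollary \ref{factors for ergodic systems for progressions of complexity 1} (valid for totally ergodic, indeed ergodic, systems): both multiple ergodic averages are controlled by the $\Z_1$ factor at every index, which for a totally ergodic system is a Kronecker system (a rotation on a compact abelian group), so it suffices to prove the identity for eigenfunctions $f_i = \chi_i$, where $T\chi_i = e(\theta_i)\chi_i$. Then $\int_X \prod_i T^{P_i(n)} f_i \, d\mu = \big(\int_X \prod_i f_i \, d\mu\big)\, e\big(\sum_i \theta_i P_i(n)\big)$, and averaging over $n\in[N]$ (using Weyl equidistribution of the polynomial $\sum_i \theta_i P_i(n)$) gives a nonzero limit precisely when $\sum_i \theta_i P_i \equiv 0$; the same computation for $L_i$ with independent averaging variables $n_1,\dots,n_d$ gives a nonzero limit precisely when $\sum_i \theta_i L_i \equiv 0$, and once more the two conditions are identified by $y_j \mapsto Q_j(y)$. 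The main obstacle, and the only non-formal step, is establishing cleanly that $\vec{L}$ inherits true complexity (resp.\ the $\Z_1$ characteristic factor) at most $1$ from the algebraic-complexity-$\le 1$ hypothesis on $\vec{P}$ --- i.e.\ that passing from the polynomial progression to its linearization does not increase complexity --- and for this I would either invoke the linear-forms results of Gowers--Wolf directly or observe that $\A_i(\vec{L}) = \A_i(\vec{P}) \le 1$ (the homogeneous linear relations among the $L_i$ are exactly the images of the homogeneous degree-$1$ relations among the $P_i$) and apply Theorem \ref{Main theorem} to $\vec{L}$ itself, which is homogeneous of complexity $\le 1$ hence eligible.
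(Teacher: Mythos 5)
Your route is genuinely different from the paper's. The paper never passes through the true-complexity statement: it reduces both averages to orbits of an irrational polynomial sequence on a nilmanifold (via the Host--Kra factors, resp.\ the regularity lemma), applies Theorem \ref{infinitary equidistribution on nilmanifolds} (resp.\ Proposition \ref{periodic equidistribution on nilmanifolds}) to the polynomial orbit and Green--Tao's theorem on linear orbits to the linearised one, and concludes by checking that the two Leibman groups coincide, $G^P=G^L$, because $\P_1=\L_1$ and both contain $G_2^{t+1}$ when $\A_i(\vec{P})\leqslant 1$. Your plan --- use the complexity-$\leqslant 1$ control to reduce to linear phases/eigenfunctions and match main terms by an explicit character computation --- is a legitimate, more elementary alternative; note in particular that for part (i) the linear side needs no complexity input at all, since $\EE_{x,\vec{y}}\prod_i f_i(x+L_i(\vec{y}))$ expands exactly, by orthogonality of characters, into the sum over frequency tuples satisfying $\sum_i\xi_i\equiv 0$ and $\sum_i\xi_i a_{ij}\equiv 0 \pmod N$ for all $j$.

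There are, however, genuine gaps. (1) On the polynomial side of part (i) the character averages are not $0$ or $1$: one must show $\EE_{y\in\ZZ/N\ZZ}e\bigl(\tfrac{1}{N}\sum_j c_j Q_j(y)\bigr)=o(1)$ uniformly whenever not all $c_j\equiv 0\pmod N$. These are Weyl/Gauss sums, so your claim that ``both vanish unless the combination is identically zero'' is false as stated; you need a uniform Weyl--Weil estimate, together with the fact that the $Q_j$ stay linearly independent mod $N$ for large primes, and the usual growth-function bookkeeping, since the structured parts produced by a $U^2$-decomposition are neither $1$-bounded nor built from boundedly many frequencies independently of the threshold. (2) In part (ii) the vanishing criterion is misstated: $\lim_N\EE_{n\in[N]}e\bigl(\sum_i\theta_i P_i(n)\bigr)$ can be nonzero without $\sum_i\theta_i P_i\equiv 0$, e.g.\ when the combination is integer-valued or has rational non-integer coefficients (a nonzero Gauss sum), and in the latter case there is no matching term on the linear side. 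The two criteria coincide (both sides nonzero exactly when every $\beta_j=\sum_i\theta_i a_{ij}$ is an integer) only because in a totally ergodic system the eigenvalues form a group with no nonzero rational elements, so each $\beta_j$ is an integer or irrational, and then linear independence of the $Q_j$ over $\QQ$ converts ``some $\beta_j$ irrational'' into an irrational monomial coefficient, which is what Weyl equidistribution actually requires; you never invoke total ergodicity at this step, yet it is exactly what makes it work. (3) You cannot apply Theorem \ref{Main theorem} to $\vec{L}$ itself, nor Corollary \ref{factors for ergodic systems for progressions of complexity 1} to the multiparameter average over $(n_1,\ldots,n_d)$: the paper's definitions and results are stated for single-parameter progressions and the multiparameter extension is explicitly not carried out (and your identification of Cauchy--Schwarz complexity with the square-independence criterion is also not correct as stated). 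For the finitary side the exact Fourier expansion sidesteps this; for the ergodic side you still need a (standard, but missing) argument that the Kronecker factor is characteristic for the multiparameter linear average, which is in effect what the paper supplies through Green--Tao's linear-orbit theorem and the identity $G^P=G^L$.
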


We shall illustrate Corollary \ref{counting result} for the specific example of $$\vec{P}(x,y) = (x,\; x+y^2,\; x+2 y^2,\; x + y^3,\; x + 2 y^3).$$ Taking $Q_1(y) = y^2$ and $Q_2(y) = y^3$ as in the statement of Corollary \ref{counting result}, we let $\vec{L}(x,y_1, y_2) = (x,\; x+y_1,\; x+2 y_1,\; x+y_2,\; x + 2 y_2)$. For any $A\subset\ZZ/N\ZZ$, we then have
\begin{align*}
&|\{(x,y)\in(\ZZ/N\ZZ)^2: (x,\; x+y^2,\; x+2 y^2,\; x + y^3,\; x + 2 y^3)\in A^5\}|\\
= &|\{(x, y_1, y_2)\in (\ZZ/N\ZZ)^3: (x,\; x+y_1,\; x+2 y_1,\; x+y_2,\; x + 2 y_2)\in A^5 \}|/N + o(N^2)
\end{align*}
upon setting $f_0 = \ldots = f_t = 1_A$. If $(X,\X, \mu, T)$ is a totally ergodic system and $A\in\X$, then we similarly obtain that 
\begin{align*}
&\lim_{N\to\infty} \EE_{n\in[N]}\mu(A\cap T^{n^2}A\cap T^{2n^2}A\cap T^{n^3}A\cap T^{2n^3}A)\\
&=\lim_{N\to\infty}  \EE_{n,m\in [N]}\mu(A\cap T^{n}A\cap T^{2n}A\cap T^{m}A\cap T^{2m}A).
\end{align*}


For progressions of algebraic complexity 1, we also prove the following result, which generalises Theorem C of \cite{frantzikinakis_2008}, Theorem 1.12 of \cite{green_tao_2010a}, and results from \cite{bergelson_host_kra_2005}. In additive combinatorics, problems of this type are known as finding popular common differences; in ergodic theory, one speaks of establishing lower bounds for multiple recurrence.
\begin{theorem}\label{popular common differences}
Let $t\in\NN_+$ and $\vec{P}\in\RR[x,y]^{t+1}$ be an integral polynomial progression of algebraic complexity at most 1, with the following property: there exist linearly independent integral polynomials $Q_1, \ldots, Q_k$ such that 
\begin{align}\label{same group of polynomials}
    \{a_1 Q_1 + \ldots + a_k Q_k:\; a_1, \ldots, a_k\in\ZZ\} = \{b_1 P_1 + \ldots + b_t P_t:\; b_1, \ldots, b_t\in\ZZ\}.
\end{align}
Then the following is true.
\begin{enumerate}
\item Let $(X, \X, \mu, T)$ be an ergodic invertible measure preserving system and $A\in\X$. Suppose that $\mu(A)>0$. Then for every $\epsilon>0$, the set
\begin{align*}
\{ n\in\NN: \mu(A\cap T^{P_1(n)} A \cap \cdots \cap  T^{P_t(n)}A) > \mu(A)^{t+1} - \epsilon\}
\end{align*}
is syndetic, i.e. it has bounded gaps.
\item Suppose that $A\subset \NN$ has upper density $\alpha > 0$. Then for every $\epsilon > 0$, the set
\begin{align*}
\{ n\in\NN: \mu(A\cap (A + P_1(n)) \cap \cdots \cap  (A + P_t(n))) > \alpha^{t+1} - \epsilon\}
\end{align*}
is syndetic.
\item For any $\alpha, \epsilon > 0$ and prime $N$, and any subset $A\subset\ZZ/N\ZZ$ of size $|A|\geqslant \alpha N$, we have
\begin{align*}
|\{ n\in\ZZ/N\ZZ: |A\cap (A + P_1(n)) \cap \cdots \cap  (A + P_t(n))| > (\alpha^{t+1} - \epsilon)N\}|\gg_{\alpha, \epsilon} N.
\end{align*}
\end{enumerate}
\end{theorem}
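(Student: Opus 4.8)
The plan is to prove (1) by an analysis on the Kronecker factor, deduce (2) from it via the Furstenberg correspondence principle, and obtain (3) by running the same argument inside $\ZZ/N\ZZ$, with the quantitative bound $\T_i(\vec P)\le 1$ (available from Theorem \ref{Main theorem}) and the $U^2$‑inverse theorem (a single large Fourier coefficient) playing the role of the Kronecker factor. So the core is part (1).

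Since $\A_i(\vec P)\le 1$ for every $i$, the progression $\vec P$ is homogeneous and eligible, so Corollary \ref{factors for ergodic systems for progressions of complexity 1} applies: for every invertible ergodic $(X,\X,\mu,T)$ the factor $\Z_1$ is characteristic for $\vec P$ at every index $0\le i\le t$ (at an index with $\A_i=0$ this follows from the $\K_{rat}$ clause, since $\K_{rat}\subseteq\Z_1$). Realise $\Z_1$ as a rotation $z\mapsto z+\alpha$ on a compact abelian group $(Z,m)$, and set $g=\EE(1_A\mid\Z_1)$, $h=1_A-g$, so $0\le g\le 1$, $\int_Z g\,dm=\mu(A)=:\beta$, and $\nnorm{h}_2=0$ by (\ref{Gowers-Host-Kra seminorms vs. Host-Kra factors}). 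For each $n$ we then have the exact decomposition
\begin{equation*}
\mu\Big(A\cap\bigcap_{i=1}^t T^{P_i(n)}A\Big)=\Phi\big(P_1(n)\alpha,\ldots,P_t(n)\alpha\big)+R(n),
\end{equation*}
where $\Phi(z_1,\ldots,z_t)=\int_Z g(z)\prod_{i=1}^t g(z+z_i)\,dm(z)$ is continuous on $Z^t$ and $R(n)$ is the finite sum of the terms in which at least one copy of $1_A$ has been replaced by $h$.

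The main term is handled by two facts. First, $\Phi(0,\ldots,0)=\int_Z g^{t+1}\,dm\ge\big(\int_Z g\,dm\big)^{t+1}=\beta^{t+1}$ by convexity of $u\mapsto u^{t+1}$ on $[0,\infty)$ and $m(Z)=1$; continuity of $\Phi$ then gives an open $V\ni(0,\ldots,0)$ in $Z^t$ with $\Phi|_V>\beta^{t+1}-\epsilon/2$. Second, $P_i(0)=0$ for all $i$, so $(0,\ldots,0)$ lies in the orbit of the polynomial sequence $n\mapsto(P_1(n)\alpha,\ldots,P_t(n)\alpha)$ in $Z^t$; by Weyl--Leibman equidistribution, in the form that is uniform in the lower‑order coefficients, the return‑time set $S_V=\{n\in\NN:(P_1(n)\alpha,\ldots,P_t(n)\alpha)\in V\}$ has bounded gaps (for merely ergodic systems the rational rotation component contributes only a periodic constraint through $\K_{rat}$, which does not destroy syndeticity). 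Hence $\{n:\Phi(P_1(n)\alpha,\ldots)>\beta^{t+1}-\epsilon/2\}\supseteq S_V$ is syndetic.

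It remains to show that $B=\{n:|R(n)|\ge\epsilon/2\}$ has \emph{uniform} upper density $0$, i.e.\ $\sup_a\frac1L\#\big(B\cap[a+1,a+L]\big)\to 0$ as $L\to\infty$; granting this, $\{n:\mu(A\cap\bigcap_i T^{P_i(n)}A)>\beta^{t+1}-\epsilon\}\supseteq S_V\setminus B$ still has bounded gaps, since $S_V$ has $\gg L$ points in every window of length $L$ while $B$ has $o(L)$ points there, proving (1); parts (2) and (3) then follow as explained. By Chebyshev it suffices that $\sup_a\frac1L\sum_{n=a+1}^{a+L}R(n)^2\to 0$, and expanding the square over the product measure $\mu\times\mu$ expresses $R(n)^2$ as a finite combination of quantities $\int_{X\times X}(\phi_0\otimes\bar\psi_0)\prod_{i=1}^t(T\times T)^{P_i(n)}(\phi_i\otimes\bar\psi_i)\,d(\mu\times\mu)$ in which some $\phi_j$ or $\psi_j$ equals $h$. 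Two points are needed: (a) Cesàro averages of polynomial multiple ergodic averages are uniform over translates, so the limit is the same along arbitrary growing windows; and (b) $\Z_1$ is still characteristic for $\vec P$ on the (possibly non‑ergodic) product system $(X\times X,\mu\times\mu,T\times T)$, and $\EE\big(h\otimes\bar\psi\mid\Z_1(X\times X)\big)=0$. Point (b) is the main obstacle: it is where the algebraic content behind Theorem \ref{Main theorem} enters, via ergodic decomposition of $X\times X$, the identity $\Z_1(X\times X)=\Z_1(X)\otimes\Z_1(X)$ (which upgrades $\EE(h\mid\Z_1(X))=0$ to $\EE(h\otimes\bar\psi\mid\Z_1(X\times X))=0$), and — using $P_i=\sum_j a_{ij}Q_j$ from hypothesis (\ref{same group of polynomials}) together with Corollary \ref{counting result} — the reduction of the product configuration to a linear one of complexity $\le 1$. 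Once (a) and (b) are in place the averages involving $h$ vanish in the limit, $\sup_a\frac1L\sum R(n)^2\to 0$ follows, and the theorem is proved.
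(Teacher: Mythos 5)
Your reduction of part (i) to the two claims about the main term $\Phi(P_1(n)\alpha,\ldots,P_t(n)\alpha)$ and the error set $B=\{n:|R(n)|\ge\epsilon/2\}$ is where the argument breaks: claim (b) is false, and so is the second-moment estimate you want to deduce from it. For the ergodic components of $(X\times X,\mu\times\mu,T\times T)$ the Kronecker factor is in general strictly larger than $\Z_1(X)\otimes\Z_1(X)$. Take the skew product $T(x,y)=(x+\alpha,y+x)$ on $\TT^2$, whose Kronecker factor is the $x$-circle; the $T\times T$-invariant sets are generated by $x-x'$, and on each ergodic component the function $e(y-y')=h\otimes\bar h$ with $h(x,y)=e(y)$ is an eigenfunction, even though $\EE(h\mid\Z_1(X))=0$. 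So $\EE(h\otimes\bar\psi\mid\Z_1(\text{component}))$ need not vanish. Worse, the conclusion $\sup_a\frac1L\sum_{n=a+1}^{a+L}R(n)^2\to0$ fails outright, already for the $3$-term progression $(x,\,x+y,\,x+2y)$, which satisfies the hypotheses with $Q_1=y$: writing $1_A(x,y)=\sum_c u_c(x)e(cy)$ on the skew torus, the all-$h$ term of $R(n)$ contains the contribution $e(n^2\alpha)\int u_1(x)u_{-2}(x+n\alpha)u_1(x+2n\alpha)\,dx$, a genuine $2$-step nilsequence whose mean square over long windows is bounded below for suitable $A$; hence $B$ can have positive Banach density. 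The underlying point is that $\Z_1$ being characteristic for the Ces\`aro averages (Corollary \ref{factors for ergodic systems for progressions of complexity 1}) does not upgrade to a pointwise-in-$n$, up-to-null-density description of $n\mapsto\mu(A\cap\bigcap_i T^{P_i(n)}A)$ at the Kronecker level; such a description genuinely requires higher-step nilsequences (this is the content of Bergelson--Host--Kra), which are not available from the results of this paper.

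The paper's proof avoids any pointwise decomposition in $n$: it weights the average by $1_{\tilde B_\delta}(n)=\prod_j 1_{B_\delta}(Q_j(n)b)$ over the syndetic set where all $Q_j(n)b$ lie near $0$, approximates this weight by character combinations $\prod_j\xi_j(Q_j(n)b)$, and then uses hypothesis \eqref{same group of polynomials} to rewrite these as $\prod_i\tilde\xi_i(P_i(n)b)$, so the weight can be absorbed into the product system $T\times S$ with the Kronecker rotation $S$; Corollary \ref{factors for ergodic systems} applied to the ergodic components of $T\times S$ then annihilates the weighted averages of every term containing $h$, and the surviving weighted main term is evaluated on $Z_1$ and bounded below by $\mu(A)^{t+1}-\epsilon$ via H\"older as $\delta\to0$. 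Note that \eqref{same group of polynomials} enters exactly at this character-absorption step, whereas in your proposal it plays no essential role in the error analysis --- a sign that the mechanism is missing. The same issue affects your one-line treatment of (iii), where the paper uses the irrational arithmetic regularity lemma together with a Lipschitz weight in the $Q_j(y)$'s and true complexity $1$ for the weighted count. To repair your approach you would either have to carry out the second-moment analysis only for the weighted averages over $\tilde B_\delta$ (which essentially reproduces the paper's argument) or import the Bergelson--Host--Kra nilsequence machinery.
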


The definition of homogeneity (Definition \ref{homogeneity}) is equivalent to a certain linear algebraic property that will be described in details in Section \ref{section on homogeneity}; this property makes it possible to explicitly describe closures of orbits of nilsequences evaluated at terms of homogeneous polynomial progressions, from which we deduce Theorem \ref{Main theorem}. Homogeneous polynomial progressions are moreover the largest family of integral polynomial progressions for which such an explicit description is possible, and even the simplest examples of inhomogeneous progressions lead to complications absent in the homogeneous case. The following result makes this precise. As with all other results in this section, all the concepts in Theorem \ref{dichotomy} are explained in subsequent sections.
\begin{theorem}[Dichotomy between homogeneous and inhomogeneous progressions]\label{dichotomy}
Let $t\in\NN_+$ and $\vec{P}\in\RR[x,y]^{t+1}$ be an integral polynomial progression. Suppose that $G$ is a connected, simply-connected, nilpotent Lie group with a rational filtration $G_\bullet$ and $\Gamma$ is a cocompact lattice. There exists a subnilmanifold $G^P/\Gamma^P$ of $G^{t+1}/\Gamma^{t+1}$ with the following property.
\begin{enumerate}
    \item If $\vec{P}$ is homogeneous, then for every irrational polynomial sequence $g:\ZZ\to G$ adapted to $G_\bullet$, the sequence 
    \begin{align*}
        g^P(x,y) = (g(x),\; g(x+P_1(y)),\; \ldots,\; g(x+P_t(y)))
    \end{align*}
    is equidistributed on $G^P/\Gamma^P$.
    \item If $\vec{P}$ is inhomogeneous, then for every irrational polynomial sequence $g\in\poly(\ZZ, G_\bullet)$, the closure of $g^P$ is a union of finitely many translates of a subnilmanifold of $G^P/\Gamma^P$. For every $\vec{P}$, we can moreover find a filtered nilmanifold $G/\Gamma$ and an irrational polynomial sequence $g:\ZZ\to G$ such that $g^P$ is equidistributed on a proper subnilmanifold of $G^P/\Gamma^P$. 
    
\end{enumerate}
\end{theorem}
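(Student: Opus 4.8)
The plan is to first pin down the subnilmanifold $G^P/\Gamma^P$ from the algebraic relations of $\vec P$, then prove (i) by reducing equidistribution to a statement about horizontal characters and feeding in the linear-algebraic reformulation of homogeneity from Section~\ref{section on homogeneity}, and finally deduce (ii) from the structure theorem for closures of polynomial orbits together with an explicit construction.

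First I would set up $G^P$. After passing to a Mal'cev basis and Taylor-expanding each coordinate $g(x+P_i(y))$, the sequence $g^P$ becomes a polynomial sequence $\ZZ^2\to G^{t+1}$ adapted to a natural rational filtration on $G^{t+1}$ induced by $G_\bullet$. I would define $G^P\leqslant G^{t+1}$ to be the rational subgroup cut out, layer by layer in that filtration, by the constraints that the algebraic relations of $\vec P$ place on a tuple $(h_0,\ldots,h_t)$: a homogeneous relation of degree $d$ yields a linear constraint on the layers of degree $\leqslant d$, an inhomogeneous relation a constraint coupling several layers, the precise bookkeeping being exactly the linear-algebraic form of Definition~\ref{homogeneity} recorded in Section~\ref{section on homogeneity}. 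Setting $\Gamma^P:=G^P\cap\Gamma^{t+1}$, the first point to verify is the inclusion $g^P(\ZZ^2)\subseteq g^P(0,0)\,G^P\,\Gamma^{t+1}/\Gamma^{t+1}$ for \emph{every} $g\in\poly(\ZZ,G_\bullet)$; this is a direct, if bookkeeping-heavy, computation in which each algebraic relation of $\vec P$ (a polynomial identity in $x,y$) forces the corresponding combination of the coordinates of $g^P$ to vanish in the relevant layer, using only commutator identities in $G$.

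For part (i), assume $\vec P$ homogeneous and $g$ irrational. By the inclusion above and Leibman's equidistribution criterion for polynomial sequences, it suffices to show that $\eta\circ g^P$ is equidistributed on $\RR/\ZZ$ for every non-trivial horizontal character $\eta$ of $G^P$; suppose instead $\eta\circ g^P(x,y)$ has all non-constant coefficients rational. When $\eta$ is the restriction of a horizontal character $(\eta_0,\ldots,\eta_t)$ of $G^{t+1}$ this is quick: each $\eta_i\circ g$ is affine, so $\eta\circ g^P(x,y)\equiv\sum_i\alpha_i(x+P_i(y))$ modulo constants with $\alpha_i=\eta_i(g_1)$, and rationality of the coefficients yields a degree-$\leqslant 1$ algebraic relation of $\vec P$, which was already divided out in building $G^P$ --- so $\eta$ is trivial on the abelianization of $G^P$, a contradiction. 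The substantive case is when $\eta$ does not descend from $G^{t+1}$, i.e.\ $\eta$ is non-trivial on $(G^P\cap (G^{t+1})_2)/(G^P)_2$; then $\eta\circ g^P$ involves the higher coefficients of $g$, and I would run a van der Corput / PET-type induction on the $y$-variable --- the flexible induction scheme alluded to in the introduction --- to peel off degrees, converting at each stage the vanishing of coefficients into a \emph{homogeneous} algebraic relation of $\vec P$ of the matching degree. Since $\vec P$ is homogeneous, Section~\ref{section on homogeneity} guarantees its relation module is generated in each degree by homogeneous relations, namely those used to define $G^P$; so $\eta$ is again forced to be trivial, a contradiction. This proves (i) --- and, through the discussion preceding the theorem, it is the engine behind Theorem~\ref{Main theorem}.

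For part (ii), the first assertion is immediate: by the Leibman / Green--Tao--Ziegler theorem the closure of any polynomial orbit is a finite union of translates of a subnilmanifold, and by the inclusion $g^P(\ZZ^2)\subseteq g^P(0,0)\,G^P\,\Gamma^{t+1}/\Gamma^{t+1}$ it lies inside $G^P/\Gamma^P$. For the second assertion I would build a degenerate example: from a relation of $\vec P$ that is not a linear combination of homogeneous ones (in the inhomogeneous case), or more generally from a relation read off a sufficiently high-step auxiliary embedding, one observes that reading it level by level couples a coordinate at one filtration level of $G^P$ to coordinates at strictly lower levels; one then takes $G$ to be a quotient of a free nilpotent Lie group of the appropriate step with a rational filtration making this coupling admissible and chooses the coefficients of an irrational $g$ so that the associated horizontal character $\eta$ of $G^P$ satisfies $\eta\circ g^P\equiv\mathrm{const}$, whence $g^P$ is trapped in (and, by part (i) inside the smaller group, equidistributed on) a proper subnilmanifold of $G^P/\Gamma^P$. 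The hard part throughout is the substantive case of part (i): ruling out simultaneously for \emph{every} irrational $g$ that the non-constant coefficients of $\eta\circ g^P$ conspire to be rational when $\eta$ involves filtration levels invisible from $G^{t+1}$. This is precisely where inhomogeneous progressions genuinely differ --- there the coupling across levels is sequence-dependent --- and it is what makes the homogeneity hypothesis essential; making the $y$-variable induction uniform in $G$ and $\Gamma$ is the technical heart of the matter.
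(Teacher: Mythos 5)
Your plan diverges from the paper at two load-bearing points, and both are genuine gaps. First, the group $G^P$ cannot be ``cut out by the constraints that the algebraic relations of $\vec P$ place on a tuple'' if that includes inhomogeneous relations coupling several layers: it must be exactly the Leibman group of Definition \ref{Leibman group}, generated by $g_i^{\vec v}$ with $\vec v\in\mathcal{P}_i$, equivalently constrained layer by layer by the \emph{homogeneous} relations only. Otherwise your claimed inclusion $g^P(\ZZ^2)\subseteq g^P(0,0)\,G^P\,\Gamma^{t+1}$ is false: in the example of Section \ref{section on failure in the inhomogeneous case}, with $\vec P=(x,\,x+y,\,x+2y,\,x+y^2)$, $G=\RR^2$, $G_2=0\times\RR$ and $g(n)=(an,\,b\binom{n}{2})$, the closure of $g^P$ for $1,a,b$ rationally independent is the \emph{full} Leibman nilmanifold, and the smaller group that appears when $a,b$ are rationally dependent depends on the coefficients of $g$, not only on $\vec P$ and $G_\bullet$; so no canonical group smaller than the Leibman group can serve as $G^P$ in the theorem.

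Second, your proof of part (i) does not go through as sketched. In the ``quick'' case you assert that each $\eta_i\circ g$ is affine, but a horizontal character of $G$ only kills $[G,G]$, not $G_2$ (the filtration need not be the lower central series), so $\eta_i\circ g$ has degree up to $s$; coefficients coming from different filtration levels of $G$ can multiply the same monomial in $(x,y)$ (exactly the collision $\eta(g_1^{\vec v_3})+\eta(g_2^{\vec v_4})$ against $y^2$ in Section \ref{section on failure in the inhomogeneous case}), and this cannot be reduced to a degree-$\leqslant 1$ relation. In the substantive case the paper uses no van der Corput/PET induction at all: homogeneity gives $V_k=\bigoplus_i W_i$ (Proposition \ref{equivalent conditions for homogeneity}), so expanding $\eta\circ g^P=\sum_{i,j}\eta(g_i^{\vec v_{i,j}})\,Q_{i,j}(x,y)$ in a basis of integral polynomials $Q_{i,j}$ for the $W_i$ isolates each coefficient (Lemma \ref{breaking an integral polynomial into a sum of integral polynomials} in the quantitative form of Theorem \ref{finitary equidistribution on nilmanifolds}); then a downward induction on the filtration level shows that $h\mapsto q\,\eta(h^{\vec v_{i,j}})$ is an $i$-th level character --- the commutator condition rests on $\mathcal{P}_i\subseteq\mathcal{P}_l\cdot\mathcal{P}_{i-l}$ --- and the irrationality of $g_i$, which is defined precisely through level characters, forces it to vanish. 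Your sketch replaces this mechanism by an unspecified ``peeling off degrees'' and explicitly defers it as the technical heart, so part (i) remains unproved. For the second assertion of (ii) your construction idea is sound in spirit, but you cannot invoke part (i) ``inside the smaller group,'' since the progression is still inhomogeneous there; the paper instead takes the abelian example above with $a=\sqrt 2$, $b=\sqrt 2+\tfrac13$ and verifies equidistribution on the proper subnilmanifold directly (cf.\ Proposition \ref{closure for inhomogeneous}).
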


While we have not been able to prove full Conjecture \ref{main conjecture} for inhomogeneous progressions, we are able to say a bit more about the relationship between various notions of complexity in the general case.

\begin{theorem}\label{relations between notions of complexity}
Let $t\in\NN_+$ and $\vec{P}\in\RR[x,y]^{t+1}$ be an integral polynomial progression. Fix $0\leqslant i\leqslant t$. Then
\begin{align*}
   \W_i(\vec{P}) = \A_i(\vec{P})\leqslant \min(\T_i(\vec{P}),  \HK_i(\vec{P})).
\end{align*}
\end{theorem}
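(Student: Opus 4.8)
The plan is to establish the chain $\W_i(\vec{P}) = \A_i(\vec{P}) \leqslant \min(\T_i(\vec{P}), \HK_i(\vec{P}))$ by proving each piece separately, since the two inequalities on the right come from comparing different spaces of systems against which the same characteristic-factor question is tested, while the leftmost equality is the genuinely algebraic input. First I would prove $\W_i(\vec{P}) = \A_i(\vec{P})$ by analysing Weyl systems directly. A Weyl system is built from a unipotent affine transformation on a torus (equivalently, polynomial sequences of bounded degree on $\mathbb{T}^d$), so the weak limit (\ref{weak limit}) over such a system reduces, via a Weyl-type equidistribution argument, to an averaged integral over $\mathbb{T}^d$ whose value is governed precisely by the polynomial identities among the coordinates $x, x+P_1(y), \ldots, x+P_t(y)$ after composing with the Weyl sequence. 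An algebraic relation of degree $(j_0, \ldots, j_t)$ as in (\ref{algebraic relation}) is exactly the data that forces a nontrivial linear dependence among the characters pulled back along these coordinates; so the function $f_i$ can be replaced by its projection onto the degree-$s$ Host-Kra (here: degree-$s$ Weyl) factor iff every relation has $\deg Q_i \leqslant s$. Concretely I would show: (a) if $\A_i(\vec{P}) \leqslant s$ then, on any Weyl system, any eigenfunction of order $> s$ placed in slot $i$ makes the average vanish, giving $\W_i(\vec{P}) \leqslant s$; (b) conversely, given an algebraic relation with $\deg Q_i = s' > s$, build an explicit Weyl system (on a torus of suitable dimension, with the transformation dictated by the $P_j$) and functions $f_0, \ldots, f_t$ with $f_i$ of order exactly $s'$ but $\mathbb{E}(f_i \mid \Z_s) = 0$, for which the average is nonzero; this shows $\W_i(\vec{P}) \geqslant s'$, hence $\W_i(\vec{P}) \geqslant \A_i(\vec{P})$.

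Next, $\A_i(\vec{P}) \leqslant \HK_i(\vec{P})$ is immediate from $\W_i(\vec{P}) = \A_i(\vec{P})$ together with $\W_i(\vec{P}) \leqslant \HK_i(\vec{P})$, and the latter holds because Weyl systems form a subclass of totally ergodic systems (indeed, an irrational Weyl system is totally ergodic), so a factor that is characteristic for all totally ergodic systems is a fortiori characteristic for all Weyl systems, forcing the smallest such $s$ to be no larger. For $\A_i(\vec{P}) \leqslant \T_i(\vec{P})$ I would argue in the finitary model: suppose $\A_i(\vec{P}) = s' $ with an algebraic relation witnessing $\deg Q_i = s'$. Take $f_j(x) = e_N(Q_j(x))$ (additive characters twisted by the relation polynomials), where $e_N(a) = e^{2\pi i a / N}$; the relation (\ref{algebraic relation}) makes the product $\prod_j f_j(x+P_j(y))$ identically $1$, so the average in Definition \ref{true complexity} equals $1$ for all $N$. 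On the other hand $\norm{f_i}_{U^{s'}}$ is bounded below by a constant (a polynomial phase of degree $s'$ correlates with the $U^{s'}$ direction) but $\norm{f_i}_{U^{s'}}$ need not be small — the point is rather that $f_i$ has no $U^{s'}$-control: one shows $\norm{f_i}_{U^{s'+\text{something}}}$... the cleaner route is to note that for the average not to be controlled by $U^{s+1}$ it suffices to exhibit, for each $\delta$, functions with $\norm{f_i}_{U^{s'}} \geqslant c$ yet the average $=1$, and a degree-$s'$ polynomial phase is orthogonal to all phases of degree $< s'$, hence $\mathbb{E}(f_i \mid \Z_{s'-1}) = 0$ in the relevant sense, while $\norm{f_i}_{U^{s'}} \not\to 0$; adapting this to the finitary $U^{s}$ framework (possibly after a standard $\mathbb{Z}/N\mathbb{Z}$ vs.\ $\mathbb{Z}/N'\mathbb{Z}$ dilation to make the $P_j$ behave linearly, as in Corollary \ref{counting result}) shows $\T_i(\vec{P}) \geqslant s' = \A_i(\vec{P})$.

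I expect the main obstacle to be the converse direction of the $\W_i = \A_i$ equality, namely step (b): constructing, for a given algebraic relation with $\deg Q_i = s'$, an explicit Weyl system and test functions on which the relevant average is provably nonzero. The difficulty is that an algebraic relation among $x, x+P_1(y), \ldots, x+P_t(y)$ as abstract polynomials need not survive when one substitutes a generic Weyl (polynomial-torus) sequence — one must choose the Weyl system so that the coordinates of $g^P(x,y)$ realise exactly the combination appearing in the relation and nothing finer, which is essentially an instance of part (i) of Theorem \ref{dichotomy} (equidistribution of $g^P$ on the subnilmanifold $G^P/\Gamma^P$ cut out by the relations) specialised to abelian $G$. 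So I would invoke the abelian case of Theorem \ref{dichotomy}, or re-derive it directly, to guarantee that the Weyl orbit closure is precisely the linear subvariety defined by the algebraic relations of $\vec{P}$; then the nonvanishing of the average becomes a routine integral of a nontrivial character over that subtorus. The other steps ($\A_i \leqslant \HK_i$ via inclusion of system classes, and the finitary lower bound $\A_i \leqslant \T_i$) are comparatively routine once the character construction is set up, modulo the standard dilation trick needed to linearise the $P_j$ modulo $N$.
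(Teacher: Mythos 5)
Your overall architecture (prove $\W_i=\A_i$ on Weyl systems, deduce $\A_i\leqslant\HK_i$ from the inclusion of Weyl systems among totally ergodic systems, and handle $\A_i\leqslant\T_i$ finitarily) matches the paper's, and your treatment of $\W_i\leqslant\HK_i$ is exactly Proposition \ref{Weyl complexity smaller than Host-Kra complexity}. The gap lies in the part that carries the theorem, the equality $\W_i=\A_i$, and you have also located the difficulty in the wrong place. The direction $\W_i\geqslant\A_i$ (your step (b)) needs no equidistribution input at all: given a relation with $\deg Q_i=s$, the paper (Proposition \ref{Weyl complexity no smaller than algebraic complexity}) writes each $Q_k$ in the binomial basis, takes $f_k(a_1,\ldots,a_s)=\xi(b_{k,1}a_1+\cdots+b_{k,s}a_s)$ with $\xi(u)=e(\alpha u)$, and observes that along the orbit of \emph{any} standard Weyl system the product equals $\xi\bigl(\sum_j a_j\,\partial^j\sum_k Q_k(x+P_k(y))\bigr)=1$ identically, because the relation and all its discrete derivatives are polynomial identities; and $\EE(f_i|\Z_{s-1})=0$ since $b_{i,s}\neq 0$. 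So the relation always ``survives'' substitution of the Weyl orbit, your worry there is a phantom, and the fix you propose --- invoking the abelian case of part (i) of Theorem \ref{dichotomy} to conclude that the orbit closure is precisely the subvariety cut out by the algebraic relations --- is not available: part (i) holds only for homogeneous progressions, while the present theorem concerns \emph{all} integral progressions, and for inhomogeneous ones the exact equidistribution of $g^P$ on $G^P/\Gamma^P$ fails even for abelian $G$ (the counterexample in Section \ref{section on failure in the inhomogeneous case} lives on $G=\RR^2$).

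The same unavailable input is what your step (a) ($\W_i\leqslant\A_i$) silently relies on: to show that $\EE(f_i|\Z_s)=0$ kills the average you must know that the closure of $(T^{x}a,T^{x+P_1(y)}a,\ldots,T^{x+P_t(y)}a)$ contains the fibre direction $\{0\}^i\times G_{s+1}\times\{0\}^{t-i}$, and for inhomogeneous progressions that closure is in general a \emph{proper} subgroup of $G^P$, so ``the closure is the subtorus defined by the relations'' cannot be the justification. The paper instead computes the closure directly in the abelian setting (Propositions \ref{Weyl complexity equals algebraic complexity} and \ref{closure of orbits on Weyl systems} with the two corollaries that follow): decomposing the Taylor components of $\vec P$ along a basis of $W_k'$ plus the inhomogeneous part $W^c$, and using the rational independence of $a_0,\ldots,a_{s-1}$ together with the linear independence of the basis polynomials, it shows the closure always contains the subgroup $K$ spanned by the elements $h_k\vec v_{k,j}$, and that $\{0\}^i\times G_{k}\times\{0\}^{t-i}\leqslant K$ exactly when $k>\A_i(\vec P)$ --- no equidistribution on all of $G^P/\Gamma^P$ is needed. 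Without an argument of this kind your proof only covers homogeneous progressions, where the statement is already part of Theorem \ref{Main theorem}. A smaller point: in your finitary sketch of $\A_i\leqslant\T_i$ (which the paper simply cites from Theorem 1.13 of \cite{kuca_2020b}) the quantifiers are inverted --- to show $\T_i\geqslant s$ one needs functions with $\norm{f_i}_{U^{s}}$ \emph{small} (a degree-$s$ phase indeed has $o(1)$ Gowers $U^{s}$-norm) while the counting average stays bounded below, not $\norm{f_i}_{U^{s}}\geqslant c$ with average $1$.
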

Of the various statements made in Theorem \ref{relations between notions of complexity}, the fact that Host-Kra complexity bounds Weyl complexity is a simple consequence of definitions and shall be explained in Section \ref{section on Weyl complexity}. Similarly, the fact that algebraic complexity is bounded from above by true complexity has been shown in Theorem 1.13 of \cite{kuca_2020b}. It is the equivalence of Weyl and algebraic complexities that is a new statement here.

\subsection*{Outline of the paper}
We start the paper by introducing basic ergodic theoretic definitions and results concerning nilsystems in Section \ref{section on ergodic theory}, and we explain why analyzing expressions like (\ref{weak limit}) comes down to answering equidistribution questions on nilmanifolds. We then show in Section \ref{section on reducing to connected groups} that in studying equidistribution on nilmanifolds, we can restrict ourselves to nilmanifolds that are quotients of connected groups at the expense of replacing a linear sequence by a polynomial one. 

Section \ref{section on homogeneity} develops a notation and basic theory for certain vector spaces associated with polynomial progressions, and it explains key differences between homogeneous and inhomogeneous progressions. In particular, it contains the proof of the upper bound on algebraic complexity for homogeneous progressions from Theorem \ref{Main theorem}. Definitions introduced in this section allow us to state  the infinitary version of an equidistribution result for homogeneous polynomial progressions on nilmanifolds (Theorem \ref{infinitary equidistribution on nilmanifolds}) in Section \ref{section on Host-Kra complexity}, from which we deduce that for homogeneous progressions, Host-Kra complexity is bounded from above by algebraic complexity (Corollary \ref{Host-Kra complexity equals algebraic complexity}). We further use Theorem \ref{infinitary equidistribution on nilmanifolds} to deduce Corollaries \ref{factors for ergodic systems}, \ref{factors for ergodic systems for progressions of complexity 1} and \ref{counting result}(ii). 

In Section \ref{section on finitary nilmanifold theory}, we introduce finitary analogues of tools from Section \ref{section on ergodic theory}. These are needed in Section \ref{section on true complexity}, in which we show that proving the equivalence of true and algebraic complexity for homogeneous progression comes down to proving Theorem \ref{finitary equidistribution on nilmanifolds}, a finitary version of Theorem \ref{infinitary equidistribution on nilmanifolds}. We also explain in Section \ref{section on true complexity} how to prove Corollary \ref{counting result}(i). Theorem \ref{finitary equidistribution on nilmanifolds}, the main technical part of this paper, is derived in Section \ref{section on the full homogeneous case}. Unfortunately, Theorem \ref{finitary equidistribution on nilmanifolds} fails for inhomogeneous progressions, as explained in Section \ref{section on failure in the inhomogeneous case}. In Section \ref{section on inhomogeneous case}, we propose a method to handle inhomogeneous progressions. While we succeed in proving an analogue of Theorem \ref{infinitary equidistribution on nilmanifolds} for the inhomogeneous progression $(x, \; x+y,\; x+2y,\; x+y^2)$ in Proposition \ref{closure for inhomogeneous}, we have been unable to extend this construction to all inhomogeneous progressions. Subsequently, we show in \ref{section on Weyl complexity} that Weyl and algebraic complexity are always equal, which is the main statement of Theorem \ref{relations between notions of complexity}. We conclude the paper by proving Theorem \ref{popular common differences} in Section \ref{section on multiple recurrence}.

\subsection*{Acknowledgments}
We are indebted to Donald Robertson for his comments on earlier versions of the paper and fruitful conversations on the project while it was carried out, and to Faustin Adiceam and Julia Wolf for pointing out an error in the original statement and proof of Corollary 1.13. We would also like to thank Sean Prendiville for introducing us to the topic of complexity, Tuomas Sahlsten for hosting a reading group on the dynamical proof of Szemer\'{e}di theorem, and Jonathan Chapman for useful discussions on algebraic relations between terms of polynomial progressions. Finally, we would like to thank the anonymous referee for suggestions on how to improve the paper. 


\section{Infinitary nilmanifold theory}\label{section on ergodic theory}
\subsection{Basic definitions from ergodic theory}
Let $(X,\X,\mu,T)$ be an invertible measure-preserving dynamical system (henceforth, we shall simply call it a \emph{system}). The background in ergodic theory that we need can be found in \cite{host_kra_2005b, host_kra_2018}, among others; here, we only reiterate the most important definitions. 
\begin{definition}\label{factors}
A \emph{factor} of a system $(X, \X, \mu, T)$ can be defined in three equivalent ways:
\begin{enumerate}
\item it is a $T$-invariant sub-$\sigma$-algebra of $\X$;
\item it is a system $(Y,\Y,\nu,S)$ together with a \emph{factor map} $\pi: X'\to Y'$, i.e. a measurable map defined for a measurable $T$-invariant set $X'$ of full measure, satisfying $S\circ\pi = \pi\circ T$ on $X'$ and $\mu\circ\pi^{-1} = \nu$;
\item it is a $T$-invariant subalgebra of $L^\infty(\mu)$.
\end{enumerate}
\end{definition}
For $r\in\NN$, we let $\K_r$ be the factor spanned by all $T^r$-invariant functions in $L^\infty(\mu)$. In particular, $\K_1 = \I$ is the factor spanned by $T$-invariant functions, and the \emph{rational Kronecker factor} $\K_{rat} = \bigvee\limits_{r\in\NN}\K_r$ is the factor spanned by all the functions in in $L^\infty(\mu)$ that are $T^r$-invariant for some $r\in\NN$. A system is \emph{ergodic} if $\K_1=\I$ is the trivial factor spanned by constant functions, and it is \emph{totally ergodic} if $\K_{rat}$ is the trivial factor.

Of particular interest to us is a sequence of factors $(\Z_s)_{s\in\NN}$ defined in \cite{host_kra_2005b}, which we refer to as \emph{Host-Kra factors}. In accordance with Definition \ref{factors}, we shall sometimes think of $\Z_s$ as a sub-$\sigma$-algebra of $\X$, and at other times we will consider a factor map $\pi_s: X\to Z_s$ and a factor $(Z_s, \Z_s, \lambda, S)$ of $(X, \X, \mu, T)$. If we concurrently talk about Host-Kra factors of two distinct spaces $X$ and $Y$, we may write $Z_s(X)$ and $Z_s(Y)$ to mean Host-Kra factors of $X$ and $Y$ respectively. We do not explicitly use the definition of Host-Kra factors anywhere in the paper, and so we leave the interested reader to look it up in \cite{host_kra_2005b, host_kra_2018}. Instead, we rely on two properties of this family of factors that concern their utility and structure respectively. First, these factors are characteristic for the convergence of polynomial progressions, as proved in Theorem \ref{Host-Kra factors are characteristic}. Rephrasing Theorem \ref{Host-Kra factors are characteristic} in terms of Definition \ref{Host-Kra complexity}, we can say that each integral polynomial progression has a finite Host-Kra complexity. Second, each factor $\Z_s$ is an inverse limit\footnote{The system $(X,\X,\mu,T)$ is an \emph{inverse limit} of a sequence of factors $(X,\X_i,\mu,T)$ if $\X_i$ form an increasing sequence of factors of $\X$ such that $\X = \bigvee\limits_{i\in\NN}\X_i$ up to sets of measure zero.} of $s$-step nilsystems, which are objects of primary importance to us.


\subsection{Nilsystems}\label{section on nilmanifolds}
Let $G$ be a Lie group with connected component $G^0$ and identity $1$. A \emph{filtration} on $G$ of degree $s$ is a chain of subgroups
\begin{align*}
    G = G_0 = G_1 \geqslant G_2 \geqslant \ldots \geqslant G_s \geqslant G_{s+1} = G_{s+2} = \ldots = 1
\end{align*}
satisfying $[G_i, G_j]\leqslant G_{i+j}$ for each $i,j\in\NN$. We denote it as $G_\bullet = (G_i)_{i=0}^\infty$. A natural example of filtration is the lower central series, given by $G_{k+1} = [G, G_k]$ for each $k>1$, where the \emph{commutator} of two elements $a,b\in G$ is defined as $[a,b]=a^{-1}b^{-1}ab$, and $[A,B]$ is the subgroup of $G$ generated by all the commutators $[a,b]$ with $a\in A, b\in B$. The group $G$ is \emph{$s$-step nilpotent} if $G_{s+1} = 1$, where $G_{s+1}$ is the $s$-th element of the lower central series of $G$. The only 0-step nilpotent group is the trivial group, and 1-step nilpotent groups are precisely abelian groups. 

For the rest of the paper, we let $G$ be a nilpotent Lie group and $\Gamma\leqslant G$ be a cocompact lattice. We call the quotient $X=G/\Gamma$ a \emph{nilmanifold}. The group $G$ acts on $X$ by left translation, and for each $a\in G$, we call the map $T_a(g\Gamma) = (ag)\Gamma$ a \emph{nilrotation}. Setting $\G/\Gamma$ to be the Borel $\sigma$-algebra of $X$ and $\nu$ to be the Haar measure with respect to left translation, we call the system $(G/\Gamma, \G/\Gamma, \nu, T_a)$ a \emph{nilsystem}.

A subgroup $H\leqslant G$ is \emph{rational} if $H/(H\cap\Gamma)$ is closed in $G/\Gamma$. A filtration $G_\bullet$ is rational if $G_i$ is a rational subgroup for each $i\in\NN$. We shall assume throughout the paper that each filtration that we discuss is rational.

In the case when $(G/\Gamma, \G/\Gamma, \nu, T_a)$ is an ergodic nilsystem, which will always be our case anyway, we can make two simplifying assumptions about the group $G$. By passing to the universal cover, we assume that $G$ is simply connected. Replacing the nilsystem with several simpler nilsystems, we further assume that $G$ is spanned by $G^0$ and $a$. These assumptions, justified in Chapter 11 of \cite{host_kra_2018}, hold for the rest of the paper.

We also denote $\Gamma_i = G_i\cap \Gamma$ and $\Gamma^0 = G^0\cap \Gamma$. The rationality of $G_i$ in $G$ means that $\Gamma_i$ is cocompact in $G_i$. 

\begin{proposition}[Conditions for total ergodicity of nilsystems, Corollary 7 and 8 of \cite{host_kra_2018}]\label{totally ergodic nilsystems}
Let $(G/\Gamma, \G/\Gamma, \nu, T_a)$ be an ergodic nilsystem. There exists $r\in\NN_+$ such that ${T_a^j(G^0/\Gamma^0)}$ is totally ergodic with respect to $T_a^r$ for all $0\leqslant j<r$.

Moreover, the following are equivalent:
\begin{enumerate}
\item $T_a$ is totally ergodic;
\item $G/\Gamma$ is connected;
\item $G=G^0\Gamma$. 
\end{enumerate}
\end{proposition}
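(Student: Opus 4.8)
The plan is to isolate the case $G=G^0\Gamma$ as the only substantive one and to settle it by passing to the horizontal torus of $G^0/\Gamma^0$.

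First I would describe the connected components of $G/\Gamma$. Since $G^0$ is open in $G$ and (under the standing assumptions) $G/G^0$ is cyclic, the compact group $G/G^0\Gamma$ is finite cyclic, say of order $r$; the component of $e\Gamma$ is the clopen set $G^0\Gamma/\Gamma\cong G^0/\Gamma^0$, and the remaining components are its translates $T_a^j(G^0/\Gamma^0)=a^jG^0\Gamma/\Gamma$ for $0\leqslant j<r$. As $T_a$ permutes these $r$ equal-measure components and the system is ergodic, $T_a$ acts on them as a single $r$-cycle, since otherwise the indicator of a $T_a$-invariant proper union of components would be a nonconstant invariant function. Hence $T_a^r$ fixes each component, the restrictions $T_a^r|_{T_a^j(G^0/\Gamma^0)}$ are mutually conjugate via the maps $T_a^j$, and in particular $T_a^r|_{G^0\Gamma/\Gamma}$ is ergodic.

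The heart of the matter is the claim that \emph{an ergodic nilsystem $(H/\Lambda, T_b)$ with $H=H^0\Lambda$ is automatically totally ergodic.} Granting this, the first assertion follows by applying the claim to the ergodic sub-nilsystem $(G^0\Gamma/\Gamma, T_{a^r})$, whose group $G^0\Gamma$ is open in $G$ and therefore has identity component $G^0$ and satisfies $G^0\Gamma=G^0\cdot\Gamma$, and then transporting total ergodicity to the other components by conjugation. The equivalences are then routine: (ii)$\Leftrightarrow$(iii) holds because $G/\Gamma$ is connected iff it equals its identity component $G^0\Gamma/\Gamma$; (iii)$\Rightarrow$(i) is the claim itself; and (i)$\Rightarrow$(ii) is the contrapositive remark that if $r\geqslant2$ then the cyclic rotation on $\ZZ/r\ZZ$ recording the component index is a factor of $(G/\Gamma,T_a)$ on which $T_a^r$ acts trivially, so $T_a^r$ is not ergodic.

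To prove the claim, identify $H/\Lambda$ with $H^0/\Lambda^0$ where $\Lambda^0=H^0\cap\Lambda$, write $b=g\gamma$ with $g\in H^0$ and $\gamma\in\Lambda$, and note that $T_b$ becomes the nil-affine transformation $h\Lambda^0\mapsto g\,\sigma_\gamma(h)\,\Lambda^0$, where $\sigma_\gamma$ is conjugation by $\gamma$; since $H^0$ is nilpotent, $\sigma_\gamma$ is unipotent, so on the horizontal torus $\mathbb{T}^m=H^0/[H^0,H^0]\Lambda^0$ the map $T_b$ induces an affine map $x\mapsto Ax+\beta$ with $A\in GL_m(\ZZ)$ unipotent, and likewise for every power $T_b^n$. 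I would then use the standard criterion (Chapter 11 of \cite{host_kra_2018}) that a nil-affine transformation of a connected nilmanifold is ergodic iff its projection to the horizontal torus is, which reduces total ergodicity of $T_b$ to the toral statement: if $x\mapsto Ax+\beta$ is ergodic then so is $x\mapsto A^nx+(I+A+\cdots+A^{n-1})\beta$ for all $n\geqslant1$. A Fourier computation shows that a unipotent affine map of $\mathbb{T}^m$ is ergodic iff the induced rotation on $\mathbb{T}^m/\overline{(A-I)\mathbb{T}^m}$ is ergodic; since $(A-I)\mathbb{T}^m=(A^n-I)\mathbb{T}^m$ (the cofactor $I+A+\cdots+A^{n-1}$ being invertible over $\RR$ for unipotent $A$), and since $\chi\big((I+A+\cdots+A^{n-1})\beta\big)=\chi(\beta)^n$ for characters $\chi$ fixed by $A^\ast$, this reduces further to the elementary fact that an ergodic toral rotation is totally ergodic. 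The main obstacle is this final reduction: one has to carefully disentangle the unipotent automorphism part from the translation part and verify that ergodicity — not merely non-vanishing of the relevant character evaluations — is what propagates to all powers. Everything else (the component count, the conjugacy between components, and the implications among (i)–(iii)) is bookkeeping.
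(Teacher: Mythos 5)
This proposition is not proved in the paper at all: it is quoted verbatim from the literature (Corollaries 7 and 8 in Chapter 11 of \cite{host_kra_2018}), so there is no in-paper argument to compare against. Your sketch essentially reconstructs the standard proof behind that citation, and it is correct in outline: the component analysis (using the standing assumption that $G=\langle G^0,a\rangle$, so $G/G^0\Gamma$ is finite cyclic of some order $r$ and $T_a$ cycles the $r$ clopen components), the passage to the affine picture $h\Lambda^0\mapsto g\,\sigma_\gamma(h)\Lambda^0$ on $H^0/\Lambda^0$ with $\sigma_\gamma$ unipotent, the character computation showing that a unipotent affine map of $\TT^m$ is ergodic iff the induced rotation on $\TT^m/(A-I)\TT^m$ is, the identity $(A^n-I)\TT^m=(A-I)\TT^m$, and the torsion-freeness of the dual of a torus all check out. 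The one place where you are still leaning on a nontrivial external input is the criterion ``an affine unipotent transformation of a connected nilmanifold is ergodic iff its projection to the horizontal torus is'': as usually stated in \cite{host_kra_2018} the criterion is for nilrotations $T_a$ with $G=\langle G^0,a\rangle$, and applying it directly to the powers $T_b^n=T_{b^n}$ is exactly the step that is delicate (the group $\langle H^0,b^n\rangle$ may shrink); the affine version you need is Parry's theorem on unipotent affine transformations of nilmanifolds, so you should either cite that explicitly or verify that the reference you invoke covers the affine case. With that citation pinned down, your argument is a complete and self-consistent replacement for the black-box reference; what it buys over the paper's approach (pure citation) is transparency about exactly which facts — unipotence of inner automorphisms, the horizontal-torus criterion, and total ergodicity of rotations on connected groups — the proposition rests on.
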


Nilsystems allow a particularly simple description of factors. If $G_\bullet$ is the lower central series filtration, then
\begin{align}\label{Host-Kra factors of nilsystems}
Z_s = \frac{G}{G_{s+1}\Gamma}
\end{align}
for all $s\in\NN_+$ (see Chapter 11 of \cite{host_kra_2018}). For $s=0$, we have $Z_0 = G/(G^0\Gamma)\cong(\ZZ/r\ZZ)$, where $r$ is the smallest positive integer for which $a^r\in G^0$. It follows from Proposition \ref{totally ergodic nilsystems} that $Z_0$ is trivial if and only if the nilsystem is totally ergodic.

Let $\vec{P}\in\RR[x,y]^{t+1}$ be an integral polynomial progression. By Theorem \ref{Host-Kra factors are characteristic}, there exists $s\in\NN$ such that for every ergodic system $(X, \X, \mu, T)$ and all choices of $f_0, \ldots, f_t\in L^\infty(\mu)$, we have
\begin{align}\label{projecting weak limit on the factor}
\nonumber	&\lim_{N\to\infty}\EE_{n\in [N]}\int_X f_0 \cdot T^{P_1(n)} f_1 \cdots  T^{P_{t}(n)} f_{t} d\mu\\
	 &= \lim_{N\to\infty}\EE_{n\in [N]}\int_{Z_s} \EE (f_0|\Z_s) \cdot S^{P_1(n)}\EE (f_1|\Z_s) \cdots  S^{P_{t}(n)} \EE (f_t|\Z_s) d\lambda
\end{align}

Using the fact that $Z_s$ is an inverse limit of ergodic $s$-step nilsystems, we can approximate the average (\ref{projecting weak limit on the factor}) arbitrarily well by projections onto ergodic nilsystems. Hence we are left with understanding averages of the form
\begin{align}\label{multiple averages on nilmanifolds}
\lim_{N\to\infty}\EE_{n\in[N]} \int_{G/\Gamma} \tilde{f}_0(b\Gamma)\cdot \tilde{f}_1(a^{P_1(n)}b\Gamma) \cdots  \tilde{f}_t(a^{P_t(n)} b\Gamma) d\nu(b\Gamma)
\end{align}
where $\tilde{f}_i$ is the projection of $f_i$ onto an ergodic $s$-step nilsystem $(G/\Gamma, \G/\Gamma, \nu, T_a)$ for all $0\leqslant i\leqslant t$. If $T$ is totally ergodic, then so is the nilrotation $T_a$.


\subsection{Polynomial sequences}\label{section on polynomial sequences}
Let $G_\bullet$ be a filtration on $G$ of degree $s$. A \emph{polynomial sequence} $g:\ZZ\to G$ adapted to $G_\bullet$ is a sequence
\begin{align}\label{polynomial sequence}
    g(n) = \prod_{i=0}^s g_i^{{{n}\choose{i}}}.
\end{align}
with the property that $g_i\in G_i$ for each $i$. Such sequences form a group denoted as $\poly(\ZZ, G_\bullet)$ by Proposition 6.2 of \cite{green_tao_2012}. One may ask why we define polynomial sequence as (\ref{polynomial sequence}) rather than in the seemingly more natural form
\begin{align}\label{polynomial sequence, wrong form}
    g(n) = \prod_{i=0}^s g_i^{n^i}.
\end{align}
The reason is that if $g$ is written in the form (\ref{polynomial sequence}), then we have the following nice statement.
\begin{lemma}[Lemma 2.8 of \cite{candela_sisask_2012}]\label{coefficients in a subgroup}
Suppose that $g\in\poly(\ZZ, G_\bullet)$. The sequence $g(n) = \prod_{i=0}^s g_i^{{n}\choose{i}}$ takes values in $H\leqslant G$ if and only if $g_0, \ldots, g_s\in H$.
\end{lemma}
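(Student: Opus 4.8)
The plan is a short induction on the index $i$, using only the ``lower-triangular'' structure of the binomial coefficients. One direction is immediate: if $g_0, \ldots, g_s \in H$, then since $H$ is a subgroup of $G$, every finite product of integer powers of the $g_i$ lies in $H$; in particular $g(n) = \prod_{i=0}^s g_i^{\binom{n}{i}} \in H$ for every $n \in \ZZ$.

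For the converse, suppose $g(n) \in H$ for all $n \in \ZZ$. I would use only the values of $g$ at $n = 0, 1, \ldots, s$, exploiting the fact that $\binom{k}{i} = 0$ whenever $i > k$, so that for each $0 \le k \le s$ one has
\[
g(k) = \prod_{i=0}^s g_i^{\binom{k}{i}} = \prod_{i=0}^k g_i^{\binom{k}{i}} = \left(\prod_{i=0}^{k-1} g_i^{\binom{k}{i}}\right) g_k,
\]
the products being taken in increasing order of $i$ as in (\ref{polynomial sequence}). I would then prove $g_k \in H$ by induction on $k$. The base case $k = 0$ is simply $g_0 = g(0) \in H$. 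For the inductive step, assume $g_0, \ldots, g_{k-1} \in H$; then $\prod_{i=0}^{k-1} g_i^{\binom{k}{i}} \in H$, and rearranging the displayed identity gives
\[
g_k = \left(\prod_{i=0}^{k-1} g_i^{\binom{k}{i}}\right)^{-1} g(k),
\]
a product of two elements of $H$, hence an element of $H$. This completes the induction and the proof.

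I do not anticipate a genuine obstacle here; the only point requiring a little care is non-commutativity of $G$, but this is harmless because at each stage we multiply only elements already known to lie in the subgroup $H$, so no commutator corrections arise. (Alternatively, the statement follows from the fact that $\poly(\ZZ, G_\bullet)$ is a group together with the observation that the discrete derivative $n \mapsto g(n)^{-1} g(n+1)$ maps it into $\poly(\ZZ, (G_{i+1})_{i\geqslant 0})$, but the direct triangular computation above is cleaner and self-contained.)
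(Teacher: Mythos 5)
Your proof is correct and follows essentially the same route as the paper: the easy direction from the subgroup property, and for the converse an induction on $k$ that evaluates $g$ at $k$ (the paper uses $k+1$, an immaterial index shift), uses $\binom{k}{i}=0$ for $i>k$ and $\binom{k}{k}=1$, and solves for the top coefficient inside $H$. No gaps; the remark about non-commutativity being harmless is accurate.
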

\begin{proof}
The converse direction is straightforward, and we prove the forward direction by induction on $0\leqslant k\leqslant s$. For $k=0$, we observe that $g_0 = g(0)\in H$. Suppose that the statement holds for some $1\leq k <s$, i.e. $g_0, \ldots, g_k\in H$. Then $g(k+1) =\left(\prod_{i=0}^k g_i^{{k+1}\choose{i}}\right) g_{k+1}$. Since $g(k+1), g_0, \ldots, g_k$ are all in $H$, it follows that $g_{k+1}\in H$. 
\end{proof} 
Lemma \ref{coefficients in a subgroup} is not true if $g$ is written in the form (\ref{polynomial sequence, wrong form}); for instance, $g(n) = {{n}\choose{2}} =\frac{1}{2}n^2 - \frac{1}{2}n$ takes values in $\ZZ$ even though $\frac{1}{2}, -\frac{1}{2}\notin\ZZ$.

In a similar manner, we define for any $D\in\NN_+$ the group $\poly(\ZZ^D, G_\bullet)$ of $D$-parameter polynomial sequences $g:\ZZ^D\to G$ adapted to $G_\bullet$, i.e. sequences of the form
\begin{align*}
    g(n_1, \ldots, n_D) = \prod_{i=0}^s\prod_{i_1+...+i_D = i} {g_{i_1, \ldots, i_D}}^{{{n_1}\choose{i_1}}\cdots{{n_D}\choose{i_D}}}
\end{align*}
for $g_{i_1, \ldots, i_D}\in G_{i_1+\ldots i_D}$.


\subsection{Infinitary equidistribution theory on nilmanifolds}\label{section on infinitary equidistribution}
For the rest of Section \ref{section on ergodic theory}, we assume that $G$ is connected. For $D\in\NN_+$, a polynomial sequence $g\in\poly(\ZZ^D,G_\bullet)$ is \emph{equidistributed} on $G/\Gamma$ if
\begin{align*}
    \EE_{n\in[N]^D}F(g(n)\Gamma) \to \int_{G/\Gamma} F d\nu
\end{align*}
for any continuous function $F:G/\Gamma\to\CC$. The following notion is useful when discussing equidistribution.
\begin{definition}[Horizontal characters]\label{horizontal characters}
A \emph{horizontal character} on $G$ is a continuous group homomorphism $\eta: G\to\RR$ for which $\eta(\Gamma)\leqslant\ZZ$.
\end{definition}
In particular, each horizontal character vanishes on $[G,G]$.

Equidistribution on nilmanifolds was studied by Leibman, who provided a useful criterion for when a polynomial sequence is equidistributed on a nilmanifold. We only need the version of the statement in the case when $G$ is connected, as we will be able to reduce to this case.
\begin{theorem}[Leibman's equidistribution theorem, \cite{leibman_2005b}]\label{Leibman's equidistribution theorem}
Let $D\in\NN_+$ and $g\in\poly(\ZZ^D, G_\bullet)$. The following are equivalent:
\begin{enumerate}
    \item $g$ is equidistributed in $G/\Gamma$;
    \item the projection of $g$ onto $G/[G,G]$ is equidistributed in $G/[G,G]\Gamma$;
    \item if $\eta: G\to\RR$ is a horizontal character for which $\eta\circ g$ is constant, then $\eta$ is trivial.
\end{enumerate}
\end{theorem}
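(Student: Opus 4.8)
The plan is to establish the cycle of implications (i) $\Rightarrow$ (ii) $\Rightarrow$ (iii) $\Rightarrow$ (i); the first two are soft and all the content lies in (iii) $\Rightarrow$ (i). For (i) $\Rightarrow$ (ii), I would observe that the natural projection $p\colon G/\Gamma\to G/[G,G]\Gamma$ is a continuous surjection of compact spaces pushing the Haar measure of $G/\Gamma$ forward to the Haar measure of $G/[G,G]\Gamma$; composing a continuous $F$ on the quotient with $p$ and applying equidistribution of $g$ then yields equidistribution of $p\circ g$. For (ii) $\Rightarrow$ (iii) I argue contrapositively: if $\eta$ is a \emph{nontrivial} horizontal character with $\eta\circ g\equiv c$ constant, then since $\eta$ vanishes on $[G,G]$ and $\eta(\Gamma)\subseteq\ZZ$, the assignment $x[G,G]\Gamma\mapsto e^{2\pi i\eta(x)}$ is a well-defined character of the torus $G/[G,G]\Gamma$ (here we use that $G$ is connected and $[G,G]\Gamma$ is closed by rationality, so this quotient is a torus), and it is nontrivial because a continuous homomorphism from a connected group into $\RR$ with integer image must vanish. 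Its integral over the torus is $0$, whereas $\EE_{n\in[N]^D}e^{2\pi i\eta(g(n))}=e^{2\pi i c}$ has modulus $1$; hence $p\circ g$ is not equidistributed.

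For (iii) $\Rightarrow$ (i) I would induct on the degree $s$ of the rational filtration $G_\bullet$, using that $G_s$ is automatically central since $[G_s,G]\leqslant G_{s+1}=1$, and that $G_s/(G_s\cap\Gamma)$ is a torus by rationality. The base case $s\leqslant 1$ is classical: $G/\Gamma$ is then a torus, $g$ is a vector-valued polynomial sequence on it, and by Weyl's polynomial equidistribution criterion $g$ fails to equidistribute precisely when some nontrivial integer character $k$ makes $k\cdot g(n)$ constant modulo $1$, which is exactly the negation of (iii).

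For the inductive step, suppose $g$ satisfies (iii) but is not equidistributed; then there is a continuous $F$ on $G/\Gamma$ with $\int F\,d\nu=0$ and $\limsup_N|\EE_{n\in[N]^D}F(g(n)\Gamma)|\geqslant\delta>0$ along a subsequence of $N$. I would decompose $F$ into vertical characters for the action of the central torus $G_s/(G_s\cap\Gamma)$ and split into two cases. If the surviving part of $F$ is $G_s$-invariant, it descends to $\bar F$ on $(G/G_s)/(\Gamma G_s/G_s)$ and $g$ descends to a non-equidistributed $\bar g\in\poly(\ZZ^D,(G/G_s)_\bullet)$; by the inductive hypothesis there is a nontrivial horizontal character on $G/G_s$ constant along $\bar g$, which pulls back to one on $G$ constant along $g$, contradicting (iii). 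If instead some vertical frequency $\xi\neq 0$ contributes, I would apply van der Corput's inequality in one of the $D$ coordinates to bound $|\EE_n F(g(n)\Gamma)|^2$ by an average over shifts $h$ of $\EE_n F(g(n+h)\Gamma)\overline{F(g(n)\Gamma)}$; the sequence $n\mapsto(g(n+h),g(n))$ lies in $\poly(\ZZ^D,(G\times G)_\bullet)$, and because $G_s$ is central the nontrivial vertical frequency $\xi$ forces the relevant function on the resulting "derivative" nilmanifold to be supported away from the top layer, so after quotienting by $\ker\xi$ the effective degree drops by one. The inductive hypothesis then produces, for a positive proportion of shifts $h$, a nontrivial horizontal character $\eta_h$ constant along the shifted sequence.

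The main obstacle is the final amalgamation step: passing from "for a positive-density set of $h$ there is a nontrivial horizontal character constant along the derivative sequence" to "there is a single nontrivial horizontal character $\eta$ on $G$ with $\eta\circ g$ constant." Since horizontal characters form a countable lattice, pigeonhole fixes one $\eta$ that works for a positive-density set of $h$; one then reads off that a suitable linear combination gives a polynomial $\eta\circ g$ whose difference sequence in $h$ is constant modulo $1$ on a density-$\geqslant c$ set, and a Weyl-type argument on the bottom torus layer upgrades this to $\eta\circ g$ being constant, contradicting (iii). The careful identification of the reduced filtration after van der Corput and the multi-parameter bookkeeping when $D>1$ are the technical crux; by contrast, the implications (i) $\Rightarrow$ (ii) $\Rightarrow$ (iii) and the base case are routine.
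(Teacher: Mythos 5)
The paper itself does not prove this theorem: it is quoted from Leibman \cite{leibman_2005b}, with the quantitative counterpart (Theorem \ref{quantitative Leibman's equidistribution theorem}) quoted from \cite{green_tao_2012}, so there is no internal proof to compare yours against; I can only assess your sketch on its own terms. Your implications (i) $\Rightarrow$ (ii) $\Rightarrow$ (iii) and the abelian base case are fine (note that in the base case you correctly work with ``constant mod $1$''; the same reading of condition (iii) must be used throughout, since for instance $g(n)=n/2$ on $\RR/\ZZ$ is not equidistributed yet no nontrivial character makes $\eta\circ g$ literally constant as a real-valued function).

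The genuine gap is in the amalgamation step of (iii) $\Rightarrow$ (i), which you flag as the crux but then dispatch incorrectly. After van der Corput and the inductive hypothesis you have: for a set of shifts $h$ of positive \emph{upper} density, some nontrivial horizontal character $\eta_h$ of the derivative nilmanifold is constant along the derivative sequence. You propose to fix a single $\eta$ ``since horizontal characters form a countable lattice, pigeonhole fixes one $\eta$ that works for a positive-density set of $h$.'' This is not valid: upper density is not countably subadditive, and a positive-density set can be covered by countably many density-zero sets (all of $\NN$ is a union of singletons), so countability alone does not force any single character to serve a positive-density set of shifts; the characters $\eta_h$ could a priori have complexity growing with $h$, each claiming only a negligible set. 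What makes the pigeonhole legitimate in the standard treatment is a quantitative bound: the character produced at each bad shift has modulus $\ll\delta^{-O_M(1)}$, so only boundedly many candidates occur and one recurs for $\gg_{\delta,M} N$ values of $h$. In other words, carried out honestly your argument becomes a proof of the quantitative statement (the paper's Theorem \ref{quantitative Leibman's equidistribution theorem}), from which the qualitative one follows; the alternative is Leibman's original qualitative route via the structure of polynomial orbit closures and unique ergodicity of nilsystems, which avoids the amalgamation issue entirely. The final upgrade you sketch (from $\partial_h(\eta\circ g)$ being constant mod $1$ for many $h$ to some nontrivial character being constant mod $1$ along $g$) needs the same uniformity, since the integer multiple of $\eta$ one must pass to has to be bounded independently of $h$.
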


We shall also need a stronger notion of equidistribution, that of irrational sequences. 
\begin{definition}
Suppose that $G_\bullet$ is a filtration on $G$ and $i\in\NN_+$, and let
\begin{align*}
    G_i^\nabla = \langle G_{i+1}, [G_j, G_{i-j}], 1\leqslant j < i \rangle
\end{align*}
An \emph{$i$-th level character} is a continuous group homomorphism $\eta_i:G_i\to\RR$ that vanishes on $G_i^\nabla$ and satisfies $\eta_i(\Gamma_i)\in\ZZ$. An element $g_i$ of $G_i$ is \emph{irrational} if $\eta_i(g_i)\notin\ZZ$ for any nontrivial $i$-th level character $\eta_i$. A sequence $g(n) = \prod\limits_{i=0}^s g_i^{{n}\choose{i}}$ is \emph{irrational} if $g_i$ is irrational for all $i\in\NN_+$. 
\end{definition}

All irrational sequences are equidistributed, but not vice versa. For instance, let $g(n) = a_1 n + \ldots + a_s n^s$ be a real-valued polynomial. It is a polynomial sequence in $\RR$ adapted to the filtration $G_1 = \ldots = G_s = \RR$, $G_{s+1} = 0$. Thus, $g$ is irrational iff $a_s\notin\QQ$, and $g$ is equidistributed iff there exists $1\leqslant i\leqslant s$ with $a_i\notin\QQ$. It is clear in this case that irrational implies equidistributed, but not vice versa.

We want to emphasise that whether a sequence is irrational or not depends on what filtration we are using, whereas the notion of equidistribution does not depend on the filtration. 


\section{Reducing to the case of connected groups}\label{section on reducing to connected groups}

The expression (\ref{multiple averages on nilmanifolds}) indicates that to understand Host-Kra complexity of a polynomial progression $\vec{P}$, we have to understand the distribution of orbits
\begin{align}\label{polynomial orbit}
(b\Gamma,\; a^{P_1(n)}b\Gamma,\; \ldots,\; a^{P_t(n)}b\Gamma)
\end{align}
inside a connected nilmanifold $G^{t+1}/\Gamma^{t+1}$. 
The point of this section is to show that we can replace linear orbits $(a^n b\Gamma)_{n\in\NN}$ on $G/\Gamma$ by polynomial orbits $(g_b(n)\Gamma^0)_{n\in\NN}$ on $G^0/\Gamma^0$ for some irrational polynomial sequence $g_b:\ZZ\to G^0$ with respect to a certain naturally defined filtration $G_\bullet^0$ on $G_0$. This way, we want to reduce the question of finding the closure for (\ref{polynomial orbit}) inside $(G/\Gamma)^{t+1}$ to finding the closure for
\begin{align}\label{polynomial orbit 2}
(g_b(m)\Gamma^0,\; g_b(m+ P_1(n))\Gamma^0,\; \ldots,\; g_b(m+P_t(n))\Gamma^0)
\end{align}
inside $(G^0/\Gamma^0)^{t+1}$. The connectedness of $G^0$ allows us to use tools from Section \ref{section on infinitary equidistribution}.

\begin{lemma}\label{introducing a parameter to the integral}
Let $(G/\Gamma, \G/\Gamma, \nu, T_a)$ be a totally ergodic nilsystem and $F:(G/\Gamma)^{t+1}\to\RR$ be essentially bounded. Then
\begin{align*}
&\EE_{n\in[N]} \int_{G/\Gamma} F(b\Gamma, a^{P_1(n)}b\Gamma, \ldots, a^{P_t(n)}b\Gamma)d\nu(b\Gamma)\\
&=\EE_{m, n\in[N]} \int_{G/\Gamma} F(a^m b\Gamma, a^{m+P_1(n)}b\Gamma, \ldots, a^{m+P_t(n)}b\Gamma)d\nu(b\Gamma).
\end{align*}
\end{lemma}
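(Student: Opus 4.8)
The plan is to reduce the identity, via Fubini's theorem (which for finite averages is just a rearrangement of a finite sum), to the statement that for each fixed pair $m,n$ the two inner integrals over $G/\Gamma$ coincide, and then to deduce this from the left-invariance of the Haar measure $\nu$ under the nilrotation $T_a$.

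Concretely, I would fix $m,n\in\ZZ$ and perform the change of variables $c\Gamma = a^m b\Gamma = T_a^m(b\Gamma)$. Since for each $0\leqslant i\leqslant t$ we have $a^{m+P_i(n)}b = a^{P_i(n)}a^m b = a^{P_i(n)}c$, the integrand transforms as
\begin{align*}
F(a^m b\Gamma,\; a^{m+P_1(n)}b\Gamma,\; \ldots,\; a^{m+P_t(n)}b\Gamma) = F(c\Gamma,\; a^{P_1(n)}c\Gamma,\; \ldots,\; a^{P_t(n)}c\Gamma).
\end{align*}
Because $T_a$ preserves $\nu$, so does $T_a^m$, hence this change of variables is measure-preserving and
\begin{align*}
\int_{G/\Gamma} F(a^m b\Gamma, a^{m+P_1(n)}b\Gamma, \ldots, a^{m+P_t(n)}b\Gamma)\,d\nu(b\Gamma) = \int_{G/\Gamma} F(b\Gamma, a^{P_1(n)}b\Gamma, \ldots, a^{P_t(n)}b\Gamma)\,d\nu(b\Gamma),
\end{align*}
the right-hand side being independent of $m$. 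Averaging this identity over $m\in[N]$ therefore leaves it unchanged, and averaging over $n\in[N]$ then produces exactly the claimed equality.

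I do not expect any genuine obstacle here: the only point to verify is that left translation by $a^m$ preserves $\nu$, which is immediate from the definition of the Haar measure on the nilmanifold, and total ergodicity of $(G/\Gamma,\G/\Gamma,\nu,T_a)$ is not actually used in this particular statement — it is merely the standing hypothesis under which the surrounding discussion takes place. This lemma is precisely the device that, in the remainder of the section, will let us replace the linear orbit $(a^n b\Gamma)_n$ on $G/\Gamma$ by a polynomial orbit on the connected component $G^0/\Gamma^0$, so that the equidistribution machinery of Section \ref{section on infinitary equidistribution} becomes applicable.
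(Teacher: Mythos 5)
Your proof is correct and follows essentially the same argument as the paper: use the $T_a$-invariance of $\nu$ (hence $T_a^m$-invariance) to identify the inner integral for each fixed $m,n$ with the $m=0$ integral, then average over $m$ and $n$. Your remark that total ergodicity is not needed here is also accurate.
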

\begin{proof}
Since $T_a$ is measure preserving, we have 
\begin{align*}
\int_{G/\Gamma} F(b\Gamma, a^{P_1(n)}b\Gamma, \ldots, a^{P_t(n)}b\Gamma)d\nu(b\Gamma)
= \int_{G/\Gamma} F(a^m b\Gamma, a^{m+P_1(n)}b\Gamma, \ldots, a^{m+P_t(n)}b\Gamma)d\nu(b\Gamma)
\end{align*}
for any $m,n\in\NN$. Consequently, 
\begin{align*}
&\int_{G/\Gamma} F(b\Gamma, a^{P_1(n)}b\Gamma, \ldots, a^{P_t(n)}b\Gamma)d\nu(b\Gamma)\\
&=\EE_{m\in[N]} \int_{G/\Gamma} F(a^m b\Gamma, a^{m+P_1(n)}b\Gamma, \ldots, a^{m+P_t(n)}b\Gamma)d\nu(b\Gamma),
\end{align*}
from which the lemma follows. 
\end{proof}

The main result of this section is the following. 
\begin{proposition}\label{replacing a linear sequence by a polynomial one}
Let $(G/\Gamma, \G/\Gamma, \nu, T_a)$ be a totally ergodic nilsystem and $b\in G^0$.  Suppose that $G_\bullet$ is the lower central series filtration on $G$ and $G^0_\bullet = G_\bullet\cap G^0$. Then there exists an irrational sequence $g_b\in\poly(\ZZ, G^0_\bullet)$ such that $g_b(n)\Gamma = a^n b\Gamma$.
\end{proposition}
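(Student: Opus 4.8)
The plan is to use the structure theory of ergodic nilsystems to write $a$ as $a = a_0\gamma$ with $a_0\in G^0$ and $\gamma\in\Gamma$, and then to absorb the $\Gamma$-part into the base point while expanding the resulting power of an element of $G^0$ into a polynomial sequence. Recall that by our standing assumptions $G$ is spanned by $G^0$ and $a$, and since the nilsystem is totally ergodic, Proposition \ref{totally ergodic nilsystems} gives $G = G^0\Gamma$; hence we may indeed write $a = a_0\gamma$ for some $a_0\in G^0$, $\gamma\in\Gamma$. The first step is therefore to analyse $a^n b\Gamma$: writing $a = a_0\gamma$ and pushing the $\gamma$'s to the right using the cocompact lattice, one gets $a^n = a_0\,(\gamma a_0\gamma^{-1})\,(\gamma^2 a_0\gamma^{-2})\cdots$ modulo an element of $\Gamma$, i.e. $a^n\Gamma$ is a product of conjugates $\gamma^{j}a_0\gamma^{-j}$, all of which lie in $G^0$ since $G^0$ is normal. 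The conjugation action $j\mapsto \gamma^{j}a_0\gamma^{-j}$ is, after passing to the simply-connected cover and taking logarithms, a polynomial (in fact unipotent, by nilpotency) map $\ZZ\to G^0$, so the whole product $n\mapsto a^n b\Gamma = g_b(n)\Gamma^0$ is a polynomial sequence in $G^0$ of some bounded degree; Lemma \ref{coefficients in a subgroup} and the fact that $\poly(\ZZ,G^0_\bullet)$ is a group (via Proposition 6.2 of \cite{green_tao_2012}, applied to the filtration $G^0_\bullet = G_\bullet\cap G^0$, which is a filtration because intersecting a filtration with a normal subgroup preserves the commutator inclusions) let us write it in the canonical binomial-coefficient form $g_b(n) = \prod_{i}g_i^{\binom{n}{i}}$ with $g_i\in G^0_i$, so that $g_b\in\poly(\ZZ,G^0_\bullet)$ and $g_b(n)\Gamma = a^n b\Gamma$ as desired.

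It remains to upgrade $g_b$ from merely being a polynomial sequence to being \emph{irrational} with respect to $G^0_\bullet$. This is where the hypothesis of total ergodicity is essential, and this is the step I expect to be the main obstacle. The idea is: if $g_b$ were not irrational, there would be some level $i\ge 1$ and a nontrivial $i$-th level character $\eta_i\colon G^0_i\to\RR$ with $\eta_i(g_i)\in\ZZ$; one then wants to manufacture from $\eta_i$ a nontrivial $T_a$-invariant (or $T_a^r$-invariant) function on $G/\Gamma$, contradicting total ergodicity. Concretely, one passes to the quotient of $G^0$ by $G_i^\nabla$ and below, where the relevant piece becomes a torus, and the character $\eta_i$ together with the obstruction to equidistribution descends to an eigenfunction for $T_a$ on $Z_i(G/\Gamma)/Z_{i-1}(G/\Gamma)$ with a root-of-unity eigenvalue; such an eigenfunction would be $T_a^r$-invariant for some $r$, forcing $\K_{rat}$ to be nontrivial. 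The cleanest route is probably inductive on the degree: the projection of $g_b$ to $G^0/[G^0,G^0]$ should be shown irrational first (this is essentially a statement about the abelianization and the Kronecker factor, using that total ergodicity means $\K_{rat}$ trivial hence no nontrivial finite-order eigenvalues), and then one lifts through the central extensions $G^0_i/G^0_{i+1}$ one step at a time, at each stage reading off from a putative $i$-th level character a nontrivial invariant function on the corresponding Host-Kra factor via the description \eqref{Host-Kra factors of nilsystems}.

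For the write-up I would first record the group-theoretic reduction (existence of $g_b\in\poly(\ZZ,G^0_\bullet)$ with $g_b(n)\Gamma = a^nb\Gamma$) as a short self-contained computation, then isolate the irrationality claim as the heart of the argument and prove it by contradiction as sketched, invoking Proposition \ref{totally ergodic nilsystems} and the identification \eqref{Host-Kra factors of nilsystems} of Host-Kra factors of nilsystems. I expect the bookkeeping in the irrationality step — correctly matching $i$-th level characters of $G^0$ against eigenfunctions on the appropriate sub-quotient, and handling the possibility that $b\ne 1$ contributes a shift that does not affect equidistribution — to be the only genuinely delicate part; everything else is routine manipulation of polynomial sequences and the group law in a nilpotent Lie group.
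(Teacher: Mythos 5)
Your construction of $g_b$ is sound and is essentially the paper's: using Proposition \ref{totally ergodic nilsystems} one writes $a=\alpha\gamma^{-1}$ with $\alpha\in G^0$, $\gamma\in\Gamma$, sets $g_b(n)=(\alpha\gamma^{-1})^n b\gamma^n$, and uses normality of $G^0$, the group structure of $\poly(\ZZ,G_\bullet)$ and Lemma \ref{coefficients in a subgroup} to conclude that $g_b\in\poly(\ZZ,G^0_\bullet)$ and $g_b(n)\Gamma=a^nb\Gamma$. The same is true of your level-$1$ argument: a nontrivial first-level character taking an integer value at the linear coefficient would produce a nonconstant invariant function on the horizontal torus, which ergodicity forbids.

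The gap is in the irrationality step for levels $i\geqslant 2$, exactly the part you defer as bookkeeping. The hypothesis that $g_{b,i}$ is not irrational, i.e.\ that some nontrivial $i$-th level character $\eta_i$ satisfies $\eta_i(g_{b,i})\in\ZZ$, carries no direct dynamical content: irrationality is strictly stronger than equidistribution (cf.\ the paper's example $g(n)=a_1n+\ldots+a_sn^s$ with $a_s\in\QQ$ but $a_1\notin\QQ$), and the orbit $a^nb\Gamma$ does equidistribute for a.e.\ $b$ by ergodicity, so there is no ``obstruction to equidistribution'' available to descend. Moreover, the eigenvalues of an ergodic nilsystem are carried entirely by the Kronecker factor $Z_1=G/G_2\Gamma$, so a level-$i$ character with $i\geqslant 2$ cannot by itself yield an eigenfunction with a root-of-unity eigenvalue on $Z_i/Z_{i-1}$; any contradiction must first transport the level-$i$ information down to level $1$. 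That transport is the real content of the proof and needs to know what the coefficients $g_{b,i}$ actually are: by Leibman's expansion (Sections 11--13 of \cite{leibman_2009}) one has $g_{b,i}\equiv A^{i-1}\alpha \bmod (G^0)^\nabla_i$ with $Ax=[x,\gamma]$, and by Lemma \ref{basic properties of the filtration on G^0} the map $A$ induces surjective homomorphisms $\overline A_i:G^0_i\to G^0_{i+1}/(G^0)^\nabla_{i+1}$ carrying $\Gamma_i$ into $\Gamma_{i+1}$ and $(G^0)^\nabla_i$ into $(G^0)^\nabla_{i+1}$. The paper's induction takes a bad character $\eta_{i+1}$ at level $i+1$ and forms $\eta_i:=\eta_{i+1}\circ A_i$, a nontrivial $i$-th level character with $\eta_i(g_{b,i})=\eta_{i+1}(g_{b,i+1})\in\ZZ$, pushing the failure down to level $1$, where (and only where) total ergodicity bites. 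Without the commutator formula for the $g_{b,i}$ and the surjectivity of $\overline A_i$, the inductive step you sketch --- reading off an invariant function on a Host--Kra factor directly from $\eta_i$ --- does not get off the ground; this is the missing idea rather than routine manipulation.
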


We observe that with this filtration on $G^0$, we have $G^0_k = G_k$ for $k\geqslant 2$. That follows from the fact that the groups $G_k$ are connected for $k\geqslant 2$ (Lemma 5 of \cite{host_kra_2018}), and hence are contained in $G^0$. 

We lose no generality in assuming that $b\in G^0$; Proposition \ref{totally ergodic nilsystems} and the connectedness of $G/\Gamma$ imply that for all $b\in G$ there exists $b'\in G^0$ such that $b\Gamma = b'\Gamma$.

\begin{proof}
The connectedness of $G/\Gamma$ implies that $G=G^0\Gamma$, and so there exist $\alpha\in G^0$ and $\gamma\in\Gamma$ such that $a = \alpha \gamma^{-1}$. Then
\begin{align*}
a^n b\Gamma = (\alpha\gamma^{-1})^n b\Gamma = (\alpha\gamma^{-1})^n b \gamma^{n}\Gamma.
\end{align*}
It follows from normality of $G^0$ and the fact that $\alpha$ and $b$ are elements of $G^0$ that the sequence $g_b(n) = (\alpha\gamma^{-1})^n b \gamma^{n}$ takes values in $G^0$. Since the sequences $h_1(n) = a^n b$ and $h_2(n) = \gamma^{n}$ are adapted to $G_\bullet$, and the set $\poly(\ZZ, G_\bullet)$ is a group, we deduce that $g_b = h_1 h_2$ is adapted to $G^0_\bullet = G_\bullet\cap G^0$. 

We want a more precise description of $g_b$, and for this we shall use some results from Sections 11-13 of \cite{leibman_2009}. Let $g=g_b$ for the identity $b=1$; that is, $g(n) = (\alpha \gamma^{-1})^n \gamma^n$. Leibman showed in Section 11.2 of \cite{leibman_2009} that
\begin{align}\label{abstract expansion of g}
g(n) &= \prod_{1\leqslant k_1 \leqslant s} (A^{k_1-1}\alpha)^{q_{k_1}(n)} \prod_{1\leqslant k_2 < k_1 < s}[A^{k_1-1}\alpha, A^{k_2-1}\alpha]^{q_{k_1, k_2}(n)}\\
\nonumber
&\prod_{1\leqslant k_3 < k_2 < k_1 < s}[[A^{k_1-1}\alpha, A^{k_2-1}\alpha], A^{k_3-1}\alpha]^{q_{k_1, k_2, k_3}(n)} \ldots,
\end{align}
where $Ax = [x,\gamma]$ and $q_{k_1, \ldots, k_r}$ are integral polynomials with $\deg q_{k_1, \ldots, k_r} \leqslant k_1 + \ldots + k_r$. More explicitly, we have
\begin{align}\label{explicit expansion of g}
g(n) = \alpha^n (A\alpha)^{{n}\choose{2}} (A^2\alpha)^{{n}\choose{3}} \cdots [A\alpha, \alpha]^{{n}\choose{3}}[A^2\alpha,\alpha]^{{n}\choose{4}} \cdots [A^2\alpha,A\alpha]^{4{{n+1}\choose{5}}}[A^3\alpha,A\alpha]^{5{{n+1}\choose{6}}}\cdots
\end{align}
The coefficients of $g$ can be analysed using a family of subgroups of $G^0$ introduced in Section 12 of \cite{leibman_2009}. For $k_1, \ldots, k_l\in\NN_+$, we let $G^0_{(k_1,\ldots,k_l)}$ be the subgroup of $G^0$ generated by all $l$-fold commutators\footnote{A 1-fold commutator is any element $h\in G$. For $l>1$, an $l$-fold commutator is an element of the form $[h_i, h_j]$, where $h_i$ is an $i$-fold commutator, $h_j$ is an $j$-fold commutator and $i+j = l$.} of elements of the form $A^{k_1-1}h_1$, ..., $A^{k_l-1}h_l$ for $h_1, \ldots, h_l\in G^0$.  We then define
\begin{align*}
G^0_{k,l} = \langle G^0_{(k_1, \ldots, k_i)}: i\geqslant l,\; k_1+\ldots+k_l\geqslant k\rangle
\end{align*}
for integers $1\leqslant l\leqslant k$ and set $G^0_{k,l} = G^0_{l,l}$ whenever $l>k$. 

The following lemma lists some basic properties of the groups $G^0_{k,l}$ that we shall use.
\begin{lemma}\label{basic properties of the filtration on G^0}
For any integers $1\leqslant l\leqslant k$,
\begin{enumerate}
\item $G^0_{k,l}$ is normal in $G$;
\item $[G^0_{k,l}, G^0_{i,j}]\leqslant G^0_{k+i, l+j}$ for any integers $1\leqslant i\leqslant j$;
\item $A^j G^0_{k,l}\leqslant G^0_{k+j,l}$ for any $j\in\NN$;
\item $G^0_{k+1,l}$ and $G^0_{k,l+1}$ are subgroups of $G^{k,l}$, and the quotient groups $G^0_{k,l}/G^0_{k+1,l}$ and $G^0_{k,l}/G^0_{k,l+1}$ are abelian;
\item for $k\geqslant 2$, $G_k = G_k^0 = G^0_{k,1} = \langle A^{k-1}G^0, G^0_{k,2}\rangle = \langle A G^0_{k-1}, G^0_{k,2}\rangle$;
\item $(G^0)^\nabla_k = \langle G^0_{k,2}, G^0_{k+1}\rangle$
\end{enumerate}
\end{lemma}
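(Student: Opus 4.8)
This is a plan for proving Lemma \ref{basic properties of the filtration on G^0}.

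\medskip

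The plan is to verify the six properties by working directly with the generating sets of the groups $G^0_{(k_1,\ldots,k_l)}$ and $G^0_{k,l}$, using throughout the basic commutator identities (Hall--Witt, $[xy,z]=[x,z]^y[y,z]$, etc.) together with the key operator identity $A(hk) = [hk,\gamma] = [h,\gamma]^k[k,\gamma]$ and the fact that $A$ maps $G^0$ into $[G^0,\gamma]\leqslant G_2 = G_2^0$, so $A$ is a ``degree-raising'' operation. Since each $G^0_{k,l}$ is by definition generated by $l'$-fold commutators with $l'\geqslant l$ and total weight $k_1+\cdots+k_{l'}\geqslant k$, most claims reduce to a bookkeeping check on these two indices $(\text{weight},\ \text{commutator length})$.

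\medskip

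I would carry out the steps in the following order. First, (i): a subgroup generated by a set $S$ is normal in $G$ iff it is normal under conjugation by a generating set of $G$; since $G$ is spanned by $G^0$ and $a$ (hence by $G^0$ and $\gamma$, as $a=\alpha\gamma^{-1}$ with $\alpha\in G^0$), and conjugating an $l$-fold commutator of elements $A^{k_i-1}h_i$ by an element of $G^0$ again yields such a commutator (with the $h_i$ replaced by conjugates), while conjugation by $\gamma$ introduces $A$'s, which only raises weights, it follows that $G^0_{(k_1,\ldots,k_l)}^{g}\leqslant G^0_{k_1+\cdots+k_l,\, l}$ and hence $G^0_{k,l}$ is normal. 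Second, (ii): $[G^0_{k,l},G^0_{i,j}]$ is generated by commutators of an $l'$-fold commutator (weight $\geqslant k$) with a $j'$-fold commutator (weight $\geqslant i$), which is an $(l'+j')$-fold commutator of weight $\geqslant k+i$, so it lies in $G^0_{k+i,\,l+j}$ — here one must also invoke (i) to pass from the commutators of generators to the commutator of the whole subgroups. Third, (iii): $A^j$ applied to a generator of $G^0_{k,l}$, which is an $l$-fold commutator of elements $A^{k_i-1}h_i$, distributes over the commutator bracket modulo higher-weight terms via the identity $A[u,v] = [Au,v][u,Av]$ plus corrections lying in deeper groups; each factor has its weight raised by $j$ in total, giving $A^j G^0_{k,l}\leqslant G^0_{k+j,\,l}$. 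Fourth, (iv): containment $G^0_{k+1,l},\,G^0_{k,l+1}\leqslant G^0_{k,l}$ is immediate from the definition (raising either index shrinks the generating set constraints), and the quotients are abelian because $[G^0_{k,l},G^0_{k,l}]\leqslant G^0_{2k,2l}\leqslant G^0_{k+1,l}\cap G^0_{k,l+1}$ by (ii). Fifth, (v): for $k\geqslant 2$ the equalities $G_k=G_k^0$ were already noted (the $G_k$ are connected, hence inside $G^0$); that $G_k^0 = G^0_{k,1}$ follows since $G^0_{k,1}$ is generated by all commutators of weight $\geqslant k$ of elements $A^{k_i-1}h_i$, and these all lie in $G_k$ (each $A$ raises the lower-central degree by one, each $h_i\in G^0=G_1$), while conversely $G_k$ is generated by $k$-fold commutators of elements of $G^0$, which are weight-$k$, length-$k$ commutators; the final description $\langle A^{k-1}G^0, G^0_{k,2}\rangle = \langle A G^0_{k-1}, G^0_{k,2}\rangle$ comes from splitting off the single-generator ($l=1$) part from the rest, using that the length-$\geqslant 2$ part with weight $\geqslant k$ is exactly $G^0_{k,2}$ and that $A^{k-1}G^0$ and $AG^0_{k-1}=AG_{k-1}$ generate the same subgroup modulo $G^0_{k,2}$ (by (iii) applied to $G_{k-1}=G^0_{k-1,1}$ and induction). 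Sixth, (vi): unwinding the definition $(G^0)^\nabla_k = \langle (G^0)_{k+1},\, [(G^0)_j,(G^0)_{k-j}] : 1\leqslant j<k\rangle$ and using (v) to identify $(G^0)_{k+1}=G^0_{k+1,1}$ and $[(G^0)_j,(G^0)_{k-j}]\leqslant G^0_{k,2}$ (by (ii), since the bracket of a weight-$j$, length-$1$ group and a weight-$(k-j)$, length-$1$ group has weight $\geqslant k$ and length $\geqslant 2$), together with the reverse inclusions $G^0_{k+1,1}\leqslant (G^0)_{k+1}$ and $G^0_{k,2}\leqslant \langle [(G^0)_j,(G^0)_{k-j}],(G^0)_{k+1}\rangle$ (every length-$\geqslant 2$, weight-$\geqslant k$ commutator can be grouped as a bracket of two lower-weight pieces or absorbed into $(G^0)_{k+1}$), gives the claimed equality.

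\medskip

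The main obstacle I anticipate is step (iii) (and the closely related bracket-distribution arguments hidden in (v) and (vi)): controlling how $A$ interacts with commutator brackets is not a one-line identity, because $A[u,v]\neq [Au,v][u,Av]$ on the nose — there are correction terms, and one has to argue inductively that all corrections land in groups of strictly higher weight so that the weight bookkeeping closes. The cleanest way to handle this is to prove (iii) simultaneously with a downward induction on the nilpotency class (equivalently, modulo $G_{m}$ for increasing $m$), so that at each stage the correction terms are already known to lie in the next group down; this is the standard Leibman-style argument from \cite{leibman_2009}, and the claim is essentially a repackaging of the computations in Sections 12--13 there. Everything else is routine commutator calculus once the indexing conventions are fixed.
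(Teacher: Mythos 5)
Most of your plan is a from-scratch re-derivation of material the paper handles by citation: properties (i)--(iv) and the equality $G_k = G^0_{k,1}$ are quoted from Lemmas 12.2 and 12.3 of Leibman's paper, and the paper itself only proves the remaining equalities in (v) (by induction on $k$, writing $G^0_{k+1} = [G_k, G]$ with $G=\langle G^0,\gamma\rangle$ and using the inductive description of $G^0_k$) together with the two inclusions in (vi). Your weight/length bookkeeping for (i)--(iv) and your two-inclusion argument for (vi) run along essentially those lines, and your alternative route to $\langle A^{k-1}G^0, G^0_{k,2}\rangle = \langle A G^0_{k-1}, G^0_{k,2}\rangle$ --- splitting off the length-one generators and using that $A$ is a homomorphism modulo $G^0_{k,2}$ --- is viable, though different from the paper's induction through $[G_k,G]$. (A small caveat in (i): conjugation by $g\in G^0$ does not literally replace the $h_i$ by conjugates, since $A$ does not commute with conjugation; one should instead use $x^g = x[x,g]$ and absorb $[x,g]$ into a higher-weight, longer commutator.)

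The genuine gap is your justification of the inclusion $G_k \subseteq G^0_{k,1}$ in (v). You assert that ``$G_k$ is generated by $k$-fold commutators of elements of $G^0$.'' That is false in general: $G_k$ is the lower central series of $G$, and $G$ is generated by $G^0$ together with $\gamma$ (since $a=\alpha\gamma^{-1}$ with $\alpha\in G^0$), not by $G^0$ alone. Commutators of elements of $G^0$ only generate the lower central series of $G^0$, which can be strictly smaller: in the skew-product situation where $G^0$ is abelian and $A=[\,\cdot\,,\gamma]$ is nontrivial, every commutator of elements of $G^0$ is trivial while $G_2$ contains $Ah\neq 1$ --- exactly the kind of system this section is built for. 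The inclusion $G_k\subseteq G^0_{k,1}$ is the substantive content of Leibman's Lemma 12.3 and must be proved by induction on $k$, e.g. via $G_{k+1}=[G_k,G]\leqslant\langle [G_k,G^0],\,[G_k,\gamma]\rangle$, handling the $G^0$-bracket with (ii) and the $\gamma$-bracket with (iii) (since $[x,\gamma]=Ax$ raises the weight). Your framework can absorb this --- it is the same mechanism you already invoke for normality in (i) --- but as written the step would fail, and it is precisely the point where the paper leans on the citation rather than on the reason you give.
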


\begin{proof}
Properties (i)-(iv) are proved in Lemma 12.2 of \cite{leibman_2009}. For $k\geqslant 2$, the statement $G_k = G^0_k$ in (v) is true by definition, and the statement $G_k = G^0_{k,1}$ is proved in Lemma 12.3 of \cite{leibman_2009}. To finish the proof of (v), it remains to show that $G^0_{k,1} = \langle A^{k-1}G^0, G^0_{k,2}\rangle = \langle A G^0_{k-1}, G^0_{k,2}\rangle$ for $k\geqslant 2$. For $k = 2$, this is true by definition of $G^0_{k,1}$ and the fact that $G^0_{k,2}\geqslant G^0_{k,3}\geqslant \ldots$, which follows from part (iv). We assume that the statement is true for some $k\geqslant 2$. That $G_{k+1}^0$ contains $\langle A G^0_k, G^0_{k+1,2}\rangle$ follows from the fact that both $A G^0_k$ and $G^0_{k+1,2}$ are contained in the $(k+1)$-th element of the lower central series of $G$, which is precisely $G^0_{k+1}$. For the other direction, we observe that
\begin{align*}
G^0_{k+1} &=  [G_k, G] = [G^0_k, \langle G^0, \gamma \rangle] \leqslant \langle [G^0_k, G^0], [G^0_k,\gamma] \rangle \\
&\leqslant \langle [A^{k-1} G^0, G^0], [G^0_{k,2}, G^0], AG^0_k\rangle
\leqslant \langle G^0_{k+1,2}, AG_k^0\rangle.
\end{align*}
A similar argument shows that $G^0_{k+1} = \langle A^{k}G^0, G^0_{k+1,2}\rangle$.

Before we prove property (vi), we recall that $(G^0)^\nabla_k = \langle G_{k+1}, [G_j, G_{k-j}]: 1\leqslant j < k\rangle$. That (vi) holds for $k=1$ can be verified by inspection. For $k\geqslant 2$, we observe that $[A^{j-1} G^0, A^{k-j-1}G^0]\leqslant [G_j^0, G_{k-j}^0]$, and so
\begin{align*}
G^0_{k,2} \leqslant \langle [G^0_j, G^0_{k-j}]: 1\leqslant j < k\rangle;
\end{align*}
when coupled with property (v), this implies that $(G^0)^\nabla_k \geqslant \langle G^0_{k,2}, G^0_{k+1}\rangle$. For the converse, we have
\begin{align*}
[G_j^0, G^0_{k-j}] = [\langle A^{j-1}G^0, G^0_{j,2}\rangle, \langle A^{k-j-1}G^0, G^0_{k-j,2}\rangle] \leqslant \langle G^0_{k,2}, G^0_{k,3}, G^0_{k,4}\rangle \leqslant G^0_{k,2},
\end{align*}
for each $1\leqslant j < k$, from which it follows that $(G^0)^\nabla_k \leqslant \langle G^0_{k,2}, G^0_{k+1}\rangle$.
\end{proof}

Letting $g(n) = \prod_{i=1}^s g_{i}^{{n}\choose{i}}$, we observe from (\ref{abstract expansion of g}), (\ref{explicit expansion of g}) as well as parts (v) and (vi) of Lemma \ref{basic properties of the filtration on G^0} that 
\begin{align}
g_i = A^{i-1}\alpha \; \mod \; (G^0)_{i}^\nabla.
\end{align}

For an arbitrary $b\in G^0$, we have $g_b(n) = a^n b \gamma^n = b (\alpha_b \gamma^{-1})^n\gamma^n$, where $\alpha_b = \alpha[\alpha,b] Ab$, as observed in Section 11.3 of \cite{leibman_2009}. Letting $g_b(n) = \prod_{i=0}^s g_{b,i}^{{n}\choose{i}}$, it is therefore true that 
\begin{align}\label{simple formula for coefficients of g}
g_{b,i} = A^{i-1}\alpha_b = A^{i-1}\alpha \mod (G^0)^\nabla_i
\end{align}
for all $i\in\NN_+$.

For $i=1$, we have $g_{b,1} = \alpha$ mod $G^0_2$, and we claim that $g_{b,i}$ is irrational. The ergodicity of $a$ implies that for almost every $b$, the sequence $n\mapsto a^n b$ is equidistributed in $G/\Gamma$, and so the same is true for the sequence $g_b$ in $G^0/\Gamma^0$. Consequently, the projection $\pi(g_b): \ZZ\to G^0/(G^0_2\Gamma^0)$ is equidistributed as well. Since $\pi(g_b(n)) = \pi(b) + \pi(\alpha) n$, it follows that $\pi(\alpha)$ is an irrational element of $G^0/G^0_2$, and so $g_{b,1}$ is an irrational element of $G^0$.

Before proving that $g_{b,i}$ are irrational for $i>1$, we discuss some properties of the map $A: G\to G$. From the definition of the filtration $G^0_\bullet$ we observe that $AG^0_i\leqslant G^0_{i+1}$ for all $i\geqslant 1$ (this is also a consequence of parts (iv) and (v) of Lemma \ref{basic properties of the filtration on G^0}). Therefore the map $A_i := A|_{G^0_i}$ takes values in $G^0_{i+1}$, and moreover $A_i(\Gamma_i)\leqslant \Gamma_{i+1}$. We also observe that the projection $\overline{A}_i: G^0_i\to G^0_{i+1}/(G^0)^\nabla_{i+1}$ is a (continuous) group homomorphism because
\begin{align*}
A(xy) = [xy, \gamma] = [x,\gamma][[x,\gamma],y][y,\gamma] = Ax [Ax,y] Ay = Ax Ay \mod G^0_{2i+1, 2}
\end{align*}
for any $x,y\in G^0_i$ and $G^0_{2i+1,2}\leqslant G^0_{i+1,2}\leqslant (G^0)^\nabla_{i+1}$ by parts (iv) and (vi) of Lemma \ref{basic properties of the filtration on G^0}. From part (v) of Lemma \ref{basic properties of the filtration on G^0} it follows that $\overline{A}_i$ is surjective. Finally, we note using parts (iii) and (v) of Lemma \ref{basic properties of the filtration on G^0} that $A_i((G^0)^\nabla_i)\leqslant (G^0)^\nabla_{i+1}$. 

Suppose that $g_{b,i}$ is irrational but $g_{b,i+1}$ is not for some $1\leqslant i < s$. Then there exists a nontrivial $(i+1)$-th level character $\eta_{i+1}:G^0_{i+1}\to\RR$ such that $\eta_{i+1}(g_{b,i+1})\in\ZZ$. From (\ref{simple formula for coefficients of g}) and the fact that $\eta_{i+1}$ vanishes on $(G^0)^\nabla_{i+1}$, we deduce that $\eta_{i+1}(g_{b,i+1})=\eta_{i+1}(A^i \alpha)$.  We also let $\overline{\eta}_{i+1}:G^0_{i+1}/(G^0)^\nabla_{i+1}\to\RR$ be the induced map.

Let $\eta_i := \eta_{i+1}\circ A_i: G^0_i\to\RR$. It is an $i$-th level character as a consequence of four facts: the vanishing of $\eta_{i+1}$ on $(G^0)_{i+1}^\nabla$, the inclusion $(G^0_{i+1,2})\leqslant (G^0)_{i+1}^\nabla$ (both of which imply that $\eta_i = \overline{\eta}_{i+1}\circ \overline{A}_i$ is a continuous group homomorphism), the inclusion $A_i((G^0)^\nabla_i)\leqslant (G^0)^\nabla_{i+1}$, and the fact that $\eta_i(\Gamma_i)\leqslant\ZZ$. It moreover satisfies $$\eta_i(g_{b,i}) = \eta_i(A^{i-1}\alpha) = \eta_{i+1}(A^i \alpha) = \eta_{i+1}(g_{b,i+1}),$$ implying that  $\eta_i(g_{b,i})\in\ZZ$.
The nontriviality of $\eta_{i+1}$ implies that $\overline{\eta}_{i+1}$ and $\overline{A}_i$ are surjective maps onto nontrivial groups; hence $\eta_i$ is nontrivial. This contradicts the irrationality of $g_{b,i}$. By induction, $g_{b,1}$, ..., $g_{b,s}$ are all irrational, implying that $g_b$ is irrational.
\end{proof}

Proposition \ref{replacing a linear sequence by a polynomial one} is vaguely reminiscent of Proposition 3.1 of \cite{frantzikinakis_kra_2005} in that we replace a linear sequence by a polynomial object on a simpler space. These two results are not equivalent, however, in that in Proposition \ref{replacing a linear sequence by a polynomial one}, we end up with a polynomial sequence on a nilmanifold of a connected group where in Proposition 3.1 of \cite{frantzikinakis_kra_2005}, one obtains a unipotent affine transformation on a torus. 

\begin{lemma}\label{preserving factors when passing to connected component}
Let $G_\bullet$ and $G^0_\bullet$ be as given in Proposition \ref{replacing a linear sequence by a polynomial one}. Then $Z_i(G/\Gamma)=Z_i(G^0/\Gamma^0)=G^0/(G^0_{i+1}\Gamma^0)$ for each $i\in\NN$.
\end{lemma}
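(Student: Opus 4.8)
The plan is to read off the Host-Kra factors of $G/\Gamma$ from formula (\ref{Host-Kra factors of nilsystems}) applied to the lower central series $G_\bullet$ of $G$, and then transport that description to the connected picture $G^0/\Gamma^0$ through the canonical identification of the two nilmanifolds, which is available precisely because $T_a$ is totally ergodic.

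First I would make that identification precise. By Proposition \ref{totally ergodic nilsystems}, total ergodicity of $T_a$ is equivalent to $G/\Gamma$ being connected, equivalently to $G=G^0\Gamma$. Since $\Gamma^0=G^0\cap\Gamma$, the canonical map $\iota\colon G^0/\Gamma^0\to G/\Gamma$, $g\Gamma^0\mapsto g\Gamma$, is then well defined and injective, and it is surjective because $G=G^0\Gamma$; as $G^0$ is open in $G$ this is moreover a homeomorphism, it sends Haar measure to Haar measure, and it conjugates $T_a$ to the transformation it induces on $G^0/\Gamma^0$, whose orbit through $b\Gamma^0$ is exactly the polynomial orbit $g_b(n)\Gamma^0$ produced in Proposition \ref{replacing a linear sequence by a polynomial one}. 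Thus $\iota$ is an isomorphism of measure-preserving systems, so it matches the Host-Kra factors of $G^0/\Gamma^0$ with those of $G/\Gamma$; this is the content of the first equality in the lemma, once the factors are identified explicitly.

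Next I would compute the factors. For $i\geqslant1$, (\ref{Host-Kra factors of nilsystems}) gives $Z_i(G/\Gamma)=G/(G_{i+1}\Gamma)$, and since $i+1\geqslant2$ the subgroup $G_{i+1}$ is connected (Lemma 5 of \cite{host_kra_2018}), hence lies in $G^0$, so $G_{i+1}=G_{i+1}\cap G^0=G^0_{i+1}$, as noted after Proposition \ref{replacing a linear sequence by a polynomial one}. It then remains to run the quotient $G/(G_{i+1}\Gamma)$ through $\iota$: the composite $G^0\to G/(G^0_{i+1}\Gamma)$ is onto because $G=G^0\Gamma$, and for $g\in G^0$ one has $g\in G^0_{i+1}\Gamma$ if and only if $g=n\gamma$ with $n\in G^0_{i+1}\subseteq G^0$, which forces $\gamma=n^{-1}g\in G^0\cap\Gamma=\Gamma^0$, i.e.\ if and only if $g\in G^0_{i+1}\Gamma^0$. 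Hence $\iota$ carries $G^0/(G^0_{i+1}\Gamma^0)$ onto $G/(G^0_{i+1}\Gamma)=G/(G_{i+1}\Gamma)=Z_i(G/\Gamma)$, which yields the full chain $Z_i(G/\Gamma)=Z_i(G^0/\Gamma^0)=G^0/(G^0_{i+1}\Gamma^0)$ for $i\geqslant1$. The case $i=0$ is trivial: total ergodicity makes $Z_0(G/\Gamma)=G/(G^0\Gamma)$ the one-point factor, and $G^0/(G^0_1\Gamma^0)=G^0/(G^0\Gamma^0)$ is also a point, so all three objects agree.

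The one subtlety worth flagging --- and the reason the argument cannot simply apply (\ref{Host-Kra factors of nilsystems}) inside $G^0$ --- is that the filtration $G^0_\bullet=G_\bullet\cap G^0$ is in general \emph{not} the lower central series of $G^0$ when $G$ is disconnected: already $G^0_2=G_2$ can strictly contain $[G^0,G^0]$. So (\ref{Host-Kra factors of nilsystems}) must be invoked for the system $(G/\Gamma,T_a)$ with the lower central series of the ambient group $G$, and the passage to $G^0/\Gamma^0$ has to be made via the isomorphism $\iota$; beyond that, the proof is just bookkeeping with the identities $G=G^0\Gamma$, $\Gamma^0=G^0\cap\Gamma$ and the connectedness of $G_{i+1}$ for $i\geqslant1$.
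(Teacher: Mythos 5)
Your proof is correct and follows essentially the same route as the paper: invoke (\ref{Host-Kra factors of nilsystems}) for the lower central series of $G$, use $G=G^0\Gamma$ (total ergodicity) to identify $G/\Gamma$ with $G^0/\Gamma^0$, and use the connectedness of $G_{i+1}$ for $i\geqslant 1$ to get $G_{i+1}=G^0_{i+1}$, treating $i=0$ separately. Your explicit verification of the quotient identification $G^0/(G^0_{i+1}\Gamma^0)\cong G/(G_{i+1}\Gamma)$ and the remark that $G^0_\bullet$ need not be the lower central series of $G^0$ are just careful elaborations of steps the paper leaves implicit.
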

\begin{proof}
We do the cases $i=0$ and $i>0$ separately. For $i>0$, we recall from (\ref{Host-Kra factors of nilsystems}) that $Z_i(G/\Gamma) = G/G_{i+1}\Gamma$. Since $G/\Gamma = G^0/\Gamma^0$ by connectedness of $G/\Gamma$, and $G_j = G^0_j$ for $j\geqslant 2$, it follows that $$Z_i(G^0/\Gamma^0) = Z_i(G/\Gamma) = G/G_{i+1}\Gamma = G^0/G^0_{i+1}\Gamma^0.$$

For $i=0$, we have $Z_i(G/\Gamma) = G/G^0\Gamma = 1 = G^0/G^0\Gamma^0 = Z_i(G^0/\Gamma^0)$. 
\end{proof}


\section{Homogeneous and inhomogeneous polynomial progressions}\label{section on homogeneity}
The central message of this paper is that homogeneous polynomial progressions satisfy certain linear algebraic properties that make them pliable for our analysis. In this section, we explicitly describe these properties.

Let $\vec{P}\in\RR[x,y]^{t+1}$ be an integral polynomial progression. Let $V_k$ be the subspace of $\RR[x,y]$ given by
\begin{align*}
    V_k &= \Span_\RR\{ (x+P_i(y))^j:\; 0\leqslant i\leqslant t,\; 1\leqslant j\leqslant k \}\\
    &=\Span_\RR\left\{ {{x+P_i(y)}\choose{j}}:\; 0\leqslant i\leqslant t,\; 1\leqslant j\leqslant k \right\},
\end{align*}
and similarly let 
\begin{align*}
    W_k =\Span_\RR \left\{ {{x+P_i(y)}\choose{k}}:\; 0\leqslant i\leqslant t \right\}.
\end{align*}
Thus, the space $V_k$ consists of all the polynomial in $x,\; x+P_1(y),\; \ldots,\; x+P_t(y)$ of degree up to $k$ while the space $W_k$ is the span of ``Taylor monomials'' ${{x}\choose{k}}, {{x+P_1(y)}\choose{k}}, \ldots, {{x+P_t(y)}\choose{k}}$ of degree $k$.
We also set
\begin{align*}
V^* = \Span_\RR\{(Q_0, \ldots, Q_{t})\in\RR[u]^{t+1}:
 Q_0(x) + Q_1(x+P_1(y)) + \ldots + Q_{t}(x+P_{t}(y)) = 0 \}
\end{align*}
to be the space of all algebraic relations satisfied by $\vec{P}$. 
We recall that an algebraic relation $(Q_0, \ldots, Q_{t})$ is homogeneous if there exists $d\in\NN$ and $a_0, \ldots, a_d\in\RR$ not all zero such that $Q_i(u) = a_i u^d$ for each $0\leqslant i\leqslant t$. We call $\vec{P}$ {homogeneous} if $V^*$ is spanned by homogeneous algebraic relations, and {inhomogeneous} otherwise. 

The concepts of integral polynomial progression and homogeneity, as well as our results in this paper, could likely be extended to multiparameter polynomial progressions of the form
\begin{align*}
(x,\; x+P_1(y_1, \ldots, y_r),\; \ldots,\; x+P_t(y_1, \ldots, y_r));
\end{align*}
however, we do not pursue this generalisation so as not to obfuscate the notation. 

Some important examples of homogeneous progressions include:
\begin{enumerate}
\item linear progressions $(x,\; x+a_1 y,\; \ldots,\; x+ a_t y)$ for distinct nonzero integers  $a_1, \ldots, a_t$, and more generally linear progressions of the form $(x,\; x + \psi_1(y_1, \ldots, y_r),\; \ldots,\; x + \psi_t(y_1, \ldots, y_r)$ for some linear forms $\psi_1, \ldots, \psi_t:\ZZ^r\to\ZZ$;
\item progressions of algebraic complexity 0, i.e. progressions where the polynomials $P_1, \ldots, P_t$ are integral and linearly independent;
\item progressions of algebraic complexity 1, such as $(x,\; x+y,\; x+y^2,\; x+y+y^2)$, which satisfy no quadratic or higher-order algebraic relation. 
\end{enumerate}

Another, less obvious example of a homogeneous progression is $(x, \; x+y,\; x+2y,\; x+y^3)$, already mentioned in the introduction, which only satisfies the homogeneous relation
\begin{align}\label{relation in 3APs, 2}
    x - 2(x+y) + (x+2y) = 0.
\end{align}
This progression should be contrasted with $(x, \; x+y,\; x+2y,\; x+y^2)$, which is inhomogeneous because it satisfies both (\ref{relation in 3APs, 2}) and the inhomogeneous relation
\begin{align}\label{algebraic relation for x, x+y, x+2y, x+y^2, 2}
    x^2 + 2x - 2(x+y)^2 + (x+2y)^2 - 2(x+y^2) = 0
\end{align}
that cannot be written down as a sum of homogeneous relations. More generally, progressions of the form
\begin{align*}
(x,\; x+y,\; \ldots,\; x+(t-1)y,\; x+ P_t(y))
\end{align*}
are all inhomogeneous whenever $1<\deg P_t < t$ because there exist polynomials $Q_0, \ldots, Q_{t-1}$ of degree $\deg P_t$ for which
\begin{align*}
    Q_0(x) + Q_1(x+y) + \ldots + Q_{t-1}(x+(t-1)y) + (x+P_t(y)) = 0.
\end{align*}

When discussing algebraic relations $(Q_0, \ldots, Q_t)$, we want to move freely between expressing the polynomials $Q_i$ in terms of the standard basis $\{u^k: k\in\NN\}$ on one hand and the Taylor basis $\{{{u}\choose{k}}: k\in\NN\}$ on the other hand. The next lemma allows us to make this transition for homogeneous polynomials.
\begin{lemma}\label{equivalent descriptions of algebraic relations}
Let $t\in\NN_+$ and $\vec{P}\in\RR[x,y]^{t+1}$ be an integral polynomial progression. Let $(Q_0, \ldots, Q_t)$ be an algebraic relation of degree $d$ satisfied by $\vec{P}$, and set $Q_i(u) = \sum_{k=0}^d b_{ik}{{u}\choose{k}}$. Then the following conditions are equivalent:
\begin{enumerate}
    \item The relation $(Q_0, \ldots, Q_t)$ is a sum of homogeneous algebraic relations.
    \item For every $0\leq k\leq d$  and $0\leq j\leq k$, we have 
    \begin{align}\label{homogeneous relations in Taylor monomials}
        b_{0k} {{x}\choose{j}} + b_{1k} {{x+P_1(y)}\choose{j}} + \ldots + b_{tk} {{x+P_t(y)}\choose{j}} = 0.
    \end{align}
    \item For every $0\leq k\leq d$ and $0\leq j\leq k$, we have 
    \begin{align}\label{equation in equivalent descriptions}
        b_{0k} x^j + b_{1k} (x+P_1(y))^j + \ldots + b_{tk} (x+P_t(y))^j = 0.
    \end{align}
\end{enumerate}
\end{lemma}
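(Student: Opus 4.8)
The plan is to prove the equivalence $(i)\Leftrightarrow(ii)\Leftrightarrow(iii)$ by exploiting the graded structure of the polynomial ring $\RR[x,y]$ with respect to the ``augmentation degree'' coming from the substitution $u\mapsto x$, $u\mapsto x+P_i(y)$. The cleanest route is to show $(ii)\Leftrightarrow(iii)$ first by a purely linear-algebraic argument about the two bases, and then to establish $(i)\Leftrightarrow(ii)$ using the observation that condition $(ii)$ is exactly the statement that each ``level-$k$ slice'' of the relation $(Q_0,\dots,Q_t)$ is itself a relation.

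For $(ii)\Leftrightarrow(iii)$: fix $0\le k\le d$ and consider the $(t+1)$-tuple of scalars $(b_{0k},\dots,b_{tk})$. The key point is that for a fixed tuple of coefficients $(c_0,\dots,c_t)$, the set of exponents $j$ for which $\sum_i c_i\,(x+P_i(y))^j=0$ coincides with the set of $j$ for which $\sum_i c_i\binom{x+P_i(y)}{j}=0$. Indeed, $\binom{u}{j}=\frac{1}{j!}\prod_{\ell=0}^{j-1}(u-\ell)$ is a polynomial in $u$ of degree exactly $j$ with the same leading coefficient structure as $u^j$ up to scaling, and more precisely the change-of-basis matrix between $\{1,u,\dots,u^m\}$ and $\{\binom{u}{0},\dots,\binom{u}{m}\}$ is upper-triangular with nonzero diagonal entries (Stirling numbers). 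Substituting $u=x+P_i(y)$ commutes with this linear change of basis, so the span of $\{\sum_i c_i\,(x+P_i(y))^j : 0\le j\le m\}$ equals the span of $\{\sum_i c_i\binom{x+P_i(y)}{j}: 0\le j\le m\}$ inside $\RR[x,y]$, and vanishing of the first $m+1$ of these is equivalent to vanishing of the second $m+1$. Applying this for each $k$ with $m=k$ gives $(ii)\Leftrightarrow(iii)$; I would spell out the triangularity argument but not grind through Stirling-number identities.

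For $(i)\Rightarrow(ii)$: if $(Q_0,\dots,Q_t)=\sum_{k} (a_{0k}u^k,\dots,a_{tk}u^k)$ is a sum of homogeneous relations, then rewriting each monomial $u^k$ in the Taylor basis and collecting terms shows $b_{ik}$ is, up to the (invertible, triangular) change of basis, the vector $(a_{0k},\dots,a_{tk})$ plus contributions from higher homogeneous pieces; one then checks that the homogeneity of the degree-$k$ piece forces $\sum_i a_{ik}(x+P_i(y))^k=0$, hence by the triangularity argument $\sum_i a_{ik}(x+P_i(y))^j=0$ for all $j\le k$, and unwinding the change of basis yields $(ii)$. For the converse $(ii)\Rightarrow(i)$: assume $(ii)$. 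Taking $j=k$ in \eqref{homogeneous relations in Taylor monomials} shows that for each $k$, the tuple $(b_{0k}u^k,\dots,b_{tk}u^k)$ (equivalently, after change of basis, the corresponding monomial tuple) is itself a \emph{homogeneous} algebraic relation satisfied by $\vec P$; summing these over $k$ reconstructs $(Q_0,\dots,Q_t)$ because $Q_i(u)=\sum_k b_{ik}\binom{u}{k}$. Hence $(Q_0,\dots,Q_t)$ is a sum of homogeneous relations, which is $(i)$.

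The main obstacle I anticipate is bookkeeping in the $(i)\Leftrightarrow(ii)$ direction: when one writes a homogeneous relation $(a_0u^k,\dots,a_tu^k)$ in the Taylor basis, it spreads over all degrees $\le k$, so disentangling which $b_{ik}$ come from which homogeneous piece requires an induction on $k$ (from top degree downward) to peel off the leading homogeneous component one degree at a time. This is routine but is the step where a careless argument could go wrong, so I would set it up as a clean downward induction using that the degree-$d$ part of $(Q_0,\dots,Q_t)$ must by itself be a homogeneous relation, subtract it off, and repeat. Everything else — the triangularity of the basis change and the fact that substitution commutes with it — is formal.
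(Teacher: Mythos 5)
Your overall architecture matches the paper's — a triangular change of basis between $\{u^j\}$ and $\{\binom{u}{j}\}$ for (ii)$\Leftrightarrow$(iii), and a downward induction that peels off the top homogeneous piece for (i)$\Leftrightarrow$(iii) — but there is a genuine gap at the one step that actually makes the lemma true. In the direction (i)$\Rightarrow$(ii) you pass from $\sum_i a_{ik}(x+P_i(y))^k=0$ to $\sum_i a_{ik}(x+P_i(y))^j=0$ for all $j\le k$ and attribute this to ``the triangularity argument''; you make the same tacit move when you claim that the \emph{set} of exponents $j$ with $\sum_i c_i(x+P_i(y))^j=0$ coincides with the set of $j$ with $\sum_i c_i\binom{x+P_i(y)}{j}=0$. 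Triangularity of the change of basis only converts the simultaneous statement ``vanishing for all $j\le m$'' from one basis to the other (which is indeed all you need for (ii)$\Leftrightarrow$(iii)); it cannot produce vanishing at a degree $j<k$ from vanishing at degree $k$ for a fixed coefficient tuple. That implication is simply false for general configurations (take $R_1=x$, $R_2=-x$: then $R_1^2-R_2^2=0$ but $R_1-R_2\neq 0$), so no purely basis-theoretic argument can supply it. It holds here only because the progression has the special shape $x+P_i(y)$, which lets one apply $\partial/\partial x$ (or the discrete derivative $\partial_x$, which sends $\binom{x+P_i(y)}{k}$ to $\binom{x+P_i(y)}{k-1}$) repeatedly to the relation $\sum_i a_{ik}(x+P_i(y))^k=0$. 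This differentiation step is exactly what the paper uses, and it is the only place the hypothesis on the form of $\vec{P}$ enters; your proof as written never invokes that hypothesis, which is a warning sign, since the paper points out that the lemma can fail for configurations of the general form $(P_1(x,y),\ldots,P_t(x,y))$. Once the differentiation step is inserted, your downward induction (the degree-$d$ part is itself a homogeneous relation because $c_{dd}\neq 0$; subtract it and repeat) goes through essentially as in the paper.

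A smaller, easily repaired slip occurs in (ii)$\Rightarrow$(i): you assert that the monomial tuples $(b_{0k}u^k,\ldots,b_{tk}u^k)$ are homogeneous relations and that summing them over $k$ reconstructs $(Q_0,\ldots,Q_t)$; it does not, since $Q_i(u)=\sum_k b_{ik}\binom{u}{k}$. The correct bookkeeping is to expand each binomial tuple as $(b_{0k}\binom{u}{k},\ldots,b_{tk}\binom{u}{k})=\sum_{j\le k}c_{jk}\,(b_{0k}u^j,\ldots,b_{tk}u^j)$ and use (iii) for \emph{all} $j\le k$, not just $j=k$, to see that each summand is a homogeneous relation; summing over $k$ then genuinely recovers $(Q_0,\ldots,Q_t)$.
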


In particular, the condition (ii) implies that homogeneous relations can equivalently be defined as relations of the form $(Q_0, ..., Q_t) = (a_0 {{u}\choose{d}}, \ldots, a_t {{u}\choose{d}})$. 

\begin{proof}
We first show the equivalence of (ii) and (iii), to be followed by the equivalence of (i) and (iii). 

The implication (iii) $\implies$ (ii) follows from the fact that the polynomial ${{u}\choose{j}}$ is a sum of the polynomials $1, u, \ldots, u^j$. For the converse, we similarly note that $u^j$ is a sum of the polynomials $1, u, \ldots, {{u}\choose{j}}$.

To prove the equivalence of (i) and (iii), we set ${{u}\choose{k}} = \sum_{j=0}^k c_{jk} u^j$ for each $k\in\NN$, so that $Q_i(u) = \sum_{k=0}^d b_{ik}\sum_{j=0}^k c_{jk} u^j$. Importantly, $c_{jk}\neq 0$ for any $0\leq j\leq k$. This allows us to rewrite
\begin{align*}
    0 &= Q_0(x) + Q_1(x+P_1(y)) + \cdots + Q_t(x+P_t(y))\\
    &= \sum_{i=0}^t \sum_{k=0}^d b_{ik}\sum_{j=0}^k c_{jk} (x+P_i(y))^j\\
    &= \sum_{j=0}^k \sum_{i=0}^t \sum_{k=j}^d b_{ik} c_{jk} (x+P_i(y))^j.
\end{align*}
The relation $(Q_0, \ldots, Q_t)$ is a sum of homogeneous algebraic relations if and only if for every $0\leq j\leq d$, we have
\begin{align}\label{condition in the equivalent descriptions}
     \sum_{k=j}^d  c_{jk}\sum_{i=0}^t b_{ik} (x+P_i(y))^j = 0,
\end{align}
and it is immediate from this expression that (iii) implies (i). To prove the implication (i) $\implies$ (iii), we assume that the relation $(Q_0, \ldots, Q_t)$ is indeed a sum of homogeneous algebraic relations, and so (\ref{condition in the equivalent descriptions}) holds for $0\leq j\leq d$. Taking $j = d$ implies (\ref{equation in equivalent descriptions}) for $j=d$, $k=d$, i.e. 
\begin{align*}
    b_{0d} x^d + b_{1d} (x+P_1(y))^d + \ldots + b_{td} (x+P_t(y))^d = 0.    
\end{align*}
Taking partial derivatives with respect to $x$ of the expression above $d-j$ times implies (\ref{equation in equivalent descriptions}) for $k = d$ and $0\leq j \leq d$. 

Thus, (\ref{condition in the equivalent descriptions}) equals
\begin{align}
     \sum_{k=j}^{d-1}  c_{jk}\sum_{i=0}^t b_{ik} (x+P_i(y))^j = 0.
\end{align}
Running the same argument as above, we prove (\ref{equation in equivalent descriptions}) for $k = d-1$ and $0\leq j\leq d-1$. Inducting downwardly on $k$ proves (iii) for all required values of $k$ and $j$.
\end{proof}
We observe that the argument in Lemma \ref{equivalent descriptions of algebraic relations} relied on the fact that the polynomial progression takes the form
\begin{align}\label{form of the progression}
    (x,\; x+P_1(y),\; \ldots,\; x+P_t(y))
\end{align}
(taking $P_1, \ldots, P_t$ to be polynomials of several variables would also do) rather than the more general form
\begin{align}\label{more general progression}
    (P_1(x, y),\; \ldots,\; P_t(x, y)).
\end{align}
This is because when the progression takes the form (\ref{form of the progression}), we can use partial differentiating with respect to $x$ to lower the degree of algebraic relations. Without this, Lemma \ref{equivalent descriptions of algebraic relations} need not hold, and so we would not have the same correspondence between relations of the form $(Q_0, ..., Q_t) = (a_0 u^d, \ldots, a_t u^d)$ and $(b_0 {{u}\choose{d}}, \ldots, b_t {{u}\choose{d}})$. 

The special form (\ref{form of the progression}) of our progression also ensures that we do not encounter issues similar to what has been discovered by Altman with regards to the original proof and statement of Theorem 1.13 from \cite{green_tao_2010a} (see \cite{tao_2020} for the explanation of the problem). Similar issues would have quite plausibly appeared, however, if we dealt with more general progressions like (\ref{more general progression}).

We define several more families of polynomial vector spaces. 
For $k\in\NN_+$, we let 
\begin{align*}
    W_k^c = W_k\cap\sum_{j\neq k}W_j \quad {\rm{and}}\quad W^c = \sum_k W_k^c,
\end{align*}
as well as the family of quotient spaces
\begin{align*}
    W'_k = W_k/W_k^c = W_k/{\left(W_k\cap\sum_{j\neq k}W_j\right)}.
\end{align*}

The space $W_k^c$ captures all the polynomials in $W_k$ that ``participate'' in inhomogeneous algebraic relations, an intuition made more precise by the result below and the examples discussed below Proposition \ref{bound on algebraic complexity}. The notation $W_k^c$ is supposed to signify the fact that $W_k^c$ is a complement of the subspace $W_k'$ inside $W_k$.

\begin{proposition}[Equivalent conditions for homogeneity]\label{equivalent conditions for homogeneity}
Let $t\in\NN_+$ and $\vec{P}\in\RR[x,y]^{t+1}$ be an integral polynomial progression. The following are equivalent:
\begin{enumerate}
\item $\vec{P}$ is homogeneous;
\item $W_k^c$ is trivial for each $k\in\NN_+$;
\item $W'_k = W_k$ for each $k\in\NN_+$.
\end{enumerate}
\end{proposition}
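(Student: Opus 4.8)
The plan is to prove the cycle of implications $(i)\Rightarrow(iii)\Rightarrow(ii)\Rightarrow(i)$, using Lemma~\ref{equivalent descriptions of algebraic relations} to translate between algebraic relations written in the standard basis and in the Taylor basis. The key observation underlying everything is that an algebraic relation $(Q_0,\ldots,Q_t)$ with $Q_i(u)=\sum_k b_{ik}\binom{u}{k}$ corresponds, grouping by degree, to the statement that $\sum_{k} \left(\sum_i b_{ik}\binom{x+P_i(y)}{k}\right)$, once re-expanded, vanishes; and by Lemma~\ref{equivalent descriptions of algebraic relations} the relation is a sum of homogeneous relations precisely when each degree-$k$ piece $\sum_i b_{ik}\binom{x+P_i(y)}{k}$ already vanishes on its own. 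So homogeneity of $\vec P$ is equivalent to saying: whenever a linear combination $\sum_k v_k$ with $v_k\in W_k$ equals zero, each $v_k$ is itself zero. This is exactly the assertion that the sum $\sum_k W_k$ is a direct sum, which is what statements (ii) and (iii) encode.

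First I would prove $(i)\Rightarrow(iii)$. Suppose $\vec P$ is homogeneous, and take $w\in W_k^c$, so $w=\sum_{j\neq k} w_j$ for some $w_j\in W_j$. Writing $w=\sum_i b_{ik}\binom{x+P_i(y)}{k}$ and $w_j=\sum_i b_{ij}\binom{x+P_i(y)}{j}$, the identity $w-\sum_{j\neq k}w_j=0$ unpacks into an algebraic relation $(Q_0,\ldots,Q_t)$ with $Q_i(u)=b_{ik}\binom{u}{k}-\sum_{j\neq k}b_{ij}\binom{u}{j}$ — here I would need to be slightly careful that these $Q_i$ genuinely define an algebraic relation in the sense of Definition~\ref{algebraic complexity}, i.e. that $\sum_i Q_i(x+P_i(y))=0$, which is just a rearrangement of $w=\sum_{j\neq k}w_j$. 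By homogeneity this relation is a sum of homogeneous relations, so Lemma~\ref{equivalent descriptions of algebraic relations}(ii) forces each degree component to vanish; in particular the degree-$k$ component $\sum_i b_{ik}\binom{x+P_i(y)}{k}=w$ is zero. Hence $W_k^c$ is trivial, giving $W_k'=W_k$, which is (iii) (and simultaneously (ii)).

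The implications $(iii)\Rightarrow(ii)$ and $(ii)\Rightarrow(iii)$ are essentially definitional: $W_k'=W_k$ means $W_k^c=W_k\cap\sum_{j\neq k}W_j$ is the zero subspace, since $\dim W_k' = \dim W_k - \dim W_k^c$. For the remaining implication $(ii)\Rightarrow(i)$, suppose every $W_k^c$ is trivial and let $(Q_0,\ldots,Q_t)$ be an arbitrary algebraic relation of degree $d$, with $Q_i(u)=\sum_{k=0}^d b_{ik}\binom{u}{k}$. Set $w_k:=\sum_i b_{ik}\binom{x+P_i(y)}{k}\in W_k$. The relation says $\sum_{k=0}^d w_k=0$, hence for the top degree $d$ we get $w_d=-\sum_{k<d}w_k$, which would place $w_d$ in $W_d\cap\sum_{k<d}W_k\subseteq W_d^c=\{0\}$, so $w_d=0$; but then $w_d$ having degree exactly $d$ would be a contradiction unless $w_d$ already was $0$, and peeling off $w_d$ we repeat the argument downward to conclude every $w_k=0$. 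By Lemma~\ref{equivalent descriptions of algebraic relations} (the equivalence of (i) and (ii) there, with $j=k$), each vanishing $w_k=0$ means the degree-$k$ part of $(Q_0,\ldots,Q_t)$ is a homogeneous relation, so $(Q_0,\ldots,Q_t)$ is a sum of homogeneous relations; thus $\vec P$ is homogeneous.

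The main obstacle I anticipate is bookkeeping around the degree-grading: one must be careful that $w_k\in W_k$ really is a polynomial all of whose Taylor-monomials are of degree exactly $k$ (not lower), so that the ``peel off the top degree'' argument in $(ii)\Rightarrow(i)$ is valid — this is where the special form $(x,x+P_1(y),\ldots,x+P_t(y))$ of the progression, and the fact from Lemma~\ref{equivalent descriptions of algebraic relations} that $\binom{u}{k}$ has a nonzero $u^k$-coefficient, are being used implicitly. The other subtlety is making sure the passage between ``$\sum_k v_k = 0$ with $v_k\in W_k$'' and genuine algebraic relations $(Q_0,\ldots,Q_t)\in\RR[u]^{t+1}$ is reversible, which again is exactly the content of Lemma~\ref{equivalent descriptions of algebraic relations}; once that lemma is in hand, the proof is a short direct-sum argument rather than anything substantive.
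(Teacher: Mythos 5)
Your proposal is correct and follows essentially the same route as the paper: both arguments rest entirely on Lemma \ref{equivalent descriptions of algebraic relations} together with the correspondence between the degree-$k$ Taylor components of a relation and elements of the spaces $W_k$, with the partial discrete derivative $\partial_x$ handling the degree bookkeeping (your top-down peeling in (ii)$\Rightarrow$(i) is just the contrapositive organization of the paper's explicit ``inhomogeneous relation yields a nonzero element of some $W_k^c$'' argument, and your (i)$\Rightarrow$(iii) is the reversal the paper alludes to). The only loose ends — the constant ($k=0$) Taylor piece, which vanishes automatically since every element of $W_j$, $j\geqslant 1$, vanishes at $(x,y)=(0,0)$ and $P_i(0)=0$, and the fact that the Lemma's condition (ii) for $j<k$ follows from the $j=k$ case by applying $\partial_x$ — are exactly the routine points you flag, not genuine gaps.
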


Intuitively, Proposition \ref{equivalent conditions for homogeneity} states that homogeneity is equivalent to the fact that for every $k\in\NN_+$, there are no polynomials in $W_k$ that could be used in constructing an inhomogeneous algebraic relation (condition (ii)). When proving Theorem \ref{finitary equidistribution on nilmanifolds}, our key equidistribution result on nilmanifolds, we will use the condition (iii) of Proposition \ref{equivalent conditions for homogeneity}.

\begin{proof}
The equivalence of (ii) and (iii) follows trivially from the definition of $W'_k$, and we focus on showing the equivalence of (i) and (ii) instead. The inhomogeneity of $\vec{P}$ implies the existence of a nontrivial algebraic relation
\begin{align*}
(Q_0(u), \ldots, Q_t(u)) 
= \left(\sum_k b_{0k} {{u}\choose{k}}, \ldots, \sum_k b_{tk} {{u}\choose{k}} \right)    
\end{align*}
 that is not a sum of homogeneous algebraic relations. By Lemma \ref{equivalent descriptions of algebraic relations}, this means that there exists $k\in\NN_+$ and $0\leq j \leq k$ for which
 \begin{align}\label{homogeneous relations in Taylor monomials, 2}
    b_{0k} {{x}\choose{j}} + b_{1k} {{x+P_1(y)}\choose{j}} + \ldots + b_{tk} {{x+P_t(y)}\choose{j}} \neq 0.
 \end{align}
 We claim that in fact we can take $j=k$. We define the discrete derivative of $Q\in\RR[u]$ to be $\partial Q(u) = Q(u+1) - Q(u)$, and the partial discrete derivative of $R\in\RR[x,y]$ with respect to $x$ to be $\partial_x R(x,y) = R(x+1, y) - R(x,y)$. Observing that $\partial {{u}\choose{k}} = {{u+1}\choose{k}}-{{u}\choose{k}} = {{u}\choose{k-1}}$, we deduce that $\partial_x {{x+P_i(y)}\choose{k}} = {{x+P_i(y)}\choose{k-1}}$. It follows that if 
\begin{align*}
    b_{0k} {{x}\choose{j}} + b_{1k} {{x+P_1(y)}\choose{j}} + \ldots + b_{tk} {{x+P_t(y)}\choose{j}} = 0,
\end{align*}
then applying the partial discrete derivative with respect to $x$ to the expression above $k-j$ times would imply
\begin{align*}
    b_{0k} {{x}\choose{j}} + b_{1k} {{x+P_1(y)}\choose{j}} + \ldots + b_{tk} {{x+P_t(y)}\choose{j}} = 0,
 \end{align*}
 contradicting (\ref{homogeneous relations in Taylor monomials, 2}). We can thus assume that $j=k$ in \eqref{homogeneous relations in Taylor monomials, 2}. 
 
Since
\begin{align*}
Q_0(x) + Q_1(x+P_1(y)) + \ldots + Q_t(x+P_t(y)) = 0,
\end{align*}
we have
\begin{align*}
R(x,y) = -\sum_{j\neq k} \sum_{i=0}^t b_{ij} {{x+P_i(y))}\choose{j}} \in \sum_{j\neq k} W_j,
\end{align*}
and so $W^c_k = W_k\cap\sum_{j\neq k} W_j$ is nontrivial. Thus (ii) implies (i) by contrapositive. The argument can be reversed, and so (i) and (ii) are in fact equivalent. 
\end{proof}

For homogeneous progressions, it is quite straightforward to obtain an upper bound on algebraic complexity.
\begin{proposition}\label{bound on algebraic complexity}
Let $t\in\NN_+$ and $\vec{P}\in\RR[x,y]^{t+1}$ be a homogeneous polynomial progression. Then $\A_i(\vec{P})\leqslant t-1$ for each $0\leqslant i\leqslant t$.
\end{proposition}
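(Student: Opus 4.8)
The plan is to reduce the statement to one claim: \emph{for every integral polynomial progression $\vec P\in\RR[x,y]^{t+1}$ and every integer $d\geqslant t$, the only tuple $(a_0,\ldots,a_t)\in\RR^{t+1}$ with $\sum_{i=0}^t a_i(x+P_i(y))^d=0$ is $(0,\ldots,0)$} — in other words, $\vec P$ has no homogeneous algebraic relation of degree $d$ once $d\geqslant t$. Granting this, homogeneity finishes the proof at once. By Definition \ref{homogeneity}, an arbitrary algebraic relation $(Q_0,\ldots,Q_t)$ of $\vec P$ is a finite linear combination of homogeneous relations; since $V^*$ is a vector space, collecting the homogeneous summands of a given degree $d$ lets us write $(Q_0,\ldots,Q_t)=\sum_d(a_0^{(d)}u^d,\ldots,a_t^{(d)}u^d)$ where each tuple $(a_0^{(d)}u^d,\ldots,a_t^{(d)}u^d)$ is again an algebraic relation of $\vec P$. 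For $d\geqslant t$ the claim forces $a_0^{(d)}=\cdots=a_t^{(d)}=0$, so $Q_i(u)=\sum_{d\leqslant t-1}a_i^{(d)}u^d$ has degree at most $t-1$ for each $i$, which is precisely $\A_i(\vec P)\leqslant t-1$.

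To prove the claim I would begin with $\sum_{i=0}^t a_i(x+P_i(y))^d=0$, expand each $(x+P_i(y))^d$ by the binomial theorem, and collect terms according to the power of $x$:
\begin{align*}
\sum_{j=0}^d\binom{d}{j}\,x^{\,d-j}\Bigl(\sum_{i=0}^t a_i P_i(y)^j\Bigr)=0
\end{align*}
As the monomials $1,x,\ldots,x^d$ are linearly independent over $\RR[y]$ and $\binom dj\neq0$, this yields the polynomial identities $\sum_{i=0}^t a_i P_i(y)^j=0$ in $\RR[y]$ for all $0\leqslant j\leqslant d$. Recalling that $P_0=0$, the case $j=0$ reads $\sum_{i=0}^t a_i=0$, while for $1\leqslant j\leqslant d$ it becomes $\sum_{i=1}^t a_i P_i(y)^j=0$.

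Next I would specialise $y$ to a single well-chosen value. The polynomials $0,P_1,\ldots,P_t$ are pairwise distinct, so each of their pairwise differences is a nonzero polynomial with only finitely many real zeros; hence there is $y_0\in\RR$ for which $P_1(y_0),\ldots,P_t(y_0)$ are pairwise distinct and nonzero. Since $d\geqslant t$, the identities for $j=1,\ldots,t$ give the linear system $\sum_{i=1}^t a_i P_i(y_0)^j=0$, $j=1,\ldots,t$, whose coefficient matrix factors as $\bigl(P_i(y_0)^{\,j-1}\bigr)_{i,j=1}^{t}\cdot\operatorname{diag}\bigl(P_1(y_0),\ldots,P_t(y_0)\bigr)$ — a Vandermonde matrix in distinct entries times an invertible diagonal matrix, hence invertible. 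Therefore $a_1=\cdots=a_t=0$, and then $a_0=0$ from the $j=0$ identity, which establishes the claim.

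The computation is routine once the bookkeeping in powers of $x$ is in place, so I do not anticipate a real obstacle. The one point requiring care is the hypothesis, implicit in the notion of an integral polynomial progression, that its $t+1$ terms are distinct polynomials — equivalently that $P_1,\ldots,P_t$ are nonzero and pairwise distinct — which is exactly what produces the evaluation point $y_0$. This hypothesis cannot be dropped: a progression with two equal terms would satisfy the homogeneous relation $(u^d,-u^d,0,\ldots,0)$ for every $d$, so its algebraic complexity would be infinite.
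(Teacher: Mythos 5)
Your proof is correct and takes essentially the same route as the paper: homogeneity reduces an arbitrary relation to homogeneous ones, and a Vandermonde argument on the system $\sum_{i=1}^t a_i P_i(y)^j=0$, $j=1,\dots,t$, rules out any nontrivial homogeneous relation of degree $\geqslant t$. The differences are only organizational — you stay in the power basis $u^d$ and evaluate at a generic point $y_0$, while the paper differentiates down to degree exactly $t$, passes to the Taylor basis via Lemma \ref{equivalent descriptions of algebraic relations}, and applies the Vandermonde argument to the polynomials $P_i(y)$ themselves; both arguments, as you rightly note, implicitly use that $0,P_1,\dots,P_t$ are pairwise distinct.
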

This bound is sharp, as evidenced by the example of arithmetic progressions.

\begin{proof}
Suppose that $\A_i(\vec{P}) \geq  t$, and let 
\begin{align*}
Q_0(x) + Q_1(x+P_1(y)) + \ldots + Q_t(x+P_t(y)) = 0
\end{align*}
be an algebraic relation of degree $d\geq t$. Taking partial derivative with respect to $x$ of the expression above $d-t$ times, we can assume that the relation has degree $t$. The homogeneity of $\vec{P}$ and Lemma \ref{equivalent descriptions of algebraic relations} impliy the existence of a nontrivial algebraic relations of the form
\begin{align}\label{homogeneous relation}
a_0 {{x}\choose{t}} + a_1 {{x+P_i(y)}\choose{t}} +  \ldots + a_t {{x+P_t(y)}\choose{t}} = 0.
\end{align}
The relation (\ref{homogeneous relation}) and the formula
\begin{align*}
    {{x+P_i(y)}\choose{t}}={{x}\choose{t}}+{{x}\choose{t-1}}P_i(y) + {{x}\choose{t-2}}{{P_i(y)}\choose{2}}+ \ldots +{{P_i(y)}\choose{t}},
\end{align*}
imply
\begin{align*}
a_1 {{P_i(y)}\choose{k}} +  \ldots + a_t {{P_t(y)}\choose{k}} = 0  
\end{align*}
for $1\leq k\leq t$. This gives us $t$ equations
\begin{alignat*}{2}
    &a_1 P_1(y) + \ldots + &&a_t P_t(y) = 0\\
    &a_1 P_1(y)^2 + \ldots + &&a_t P_t(y)^2 = 0\\
    &\vdots  &&\vdots \\
    &a_1 P_1(y)^t + \ldots + &&a_t P_t(y)^t = 0.
\end{alignat*}
The invertibility of the Vandermonde matrix and the distinctness of the polynomials $P_1, \ldots, P_t$ imply that these $t$ equations can only be satisfied if $a_1 = \ldots = a_t = 0$, which also implies $a_0 = 0$. This contradicts the nontriviality of (\ref{homogeneous relation}).

\end{proof}

Proposition \ref{equivalent conditions for homogeneity} implies that homogeneous progressions satisfy
\begin{align}\label{direct sum for homogeneous configurations}
    V_k = \bigoplus_{i=1}^k W_i = \bigoplus_{i=1}^k W'_i.
\end{align}
In the inhomogeneous case, we instead have
\begin{align}\label{direct sum for inhomogeneous configurations}
    V_k = \sum_{i=1}^k W_i = \left(\bigoplus_{i=1}^k W'_i\right)\oplus(W^c\cap V_k) 
\end{align}
for some nontrivial subspace $W^c\cap V_k$. The nontriviality of this subspace is the main source of difficulty preventing us from generalising Theorem \ref{Main theorem} to inhomogeneous progressions.

Given the rather abstract nature of the spaces $W_k, W'_k$ and $W^c_k$, we illustrate their definitions with concrete examples. For the homogeneous progression $(x, \; x+y,\; x+2y,\; x+y^3)$, we have
\begin{align*}
    W_1' = W_1 = \Span_\RR\{x, y, y^3\}\quad {\rm{and}} \quad W'_2 = W_2 = \Span_\RR\left\{{{x}\choose{2}}, xy+ {{y}\choose{2}}, y^2, xy^3 + {{y^3}\choose{2}}\right\}, 
\end{align*}
while for the inhomogeneous progression $(x, \; x+y,\; x+2y,\; x+y^2)$, we have
\begin{align*}
    W_1 = \Span_\RR\{x, y, y^2\}\quad {\rm{and}} \quad W_2 = \Span_\RR\left\{{{x}\choose{2}}, xy+ {{y}\choose{2}}, y^2, xy^2 + {{y^2}\choose{2}}\right\} 
\end{align*}
but 
\begin{align*}
    W_1' = \Span_\RR\{x, y\}, \quad W_2' = \Span_\RR\left\{{{x}\choose{2}}, xy+ {{y}\choose{2}}, xy^2 + {{y^2}\choose{2}}\right\} \quad {\rm{and}} \quad W^c = \Span_\RR\{y^2\}.
\end{align*}
The nontriviality of $W^c$ for the latter progression is intrinsically related to the algebraic relation (\ref{algebraic relation for x, x+y, x+2y, x+y^2, 2}).

The spaces $V_k$ and $W_k$ are subspaces of $\RR[x,y]$.
We also need an analogous family of subspaces of $\RR^{t+1}$. For a polynomial progression $\vec{P}\in\RR[x,y]^{t+1}$, we let
\begin{align*}
\vec{P}^k(x,y) = (x^k, (x+P_1(y))^k, \ldots, (x+P_{t}(y))^k) \; {\rm{and}} \; {{\vec{P}(x,y)}\choose{k}} = \left({{x}\choose{k}}, {{x+P_1(y)}\choose{k}}, \ldots, {{x+P_{t}(y)}\choose{k}}\right).
\end{align*}
We then define 
\begin{align*}
    \P_k &=\Span_{\RR}\left\{\vec{P}^k(x,y): x, y\in\RR\right\} =\Span_{\RR}\left\{(x^k, (x+P_1(y))^k, \ldots, (x+P_{t}(y))^k): x,y\in\RR\right\}
\end{align*}
for each $k\in\NN_+$. The following lemma gives equivalent formulas for the spaces $\P_k$.
\begin{lemma}\label{equivalent conditions for P_k}
Let $t, k\in\NN_+$ and $\vec{P}\in\RR[x,y]^{t+1}$ be an integral polynomial progression. Then
\begin{align*}
    \P_k &= \Span_{\RR}\left\{\vec{P}^k(x,y): x, y\in\RR\right\} &&= \Span_{\RR}\left\{\vec{P}^j(x,y): x, y\in\RR,\; 1\leq j\leq k\right\}\\
    &= \Span_{\RR}\left\{{{\vec{P}(x,y)}\choose{k}}: x, y\in\RR\right\} &&= \Span_{\RR}\left\{{{\vec{P}(x,y)}\choose{j}}: x, y\in\RR,\; 1\leq j\leq k\right\}.
\end{align*}
\end{lemma}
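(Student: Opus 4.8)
The plan is to show that all four spans coincide by a chain of inclusions, exploiting the fact that each of the ``power'' monomials $u^j$ and each of the Taylor monomials $\binom{u}{j}$ is a linear combination of $1, u, \dots, u^{\max(j)}$ with the top coefficient nonzero. First I would reduce the two ``single degree $k$'' spans to the two ``all degrees $\leq k$'' spans and vice versa, and then reconcile the power basis with the Taylor basis.

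The cleanest route is as follows. For fixed $x,y$, write $\vec{P}(x,y) = (x, x+P_1(y), \dots, x+P_t(y)) \in \RR^{t+1}$. Since $\binom{u}{j} = \sum_{\ell=0}^{j} c_{\ell j} u^\ell$ with $c_{jj} \neq 0$, we get $\binom{\vec{P}(x,y)}{j} = \sum_{\ell=0}^{j} c_{\ell j}\, \vec{P}^\ell(x,y)$ (interpreting $\vec{P}^0(x,y)$ as the all-ones vector $\vec{P}^0 = (1,\dots,1)$), and conversely $\vec{P}^j(x,y) = \sum_{\ell=0}^{j} c'_{\ell j} \binom{\vec{P}(x,y)}{\ell}$ with $c'_{jj} \neq 0$; this is just the standard change of basis between $\{u^\ell\}$ and $\{\binom{u}{\ell}\}$ applied coordinatewise. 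The one subtlety is the constant term: $\vec{P}^0(x,y) = \binom{\vec{P}(x,y)}{0} = (1,\dots,1)$ for all $x,y$, so this vector lies in every span under consideration as soon as $k \geq 1$ — indeed, taking $x = y = 0$ gives $\vec{P}^1(0,0) = (0,0,\dots,0)$... wait, that is the zero vector, which is unhelpful; instead take $x = 1$, $y = 0$: then $\vec{P}^1(1,0) = (1, 1+P_1(0), \dots) = (1,1,\dots,1)$ since $P_i(0) = 0$ by integrality. Hence $(1,\dots,1) \in \P_1 \subseteq \P_k$, and likewise the all-ones vector lies in each of the other three spans. With the constant vector available, the triangular change-of-basis relations above immediately give, by induction on $j$, that $\Span\{\vec{P}^j(x,y) : x,y, 1\leq j\leq k\} = \Span\{\binom{\vec{P}(x,y)}{j} : x,y, 1\leq j \leq k\}$, and that this equals $\Span\{\vec{P}^k(x,y):x,y\}$ once one also knows the single-degree span captures the lower degrees.

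The remaining point is to show $\Span\{\vec{P}^k(x,y):x,y\} = \Span\{\vec{P}^j(x,y):x,y,\ 1\leq j\leq k\}$, i.e. the degree-$k$ vectors already span everything the lower-degree ones do. For this I would argue that the map $x \mapsto \vec{P}^k(x, y)$, for fixed $y$, traces out a polynomial curve in $\RR^{t+1}$; differentiating $\vec{P}^k(x,y)$ repeatedly in $x$ (or taking finite differences, since we only need the span over all real $x$) produces scalar multiples of $\vec{P}^{k-1}(x,y), \vec{P}^{k-2}(x,y), \dots$ plus the constant vector, all of which therefore lie in $\Span\{\vec{P}^k(x,y): x \in \RR\}$. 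Concretely, $\partial_x \vec{P}^k(x,y) = k\,\vec{P}^{k-1}(x,y)$ viewed coordinatewise, and a derivative of a vector-valued polynomial in $x$ lies in the linear span of its values; iterating shows $\vec{P}^j(x,y) \in \P_k$ for all $1 \leq j \leq k$ and all $x,y$. This gives one inclusion; the reverse inclusion $\P_k \subseteq \Span\{\vec{P}^j : 1 \leq j \leq k\}$ is trivial.

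**Main obstacle.** The only genuinely delicate point is the bookkeeping around the constant (degree-$0$) vector: the change-of-basis relations between $u^j$ and $\binom{u}{j}$ involve a constant term, so one must verify that $(1,\dots,1) \in \P_k$ — which uses $P_i(0) = 0$ (integrality) and the choice $x=1, y=0$ — and then carry it through the induction. The differentiation/finite-difference argument that the top degree dominates is routine but should be stated carefully because we are taking spans over the continuum of real parameters, not just integers, so ordinary derivatives are legitimate and the ``a derivative of a polynomial curve lies in the span of its values'' fact is clean. Everything else is linear algebra with triangular matrices.
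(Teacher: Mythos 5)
Your overall strategy is essentially the paper's: reduce the single-degree spans to the ``all degrees $\leq k$'' spans by differentiating/differencing in $x$, then pass between the power basis and the Taylor basis by a triangular change of basis. The change-of-basis step and the differentiation step for the power basis are correct (your worry about the constant term is in fact moot, since $\binom{u}{j}$ vanishes at $u=0$ for $j\geq 1$, so the coefficient $c_{0j}$ is zero; also note that invoking $(1,\ldots,1)\in\P_1\subseteq\P_k$ is circular until the differentiation step is in place, though that step does independently produce the constant vector).

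However, as written you only prove three of the four asserted equalities: you get $\Span_\RR\{\vec{P}^k\}=\Span_\RR\{\vec{P}^j:\,1\leq j\leq k\}=\Span_\RR\{\binom{\vec{P}}{j}:\,1\leq j\leq k\}$, but you never show that the single-degree Taylor span $\Span_\RR\{\binom{\vec{P}(x,y)}{k}:x,y\in\RR\}$ contains the others. Your announced plan mentions reducing both single-degree spans, but the execution only treats the power case, and neither of your tools closes this gap: the change of basis mixes degrees when $\binom{u}{k}$ is expanded in powers, and the continuous derivative does not preserve the binomial form, since $\frac{\partial}{\partial x}\binom{x+P_i(y)}{k}$ is not a multiple of $\binom{x+P_i(y)}{k-1}$. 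What is needed, and what the paper uses, is the discrete difference in $x$:
\begin{align*}
\binom{\vec{P}(x+1,y)}{k}-\binom{\vec{P}(x,y)}{k}=\binom{\vec{P}(x,y)}{k-1},
\end{align*}
which is a difference of two elements of the span and hence lies in it (no closure argument even needed); iterating $k-j$ times places $\binom{\vec{P}(x,y)}{j}$ in $\Span_\RR\{\binom{\vec{P}(x,y)}{k}:x,y\in\RR\}$ for all $1\leq j\leq k$. With that one line added, your proof coincides with the paper's.
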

\begin{proof}
We fix $k\in\NN_+$ and denote
\begin{align*}
    A_1 &= \Span_{\RR}\left\{\vec{P}^k(x,y): x, y\in\RR\right\},\quad &&A_2 = \Span_{\RR}\left\{\vec{P}^j(x,y): x, y\in\RR,\; 1\leq j\leq k\right\},\\
    A_3 &= \Span_{\RR}\left\{{{\vec{P}(x,y)}\choose{k}}: x, y\in\RR\right\},\quad &&A_4 = \Span_{\RR}\left\{{{\vec{P}(x,y)}\choose{j}}: x, y\in\RR,\; 1\leq j\leq k\right\} 
\end{align*}
for the four spaces mentioned in the statement of the lemma. It is clear that $A_1\subseteq A_2$ and $A_3 \subseteq A_4$. To prove the converse inclusions, we note that $\frac{\partial}{\partial_x}\vec{P}(x,y)^k = k \vec{P}(x,y)^{k-1}$ and $\partial_x{{\vec{P}(x,y)}\choose{k}} = {{\vec{P}(x,y)}\choose{k-1}}$, where $\frac{\partial}{\partial_x}$ is the usual partial derivative with respect to $x$ and $\partial_x$ is the partial discrete derivative with respect to $x$ defined in the proof of Lemma \ref{equivalent descriptions of algebraic relations}. For every $R(x,y)\in A_1$ and $h\neq 0$, the expression $$\cfrac{R(x+h, y)-R(x, y)}{h}$$ is still in $A_1$, and so $\frac{\partial}{\partial_x}R(x,y)\in A_1$ by the closeness of $A_1$. Applying $\frac{\partial}{\partial_x}$ to $\vec{P}(x,y)^k$ exactly $k-j$ times with the observations above, we deduce that $\vec{P}(x,y)^j$ is still in $A_1$. Hence $A_2\subseteq A_1$. Analogously, applying $\partial_x$ exactly $k-j$ times to ${{\vec{P}(x,y)}\choose{k}}$, we deduce that ${{\vec{P}(x,y)}\choose{j}}\in A_3$, and so $A_4\subseteq A_3$.

It remains to show that $A_2 = A_4$. For this, we note that $\vec{P}(x,y)^k$ is a linear combination of ${{\vec{P}(x,y)}\choose{1}}, \ldots, {{\vec{P}(x,y)}\choose{k}}$, and conversely ${{\vec{P}(x,y)}\choose{k}}$ is a linear combination of $\vec{P}(x,y), \ldots, \vec{P}(x,y)^k$, from which the equality $A_2 = A_4$ follows. 
\end{proof}

Henceforth, we treat $\RR^{t+1}$ as an $\RR$-algebra with coordinatewise multiplication $\vec{v}\cdot\vec{w}=(v(0) w(0), \ldots, v(t) w(t))$ for $\vec{v}=(v(0), \ldots, v(t))$ and $\vec{w}=(w(0), \ldots, w(t))$. We similarly let $A\cdot B = \{\vec{a}\cdot\vec{b}: \vec{a}\in A, \vec{b}\in B\}$ be the product set of $A$ and $B$ for any $A,B\subseteq\RR^{t+1}$. With these definitions, we observe that $\P_{i+j}\leqslant\P_i\cdot\P_j$, but the converse is in general not true. We also set $\vec{e}_i$ to be the coordinate vector with 1 in the $i$-th place and 0 elsewhere.

We conclude this section by relating the spaces $W_k$ and $W'_k$ to $\P_k$.
Let $t_k = \dim W_k$ and $t'_k = \dim W'_k$ for each $k\in\NN$. The spaces $W_k$ and $\P_k$ are connected as follows. Let $\{Q_{k,1}, \ldots, Q_{k,t_k}\}$ be a basis for $W_k$. Then
\begin{align*}
    \left({{x}\choose{k}}, {{x+P_1(y))}\choose{k}}, \ldots, {{x+P_t(y)}\choose{k}}\right) = \sum_{j=1}^{t_k} \vec{v}_{k,j} Q_{k,j}(x,y)
\end{align*}
for some linearly independent vectors $\vec{v}_{k,1}, \ldots, \vec{v}_{k,t_k}\in\RR^{t+1}$.
We let $\tau_k(Q_{k,j})=\vec{v}_{k,j}$, and extend this map to all of $W_k$ by linearity. This map depends on the choice of the basis for $W_k$. It is surjective by the definition of $\P_k$ and injective by the linear independence of $\vec{v}_{k,1}, \ldots, \vec{v}_{k,t_k}$. Hence it is a vector space isomorphism. In particular, Proposition \ref{equivalent conditions for homogeneity} implies that $W'_k\cong \P_k$ whenever $\vec{P}$ is homogeneous, a fact that we shall use a lot in the proof of Theorem \ref{finitary equidistribution on nilmanifolds}. 

To illustrate the aforementioned correspondence between $W_k$ and $\P_k$, consider the progression $(x,\; x+y,\; x+2y,\; x+y^3)$. The isomorphisms $\tau_1$ and $\tau_2$ are given by
\begin{align*}
    \tau_1(x) = (1,1,1,1),\quad \tau_1(y) = (0,1,2,0),\quad \tau_1(y^3) = (0,0,0,1)
\end{align*}
and
\begin{align*}
    \tau_2\left({{x}\choose{2}}\right) &= (1,1,1,1),\quad \tau_2\left(xy+ {{y}\choose{2}}\right) = (0,1,2,0),\\
    \tau_2(y^2) &= (0,0,1,0),\quad \tau_2\left(xy^3 + {{y^3}\choose{2}}\right) = (0,0,0,1).
\end{align*}


\section{Relating Host-Kra complexity to algebraic complexity}\label{section on Host-Kra complexity}
Having introduced the notation for the spaces $\P_i$, we are ready to show precisely how determining Host-Kra complexity for homogeneous progressions can be reduced to a certain equidistribution problem on nilmanifolds. We start by defining a group which contains the orbit (\ref{polynomial orbit 2}). Groups of this form have previously been defined in \cite{leibman_2009, green_tao_2010a, candela_sisask_2012, kuca_2020b}, among others.

\begin{definition}[Leibman group]\label{Leibman group}
Let $t\in\NN_+$ and $G$ be a connected group with a filtration $G_\bullet$ of degree $s$. For an integral polynomial progression $\vec{P}\in\RR[x,y]^{t+1}$, we define the associated \emph{Leibman group} to be 
\begin{align*}
G^P = \langle g_i^{\vec{v}_i}: g_i\in G_i, \vec{v}_i\in\P_i, 1\leqslant i\leqslant s \rangle,
\end{align*}
where $h^{\vec{v}}=(h^{v(0)}, \ldots,h^{v(t)})$ for any $h\in G$ and $\vec{v} = (v(0), \ldots, v(t))\in\RR^{t+1}$. We also set $\Gamma^P=G^P\cap G^{t+1}$. If $g\in\poly(\ZZ, G_\bullet)$, then we denote
\begin{align*}
g^P(x,y) = (g(x), g(x+P_1(y)), \ldots, g(x+P_t(y)))
\end{align*}
and observe that $g^P$ takes values in $G^P$. 
\end{definition}

\begin{lemma}\label{Leibman group contains subgroups}
Let $t\in\NN_+$ and $G$ be a connected group with a filtration $G_\bullet$ of degree $s$. Suppose that $\vec{P}\in\RR[x,y]^{t+1}$ is an integral polynomial progression with $\A_i(\vec{P}) = s'$ for some $s'\in\NN$ and some $0\leq i\leq t$. Then 
$G^P$ contains $1^{i}\times G_{s'+1} \times 1^{t-i}$.
\end{lemma}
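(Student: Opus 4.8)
The plan is to produce, for each nonzero $h\in G_{s'+1}$, the element $1^i\times h\times 1^{t-i}$ as a product of generators $g_j^{\vec v_j}$ of $G^P$, exploiting the hypothesis $\A_i(\vec P)=s'$. The point of this hypothesis is that there is \emph{no} nonzero algebraic relation $(Q_0,\ldots,Q_t)$ with $\deg Q_i\geq s'+1$, hence in particular, by Lemma \ref{equivalent conditions for P_k}, the projection onto the $i$-th coordinate $\pi_i:\P_{s'+1}\to\RR$ is surjective: if it were not, there would be a nonzero vector $\vec v\in\RR^{t+1}$ with $v(i)=0$ orthogonal to nothing useful — more to the point, vanishing of $\pi_i$ on $\P_{s'+1}$ would exactly produce a relation $\sum_{j}a_j{\binom{x+P_j(y)}{s'+1}}$ in which the $i$-th summand is absent while the degree-$(s'+1)$ part survives, and differentiating/combining as in Lemma \ref{equivalent descriptions of algebraic relations} would contradict $\A_i(\vec P)=s'$. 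So first I would record the clean statement: \emph{$\A_i(\vec P)=s'$ implies $\vec e_i\in\P_{s'+1}$ is in the image, equivalently there is $\vec v\in\P_{s'+1}$ with $v(i)\neq 0$.} Actually the sharp form I want is $\vec e_i\in\P_{s'+1}$ itself, which should follow because $\P_{s'+1}$ is a coordinate-subspace-type object cut out precisely by the relations of degree $\leq s'+1$, and the absence of a relation involving coordinate $i$ nontrivially at the top degree forces $\vec e_i$ to lie in $\P_{s'+1}$.

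Granting $\vec e_i\in\P_{s'+1}$: by definition of the Leibman group, for every $h\in G_{s'+1}$ the element $h^{\vec e_i}=(1,\ldots,1,h,1,\ldots,1)$ (with $h$ in slot $i$) is a generator of $G^P$, since $h\in G_{s'+1}$ and $\vec e_i\in\P_{s'+1}$. Taking the subgroup generated by all such elements as $h$ ranges over $G_{s'+1}$ gives exactly $1^i\times G_{s'+1}\times 1^{t-i}\leq G^P$, which is the claim. So once the algebraic input is in place the group-theoretic step is immediate.

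The main obstacle is the algebraic input $\vec e_i\in\P_{s'+1}$, and there is a subtlety: $\P_{s'+1}$ as defined is $\Span_\RR\{\vec P^{s'+1}(x,y)\}$, and a priori $\vec e_i$ being a ``missing relation'' at degree $s'+1$ tells us about $\P_{s'+1}$ modulo lower $\P_j$'s via Lemma \ref{equivalent conditions for P_k}, which says $\P_{s'+1}=\Span\{\vec P^{j}: 1\leq j\leq s'+1\}$. I would argue: suppose $\vec e_i\notin\P_{s'+1}$. Then there is a linear functional on $\RR^{t+1}$, i.e.\ a vector $(a_0,\ldots,a_t)$, vanishing on $\P_{s'+1}$ but with $a_i\neq 0$; vanishing on $\P_{s'+1}=\Span\{\binom{\vec P(x,y)}{j}:1\leq j\leq s'+1\}$ (Lemma \ref{equivalent conditions for P_k}) means $\sum_{\ell=0}^t a_\ell\binom{x+P_\ell(y)}{j}=0$ for all $1\le j\le s'+1$, i.e.\ $(a_0\binom{u}{j},\ldots,a_t\binom{u}{j})$ is an algebraic relation for each such $j$; summing these (with the constant term handled separately, using integrality) yields an algebraic relation $(Q_0,\ldots,Q_t)$ with $Q_\ell(u)=a_\ell\sum_{j\le s'+1}\binom{u}{j}$, so $\deg Q_i=s'+1$ because $a_i\neq 0$. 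This contradicts $\A_i(\vec P)=s'$. Hence $\vec e_i\in\P_{s'+1}$, completing the proof. I expect the only delicate points are the bookkeeping of constant/lower-degree terms in the relation (dealt with exactly as in the proof of Lemma \ref{equivalent descriptions of algebraic relations}, via partial discrete derivatives $\partial_x$) and making sure the ambient filtration degree $s$ satisfies $s\geq s'+1$ so that $G_{s'+1}$ and $\P_{s'+1}$ are among the generators — if $s'+1>s$ then $G_{s'+1}=1$ and the statement is vacuous, which I would note at the outset.
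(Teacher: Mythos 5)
Your proposal is correct and follows essentially the same route as the paper: the whole content is that $\A_i(\vec{P})=s'$ forces $\vec{e}_i\in\P_{s'+1}$ (because a functional annihilating $\P_{s'+1}$ with nonzero $i$-th entry would produce, via Lemma \ref{equivalent conditions for P_k}, a relation $\sum_\ell a_\ell\binom{x+P_\ell(y)}{s'+1}=0$ of degree $s'+1$ at index $i$), after which $h^{\vec{e}_i}\in G^P$ for all $h\in G_{s'+1}$ by definition of the Leibman group. The only cosmetic remark is that your summation over $j\le s'+1$ and the aside about constant terms are unnecessary — taking $j=s'+1$ alone already yields the contradicting relation.
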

\begin{proof}
The assumption $\A_i(\vec{P}) = s'$ implies that $(x+P_i(y))^{s'+1}$ is linearly independent from $(x+P_k(y))^{s'+1}$ for $k\neq i$, hence  $\P_{s'+1}$ contains $\vec{e}_i$. The Lemma then follows by the definition of $G^P$.
\end{proof}

We are now ready to state an infinitary version of the main technical result in the paper. This result constitutes the first part of Theorem \ref{dichotomy}.

\begin{theorem}\label{infinitary equidistribution on nilmanifolds}
Let $t\in\NN_+$ and $G$ be a connected group with filtration $G_\bullet$. Suppose that $g\in\poly(\ZZ, G_\bullet)$ is irrational and that $\vec{P}\in\RR[x,y]^{t+1}$ is a homogeneous polynomial progression. Then $g^P$ is equidistributed on the nilmanifold $G^P/\Gamma^P$.
\end{theorem}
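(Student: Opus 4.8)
The plan is to prove equidistribution of $g^P$ on $G^P/\Gamma^P$ by applying Leibman's criterion (Theorem \ref{Leibman's equidistribution theorem}), which reduces the problem to showing that any horizontal character $\eta\colon G^P\to\RR$ for which $\eta\circ g^P$ is constant must vanish on $G^P$. Since $G$ is connected and $G_\bullet$ is rational, so is $G^P$ with the induced filtration, so the reduction applies. The first step is to understand horizontal characters of $G^P$. Because $\eta$ kills $[G^P,G^P]$, and because the abelianization of $G^P$ is built from the abelianizations of the various $G_i$ twisted by the spaces $\P_i$, a horizontal character should decompose as a sum of contributions $\eta_i$, one for each level $i$, where $\eta_i$ is, roughly, a pairing of an ordinary horizontal character $\bar\eta_i$ on $G_i/G_{i+1}$ (suitably meaning the $i$-th quotient in the filtration) against a linear functional on $\P_i$. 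The key structural input here is the description of $G^P$ in terms of generators $g_i^{\vec v_i}$ with $g_i\in G_i$, $\vec v_i\in\P_i$.

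Next I would compute $\eta\circ g^P$ explicitly. Writing $g(n)=\prod_{i}g_i^{\binom{n}{i}}$ with $g_i\in G_i$, the sequence $g^P(x,y)$ has, at level $i$, a contribution governed by $g_i^{\binom{\vec P(x,y)}{i}}$ where $\binom{\vec P(x,y)}{i}=\bigl(\binom{x}{i},\binom{x+P_1(y)}{i},\dots,\binom{x+P_t(y)}{i}\bigr)\in\P_i$ (using Lemma \ref{equivalent conditions for P_k}). Applying $\eta$ and separating levels, $\eta\circ g^P(x,y)$ becomes a sum over $i$ of terms of the shape $c_i(\bar\eta_i(g_i))\cdot \lambda_i\!\bigl(\tbinom{\vec P(x,y)}{i}\bigr)$ plus lower-order cross terms coming from commutators; here $\lambda_i$ is the linear functional on $\P_i$ attached to $\eta_i$. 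For $\eta\circ g^P$ to be constant in $(x,y)$, each homogeneous-degree piece must vanish, and the top-degree analysis forces, for the largest $i$ with a nonzero contribution, the polynomial $(x,y)\mapsto \lambda_i\!\bigl(\tbinom{\vec P(x,y)}{i}\bigr)$ to be constant — in fact it is a linear combination of $\binom{x+P_j(y)}{i}$, so constancy is a nontrivial algebraic relation among these Taylor monomials of pure degree $i$, i.e. an element of $W^c_i$ in the notation of Section \ref{section on homogeneity}. But by Proposition \ref{equivalent conditions for homogeneity}, homogeneity of $\vec P$ is exactly the statement that $W^c_i$ is trivial for every $i$. Hence $\lambda_i$ must annihilate $\P_i$ (using the isomorphism $W'_k\cong\P_k$ together with $W'_k=W_k$ in the homogeneous case), which forces the level-$i$ part of $\eta$ to be trivial.

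Then I would run a downward induction on the level $i$: having killed the top levels, the next-to-top contribution becomes the new top, and the same argument — constancy of a pure-degree polynomial combination of $\binom{x+P_j(y)}{i'}$ being forbidden by triviality of $W^c_{i'}$, hence by homogeneity — forces that level to vanish too, and so on down to $i=1$. The irrationality hypothesis on $g$ enters to guarantee that the "scalars" $\bar\eta_i(g_i)$ one extracts are genuinely nonzero modulo $\ZZ$ unless $\bar\eta_i$ is trivial, so that vanishing of the level-$i$ part of $\eta\circ g^P$ really does propagate to vanishing of $\eta_i$ rather than merely a rationality coincidence; this is also where one needs the filtration-dependent notion of irrationality and the compatibility between the filtration on $G$ and the induced filtration on $G^P$. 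Concluding, $\eta$ is trivial on $G^P$, so by Theorem \ref{Leibman's equidistribution theorem} the sequence $g^P$ is equidistributed on $G^P/\Gamma^P$.

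The main obstacle I anticipate is bookkeeping the commutator ("cross") terms: $G^P$ is non-abelian, so $\eta\circ g^P$ is not literally a sum of the naive level-by-level terms, and one must verify that the lower-order corrections from commutators $[G_i,G_j]$ paired with $\P_i\cdot\P_j\supseteq\P_{i+j}$ do not interfere with the degree-separation argument — equivalently, that the abelianization of $G^P$ and its pairing with the $\P_i$ is as clean as the heuristic suggests. Making the decomposition of horizontal characters of $G^P$ precise (and checking $G^P$ is rational so that horizontal characters behave well) is the technical heart; once that is in hand, the algebraic input is entirely supplied by Proposition \ref{equivalent conditions for homogeneity} and the isomorphism $W'_k\cong\P_k$.
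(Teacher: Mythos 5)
Your overall strategy --- Leibman's criterion applied to $G^P/\Gamma^P$, expanding $\eta\circ g^P$ level by level, killing levels by downward induction, using irrationality to pass from ``the extracted scalar behaves rationally'' to ``the induced $i$-th level character is trivial'', and using $\P_{i+j}\subseteq\P_i\cdot\P_j$ to verify the level-character property --- is exactly the paper's: the paper proves the finitary analogue (Theorem \ref{finitary equidistribution on nilmanifolds}) this way via Theorem \ref{quantitative Leibman's equidistribution theorem} and deduces the infinitary statement from it. However, the step where you actually invoke homogeneity is mis-wired, and as written it would fail. The expansion is $\eta\circ g^P(x,y)=\sum_{i,j}\eta(g_i^{\vec v_{i,j}})Q_{i,j}(x,y)$ with $Q_{i,j}$ a basis of $W_i$; this identity is exact, since $\eta$ is additive on products and coordinatewise powers of the fixed element $g_i$ commute, so the commutator ``cross terms'' you worry about do not appear here at all (commutators only matter when checking that $h\mapsto\eta(h^{\vec v_{i,j}})$ vanishes on $[G_l,G_{i-l}]$). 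The real problem is your assertion that ``each homogeneous-degree piece must vanish'' and that ``top-degree analysis'' isolates the contribution of the largest filtration level. Filtration level and polynomial degree do not match: $\binom{x+P_j(y)}{i}$ is not of pure degree $i$, and monomials produced at different levels can coincide. This is precisely the inhomogeneous obstruction: for $(x,\,x+y,\,x+2y,\,x+y^2)$ the monomial $y^2$ receives contributions both from level $1$ (via $x+y^2$) and from level $2$, and only their sum is constrained (see Section \ref{section on failure in the inhomogeneous case}). So the separation of the level-$i$ coefficients is not free; it is exactly here that homogeneity must enter, in the form $V_k=\bigoplus_i W_i$, equivalently $W^c_i=0$ for all $i$ (Proposition \ref{equivalent conditions for homogeneity}), which makes the whole family $\{Q_{i,j}\}$ across all levels linearly independent and hence pins down each coefficient $\eta(g_i^{\vec v_{i,j}})$ individually; the finitary incarnation of this is Lemma \ref{breaking an integral polynomial into a sum of integral polynomials}.

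By contrast, the place where you do invoke homogeneity is both incorrect and unnecessary. A relation $\sum_j a_j\binom{x+P_j(y)}{i}=0$ among Taylor monomials of the single degree $i$ is a \emph{homogeneous} relation, not an element of $W^c_i$; homogeneous progressions (arithmetic progressions, say) satisfy many such relations, and triviality of $W^c_i$ says nothing about them, so your deduction would ``prove'' $\P_i=\RR^{t+1}$, which is false. Moreover nothing of the sort is needed: once you know the level-$i$ piece $\lambda_i\bigl(\binom{\vec P(x,y)}{i}\bigr)$ vanishes identically (constant, and zero at the origin), the conclusion that $\lambda_i$ annihilates $\P_i$ is immediate from the definition of $\P_i$ as the span of the vectors $\binom{\vec P(x,y)}{i}$. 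The fix is to move the homogeneity input from that step to the separation step; with that rewiring (and the usual care that the Leibman obstruction really gives constancy modulo $\ZZ$, i.e.\ rationality of coefficients, which is what makes the irrationality hypothesis do genuine work in the downward induction), your plan coincides with the paper's argument.
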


Importantly, Theorem \ref{infinitary equidistribution on nilmanifolds} fails for inhomogeneous progressions in that for each inhomogeneous progression $\vec{P}$, we can find a nilmanifold $G/\Gamma$, a filtration $G_\bullet$, and an irrational sequence $g\in\poly(\ZZ, G_\bullet)$ for which the orbit of $g^P$ is contained in a proper subnilmanifold of $G^P/\Gamma^P$. An example of this is given in Section \ref{section on failure in the inhomogeneous case}.

We have all the tools to prove Theorem \ref{infinitary equidistribution on nilmanifolds} by now. However, we will later need a finitary version of Theorem \ref{infinitary equidistribution on nilmanifolds}, and so instead of proving twice what is essentially the same result, we shall only give the finitary proof later on and deduce Theorem \ref{infinitary equidistribution on nilmanifolds} from it. For now, however, we can show how the $\HK_i(\vec{P}) \leqslant \A_i(\vec{P})$ part of Theorem \ref{Main theorem} follows from Theorem \ref{infinitary equidistribution on nilmanifolds}.

\begin{corollary}\label{Host-Kra complexity equals algebraic complexity}
Let $t\in\NN_+$ and $\vec{P}\in\RR[x,y]^{t+1}$ be a homogeneous polynomial progression. For any $0\leqslant i\leqslant t$, we have
\begin{align*}
    \HK_i(\vec{P}) \leqslant \A_i(\vec{P}).
\end{align*}
\end{corollary}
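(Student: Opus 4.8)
The plan is to reduce the claim $\HK_i(\vec{P}) \leqslant \A_i(\vec{P})$ to Theorem \ref{infinitary equidistribution on nilmanifolds} via the machinery assembled in Sections \ref{section on ergodic theory} and \ref{section on reducing to connected groups}. Write $s = \A_i(\vec{P})$. By definition of Host-Kra complexity, it suffices to show that $\Z_s$ is characteristic for the weak convergence of $\vec{P}$ at $i$ for every totally ergodic system $(X,\X,\mu,T)$; equivalently, if $\EE(f_i\mid\Z_s)=0$ then the weak limit \eqref{weak limit} vanishes. Using Theorem \ref{Host-Kra factors are characteristic} we may assume some $\Z_{s'}$ (with $s'\geqslant s$, harmlessly) is characteristic, project everything onto it, and approximate $\Z_{s'}$ by ergodic nilsystems; since $T$ is totally ergodic, so are the resulting nilrotations $T_a$, and by Proposition \ref{totally ergodic nilsystems} the nilmanifold $G/\Gamma$ is connected. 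Thus we are reduced to showing that the average \eqref{multiple averages on nilmanifolds} is unchanged when $\tilde f_i$ is replaced by its projection onto $Z_s(G/\Gamma)$.

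Next I would apply Lemma \ref{introducing a parameter to the integral} to rewrite \eqref{multiple averages on nilmanifolds} with the extra parameter $m$, and then Proposition \ref{replacing a linear sequence by a polynomial one} to replace the linear orbit $n\mapsto a^n b\Gamma$ by an irrational polynomial sequence $g_b\in\poly(\ZZ,G^0_\bullet)$ on the connected nilmanifold $G^0/\Gamma^0$. For a.e. $b$ the sequence $g_b$ is irrational (this is part of Proposition \ref{replacing a linear sequence by a polynomial one}), so Theorem \ref{infinitary equidistribution on nilmanifolds} applies: the orbit $g_b^P(m,n) = (g_b(m), g_b(m+P_1(n)), \ldots, g_b(m+P_t(n)))$ is equidistributed on $G^{0,P}/\Gamma^{0,P}$. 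Hence the double average over $m,n$ converges to $\int_{G^{0,P}/\Gamma^{0,P}} \tilde f_0\otimes\cdots\otimes\tilde f_t\, d\nu_{G^{0,P}/\Gamma^{0,P}}$, an integral over the Leibman nilmanifold.

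Finally I would feed in the algebraic input. By Lemma \ref{Leibman group contains subgroups}, since $\A_i(\vec{P}) = s$, the Leibman group $G^{0,P}$ contains $1^{i}\times G^0_{s+1}\times 1^{t-i}$. This means that integration over $G^{0,P}/\Gamma^{0,P}$ includes an integration over a coset of $G^0_{s+1}/(G^0_{s+1}\cap\Gamma^0)$ in the $i$-th coordinate; using the Fubini-type structure of the Haar measure on the nilmanifold, this integration replaces $\tilde f_i$ by its conditional expectation onto the factor $G^0/(G^0_{s+1}\Gamma^0) = Z_s(G^0/\Gamma^0)$, which by Lemma \ref{preserving factors when passing to connected component} equals $Z_s(G/\Gamma)$. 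Therefore the whole average vanishes when $\EE(\tilde f_i\mid Z_s)=0$, and undoing the approximations and projections gives the corollary. The main obstacle I anticipate is the last step: justifying cleanly that containment of the subgroup $1^i\times G^0_{s+1}\times 1^{t-i}$ in $G^{0,P}$ forces the integral over the Leibman nilmanifold to project the $i$-th function onto $Z_s$ — this requires knowing that $G^0_{s+1}$ (hence its image) is a \emph{rational} subgroup so that the relevant sub-nilmanifold is closed and the Haar measure factors, and it requires a careful bookkeeping of which coordinate copies of subgroups actually lie inside $G^{0,P}$ as opposed to merely their diagonal-type combinations.
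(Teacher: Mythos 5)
Your proposal is correct and follows essentially the same route as the paper's own proof: project onto a characteristic Host--Kra factor, approximate by totally ergodic nilsystems, use Lemma \ref{introducing a parameter to the integral} and Proposition \ref{replacing a linear sequence by a polynomial one} to pass to an irrational polynomial sequence on the connected nilmanifold, apply Theorem \ref{infinitary equidistribution on nilmanifolds} to replace the average by an integral over $G^P/\Gamma^P$, and then kill that integral via Lemma \ref{Leibman group contains subgroups} together with a disintegration over cosets of $1^i\times G^0_{s+1}\times 1^{t-i}$, exactly as in the paper. The final step you flag as delicate is handled in the paper by the standing assumption that all filtrations are rational (so $G^0_{s+1}\Gamma^0$ is closed) and the quotient-integral bound $\bigl|\int_{(G^0)^P/(\Gamma^0)^P} f_0\otimes\cdots\otimes f_t\bigr|\leqslant\int_{(G^0)^P/H(\Gamma^0)^P}\bigl|\int_{xH(\Gamma^0)^P} f_0\otimes\cdots\otimes f_t\bigr|$, which is precisely the Fubini-type argument you describe.
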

The converse inequality will follow from showing that algebraic complexity equals Weyl complexity, and that Weyl complexity is less than or equal to Host-Kra complexity, both of which are done in Section \ref{section on Weyl complexity}.

\begin{proof}[Proof of Corollary \ref{Host-Kra complexity equals algebraic complexity} using Theorem \ref{infinitary equidistribution on nilmanifolds}]
Let $\A_i(\vec{P}) = s$. Let $(X,\X,\mu,T)$ be a totally ergodic system, $f_0, \ldots, f_t\in L^\infty(\mu)$, and suppose that $\EE(f_i|\Z_s)=0$. By Theorem \ref{Host-Kra factors are characteristic}, the expression
\begin{align}\label{weak limit in the proof}
    \lim_{N\to\infty}\EE_{n\in [N]}\int_X f_0 \cdot T^{P_1(n)} f_1 \cdots T^{P_{t}(n)} f_{t} d\mu
\end{align} 
remains unchanged if we project the functions $f_0, \ldots, f_t$ onto the factor $\Z_{s_0}$ for some $s_0\in\NN$. If $s_0<s$, then $\EE(f_i|\Z_{s_0}) = 0$ and the limit (\ref{weak limit in the proof}) is 0, so we can assume that $s_0\geqslant s$. Since the factor $\Z_{s_0}$ is an inverse limit of $s_0$-step nilsystems, we can approximate $X$ by totally ergodic nilsystems.

Let $(G/\Gamma, \G/\Gamma, \nu, T_a)$ be a totally ergodic nilsystem, and $G_\bullet$ be the lower central series filtration on $G$. Using (\ref{Host-Kra factors of nilsystems}), it suffices to show that if $f_0, \ldots, f_t\in L^\infty(\nu)$ and $f_i$ vanishes on each coset of $G_{s+1}\Gamma$, then 
\begin{align*} 
\lim_{N\to\infty}\EE_{n\in[N]} \int_{G/\Gamma} f_0(b\Gamma)\cdot f_1(a^{P_1(n)}b\Gamma) \cdots f_t(a^{P_t(n)} b\Gamma) d\nu(b\Gamma) = 0.
\end{align*}

Let $G^0_\bullet$ be the filtration on $G^0$ given by $G^0_\bullet = G_\bullet\cap G^0$, and let $g_b\in\poly(\ZZ, G^0_\bullet)$ be the irrational sequence defined in Proposition \ref{replacing a linear sequence by a polynomial one} for which $a^n b\Gamma = g_b(n)\Gamma$. The irrationality of $g_b$, Lemma \ref{introducing a parameter to the integral} and Theorem \ref{infinitary equidistribution on nilmanifolds} imply that
\begin{align*}
&\lim_{N\to\infty}\EE_{n\in[N]} \int_{G/\Gamma} f_0(b\Gamma)\cdot f_1(a^{P_1(n)}b\Gamma) \cdots f_t(a^{P_t(n)} b\Gamma) d\nu(b\Gamma)\\
&= \int_{G^0/\Gamma^0} \lim_{N\to\infty}\EE_{m,n\in[N]}  f_0(g_b(m)\Gamma^0)\cdot f_1(g_b(m+P_1(n))\Gamma^0) \cdots f_t(g_b(m+P_t(n))\Gamma^0) d\nu(b\Gamma^0)\\
& = \int_{(G^0)^P/(\Gamma^0)^P} f_0 \otimes \cdots \otimes f_t d\nu^P,
\end{align*}
where $(G^0)^P$ is the Leibman group for $\vec{P}$ and $\nu^P$ is the Haar measure on $(G^0)^P/(\Gamma^0)^P$.

The assumption that $f_i$ vanishes on each coset of $G_{s+1}\Gamma$ in $G/\Gamma$ together with Lemma \ref{preserving factors when passing to connected component} imply that $f_i$ vanishes on each coset of $G^0_{s+1}\Gamma^0$ inside $G^0/\Gamma^0$. By Lemma \ref{Leibman group contains subgroups}, the group $(G^0)^P$ contains $H=1^{i}\times G^0_{s+1} \times 1^{t-i}$; therefore
\begin{align*}
&\left|\int_{(G^0)^P/(\Gamma^0)^P} f_0 \otimes \cdots \otimes f_t \right| \leqslant \int_{(G^0)^P/H(\Gamma^0)^P} \left|\int_{x H(\Gamma^0)^P} f_0 \otimes \cdots \otimes f_t \right|\\
&\leqslant\left(\prod_{j\neq i}||f_j||_\infty\right)\int_{(G^0)^P/H(\Gamma^0)^P} \left|\int_{x_i G^0_{s+1}\Gamma^0} f_i\right| = 0,
\end{align*}
implying that $\Z_s$ is characteristic for the weak convergence of $\vec{P}$ at $i$. 
\end{proof}

Corollary \ref{Host-Kra complexity equals algebraic complexity} implies that if a progression $\vec{P}$ satisfies $\A_i(\vec{P})=s$, then $\Z_s$ is characteristic for the weak or $L^2$ convergence of $\vec{P}$ at $i$ for any totally ergodic system. We now prove Corollary \ref{factors for ergodic systems}, which extends this result to ergodic systems for eligible progressions, with a slight modification in the $s=0$ case. The proof is almost identical to the proof of Proposition 4.1 in \cite{frantzikinakis_2008}.

\begin{proof}[Proof of Corollary \ref{factors for ergodic systems}]
Let $\vec{P}\in\RR[x,y]^{t+1}$ be an eligible homogeneous progression with $\A_i(\vec{P}) = s$ and $(X,\X,\mu,T)$ be ergodic. By Theorem \ref{Host-Kra factors are characteristic}, there exists a Host-Kra factor that is characteristic for the weak and $L^2$ convergence of $\vec{P}$. Since each Host-Kra factor is an inverse limit of nilsequences, we can approximate $X$ by an ergodic nilsystem $(G/\Gamma, \G/\Gamma, \nu, T_a)$. The compactness of $G/\Gamma$ and the assumption that $G$ is generated by the connected component $G^o$ and $a$ imply that $a^r \in G^o$ for some $r\in\NN_+$; and hence

\begin{align}\label{multiple average of a dilate}
\EE_{n\in[rN]}\prod_{i=1}^t T_a^{P_i(n)}f_i &= \EE_{j\in[r]} \EE_{n\in[N]} \prod_{i=1}^t T_a^{P_i(r(n-1)+j)} f_i \\
\nonumber &= \EE_{j\in[r]} \EE_{n\in[N]} \prod_{i=1}^t (T_a^r)^{\tilde{P}_{i,j}(n)} (T_a^{P_i(j)} f_i),
\end{align}
where $\tilde{P}_{i,j}(n) = \frac{P_i(r(n-1)+j) - P_i(j)}{r}$. This is where we use the fact that $\vec{P}$ is eligible. The definition of eligibility implies that for any $0\leqslant j < r$, the progression 
\begin{align*}
\vec{\tilde{P}}_j(x,y) = (x,\; x+\tilde{P}_{1,j}(y),\; ...,\; x+\tilde{P}_{t,j}(y))
\end{align*}
is homogeneous and that $\A_i(\vec{\tilde{P}}_j) = \A_i(\vec{P})$ for every $0\leq i < r$.

If $s>0$, suppose that $\EE(f_i|\Z_s(T_a)) = 0$. Then the equality $\Z_s(T_a) = \Z_s(T_a^r)$ and the $T_a$-invariance of $\Z_s$ imply that $\EE(T_a^{P_i(j)} f_i|\Z_s(T_a^r)) = 0$. We deduce from  Corollary \ref{Host-Kra complexity equals algebraic complexity} and the total ergodicity of $T_a^r$ on each connected components of $G/\Gamma$ that the expression in (\ref{multiple average of a dilate}) converges to 0 as $N\to\infty$.

If $s=0$, suppose that $\EE(f_i|\K_{rat}(T_a)) = 0$. The total ergodicity of $T_a^r$ implies that $\K_{rat}(T_a) = \Z_0(T_a^r)$, and so $\EE(T_a^{P_i(j)} f_i|\Z_0(T_a^r)) = 0$. Again, it follows from Corollary \ref{Host-Kra complexity equals algebraic complexity} and the total ergodicity of $T_a^r$ on each connected components of $G/\Gamma$ that the expression in (\ref{multiple average of a dilate}) converges to 0 as $N\to\infty$.

\end{proof}

We now show that progressions of algebraic complexity at most 1 are eligible, which together with Corollary \ref{factors for ergodic systems} immediately implies Corollary \ref{factors for ergodic systems for progressions of complexity 1}.

\begin{lemma}\label{progressions of complexity 1 are homogeneous and eligible}
Let $\vec{P}\in\RR[x,y]^{t+1}$ be an algebraic progression with $\max_i \A_i(\vec{P})\leq 1$. Then $\vec{P}$ is homogeneous and eligible. 
\end{lemma}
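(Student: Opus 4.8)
The plan is to prove the two claims separately: first that $\vec{P}$ is homogeneous, then that it is eligible.

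\textbf{Homogeneity.} By Lemma \ref{equivalent descriptions of algebraic relations}, it suffices to show that every algebraic relation satisfied by $\vec{P}$ is a sum of homogeneous ones. Suppose $(Q_0, \ldots, Q_t)$ is an algebraic relation of degree $d = \max_i \deg Q_i$. If $d \leq 1$, then writing $Q_i(u) = b_{i0} + b_{i1} u$, the relation $\sum_i Q_i(x+P_i(y)) = 0$ splits by degree in $x$ (comparing coefficients of $x^0$ and $x^1$, which is legitimate since the $P_i$ depend only on $y$): we get $\sum_i b_{i1}(x + P_i(y)) = 0$ forcing $\sum_i b_{i1} = 0$ and $\sum_i b_{i1} P_i(y) = 0$, together with $\sum_i b_{i0} = 0$. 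Each of the two pieces $(b_{01}u, \ldots, b_{t1}u)$ and $(b_{00}, \ldots, b_{t0})$ is itself a homogeneous relation (degree $1$ and degree $0$ respectively), so the original relation is their sum. If instead $d \geq 2$, then since $\A_i(\vec{P}) \leq 1$ for all $i$, we must have $\deg Q_i \leq 1$ for every $i$, contradicting $d \geq 2$; hence no relations of degree $\geq 2$ exist and we are in the previous case. Thus every relation satisfied by $\vec{P}$ is a sum of homogeneous relations, so $\vec{P}$ is homogeneous.

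\textbf{Eligibility.} Fix $r \in \NN_+$ and $0 \leq j \leq r-1$, and set $\tilde{P}_{i,j}(y) = \frac{P_i(r(y-1)+j) - P_i(j)}{r}$, which is an integral polynomial of the same degree as $P_i$ in $y$ (and vanishing at $y=1$, i.e. after the shift it is genuinely of the normalized form). We must show $\vec{\tilde P}_j$ is homogeneous and that $\A_i(\vec{\tilde P}_j) = \A_i(\vec{P})$ for all $i$. The key point is that $\tilde P_{i,j}$ is obtained from $P_i$ by an affine change of the $y$-variable followed by an affine (dilation plus translation by a constant depending on $i$) change in the $x$-direction; concretely, an algebraic relation $\sum_i Q_i(x + \tilde P_{i,j}(y)) = 0$ for $\vec{\tilde P}_j$ corresponds, after substituting $y \mapsto \frac{y'-j}{r}+1$ and $x \mapsto \frac{x'}{r} + c$ for a suitable constant $c$ depending on $j$ (absorbing the $-P_i(j)/r$ terms is the subtle part — it is here that one checks the translation is by a quantity independent of $i$, which holds because $x + \tilde P_{i,j}(y) = \frac{1}{r}\big((x r + P_i(j)) + P_i(r(y-1)+j)\big) - \text{const}$, wait — one should instead note $x + \tilde P_{i,j}(y) = \frac{1}{r}\big(rx - P_i(j) + P_i(r(y-1)+j)\big)$, and since the map $u \mapsto Q_i(\tfrac{1}{r}(ru - P_i(j) + \cdots))$ has the same degree as $Q_i$), to an algebraic relation for $\vec{P}$ of the same multidegree. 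This gives a degree-preserving bijection between the algebraic relations of $\vec{\tilde P}_j$ and those of $\vec{P}$, which simultaneously yields $\A_i(\vec{\tilde P}_j) = \A_i(\vec{P})$ and, since it preserves the property of being a sum of homogeneous relations, the homogeneity of $\vec{\tilde P}_j$ (the latter also follows immediately from the bound $\A_i(\vec{\tilde P}_j) \leq \A_i(\vec{P}) \leq 1$ together with the first part of the proof applied to $\vec{\tilde P}_j$).

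\textbf{Main obstacle.} The genuinely delicate bookkeeping is in the eligibility part: one must verify that the affine substitution in $x$ needed to pass between relations of $\vec{P}$ and of $\vec{\tilde P}_j$ is by a single constant common to all indices $i$ (so that it is a legitimate change of variable in the single variable $x$ of the progression, not a separate shift per coordinate), and that this substitution neither raises nor lowers the degree of any $Q_i$. Once the substitution is correctly identified, both the degree-preservation and the preservation of the "sum of homogeneous relations" property are routine. In fact, the cleanest route is to observe that eligibility here is almost free: by the homogeneity argument above applied directly to $\vec{\tilde P}_j$, it is homogeneous as soon as $\A_i(\vec{\tilde P}_j) \leq 1$ for all $i$, and $\A_i(\vec{\tilde P}_j) \leq \A_i(\vec{P})$ is the easy inequality (a relation for $\vec{P}$ pulls back to one for $\vec{\tilde P}_j$ via the inverse substitution, of the same degree); the reverse inequality $\A_i(\vec{\tilde P}_j) \geq \A_i(\vec{P})$, needed for the exact equality, uses the substitution in the other direction.
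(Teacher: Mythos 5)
Your proof of the homogeneity half matches the paper's (the paper simply observes that an inhomogeneous relation must have degree at least $2$; your explicit splitting of a degree-$\leq 1$ relation into its degree-$1$ and degree-$0$ homogeneous pieces is the same argument spelled out). The eligibility half, however, takes a genuinely different and in fact cleaner route. The paper rules out relations of $\vec{\tilde{P}}_j$ of degree $\geq 2$ directly: it writes out a putative quadratic relation, substitutes $x' = rx$, $y' = r(y-1)+j$, uses the homogeneity of $\vec{P}$ to extract the pure quadratic part and $\A_i(\vec{P})\leq 1$ to kill it, handles higher degrees by differencing in $x$, and then matches the remaining linear relations one-to-one. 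You instead build a multidegree-preserving bijection between \emph{all} algebraic relations of $\vec{\tilde{P}}_j$ and of $\vec{P}$, which yields $\A_i(\vec{\tilde{P}}_j)=\A_i(\vec{P})$ for every $i$ in one stroke (without any degree-$2$ computation and without using homogeneity of $\vec{P}$ at that step), and then homogeneity of $\vec{\tilde{P}}_j$ follows from the bound $\A_i(\vec{\tilde{P}}_j)\leq 1$ together with the first part applied to $\vec{\tilde{P}}_j$.

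That said, the write-up of the bijection needs repair. The worry in your ``main obstacle'' paragraph is a red herring: the shift is $-P_i(j)/r$, which genuinely depends on $i$, and it does \emph{not} need to be common to all coordinates. The correct move, which you only half-state after the ``wait'', is to absorb it into the polynomial at coordinate $i$: send $(Q_0,\ldots,Q_t)$ to $(\tilde{Q}_0,\ldots,\tilde{Q}_t)$ with $\tilde{Q}_i(u)=Q_i\bigl((u-P_i(j))/r\bigr)$, with inverse $R_i\mapsto R_i(ru+P_i(j))$. A relation is a tuple of separate polynomials, so a per-coordinate affine substitution is legitimate; it preserves each $\deg Q_i$, and since $(x,y)\mapsto(rx,\,r(y-1)+j)$ is an invertible affine map of $\RR^2$, polynomial identities transfer in both directions. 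Two further slips: your claim that the bijection preserves ``being a sum of homogeneous relations'' is unjustified (a homogeneous relation does not map to one, precisely because of the $i$-dependent shifts), but you never need it thanks to your fallback via the first part (using that all $\tilde{P}_{i,j}$ vanish at $y=1$); and the direction of your ``easy inequality'' is swapped — pulling a relation of $\vec{P}$ back to $\vec{\tilde{P}}_j$ gives $\A_i(\vec{\tilde{P}}_j)\geq\A_i(\vec{P})$, while the push-forward gives $\leq$. Finally, the parenthetical assertion that $\tilde{P}_{i,j}$ is integral is false in general (take $P_i(y)=\binom{y}{2}$, $r=2$, $j=0$), but your argument, like the paper's, works over $\RR$ and never uses it.
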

\begin{proof}
From the definition of inhomogeneous relations it follows that each inhomogeneous relations must have degree at least 2. Thus, the fact that $\vec{P}$ has algebraic complexity at most 1 immediately implies that it is homogeneous.

To prove that $\vec{P}$ is eligible, we fix $r\in\NN_+$ and $0\leq j < r$. We show that the progression
\begin{align*}
\vec{\tilde{P}}(x,y) = (x,\; x+\tilde{P}_{1,j}(y),\; \ldots,\; x+\tilde{P}_{t,j}(y)),
\end{align*}
where $\tilde{P}_{i,j}(y) = \frac{P_i(r(y-1)-j) - P_i(y)}{r}$, also has algebraic complexity at most 1, from which the eligibility of $\vec{P}$ will follow easily. Indeed, suppose first that $\vec{\tilde{P}}$ satisfies an algebraic relation of degree 2:
\begin{align*}
    \sum_{i=0}^t a_{i2}\left(x+\frac{P_i(r(y-1) - j)-P_i(j)}{r}\right)^2 + a_{i1}\left(x+\frac{P_i(r(y-1) - j)-P_i(j)}{r}\right) = 0. 
\end{align*}
Setting $y' = r(y-1) - j,\; x' = x r,\; a_{i2}' = a_{i2}/r^2,\; a_{i1}' = a_{i1}/r$ for brevity and rearranging, we deduce that 
\begin{align*}
    \sum_{i=0}^t\left(a_{i2}'(x'+P_i(y'))^2 + (a'_{i1} - 2a'_{i2}P_i(j)) (x'+P_i(y')) + a'_{i2}P_i(j)^2 - a'_{i1} P_i(j)\right) = 0.
\end{align*}
The homogeneity of $\vec{P}$ implies that
\begin{align*}
    \sum_{i=0}^t a'_{i2}(x'+P_i(y'))^2 = 0,
\end{align*}
and the fact that $\vec{P}$ has algebraic complexity at most 1 further implies that $a'_{02} = \ldots = a_{t2}' = 0$. The claim $a_{02} = \ldots = a_{t2} = 0$ follows by rescaling. Thus, $\vec{\tilde{P}}$ satisfies no algebraic relation of degree 2. It follows by induction that $\vec{\tilde{P}}$ satisfies no algebraic relation of degree $d > 2$ since each such relation
\begin{align}\label{algebraic relation in the proof of eligibility}
    Q_0(x) + Q_1(x+\tilde{P}_{1,j}(y)) + \ldots + Q_t(x+\tilde{P}_{t,j}(y)) = 0
\end{align}
would induce an algebraic relation of degree $d-1$ by partially differentiating (\ref{algebraic relation in the proof of eligibility}) with respect to $x$. This establishes the claim that $\vec{\tilde{P}}$ has algebraic complexity at most 1. Thus, every algebraic relation satisfied by $\vec{\tilde{P}}$ is of the form
\begin{align*}
 a_0 x + a_1 \left(x+\frac{P_1(r(y-1)+j) - P_1(j)}{r}\right) + \ldots + a_t \left(x+\frac{P_t(r(y-1)+j) - P_t(j)}{r}\right) = 0
\end{align*}
and corresponds to an algebraic relation
\begin{align*}
 a_0 x + a_1 (x+P_1(y)) + \ldots + a_t (x+P_t(y)) = 0
\end{align*}
satisfied by $\vec{P}$. This one-to-one correspondence between the algebraic relations satisfied by $\vec{\tilde{P}}$ and $\vec{P}$ implies the eligibility of $\vec{P}$.
\end{proof}

Theorem \ref{infinitary equidistribution on nilmanifolds} also allows us to prove the second part of Corollary \ref{counting result}.
\begin{proof}[Proof of  Corollary \ref{counting result}(ii)]
Let $(X, \X, \mu, T)$ be a totally ergodic system, and suppose that $\vec{P}\in\RR[x,y]^{t+1}$ is an integral progression with algebraic complexity at most 1. This implies that $\vec{P}$ is homogeneous since each inhomogeneous  algebraic relation must have degree at least 2. For each $0\leqslant i\leqslant t$, let $P_i(y) = \sum_{j=1}^d a_{i,j} Q_j(y)$ and $L_i(y_1, \ldots y_d) = \sum_{j=1}^d a_{i,j} y_j$ for some $a_{i,j}\in\ZZ$ and integral polynomials $Q_1, \ldots, Q_d$. Letting 
\begin{align*}
\vec{L}(x,y_1, \ldots, y_d) = (x,\; x+L_1(y_1, \ldots, y_d), \; \ldots, \; x+L_t(y_1, \ldots, y_d)), 
\end{align*}
we observe that $\vec{P}(x,y) = \vec{L}(x, Q_1(y), \ldots, Q_d(y))$. It follows that $\vec{L}$ also has an algebraic complexity at most 1, since each algebraic relation of degree $(j_0, \ldots, j_t)$ between terms of $\vec{L}$ would immediately imply an algebraic relation of the same degree between terms of $\vec{P}$ after substituting $y_i = Q_i(y)$.

Using the same argument as in the proof of Corollary \ref{Host-Kra complexity equals algebraic complexity}, we reduce the question of understanding
\begin{align}\label{multiple average for g^P}
\lim_{N\to\infty}\EE_{n\in [N]} \int_X \prod_{i=0}^t T^{P_i(n)} f_i d\mu
\end{align}
to understanding
\begin{align}\label{multiple average for g^P, 2}
\lim_{N\to\infty}\EE_{x,y\in[N]} F(g^P(x,y))
\end{align}
for each essentially bounded function $F: (G/\Gamma)^{t+1}\to\CC$ and an irrational sequence $g\in\poly(\ZZ, G_\bullet)$ for some filtration $G_\bullet$ on $G$. Following the same method to analyse
\begin{align}\label{multiple average for g^L}
\lim_{N\to\infty}\EE_{y_1, ..., y_d\in [N]} \int_X \prod_{i=0}^t T^{L_i(y_1, \ldots, y_d)} f_i d\mu,
\end{align}
we deduce that understanding (\ref{multiple average for g^L}) comes down to estimating
\begin{align}\label{multiple average for g^L, 2}
\lim_{N\to\infty}\EE_{x,y_1, ..., y_d\in[N]} F(g^L(x,y_1, \ldots, y_d)),
\end{align}
where 
$$g^L(x,y_1, ..., y_d) = (g(x), g(x+L_1(y_1, \ldots, y_d)), \ldots, g(x+L_t(y_1, \ldots, y_d))).$$

By Theorem \ref{infinitary equidistribution on nilmanifolds}, the limit in (\ref{multiple average for g^P, 2}) equals $\int_{G^P/\Gamma^P} F$; by Theorem 11 of \cite{green_tao_2010a}\footnote{While Theorem 11 of \cite{green_tao_2010a} has been shown to fail in general, its corrected version, to be found in the arXiv version of the paper at https://arxiv.org/abs/1002.2028, still holds in this case, as the system under consideration is translation invariant.}, the limit in (\ref{multiple average for g^L, 2}) is $\int_{G^L/\Gamma^L} F$ for some subgroup $G^L\leqslant G^{t+1}$. From the fact that $\max_i \A_i(\vec{P})\leqslant 1$ we deduce that $G^P = \langle h_1^{\vec{v}_1}, G_2^{t+1}: h_1\in G_1, \vec{v}_1\in\P_1 \rangle$; similarly, the construction of the group $G^L$ in \cite{green_tao_2010a} and the fact that $\vec{L}$ has algebraic complexity at most 1 reveals that $G^L = \langle h_1^{\vec{v}_1}, G_2^{t+1}: h_1\in G_1, \vec{v}_1\in\L_1 \rangle$, where
\begin{align*}
\L_1 = \Span_{\RR}\{(x, x+L_1(y_1, \ldots, y_d), \ldots, x+L_t(y_1, \ldots, y_d)): x,y_1, \ldots, y_d\in\RR\}.
 \end{align*}
We observe that $\P_1 = \L_1$; from this it follows that $G^P = G^L$, and so the limits in (\ref{multiple average for g^P, 2}) and (\ref{multiple average for g^L, 2}) are equal. This implies that (\ref{multiple average for g^P}) and (\ref{multiple average for g^L}) equal as well.

\end{proof}


\section{Finitary nilmanifold theory}\label{section on finitary nilmanifold theory}

Before we can prove a finitary version of Theorem \ref{infinitary equidistribution on nilmanifolds}, we need to introduce necessary finitary concepts required for this task. Most concepts and definitions in this and next section are taken from \cite{green_tao_2010a, green_tao_2012, candela_sisask_2012}. Throughout this section, we assume that $G$ is connected, and that each nilmanifold $G/\Gamma$ comes with a filtration $G_\bullet$ and a Mal'cev basis $\chi$ adapted to $G_\bullet$. We call a nilmanifold endowed with filtration and a Mal'cev basis \emph{filtered}. A \emph{Mal'cev basis} is a basis for the Lie algebra of $G$ with some special properties; since we do not explicitly work with the notion of Mal'cev basis or its rationality in the paper, we refer the reader to \cite{green_tao_2012} for definitions of these concepts. What matters for us is that each Mal'cev basis induces a diffemomorphism $\psi: G\to\RR^m$, called \emph{Mal'cev coordinate map}, which satisfies the following properties:
\begin{enumerate}
\item $\psi(\Gamma) = \ZZ^m$;
\item $\psi(G_i) = \{0\}^{m-m_i}\times \RR^{m_i}$, where $m_i = \dim G_i$.
\end{enumerate}
Thus, $\psi$ provides a natural coordinate system on $G$ that respects the filtration $G_\bullet$ and the lattice $\Gamma$.  Similarly to $\psi$, we define maps $\psi_i:G_i\to\RR^{m_i-m_{i+1}}$ by assigning to each element of $G_i$ its Mal'cev coordinates indexed by $m-m_i + 1$, ..., $m-m_{i+1}$. With this definition, we have $\psi_i(x)=0$ if and only if $x\in G_{i+1}$, and $\psi_i(x)\in\ZZ^{m_i-m_{i+1}}$ if and only if $x\in\Gamma_i$.

\begin{definition}[Complexity of nilmanifolds]\label{complexity of nilmanifolds}
A filtered nilmanifold $G/\Gamma$ has complexity $M$ if the degree $s$ of the filtration $G_\bullet$, the dimension $m$ of the group $G$, and the rationality of the Mal'cev basis $\chi$ are all bounded by $M$. 
\end{definition}

We remark that complexity of nilmanifolds has nothing to do with the four notions of complexity of polynomial progressions that we examine. Neither does complexity of nilsequences defined below.

\begin{definition}[Nilsequences]\label{nilsequences}
A function $f:\ZZ\to\CC$ is a \emph{nilsequence} of {degree} $s$ and {complexity} $M$ if $f(n)=F(g(n)\Gamma)$, where $F:G/\Gamma\to\RR$ is an $M$-Lipschitz function on a filtered nilmanifold $G/\Gamma$ of degree $s$ and complexity $M$, and $g\in\poly(\ZZ,G_\bullet)$. 
\end{definition}

\begin{definition}[Quantitative equidistribution]\label{quantitative equidistribution}
Let $D\in\NN_+$ and $\delta>0$. A sequence $g\in\poly(\ZZ^D,G)$ is $(\delta,N)$-\emph{equidistributed} on $G/\Gamma$ if
\begin{align*}
    \left|\EE_{n\in[N]^D}F(g(n)\Gamma)-\int_{G/\Gamma}F \right|\leqslant\delta\norm{f}_\Lip
\end{align*}
for all Lipschitz functions $F:G/\Gamma\to\CC$, where $\norm{f}_\Lip$ is the Lipschitz norm on $F$ with respect to a metric defined in \cite{green_tao_2012}.
\end{definition}

It has been shown in Theorem \ref{Leibman's equidistribution theorem} that equidistribution is related to horizontal characters. Given the Mal'cev coordinate map $\psi: G\to\RR^m$, each horizontal character can be written in the form $\eta(x) = k\cdot\psi(x)$ for some $k\in\ZZ^m$. We call $|\eta|=|k|=|k_1|+\ldots+|k_m|$ the \emph{modulus} of $\eta$. Similarly, each $i$-th level character $\eta_i:G_i\to\RR$ is of the form $\eta_i(x) = k\cdot\psi_i(x)$ for some $k\in\ZZ^{m_i-m_{i+1}}$, and we define its modulus to be $|\eta_i|=|k| = |k_1|+\ldots+|k_{m_i-m_{i+1}}|$. 

We shall also need to quantify the notion of polynomials that are ``almost constant" mod $\ZZ$, using a definition from \cite{green_tao_2012}. In what follows, $||x||_{\RR/\ZZ} = \min\{|x-n|: n\in\ZZ\}$ is the \emph{circle norm} of $x\in\RR$.
\begin{definition}[Smoothness norm]\label{smoothness norm}
Let $$Q(n_1, \ldots, n_D) = \sum_{i=0}^d \sum_{i_1+\ldots+i_D = i} a_{i_1, \ldots, i_D} {{n}\choose{i_1}}\cdots{{n}\choose{i_d}}$$ be a polynomial in $\RR[n_1, \ldots, n_D]$. For $N\in\NN_+$, we define the \emph{smoothness norm} of $Q$ to be
\begin{align*}
||Q||_{C^\infty[N]} = \max\{N^{i_1+\ldots+i_D} ||a_{i_1, \ldots, i_D}||_{\RR/\ZZ}: i_1, \ldots, i_D\in\NN,\; 1\leq i_1+\cdots + i_D \leq d\}.
\end{align*}
\end{definition}
In particular, $||Q||_{C^\infty[N]}$ is bounded from above as $N\to\infty$ if and only if $Q$ is constant mod $\ZZ$.

With these definitions, we are ready to state a quantitative version of Theorem \ref{Leibman's equidistribution theorem}

\begin{theorem}[Quantitative Leibman's equidistribution theorem, Theorem 2.9 of \cite{green_tao_2012}]\label{quantitative Leibman's equidistribution theorem}
Let $\delta>0$, $M\geqslant 2$ and $D, N\in\NN_+$ with $D\leqslant M$. Let $G/\Gamma$ be a filtered nilmanifold of complexity $M$ and $g\in\poly(\ZZ^D, G_\bullet)$. Then there exists $C_M>0$ such that at least one of the following is true:
\begin{enumerate}
\item $g$ is $(\delta,N)$-equidistributed in $G/\Gamma$;
\item there exists a nontrivial horizontal character $\eta$ of modulus $|\eta|\ll\delta^{-C_{M}}$ for which $||\eta\circ g||_{C^\infty[N]}\ll \delta^{-C_{M}}$.
\end{enumerate}
\end{theorem}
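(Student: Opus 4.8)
The plan is to prove the theorem by induction on the degree $s$ of the filtration $G_\bullet$ (carrying the dimension $m=\dim G$ along as an auxiliary parameter), in the manner of Green and Tao. Throughout, call a quantity \emph{acceptable} if it is $\ll_M \delta^{-C}$ for a constant $C=C_M$ that is permitted to grow from one step of the induction to the next; the $C_M$ in the statement is whatever this process ultimately produces. This is the effective refinement of Theorem \ref{Leibman's equidistribution theorem}(iii).

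\textbf{Base case $s=1$.} Here $G$ is abelian and, being connected, $G/\Gamma$ is a torus $\RR^m/\ZZ^m$, while $g$ has coefficients in $\RR^m$. If $g$ is not $(\delta,N)$-equidistributed, then expanding an offending Lipschitz test function in its Fourier series on the torus and pigeonholing yields a nonzero frequency $k\in\ZZ^m$ of acceptable size with $|\EE_{n\in[N]^D} e(k\cdot g(n))|$ acceptably bounded below, where $e(\theta):=e^{2\pi i\theta}$. Iterated Weyl differencing (the quantitative Weyl inequality) then forces every nonconstant coefficient of the real polynomial $k\cdot g$ to lie within an acceptable multiple of the relevant negative power of $N$ from $\ZZ$, which is precisely the assertion that $\norm{k\cdot g}_{C^\infty[N]}\ll\delta^{-C_M}$. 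The horizontal character $\eta(x)=k\cdot\psi(x)$ then works.

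\textbf{Inductive step.} Assume the theorem for all filtered nilmanifolds of degree $<s$, and let $g\in\poly(\ZZ^D,G_\bullet)$ fail to be $(\delta,N)$-equidistributed on $G/\Gamma$, where $\deg G_\bullet=s\geqslant 2$. Pick a Lipschitz $F$ with $\int F=0$, $\norm{F}_\Lip\leqslant 1$ and $|\EE_n F(g(n)\Gamma)|>\delta$. Since $G_s$ is central (as $[G,G_s]\leqslant G_{s+1}=1$), decompose $F=\sum_\xi F_\xi$ into vertical characters for the central torus $G_s/(G_s\cap\Gamma)$, where $F_\xi(ux)=e(\xi(u))F_\xi(x)$ for $u\in G_s$; truncate to $|\xi|$ acceptable at acceptable cost, and pigeonhole to a single dominant $\xi$. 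If $\xi=0$, then $F_0$ is $G_s$-invariant, hence descends to a Lipschitz function on the degree-$(s-1)$ nilmanifold $(G/G_s)/\overline\Gamma$, on which the projection $\overline g$ of $g$ is not equidistributed; the inductive hypothesis gives a horizontal character $\overline\eta$ there of acceptable modulus and smoothness norm, and $\eta:=\overline\eta\circ\pi$, with $\pi:G\to G/G_s$ the projection, is the required character on $G$. If $\xi\neq 0$, then $F_\xi$ automatically has mean zero; apply a van der Corput inequality in one variable $n_j$, so that the average is controlled by $\EE_h\EE_n F_\xi(g(n+he_j)\Gamma)\overline{F_\xi(g(n)\Gamma)}$. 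Because $F_\xi$ has nonzero vertical frequency, the function $\Delta_a F_\xi(x):=F_\xi(ax)\overline{F_\xi(x)}$ is $G_s$-invariant in $x$, so for each fixed $h$ the inner average depends on $g(n+he_j)g(n)^{-1}$ only through its class modulo $G_s$, together with a central phase that is a polynomial in $n$ of degree $<s$; hence it is governed by a polynomial sequence of degree $<s$ on a nilmanifold of degree $<s$. The inductive hypothesis then applies for an acceptable proportion of shifts $h\in[N]$, producing in each case a nonzero horizontal character of acceptable modulus whose composition has acceptable $C^\infty[N]$-norm. Pigeonholing the modulus $k$ to be independent of $h$, the relevant coefficients are polynomials in $h$, and a polynomial that is within an acceptable multiple of a fixed negative power of $N$ from $\ZZ$ on an acceptable proportion of $h\in[N]$ is identically so; unwinding this yields the required horizontal character $\eta$ on $G$, and the two branches close the induction.

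\textbf{Main obstacle.} The delicate part is the $\xi\neq 0$ branch: correctly isolating the central (vertical-frequency) contribution so that the van der Corput derivative genuinely lands on a nilmanifold of strictly smaller degree, and then ``integrating'' the $h$-indexed family of horizontal characters into a single character on $G$ — all while keeping every modulus and every $C^\infty[N]$-norm polynomial in $\delta^{-1}$ and with $C_M$ degrading in a controlled way through the recursion, which is where essentially all of the quantitative bookkeeping lies. A secondary technical point is the opening reduction from non-equidistribution tested against arbitrary Lipschitz functions to non-equidistribution tested against a single vertical nilcharacter, which requires a quantitative approximation of Lipschitz functions by finite vertical-frequency sums with controlled Lipschitz norms.
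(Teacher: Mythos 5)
This statement is not proved in the paper at all: it is quoted as Theorem 2.9 of \cite{green_tao_2012} (the paper only ever applies it), so the only thing your outline can be measured against is Green and Tao's original argument, which you follow in broad strokes: induction on the degree of the filtration, quantitative Weyl in the abelian base case, vertical character decomposition along the central group $G_s$, and van der Corput in the nonzero-frequency branch.

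The genuine gap is precisely at the point you flag as the main obstacle, and as written the step is incorrect rather than merely unfinished. The invariance you actually have is of $F_\xi\otimes\overline{F_\xi}$ under the diagonal central subgroup $G_s^\Delta=\{(u,u):u\in G_s\}$ of $G\times G$; hence for fixed $h$ the inner average depends on the class of the pair $\bigl(g(n+he_j),g(n)\bigr)$ in $(G\times G)/G_s^\Delta$, and not, as you claim, on $g(n+he_j)g(n)^{-1}$ modulo $G_s$ together with a central polynomial phase of degree $<s$. With the product filtration $(G\times G)_i=G_i\times G_i$, the quotient $(G\times G)/G_s^\Delta$ still has degree $s$, since its $s$-th filtration group is $(G_s\times G_s)/G_s^\Delta\cong G_s$; so the inductive hypothesis does not apply and the induction does not close. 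The actual mechanism in Green--Tao is to replace the product filtration by the relative one, $(G\times G)_0=G\times G$ and $(G\times G)_i=G_i\times_{G_{i+1}}G_i$ for $i\geqslant 1$, to check via the Lazard--Leibman theory of shifted filtrations that $n\mapsto(g(n+h),g(n))$ is polynomial with respect to it, and to note that its degree-$s$ group is exactly $G_s^\Delta$, so that quotienting by $G_s^\Delta$ genuinely drops the degree; this is the heart of the proof and is absent from your sketch. A secondary inaccuracy: from a character working for an acceptable proportion of $h$ you cannot conclude the $h$-dependent coefficients are ``identically'' almost integral; the correct conclusion (Green--Tao's recurrence lemmas, in the spirit of Lemma \ref{breaking an integral polynomial into a sum of integral polynomials} here) is that some bounded integer multiple $q$ of them is close to integers, so the horizontal character you finally output is a bounded multiple of the pigeonholed one, and the quantitative bookkeeping must carry that $q$ through the recursion.
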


We now need to quantify the notion of irrationality. 
\begin{definition}[Quantitative irrationality]\label{quantitative irrational sequences}
Let $G/\Gamma$ be a filtered nilmanifold of degree $s$, and suppose $A,N>0$. An element $g_i\in G_i$ is \emph{$(A,N)$-irrational} if for every nontrivial $i$-th level character $\eta:G_i\to\RR$ of modulus $|\eta|\leqslant A$, we have $||\eta(g_i)||_{\RR/\ZZ}\geqslant A/N^i$. It is $A$-irrational if for every nontrivial $i$-th level character $\eta:G_i\to\RR$ of modulus $|\eta|\leqslant A$, we have $\eta\circ g_i\notin\ZZ$. We say that a sequence $g\in\poly(\ZZ, G_\bullet)$ is $(A,N)$-irrational (respectively $A$-irrational) if $g_i$ is $(A,N)$-irrational (respectively $A$-irrational) for each $1\leqslant i\leqslant s$. Similarly, we say that the nilsequence $n\mapsto F(g(n)\Gamma)$ is $(A,N)$- or $A$-irrational if the polynomial sequence $g$ is.
\end{definition}
Clearly, $(A,N)$-irrationality is stronger than $A$-rationality, but for some of our applications the latter notion will be sufficient. 

We are now ready to state the finitary version of Theorem \ref{infinitary equidistribution on nilmanifolds}, which is the main technical result of this paper, and derive Theorem \ref{infinitary equidistribution on nilmanifolds} from it.

\begin{theorem}\label{finitary equidistribution on nilmanifolds}
Let $t\in\NN_+$ and $A,M,N\geqslant 2$. Let $G/\Gamma$ be a filtered nilmanifold of complexity $M$. Suppose that $g\in\poly(\ZZ, G_\bullet)$ is $(A,N)$-irrational, $F:(G/\Gamma)^{t+1}\to\CC$ is $M$-Lipschitz, and $\vec{P}\in\RR[x,y]^{t+1}$ is a homogeneous polynomial progression. Then
\begin{align*}
\EE_{x,y\in[N]} F(g^P(x,y)\Gamma^{t+1})  = \int_{G^P/\Gamma^P} F + O_M(A^{-c_M})
\end{align*}
for some $c_M>0$.
\end{theorem}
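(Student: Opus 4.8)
\emph{Strategy.}
The plan is to prove that $g^P$ is $(\delta,N)$-equidistributed on the subnilmanifold $G^P/\Gamma^P$ with $\delta=A^{-c_M}$; since the restriction of $F$ to $G^P/\Gamma^P$ is still $M$-Lipschitz, this yields the stated estimate. First one equips $G^P$ with a rational filtration $G^P_\bullet$ of degree $O_M(1)$ (built from $G_\bullet$ and the degrees of the $P_j$) for which $g^P\in\poly(\ZZ^2,G^P_\bullet)$; this is routine, using that the spaces $\P_i$ are rational --- being spanned by the integer vectors $\vec{P}^k(x,y)$, $x,y\in\ZZ$ --- together with the standard behaviour of polynomial sequences under polynomial changes of the base variable \cite{green_tao_2012}. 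Thus $G^P/\Gamma^P$ is a filtered nilmanifold of complexity $O_M(1)$ (the implied constants also depending on $t$ and $\vec{P}$). Applying the quantitative Leibman equidistribution theorem (Theorem \ref{quantitative Leibman's equidistribution theorem}) to $g^P$ on $G^P/\Gamma^P$, it suffices to rule out the existence of a nontrivial horizontal character $\eta$ of $G^P$ with $|\eta|\ll\delta^{-C_M}$ and $\norm{\eta\circ g^P}_{C^\infty[N]}\ll\delta^{-C_M}$; choosing $c_M$ a small enough multiple of $C_M^{-1}$, such an $\eta$ will be seen to contradict the $(A,N)$-irrationality of $g$.

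\emph{Expanding $\eta\circ g^P$ --- the role of homogeneity.}
Writing $g(n)=\prod_{i=0}^s g_i^{\binom{n}{i}}$ with $g_i\in G_i$, a coordinatewise check (setting $P_0:=0$) gives
\begin{align*}
g^P(x,y)=\prod_{i=0}^s g_i^{\binom{\vec{P}(x,y)}{i}},
\end{align*}
and $\binom{\vec{P}(x,y)}{i}\in\P_i$ by Lemma \ref{equivalent conditions for P_k}, so each factor lies in $G^P$. Since $\eta$ is a homomorphism and $\vec{v}\mapsto\eta(g_i^{\vec{v}})$ is $\RR$-linear on $\P_i$, and since homogeneity supplies the isomorphism $\tau_i\colon W_i\to\P_i$ with $\binom{\vec{P}(x,y)}{i}=\sum_l\vec{v}_{i,l}\,Q_{i,l}(x,y)$ for an integer-vector basis $\{\vec{v}_{i,l}\}$ of $\P_i$ and the corresponding basis $\{Q_{i,l}\}$ of $W_i$, we obtain
\begin{align*}
\eta\bigl(g^P(x,y)\bigr)=\sum_{i=0}^s\sum_l c_{i,l}\,Q_{i,l}(x,y),\qquad c_{i,l}:=\eta\bigl(g_i^{\vec{v}_{i,l}}\bigr).
\end{align*}
The decisive point is that, by \eqref{direct sum for homogeneous configurations}, the polynomials $\{Q_{i,l}\}_{i,l}$ are linearly independent in $\RR[x,y]$; consequently a change of basis with $O_M(1)$-bounded rational coefficients lets one read the coefficients $c_{i,l}$ off the smoothness norm of $\eta\circ g^P$. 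This is precisely the step that fails for inhomogeneous progressions, where $V_s$ acquires the extra summand $W^c\cap V_s$ and distinct levels overlap, so that a top-level contribution can be masked --- compare Theorem \ref{dichotomy}(ii).

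\emph{Induction from the top of the filtration.}
One processes the levels $i=s,s-1,\ldots,1$ in turn, inducting on the degree of $G_\bullet$. At the top level, $G_{s+1}=1$ and $G_s$ is central, so $(g_sh_s)^{\vec{v}}=g_s^{\vec{v}}h_s^{\vec{v}}$ exactly, and --- writing any $\vec{v}\in\P_s$ as a sum of products $\vec{v}'\cdot\vec{v}''$ with $\vec{v}'\in\P_j$ and $\vec{v}''\in\P_{s-j}$, which is possible since $\P_s\leqslant\P_j\cdot\P_{s-j}$ --- the elements $[a,b]^{\vec{v}}$ with $a\in G_j$, $b\in G_{s-j}$ are products of commutators $[a^{\vec{v}'},b^{\vec{v}''}]$ of elements of $G^P$, hence lie in $[G^P,G^P]$. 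Therefore $h\mapsto\eta(h^{\vec{v}})$ vanishes on $G_s^\nabla$ and is a genuine $s$-th level character of $G$ of modulus $\ll\delta^{-C_M}$, for each $\vec{v}\in\P_s$. Expanding in the $x$-variable via $\binom{\vec{P}(x,y)}{i}=\sum_{k\leqslant i}\binom{x}{k}\binom{\vec{P}(0,y)}{i-k}$ one finds $\eta(g^P(x,y))=\sum_{k\leqslant s}\binom{x}{k}\sum_{i\geqslant k}\eta\bigl(g_i^{\binom{\vec{P}(0,y)}{i-k}}\bigr)$; combined with the linear independence of the $Q_{i,l}$ and the nesting of the $\P_i$, a degree analysis shows that the values $\eta(g_s^{\vec{v}})$, $\vec{v}\in\P_s$, are recoverable from coefficients of $\eta\circ g^P$ of total degree $\geqslant s$, whence $\norm{\eta(g_s^{\vec{v}})}_{\RR/\ZZ}\ll_M\delta^{-C_M}N^{-s}$. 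By the $(A,N)$-irrationality of $g_s$ (Definition \ref{quantitative irrational sequences}) and the choice of $\delta$, every such $s$-th level character is trivial, i.e.\ $\eta$ vanishes on the central subgroup $\langle h^{\vec{v}}:h\in G_s,\ \vec{v}\in\P_s\rangle$. Hence $\eta$ descends to a nontrivial horizontal character of the Leibman group $(G/G_s)^P$ of $\vec{P}$ over the degree-$(s-1)$ group $G/G_s$, with $g\bmod G_s$ still $(A,N)$-irrational and the corresponding smoothness norm still $\ll\delta^{-C_M}$; by the inductive hypothesis $(g\bmod G_s)^P$ is $(\delta,N)$-equidistributed on its ambient nilmanifold, which is incompatible with such a character, so the induction closes --- the base case $s\leqslant1$ being immediate, since then $G$ is abelian and the coefficients $\eta(g_1^{\vec{v}})$, $\vec{v}\in\P_1$, are recovered from the degree-$\geqslant1$ part of $\eta\circ g^P$, so the irrationality of $g_1$ forces $\eta$ to vanish on $G^P=\langle h^{\vec{v}}:h\in G,\ \vec{v}\in\P_1\rangle$. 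Finally, Theorem \ref{infinitary equidistribution on nilmanifolds} follows from Theorem \ref{finitary equidistribution on nilmanifolds} by a routine compactness argument.

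\emph{Main obstacle.}
The conceptual heart is the single use of homogeneity above: the direct sum \eqref{direct sum for homogeneous configurations} --- equivalently, the isomorphism $W_i\cong\P_i$ --- is exactly what makes the smoothness norm of $\eta\circ g^P$ ``see'' the contributions of the different levels separately, and its failure in the inhomogeneous case is precisely what allows $g^P$ to be trapped in a proper subnilmanifold. The technically hardest points are the structural bookkeeping around $G^P$: that $G^P/\Gamma^P$ has bounded complexity and $g^P$ is adapted to a bounded-degree filtration, that horizontal characters of $G^P$ restrict at the top level to honest $s$-th level characters of $G$ with the values $\eta(g_s^{\vec{v}})$ recoverable at total degree $\geqslant s$, and that $G^P$ is compatible with the quotient $G\to G/G_s$ --- all while tracking the $O_M(1)$ denominators and the modulus losses $\delta^{-C_M}$ consistently through the induction so that a single exponent $c_M$ suffices throughout.
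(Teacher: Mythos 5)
Your overall mechanism coincides with the paper's: apply Theorem \ref{quantitative Leibman's equidistribution theorem} to the two-parameter sequence $g^P$ on $G^P/\Gamma^P$, use homogeneity (equivalently \eqref{direct sum for homogeneous configurations}, the linear independence of the $Q_{i,j}$) to read the coefficients $\eta(g_i^{\vec{v}_{i,j}})$ off $\norm{\eta\circ g^P}_{C^\infty[N]}$, check via $\P_i\subseteq\P_l\cdot\P_{i-l}$ that $h\mapsto\eta(h^{\vec{v}})$ is an $i$-th level character, and contradict $(A,N)$-irrationality. The genuine gap is in how you close the induction. After killing the top level you quotient by $G_s$ and claim that the descended nontrivial character is ``incompatible'' with the $(\delta,N)$-equidistribution of $(g\bmod G_s)^P$ supplied by the inductive hypothesis. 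At these parameter scales that implication is false: Theorem \ref{quantitative Leibman's equidistribution theorem} only gives $|\eta|,\norm{\eta\circ g^P}_{C^\infty[N]}\ll\delta^{-C_M}=A^{c_MC_M}$, a \emph{large} quantity, and a nontrivial horizontal character whose composition has smoothness norm of that size coexists happily with excellent equidistribution. Concretely, take $G/\Gamma=\RR/\ZZ$ with $N$ prime, $g(n)=Kn/N$ and $\eta=\mathrm{id}$: then $|\eta|=1$ and $\norm{\eta\circ g}_{C^\infty[N]}=K$, yet $g$ is $(O(1/N),N)$-equidistributed. So the existence of your descended character does not contradict the inductive hypothesis, and the induction does not close as written; the only way a character with such weak smoothness bounds yields a contradiction is through coefficient extraction plus irrationality, which is exactly your top-level step and the paper's argument at every level.

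There are also unproved structural claims in the descent itself: to push $\eta$ down to $(G/G_s)^P$ you need it to vanish on the full kernel $G^P\cap G_s^{t+1}$, which can be strictly larger than $\langle h^{\vec{v}}:h\in G_s,\ \vec{v}\in\P_s\rangle$ because $\P_l\cdot\P_{s-l}$ need not lie in $\P_s$ (the extra elements are commutators, so vanishing on $[G^P,G^P]$ plausibly rescues this, but it needs a normal-form lemma), and you need integrality of the descended map on the lattice of $(G/G_s)^P$, i.e.\ something like $G^P\cap(\Gamma G_s)^{t+1}=\Gamma^P\cdot(G^P\cap G_s^{t+1})$; none of this is supplied. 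A smaller point: the change of basis from the Taylor coefficients of $\eta\circ g^P$ to the $c_{i,l}$ has rational, not integral, inverse, so one only gets $\norm{q\,\eta(g_i^{\vec{v}_{i,j}})}_{\RR/\ZZ}\ll AN^{-i}$ for a bounded positive integer $q$ (this is Lemma \ref{breaking an integral polynomial into a sum of integral polynomials}), and the irrationality argument must be run for the character $h\mapsto q\eta(h^{\vec{v}_{i,j}})$. All of these issues are avoided by the paper's organization, which you could adopt as the repair: keep the single character $\eta$ on $G^P$ and show by downward induction on $i$ that $\eta$ vanishes on $H_i=\langle h^{\vec{v}_{i,j}}:h\in G_i\rangle$; since $\P_i\subseteq\P_{i+1}$, the vanishing on $H_{i+1}$ is precisely what makes $\xi_{i,j}(h)=q\eta(h^{\vec{v}_{i,j}})$ vanish on $G_{i+1}$ and hence be an $i$-th level character, so no quotient groups and no second appeal to equidistribution are needed.
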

\begin{proof}[Proof of Theorem \ref{infinitary equidistribution on nilmanifolds} using Theorem \ref{finitary equidistribution on nilmanifolds}]
Let $F:(G/\Gamma)^{t+1}\to\RR$ be a continuous function.  By the Stone-Weierstrass theorem, Lipschitz functions on a compact set form a dense subset of the algebra of continuous functions. Approximating $F$ by a sequence of Lipschitz functions if necessary, we can assume without loss of generality that $F$ is Lipschitz. We let $M$ be the maximum of the complexity of $G/\Gamma$ and the Lipschitz norm of $F$.

Let $g\in\poly(\ZZ, G_\bullet)$ be an irrational sequence. For each $N\in\NN_+$, we let $A_N$ be the maximal real number $A$ for which $g$ is $(A_N, N)$-irrational. We claim that $A_N\to\infty$ as $N\to\infty$. If not, then there exists some number $A>0$ and an index $i\in\NN_+$ with the property that $g_i$ is not $(A,N)$-irrational for all $N\in\NN_+$. We fix this $i$. It follows that there exists a sequence of nontrivial $i$-th level characters $\eta_{N}: G_i\to\RR$ of modulus at most $A$ such that $||\eta_N(g_i)||_{\RR/\ZZ}<A/N^i$. Since there are only finitely many $i$-th level characters of modulus bounded by $A$, we conclude that there exists a nontrivial $i$-th level character $\eta$ of modulus at most $A$ such that $||\eta(g_i)||_{\RR/\ZZ} < A/N^i$ for all $N\in\NN_+$. Taking $N\to\infty$, we see that $\eta(g_i)\in\ZZ$, contradicting the irrationality of $g_i$. 

It therefore follows from Theorem \ref{finitary equidistribution on nilmanifolds} that
\begin{align*}
\EE_{x,y\in[N]} F(g^P(x,y)\Gamma^{t+1})  = \int_{G^P/\Gamma^P} F + O_M(A_N^{-c_M})
\end{align*}
Since $M$ is constant, letting $N\to\infty$ sends the error term to 0, implying that $g^P$ is equidistributed on $G^P/\Gamma^P$ as claimed. 
\end{proof}


\section{Reducing true complexity to an equidistribution question}\label{section on true complexity}

In Sections \ref{section on reducing to connected groups}-\ref{section on finitary nilmanifold theory}, we have shown how the question of determining Host-Kra complexity for homogeneous progressions can be reduced to showing that $g^P$ is equidistributed on $G^P/\Gamma^P$. Determining true complexity for homogeneous progression comes down to the exact same equidistribution question. All the arguments in this section can be viewed as finitary analogues of arguments in previous sections.

Since we are now primarily concerned with functions from $\ZZ/N\ZZ$ to $\CC$, we shall need an $N$-periodic version of certain previously defined concepts. In this section, $N$ is always a prime, and the group $G$ is connected. A function $f:\ZZ/N\ZZ\to\CC$ is called \emph{1-bounded} whenever $\norm{f}_\infty\leqslant 1$.
\begin{definition}[Periodic sequences]\label{$N$-periodicity}
Let $G_\bullet$ be a filtration on $G$. A sequence $g\in\poly(\ZZ, G_\bullet)$ is \emph{$N$-periodic} if $g(n+N)g(n)^{-1}\in\Gamma$ for each $n\in\ZZ$, and it is periodic if it is $N$-periodic for some $N>0$. A nilsequence $n\mapsto F(g(n)\Gamma)$ is $N$-periodic (resp. periodic) if $g$ is. 
\end{definition}

Given a homogeneous polynomial progression $\vec{P}\in\RR[x,y]^{t+1}$, we want to show that $\A_i(\vec{P}) = \T_i(\vec{P})$ for each $0\leqslant i\leqslant t$. The forward inequality $\A_i(\vec{P}) \leq \T_i(\vec{P})$ is straightforward to derive (see Theorem 1.13 in \cite{kuca_2020b}); it is the reverse inequality that poses a challenge. We thus want to prove the following.
\begin{theorem}\label{true complexity is bounded by algebraic complexity}
Let $t\in\NN_+$, $\vec{P}\in\RR[x,y]^{t+1}$ be a homogeneous polynomial progression, $0\leqslant i\leqslant t$, and suppose that $\A_i(\vec{P}) = s$. For every $\epsilon>0$, there exist $\delta>0$ and $N_0\in\NN$ such that for all primes $N>N_0$ and all 1-bounded functions $f_0, \ldots, f_t:\ZZ/N\ZZ\to\CC$, we have
\begin{align*}
    \left|\EE_{x,y\in\ZZ/N\ZZ}f_0(x) f_1(x+P_1(y))\cdots f_t(x+P_t(y))\right| < \epsilon
\end{align*}
whenever $\norm{f_i}_{U^{s+1}}<\delta$. 
\end{theorem}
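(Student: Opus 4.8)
The plan is to prove the contrapositive via a two-step reduction to the equidistribution result of Theorem \ref{finitary equidistribution on nilmanifolds}, following the degree-lowering template of \cite{peluse_2019, kuca_2020b}. Suppose that for some fixed $\epsilon>0$ there are arbitrarily large primes $N$ and $1$-bounded functions $f_0,\ldots,f_t$ on $\ZZ/N\ZZ$ with $\norm{f_i}_{U^{s+1}}$ arbitrarily small and yet $|\Lambda|\geqslant\epsilon$, where $\Lambda=\EE_{x,y\in\ZZ/N\ZZ}\prod_{j=0}^t f_j(x+P_j(y))$ (with the convention $P_0=0$). We want to derive $\norm{f_i}_{U^{s+1}}\gg_\epsilon 1$, a contradiction. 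First, a \emph{generalized von Neumann theorem}: by PET induction in the spirit of \cite{bergelson_leibman_1996} and its finitary refinements, there is $D=D(\vec P)\in\NN$, depending only on the degrees of $P_1,\ldots,P_t$, and $c>0$ with $|\Lambda|\leqslant C_{\vec P}\norm{f_i}_{U^D}^c$, hence $\norm{f_i}_{U^D}\geqslant\delta_1(\epsilon,\vec P)>0$. Since $D$ is in general much larger than $s+1=\A_i(\vec P)+1$, this does not yet suffice; the remaining work lowers the controlling degree from $D$ to $s+1$.

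For the reduction it is convenient to isolate the $i$-th function: substituting $u=x+P_i(y)$ writes $\Lambda=\EE_u f_i(u)\,\theta_i(u)$ with $\theta_i(u)=\EE_y\prod_{j\neq i}f_j\big(u+(P_j-P_i)(y)\big)$, a $1$-bounded function built from the \emph{companion progression} $\vec P^{\,\prime}$ obtained from $\vec P$ by deleting the $i$-th polynomial and moving the point $x$ to the $i$-th slot; one checks, using the automorphism $R(x,y)\mapsto R(x+P_i(y),y)$ of $\RR[x,y]$, that $\vec P^{\,\prime}$ is again a polynomial progression with $\vec P^{\,\prime}$ homogeneous $\iff\vec P$ homogeneous and $\A_0(\vec P^{\,\prime})=\A_i(\vec P)=s$. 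The goal becomes to show that $\theta_i$ is approximable in $L^2$, up to error $\eta$, by a degree-$s$ nilsequence of complexity $O_\eta(1)$: then $|\Lambda|=|\EE_u f_i(u)\theta_i(u)|\leqslant\norm{f_i}_{U^{s+1}}\cdot O_\eta(1)+\eta$ because a bounded-complexity degree-$s$ nilsequence has bounded $(U^{s+1})^*$-norm, and choosing $\eta=\epsilon/2$ forces $\norm{f_i}_{U^{s+1}}\gg_\epsilon 1$.

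The heart of the argument, and the step I expect to be the main obstacle, is showing that $\theta_i$ carries no correlation with irrational nilsequences of degree exceeding $s$. One first regularizes the functions $f_j$ ($j\neq i$) by the arithmetic regularity lemma, replacing them up to $U^D$-uniform and $L^2(\eta)$-small errors by degree-$(D-1)$ nilsequences $F_j=\Psi_j(g(\cdot)\Gamma)$ on a common filtered nilmanifold $G/\Gamma$ of complexity $O_\eta(1)$ and a common sequence $g$; the $U^D$-uniform errors are negligible by the generalized von Neumann theorem applied to $\vec P^{\,\prime}$, and the $L^2$-small errors cost $O(\eta)$. This turns $\theta_i$ into $\theta_i^{\,\mathrm{str}}(u)=\EE_y\bigotimes_{j\neq i}\Psi_j\big((g(u+(P_j-P_i)(y))\Gamma)_{j\neq i}\big)$ up to $O(\eta)$. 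Now suppose $\theta_i^{\,\mathrm{str}}$ correlates with a degree-$d$ nilsequence $\chi(u)=\Theta(h(u)\Lambda')$ with $d\geqslant s+1$ and bounded complexity; by the Green--Tao factorization theorem \cite{green_tao_2012} we may, after passing to arithmetic progressions of common difference $O(1)$ and reducing to the connected case via Proposition \ref{replacing a linear sequence by a polynomial one} and Section \ref{section on finitary nilmanifold theory}, assume $h$ and $g$ live on a common connected filtered nilmanifold with $g$ $(A,N)$-irrational for any prescribed $A$, and that $\chi$ transforms under a nontrivial vertical character of $G_d$. Then $\langle\theta_i^{\,\mathrm{str}},\chi\rangle=\EE_{u,y}\big(\bigotimes_{j\neq i}\Psi_j\otimes\overline\Theta\big)\big(g^{\vec P^{\,\prime}}(u,y)\Gamma^{t+1}\big)$, and Theorem \ref{finitary equidistribution on nilmanifolds} applied to the homogeneous progression $\vec P^{\,\prime}$ shows this equals $\int_{G^{\vec P^{\,\prime}}/\Gamma^{\vec P^{\,\prime}}}\bigotimes_{j\neq i}\Psi_j\otimes\overline\Theta+o(1)$. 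Since $\A_0(\vec P^{\,\prime})=s<d$, Lemma \ref{Leibman group contains subgroups} gives $G_{s+1}\times 1^t\leqslant G^{\vec P^{\,\prime}}$, hence $G_d\times 1^t\leqslant G^{\vec P^{\,\prime}}$, and integrating $\overline\Theta$ over this fibre annihilates it because it transforms under a nontrivial character of $G_d$ --- exactly the computation in the final display of the proof of Corollary \ref{Host-Kra complexity equals algebraic complexity}. Thus $\langle\theta_i^{\,\mathrm{str}},\chi\rangle=o(1)$ for every such $\chi$, so (after the standard inverse-theorem and factorization bookkeeping that reduces arbitrary higher-degree nilsequences to irrational ones and handles the remaining regularity errors) $\theta_i^{\,\mathrm{str}}$, and hence $\theta_i$, is $O(\eta)$-close in $L^2$ to a degree-$s$ nilsequence of complexity $O_\eta(1)$, completing the contradiction. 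The delicate part throughout is keeping the nilmanifold complexity, the irrationality parameter $A$, the common difference from the factorization theorem, and the dependence on the prime $N$ all under uniform control while propagating them through the regularity lemma, the inverse theorem, and the passage to the connected case; it is also exactly here --- through Theorem \ref{finitary equidistribution on nilmanifolds} and the containment $G_{s+1}\times 1^t\leqslant G^{\vec P^{\,\prime}}$ --- that the homogeneity hypothesis on $\vec P$ is indispensable.
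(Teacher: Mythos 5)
Your proposal goes by a genuinely different route than the paper: you keep $f_i$ unregularized, pass to the companion progression $\vec P'$ (that reduction, including the preservation of homogeneity and of $\A_i$ under $R(x,y)\mapsto R(x+P_i(y),y)$, is correct), and try to run a Peluse-style degree-lowering argument on the dual function $\theta_i$, whereas the paper (deferring details to Theorem 8.1 of \cite{kuca_2020b}) regularizes \emph{all} $t+1$ functions simultaneously with a single irrational periodic sequence $g$ via Lemma \ref{regularity lemma}, evaluates the structured main term by Proposition \ref{periodic equidistribution on nilmanifolds} as $\int_{G^P/\Gamma^P}\bigotimes_j F_j$, disintegrates along $1^i\times G_{s+1}\times 1^{t-i}\leqslant G^P$ (Lemma \ref{Leibman group contains subgroups}), and only then uses the hypothesis $\norm{f_i}_{U^{s+1}}<\delta$ to kill the fibre averages of $F_i$.

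The gap in your route is at its heart, the step where you show $\theta_i^{\mathrm{str}}$ does not correlate with higher-degree nilsequences. Your displayed identity $\langle\theta_i^{\,\mathrm{str}},\chi\rangle=\EE_{u,y}\bigl(\bigotimes_{j\neq i}\Psi_j\otimes\overline\Theta\bigr)\bigl(g^{\vec P'}(u,y)\Gamma^{t+1}\bigr)$ is only valid when the test nilsequence $\chi$ is built from the \emph{same} polynomial sequence $g$; but the nilsequences you must rule out are the ones produced by an inverse/approximation theorem applied to $\theta_i^{\mathrm{str}}$ itself, and these come with a foreign pair $(h,G'/\Gamma')$. To apply Theorem \ref{finitary equidistribution on nilmanifolds} you would have to work on the product group with the joint sequence $(g,h)$, and that theorem gives you nothing unless $(g,h)$ is irrational for a filtration on the product --- which is exactly what can fail when $g$ and $h$ share algebraic structure. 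Invoking the factorization theorem of \cite{green_tao_2012} on the pair does not resolve this: after factorizing $(g,h)$ into a highly irrational sequence on a rational subgroup $\tilde G\leqslant G\times G'$, the subgroup need not contain $1\times G'_d$, so the nontrivial vertical character of $\Theta$ that your annihilation step relies on can become trivial, and the Leibman-group containment argument (the analogue of the last display in the proof of Corollary \ref{Host-Kra complexity equals algebraic complexity}) no longer applies as stated. Relatedly, even granting non-correlation with every irrational higher-degree nilsequence, the conclusion that $\theta_i$ is $L^2$-close to a degree-$s$ nilsequence of complexity $O_\eta(1)$ is not "standard bookkeeping": it is precisely the crux, and as proposed it is circular, since producing the approximant requires the inverse-theorem step whose output is the foreign $\chi$ you cannot yet handle. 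The paper's device of a common sequence $g$ for all functions (Lemma \ref{regularity lemma}) exists exactly to avoid this joint-equidistribution problem; alternatively, Peluse's actual degree-lowering in \cite{peluse_2019} circumvents it by a different, Fourier-analytic induction rather than by testing against nilsequences. As written, your argument therefore has a genuine missing ingredient at the degree-lowering step, although its first reduction (companion progression plus generalized von Neumann plus the dual-norm estimate for bounded-complexity degree-$s$ nilsequences) is sound.
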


We know that each progression is controlled by \emph{some} Gowers norm. The result below plays the same role in deriving Theorem \ref{true complexity is bounded by algebraic complexity} as Theorem \ref{Host-Kra factors are characteristic} plays in the proof of Corollary \ref{Host-Kra complexity equals algebraic complexity}.
\begin{proposition}[Proposition 2.2 of \cite{peluse_2019}]\label{control by some Gowers norm}
Let $\vec{P}\in\RR[x,y]^{t+1}$ be an integral polynomial progression. There exists $s\in\NN_+$ with the following property: for every $\epsilon>0$, there exist $\delta>0$ and $N_0\in\NN$ such that for all primes $N>N_0$ and all 1-bounded functions $f_0, \ldots, f_t:\ZZ/N\ZZ\to\CC$, we have
\begin{align*}
    \left|\EE_{x,y\in\ZZ/N\ZZ}f_0(x) f_1(x+P_1(y))\cdots f_t(x+P_t(y))\right| < \epsilon
\end{align*}
whenever $\norm{f_i}_{U^{s+1}}<\delta$ for some $0\leqslant i\leqslant t$.
\end{proposition}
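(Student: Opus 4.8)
The plan is to prove this by the polynomial exhaustion technique (PET induction) of Bergelson--Leibman, transcribed to $\ZZ/N\ZZ$, followed by the generalized von Neumann inequality for systems of linear forms. Throughout, $N$ will be a large prime, chosen so that the changes of variables below are invertible modulo $N$ and the leading coefficients of the $P_j$ do not vanish modulo $N$; this is what produces the threshold $N_0$. First I would fix an index $i_0$ and substitute $x\mapsto x-P_{i_0}(y)$, which turns the average into $\EE_{x,y} f_{i_0}(x)\prod_{j\neq i_0}f_j(x+R_j(y))$ with $R_j=P_j-P_{i_0}$ pairwise distinct and nonzero. Relabelling, it suffices to show: for every configuration of the form $(x,\,x+R_1(y),\,\ldots,\,x+R_t(y))$ with the $R_j$ distinct and nonzero there is $s=s(t,d)$, where $d=\max_j\deg R_j$, so that the associated average is $O(\norm{f_0}_{U^{s+1}}^{c})+o_{N\to\infty}(1)$ for some $c>0$. (This is in fact stronger than claimed, giving control by every $f_i$, but it is no harder to obtain.)

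Second I would run the PET scheme in the variable $y$. One applies the finitary van der Corput inequality to the inner average $b_y:=\EE_x f_0(x)\prod_j f_j(x+R_j(y))$, which duplicates $y$ to $(y,y+h)$, introduces an averaging parameter $h$, and after expanding $b_{y+h}\overline{b_y}$, substituting $x'=x-w$ for the duplicated $x$-variable, and shifting $x$ by $-R_{j_0}(y)$ for the index $j_0$ of maximal degree, replaces $f_0$ by the multiplicative derivative $\Delta_w f_0:=f_0\cdot\overline{f_0(\cdot+w)}$ and strictly lowers the Bergelson--Leibman weight of the configuration (the weight vector records, degree by degree, the number of essentially distinct leading behaviours among the current polynomials; it decreases because each step either drops the degree of the distinguished polynomial by one or merges two polynomials). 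After $O_{t,d}(1)$ such steps the configuration becomes linear: every surviving function --- among them $\Delta_{w_1}\cdots\Delta_{w_k}f_0$ for some $k=O_{t,d}(1)$ --- is evaluated at an affine-linear form in $x$ and the accumulated parameters $y,w_1,\ldots,w_k,h_1,\ldots$.

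Finally, the terminal multilinear average over linear forms has Cauchy--Schwarz complexity bounded in terms of $t$ and $d$, so the generalized von Neumann (Gowers--Cauchy--Schwarz) inequality bounds it by a suitable Gowers norm of $\Delta_{w_1}\cdots\Delta_{w_k}f_0$; averaging over $w_1,\ldots,w_k$ together with one further Cauchy--Schwarz collapses this to $\norm{f_0}_{U^{s+1}}^{c}$ for an appropriate $s=s(t,d)$ and $c>0$, while the $o(1)$ absorbs the $O(1/N)$ errors from the van der Corput windows and the contribution of the $O(1/N)$-proportion of $y$ on which a leading coefficient of some intermediate polynomial degenerates mod $N$. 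The main obstacle is the PET bookkeeping: defining the weight so that it provably strictly decreases under the van der Corput step applied to the polynomial the scheme selects, and verifying both that the distinguished function $f_0$ is genuinely the one that accumulates all the derivatives and that the terminal linear system has bounded Cauchy--Schwarz complexity (equivalently, that the linear forms appearing are not too linearly dependent). None of this requires a new idea beyond a careful finitary rewriting of the Bergelson--Leibman argument, as in the source of this proposition.
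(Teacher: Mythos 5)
The paper itself contains no proof of this proposition: it is imported verbatim, with attribution, as Proposition 2.2 of \cite{peluse_2019}, and the proof in that source is exactly the route you outline --- PET induction by repeated Cauchy--Schwarz/van der Corput differencing in the $y$ variable, terminating in a system of linear forms to which the generalized von Neumann inequality is applied. So your overall strategy is the intended one, and the result is genuinely standard.

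Two specific steps of your sketch, however, are wrong as written, and they are precisely the bookkeeping you flag as the main obstacle. First, subtracting the polynomial of \emph{maximal} degree does not in general decrease the Bergelson--Leibman weight: for the family $\{y,\,y^2\}$ the van der Corput step with shift $y^2$ produces $\{(y+h)-y^2,\; y-y^2,\; 2hy+h^2\}$, which again has one leading-coefficient class in degree $2$ and one in degree $1$, so the weight $(1,1)$ does not drop, whereas shifting by the minimal-degree polynomial $y$ gives weight $(1,0)$. The scheme must subtract a representative of a minimal nonconstant equivalence class, chosen not equivalent to the class carrying the designated function. Second, placing $f_{i_0}$ at the constant slot and expecting it to absorb all the derivatives is not what the iteration does: after your first step and the subsequent shift of $x$, the slot whose argument is constant in $y$ is occupied by a copy of $f_{j_0}$, so later $w$-derivatives land on other functions. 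The standard arrangement is the opposite normalisation: keep the designated function composed with a nonconstant polynomial throughout, and at each step use Cauchy--Schwarz in $x$ to eliminate, by $1$-boundedness, whichever function currently occupies the constant slot; the designated function then survives to the terminal linear stage only as shifted copies, and shift-invariance of Gowers norms, the linear generalized von Neumann inequality, and a final averaging over the accumulated $h$-parameters give the bound by $\norm{f_{i_0}}_{U^{s+1}}^{c}+O(N^{-c})$, uniformly in the $1$-bounded functions. With those corrections (or with a reworked weight function adapted to your derivative variant, ensuring the designated slot is never the one subtracted) your sketch becomes the standard proof and yields $s=s(t,\max_j\deg P_j)$ as required.
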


Next, we want to perform a finitary analogue of the approximation-by-nilsystems argument. This can be achieved with the help of a periodic version of celebrated arithmetic regularity lemma from \cite{green_tao_2010a} in which the same polynomial sequence $g$ is used in the decomposition of several functions.

\begin{lemma}[Lemma 2.13 of \cite{kuca_2020b}]\label{regularity lemma}
Let $s, t\in\NN_+ $, $\epsilon>0$, and $\mathcal{F}:\RR_+\to\RR_+$ be a growth function. There exists $M=O_{\epsilon,\mathcal{F}}(1)$, a filtered nilmanifold $G/\Gamma$ of degree $s$ and complexity at most $M$, and an $N$-periodic, $\mathcal{F}(M)$-irrational sequence $g\in\poly(\ZZ,G_\bullet)$ satisfying $g(0)=1$ such that for all 1-bounded functions $f_0, \ldots, f_t:\ZZ/N\ZZ\to\CC$, there exist decompositions
\begin{align*}
    f_i = f_{i, nil} + f_{i,sml} + f_{i,unf}
\end{align*}
where
\begin{enumerate}
    \item $f_{i,nil}(n)=F_i(g(n)\Gamma)$ for $M$-Lipschitz function $F_i: G/\Gamma\to\CC$,
    \item $||f_{i,sml}||_2\leqslant \epsilon$,
    \item $||f_{i,unf}||_{U^{s+1}}\leqslant \frac{1}{\mathcal{F}(M)}$,
    \item the functions $f_{i,nil}$, $f_{i,sml}$ and $f_{i,unf}$ are 4-bounded,
\end{enumerate}
\end{lemma}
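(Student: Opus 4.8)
The plan is to deduce this as a simultaneous, periodic, and irrational version of the arithmetic regularity lemma of Green and Tao \cite{green_tao_2010a}, together with the quantitative factorisation theorem of \cite{green_tao_2012}, by three successive refinements: running the single-function regularity lemma on all of $f_0,\dots,f_t$ with a common nilmanifold and polynomial sequence; normalising so that $g(0)=1$ and $g$ is $N$-periodic; and upgrading $g$ to be $\mathcal F(M)$-irrational.

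First I would invoke the single-function statement: for a $1$-bounded $f:\ZZ/N\ZZ\to\CC$, a degree $s$, and a growth function $\mathcal G$, there is $M'=O_{\epsilon,\mathcal G}(1)$, a filtered nilmanifold of degree $s$ and complexity at most $M'$, a sequence in $\poly(\ZZ,G_\bullet)$, and a decomposition $f=f_{nil}+f_{sml}+f_{unf}$ with $f_{nil}=F(g(\cdot)\Gamma)$ for an $M'$-Lipschitz $F$, $\|f_{sml}\|_2\le\epsilon$, $\|f_{unf}\|_{U^{s+1}}\le 1/\mathcal G(M')$, and all three pieces $O(1)$-bounded. To obtain a common $g$, apply this first to $f_0$, then to $f_1$ with a growth function inflated past the complexity produced at the first stage, and so on; then set $G=G^{(0)}\times\dots\times G^{(t)}$ with the product filtration, $\Gamma=\prod_i\Gamma^{(i)}$, $g=(g^{(0)},\dots,g^{(t)})$, and let $F_i$ be the $i$-th original Lipschitz function composed with the coordinate projection $G/\Gamma\to G^{(i)}/\Gamma^{(i)}$. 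Then $f_{i,nil}(n)=F_i(g(n)\Gamma)$ with a single $g$, the complexity of $G/\Gamma$ is $O_{\epsilon,\mathcal F}(1)$ once the meta-growth function governing the iteration is chosen to grow fast enough, and each $\|f_{i,unf}\|_{U^{s+1}}$ is controlled against the \emph{final} complexity $M$.

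Next I would normalise. Replacing $g(n)$ by $g(0)^{-1}g(n)$ — still a polynomial sequence, since $\poly(\ZZ,G_\bullet)$ is a group containing the constant sequences — and each $F_i$ by $x\mapsto F_i(g(0)x)$, which remains $O(M)$-Lipschitz, leaves every $f_{i,nil}$ unchanged but arranges $g(0)=1$. For periodicity and irrationality I would apply the quantitative factorisation theorem (Theorem 1.19 of \cite{green_tao_2012}): write $g=\varepsilon\,g'\,\gamma$, where $\varepsilon$ is $(M,N)$-smooth, $g'$ lies in a rational subgroup $G'\le G$ and is $\mathcal F(M)$-irrational for the induced filtration, and $\gamma$ is $M$-rational, hence periodic with period $q=O_M(1)$, which over $\ZZ/N\ZZ$ may be taken to divide $N$. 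Since $\varepsilon$ varies slowly, split $\ZZ/N\ZZ$ into $O_M(1)$ arithmetic progressions on which $\varepsilon$ is essentially constant, and let $\gamma$ run over its $O_M(1)$ residue classes; absorbing both the smooth and the rational data into the nilmanifold $G'/\Gamma'$ and into the Lipschitz functions reduces everything to the irrational, $N$-periodic sequence $g'$, at the cost of enlarging $M$ by a bounded amount. The small and uniform parts survive these operations, $\|f_{i,unf}\|_{U^{s+1}}\le 1/\mathcal F(M)$ is preserved by choosing $\mathcal F$ (and the nested growth functions) appropriately, and $4$-boundedness is a crude bound on the $O(1)$-bounded pieces.

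The delicate point is entirely bookkeeping: one must interleave the $t+1$ invocations of the single-function regularity lemma with the factorisation and the splitting into progressions and cosets, while arranging that the final complexity depends only on $\epsilon$ and $\mathcal F$ and that the uniformity of each $f_{i,unf}$ is measured against that final complexity. This is handled by a standard but lengthy choice of a nested family of rapidly growing growth functions, each dominating the complexity dependence of the next stage; the full argument is carried out in \cite{kuca_2020b}, building on \cite{green_tao_2010a, green_tao_2012}.
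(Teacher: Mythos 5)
You should first note that the paper itself gives no proof of this lemma: it is imported verbatim as Lemma 2.13 of \cite{kuca_2020b}, whose proof is a several-function adaptation of the periodic, irrational arithmetic regularity lemma of Candela and Sisask (Theorem 5.1 of \cite{candela_sisask_2012}), built on \cite{green_tao_2010a, green_tao_2012}. Your overall strategy --- decompose all the $f_i$ against a common nilmanifold (product construction with nested growth functions), then factorise the resulting polynomial sequence into smooth, irrational and rational parts and absorb the smooth and rational data into the Lipschitz functions --- is the right family of ideas, and you correctly apply the factorisation to the \emph{combined} sequence rather than coordinatewise, which matters because irrationality of each $g^{(i)}$ does not imply irrationality of $(g^{(0)},\ldots,g^{(t)})$ on the product filtration (an $i$-th level character of the product is a sum of characters of the factors, and non-integers can sum to an integer).

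The genuine gap is in your treatment of $N$-periodicity. First, the claim that the $M$-rational part $\gamma$ ``may be taken to have period dividing $N$'' is false in the relevant regime: $N$ is prime and the period of $\gamma$ is $q=O_M(1)\ll N$, so it divides $N$ only if $q=1$. Second, splitting $\ZZ/N\ZZ$ into $O_M(1)$ progressions on which $\varepsilon$ is essentially constant and $\gamma$ is constant yields structured decompositions only on those sub-progressions; the lemma requires a single decomposition $f_{i,nil}(n)=F_i(g(n)\Gamma)$ valid on all of $\ZZ/N\ZZ$ with one sequence $g$ that is genuinely $N$-periodic in the sense $g(n+N)g(n)^{-1}\in\Gamma$, and reassembling the pieces into such a $g$ is precisely the hard step, not a routine absorption. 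This exact periodicity is not cosmetic: it is what later makes Proposition \ref{periodic equidistribution on nilmanifolds} work, via $g_i^{N^i}\in\Gamma_i$ mod $G_{i+1}^\nabla$ (Lemma 5.3 of \cite{candela_sisask_2012}). The way this is actually handled is to use the periodic versions of the factorisation theorem and regularity lemma from \cite{candela_sisask_2012}, in which $g$ is adjusted so that periodicity holds exactly, rather than recovered a posteriori from an $M$-rational $\gamma$ as you propose. A smaller inaccuracy: Theorem 1.19 of \cite{green_tao_2012} produces a $g'$ that is totally equidistributed, not $\mathcal{F}(M)$-irrational; the irrational form of the factorisation is again due to Candela--Sisask. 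Your nested-growth-function bookkeeping for measuring each $\norm{f_{i,unf}}_{U^{s+1}}$ against the final complexity is workable as flagged, but without a correct mechanism for exact $N$-periodicity the argument as written does not deliver the stated lemma.
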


The last piece that we need is a finitary, periodic version of Theorem \ref{finitary equidistribution on nilmanifolds}.
\begin{proposition}\label{periodic equidistribution on nilmanifolds}
Let $t\in\NN_+$ and $A,M,N\geqslant 2$. Let $G/\Gamma$ be a filtered nilmanifold and complexity $M$. Suppose that $g\in\poly(\ZZ, G_\bullet)$ is an $A$-irrational, $N$-periodic polynomial sequence, $F:(G/\Gamma)^{t+1}\to\CC$ is $M$-Lipschitz and 1-bounded, and $\vec{P}\in\RR[x,y]^{t+1}$ is a homogeneous polynomial progression. Then
\begin{align*}
\EE_{x,y\in\ZZ/N\ZZ} F(g^P(x,y)\Gamma^{t+1})  = \int_{G^P/\Gamma^P} F + O_M(A^{-c_M})
\end{align*}
for some $c_M>0$.
\end{proposition}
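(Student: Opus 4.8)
The plan is to run the argument of Theorem~\ref{finitary equidistribution on nilmanifolds} essentially verbatim, making two changes to account for the periodicity hypothesis and for the change of averaging range. First, since $N$ is prime we may assume $N>\max_i\deg P_i$ (the finitely many smaller primes being handled trivially), and then every integral polynomial satisfies $P_i(y+N)\equiv P_i(y)\pmod N$ for all $y\in\ZZ$; combined with the $N$-periodicity of $g$ this gives $g^P(x+N,y)\equiv g^P(x,y)$ and $g^P(x,y+N)\equiv g^P(x,y)\pmod{\Gamma^{t+1}}$, so that $(x,y)\mapsto F(g^P(x,y)\Gamma^{t+1})$ is $N$-periodic in each variable and
$$\EE_{x,y\in\ZZ/N\ZZ}F(g^P(x,y)\Gamma^{t+1})=\EE_{x,y\in[N]}F(g^P(x,y)\Gamma^{t+1}).$$
(For $N\le\max_i\deg P_i$ the asserted error term is $\Omega_M(1)$, so nothing is claimed.)

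Second, the proof of Theorem~\ref{finitary equidistribution on nilmanifolds} uses $(A,N)$-irrationality only in order to contradict the conclusion of the quantitative Leibman theorem; in the $N$-periodic setting we replace this by $A$-irrationality as follows. Whenever the inductive scheme of Section~\ref{section on the full homogeneous case}, applied to the filtration of $G^P$, would via Theorem~\ref{quantitative Leibman's equidistribution theorem} produce from the failure of $(\delta,N)$-equidistribution (with $\delta=A^{-c_M}$) a nontrivial horizontal character $\eta$ of $G^P$ (or of one of its quotients) with $|\eta|\ll A^{c_MC_M}$ and $||\eta\circ g^P||_{C^\infty[N]}\ll A^{c_MC_M}$, we argue instead as follows: because $g^P$ is $N$-periodic, $\eta\circ g^P$ is a two-variable polynomial all of whose binomial-basis coefficients of positive total degree lie in $\tfrac1N\ZZ$ (by the standard triangular computation, using that $N$ is prime and exceeds the degree of the filtration), so the smoothness bound forces, once $N\gg A^{c_MC_M}$, every such coefficient to be an integer, i.e.\ $\eta\circ g^P$ is constant modulo $\ZZ$. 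From this point the argument is word for word that of Theorem~\ref{finitary equidistribution on nilmanifolds}: using the homogeneity of $\vec P$ in the form $W'_k=W_k$ (equivalently $W'_k\cong\P_k$) to describe the horizontal characters of $G^P$ through the filtration $G_\bullet$ and the spaces $\P_i$, one extracts from $\eta$ a nontrivial $i$-th level character $\eta_i$ of $G$, for some $i$, of modulus $\ll A^{c_MC_M}$ with $\eta_i(g_i)\in\ZZ$. Choosing $c_M$ small enough that $A^{c_MC_M}\le A$ once $A\ge A_0(M)$ — the range $A<A_0(M)$ again giving a trivial estimate — this contradicts the $A$-irrationality of $g$, so $(\delta,N)$-equidistribution holds, and together with the first step this yields the claimed bound.

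The genuinely hard content is inherited unchanged from Theorem~\ref{finitary equidistribution on nilmanifolds}, namely the algebraic step converting ``$\eta\circ g^P$ is constant'' into a prohibited level character of $g$; this is where homogeneity is essential and where the induction of Section~\ref{section on the full homogeneous case} does its work. The two modifications above are routine, the only delicate point being to track the powers of $A$ so that the extracted character has modulus at most $A$ and the final error is a fixed negative power of $A$ independent of $N$ — which is exactly why $N$ must be large in terms of $A$, a harmless requirement in the intended application through the periodic arithmetic regularity lemma (Lemma~\ref{regularity lemma}).
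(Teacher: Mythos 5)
There is a genuine gap in the step where you replace $(A,N)$-irrationality by $A$-irrationality. Your claim is that all positive-degree binomial-basis coefficients of $\eta\circ g^P$ lie in $\frac1N\ZZ$ (up to a bounded denominator), so the smoothness bound $\|\eta\circ g^P\|_{C^\infty[N]}\ll A^{c_MC_M}$ forces them to be integers once $N\gg A^{c_MC_M}$. For a coefficient of total degree $d$ the smoothness norm gives $\|a\|_{\RR/\ZZ}\ll A^{c_MC_M}N^{-d}$, while the periodicity only gives granularity $\frac1N$ (not $\frac1{N^d}$ is the good direction, but the point is the denominator is $N$, not $N^d$); so integrality follows for $d\geqslant 2$ when $N$ is large, but for the degree-one coefficients both the granularity and the smoothness bound scale like $1/N$, and no largeness of $N$ in terms of $A$ forces $a\in\ZZ$ — the coefficient could perfectly well equal $1/N$. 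Consequently, at the last stage of the induction (killing the level-one group $H_1$) you only know $\|q\,\eta(g_1^{\vec v_{1,j}})\|_{\RR/\ZZ}\ll A^{c_MC_M}/N$, and the purely qualitative hypothesis of $A$-irrationality (which asserts only that character values are not integers, with no quantitative lower bound) yields no contradiction. This is not a removable technicality: an $N$-periodic $A$-irrational sequence need not be $(A,N)$-irrational at scale $N$, so the quantitative contradiction of Theorem \ref{finitary equidistribution on nilmanifolds} genuinely cannot be run at scale $N$ with only $A$-irrationality. (Separately, even where your argument does work it only proves the proposition for $N\gg_M A^{O_M(1)}$, which is weaker than the statement, though admittedly enough for the application through Lemma \ref{regularity lemma}.)

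The paper closes exactly this loophole by changing the scale rather than the irrationality notion: one first shows that an $A$-irrational, $N$-periodic $g$ is automatically $(A,Nk)$-irrational for every $k> A$, because $N$-periodicity forces each level-character value $\eta_i(g_i)$ into $\frac1{N^i}\ZZ$ (Lemma 5.3 of \cite{candela_sisask_2012}), so nontriviality modulo $\ZZ$ already gives $\|\eta_i(g_i)\|_{\RR/\ZZ}\geqslant N^{-i}\geqslant A/(Nk)^i$ once $k^i>A$. Then Theorem \ref{finitary equidistribution on nilmanifolds} is applied as a black box over $[Nk]$, the exact $N$-periodicity of $(x,y)\mapsto F(g^P(x,y)\Gamma^{t+1})$ (which you verified correctly) identifies the $\ZZ/N\ZZ$ average with the $[Nk]$ average up to $O(1/k)$, and letting $k\to\infty$ gives the stated bound $O_M(A^{-c_M})$ uniformly in $N$, with no re-running of the induction of Section \ref{section on the full homogeneous case}. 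If you want to salvage your write-up, this two-line reduction is the repair: work at scale $Nk$ so that the quantitative threshold $A/(Nk)^i$ sits strictly below the $1/N^i$ granularity imposed by periodicity.
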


\begin{proof}[Proof of Proposition \ref{periodic equidistribution on nilmanifolds} using Theorem \ref{finitary equidistribution on nilmanifolds}]
Let $g\in\poly(\ZZ, G_\bullet)$ be $A$-irrational and $N$-periodic. We claim that $g$ is $(A,Nk)$-irrational for all sufficiently large $k\in\NN_+$. If not, then there exists $1\leqslant i\leqslant s$ such that for each $k\in\NN_+$ there exists an $i$-th level character $\eta_{i,k}:G_i\to\RR$ of complexity at most $A$ satisfying $||\eta_{i,k}(g_i)||_{\RR/\ZZ}< A/(Nk)^i$. The $N$-periodicity of $g_i$ implies that $g_i^{N^i}\in\Gamma_i$ mod $G_{i+1}^\nabla$ (Lemma 5.3 of \cite{candela_sisask_2012}); hence $\eta_{i,k}(g_i)\in\frac{1}{N^i}\ZZ$. Thus, $\eta_{i,k}(g_i)\in\ZZ$ whenever $k^i>A$. In particular, since we can take $k$ arbitrarily large, there exists a nontrivial $i$-th level character $\eta_{i,k}$ of complexity at most $A$ for which $\eta_{i,k}(g_i)\in\ZZ$, contradicting the $A$-irrationality of $g$. Hence  $g$ is $(A,Nk)$-irrational for all sufficiently large $k\in\NN_+$.

Applying Theorem \ref{finitary equidistribution on nilmanifolds}, we deduce that
\begin{align*}
&\EE_{x,y\in\ZZ/N\ZZ} F(g^P(x,y)\Gamma^{t+1})  = \EE_{x,y\in[Nk]} F(g^P(x,y)\Gamma^{t+1}) + O(1/k)\\
&  = \int_{G^P/\Gamma^P} F + O_M(A^{-c_M}) + O(1/k)
\end{align*}
for all sufficiently large $k\in\NN_+$. Taking $k\to\infty$ finishes the proof. 
\end{proof}

Theorem \ref{true complexity is bounded by algebraic complexity} is a special case of Theorem 8.1 of \cite{kuca_2020b}, the proof of which is analogous to the derivation of Corollary \ref{Host-Kra complexity equals algebraic complexity} from Theorem \ref{infinitary equidistribution on nilmanifolds}. Here, we only sketch the steps taken in the derivation of Theorem 8.1 of \cite{kuca_2020b}, and we refer the reader to \cite{kuca_2020b} for all the details. First, we use Proposition \ref{control by some Gowers norm} and Lemma \ref{regularity lemma} to replace the functions $f_0, \ldots, f_t$ by irrational, periodic nilsequences. Second, we use Proposition \ref{periodic equidistribution on nilmanifolds} to approximate the sum by an integral of some Lipschitz function $F$ over $G^P/\Gamma^P$. Third, we use the fact that $\A_i(\vec{P})=s$ to conclude that $1^i \times G_{s+1}\times 1^{t_i}$ is a subgroup of $G^P$. Fourth, we use disintegration theorem to bound $\int_{G^P/\Gamma^P}$ by averages of some Lipschitz function $F_i$ over cosets of $G_{s+1}\Gamma$. Fifth, we use the assumption that $f_i$ has a small $U^{s+1}$ norm to conclude that averages of $F_i$ over cosets of $G_{s+1}\Gamma$ are small. From this follows the smallness of 
\begin{align*}
    \EE_{x,y\in\ZZ/N\ZZ}f_0(x) f_1(x+P_1(y))\cdots f_t(x+P_t(y)).
\end{align*}
The proof of Theorem 8.1 of \cite{kuca_2020b} makes this argument precise and illustrates how all the error quantities are taken care of. 

Finally, Proposition \ref{periodic equidistribution on nilmanifolds} together with Theorem 9.1 of \cite{kuca_2020b} imply part (i) of Corollary \ref{counting result}.


\section{The proof of Theorem \ref{finitary equidistribution on nilmanifolds}}\label{section on the full homogeneous case}

To complete the proofs of Corollary \ref{Host-Kra complexity equals algebraic complexity} and Theorem \ref{true complexity is bounded by algebraic complexity}, it remains to derive Theorem \ref{finitary equidistribution on nilmanifolds}. Before we prove Theorem \ref{finitary equidistribution on nilmanifolds} for an arbitrary homogeneous progression, we want to deduce the theorem in the special case of $\vec{P} = (x, \; x+y,\; x+2y,\; x+y^3)$. This will help illustrate the method, and we will later compare this progression with $(x, \; x+y,\; x+2y,\; x+y^2)$ to see what is failing in the inhomogeneous case. The method is an adaptation of the proof of Theorem 1.11 from \cite{green_tao_2010a}, however the linear algebraic component coming from the fact that we are dealing with polynomial progressions is much more involved. The method used here is somewhat similar to the methods used in \cite{kuca_2020b}; here, however, we perform downward induction on the degree of subgroups $G_i$ whereas in \cite{kuca_2020b}, we induct downwardly on the degree of monomials in $\eta\circ g^P$. 

\begin{proposition}\label{equidistribution for x, x+y, x+2y, x+y^3}
Let $A,M,N\geqslant 2$. Let $G/\Gamma$ be a filtered nilmanifold of degree 2 and complexity $M$. Suppose that $g\in\poly(\ZZ, G_\bullet)$ is an $(A,N)$-irrational sequence satisfying $g(0)=1$, $F:(G/\Gamma)^{t+1}\to\CC$ is $M$-Lipschitz, and $\vec{P} = (x, \; x+y,\; x+2y,\; x+y^3)$. Then
\begin{align*}
\EE_{x,y\in[N]} F(g^P(x,y)\Gamma^4)  = \int_{G^P/\Gamma^P} F + O_M(A^{-c_M})
\end{align*}
for some $c_M>0$.
\end{proposition}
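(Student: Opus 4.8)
The plan is to use the quantitative Leibman equidistribution theorem (Theorem \ref{quantitative Leibman's equidistribution theorem}) applied to the two-parameter polynomial sequence $g^P \in \poly(\ZZ^2, (G^P)_\bullet)$ on the nilmanifold $G^P/\Gamma^P$, and to show that a failure of equidistribution produces a horizontal character on $G^P$ whose pullback contradicts the $(A,N)$-irrationality of $g$. First I would record the structure of $G^P$: since $G$ has degree $2$, $G^P$ is generated by $g_1^{\vec v_1}$ with $g_1 \in G_1$, $\vec v_1 \in \P_1$, and by $g_2^{\vec v_2}$ with $g_2 \in G_2$, $\vec v_2 \in \P_2$. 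For $\vec{P} = (x, x+y, x+2y, x+y^3)$ one computes $\P_1 = \Span\{(1,1,1,1), (0,1,2,0), (0,0,0,1)\}$ (three-dimensional) and $\P_2 = \RR^4$ (using that $(x+y^3)^2$ involves $y^6$, $x y^3$, etc., which are linearly independent from the other squares — this is exactly where homogeneity, i.e.\ $\A_i = 1$ for all $i$ except possibly via the single relation $x - 2(x+y)+(x+2y)=0$, enters and forces $\P_2 = \RR^4$). Consequently $(G^P)_2 = G_2^4$ is the full degree-$2$ part, and the "abelianization" $G^P/[G^P,G^P](G^P)_2$ is governed by $G_1/[G_1,G_1]G_2 \otimes \P_1$.

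The key steps, in order: (1) Suppose $g^P$ is \emph{not} $(\delta,N)$-equidistributed on $G^P/\Gamma^P$; by Theorem \ref{quantitative Leibman's equidistribution theorem} there is a nontrivial horizontal character $\xi$ on $G^P$ of modulus $\ll \delta^{-C_M}$ with $\|\xi \circ g^P\|_{C^\infty[N]} \ll \delta^{-C_M}$. (2) Decompose $\xi$ according to the factorization of the abelianized $G^P$: $\xi$ restricted to the image of $g_1^{\vec v_1}$ is of the form $\sum_j \eta_j \otimes \vec w_j$ where each $\eta_j$ is a horizontal character on $G$ and the $\vec w_j$ range over a basis dual to $\P_1$; write $\xi \circ g^P(x,y) = \sum_j (\eta_j \circ g)(x + \langle\text{stuff}\rangle)$ — more precisely, pulling back through the three generators of $\P_1$ one gets a combination of $\eta \circ g$ evaluated at $x$, $x+y$, $x+2y$, $x+y^3$ for various horizontal characters $\eta$ of $G$, modulo lower-order (degree-$2$, i.e.\ $G_2$) contributions which are themselves controlled because $\P_2 = \RR^4$ lets us absorb them. (3) Expand $\xi\circ g^P(x,y)$ as a polynomial in $x,y$ and extract its coefficients; the smoothness-norm bound forces every nonconstant coefficient to be within $\delta^{-C_M}/N^{\deg}$ of an integer. (4) The monomials in $x$ and in $y$ (e.g.\ the $y^3$, $y^6$ monomials coming from the fourth coordinate, the $x$-linear monomial, the $xy$ monomial) are distinct enough — this is the linear-algebra heart — that one can solve for the individual horizontal characters $\eta_j \circ g$ and conclude that each $\eta_j \circ g_1$ (and, descending, each relevant $\eta \circ g_2$) is close to an integer with controlled modulus; this yields a nontrivial level character $\eta$ of $G$ with $\|\eta(g_i)\|_{\RR/\ZZ}$ small, contradicting $(A,N)$-irrationality once $\delta$ is a suitable fixed power of $A^{-1}$. (5) Hence $g^P$ is $(A^{-c_M}, N)$-equidistributed, which is the claimed estimate.

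The main obstacle is step (4): untangling which combinations of horizontal characters on $G$ appear, and verifying that the resulting linear system — indexed by monomials $x^a y^b$ arising from substituting $x, x+y, x+2y, x+y^3$ into polynomials $\eta\circ g$ of degree $\le 2$ — has full rank, so that smallness of all coefficients of $\xi\circ g^P$ genuinely forces smallness of each $\eta_j\circ g$ rather than just of some linear combination. This is precisely the place where the distinction between $(x,x+y,x+2y,x+y^3)$ and the inhomogeneous $(x,x+y,x+2y,x+y^2)$ shows up: for the latter, the monomial $y^2$ appears both from the quadratic part of the first three terms and from the linear part of the fourth term, creating a nontrivial kernel (reflected algebraically in $W^c = \Span\{y^2\} \neq 0$ and the relation \eqref{algebraic relation for x, x+y, x+2y, x+y^2, 2}), whereas for $y^3$ the relevant monomials $y^3, y^6$ are "fresh" and no such collision occurs. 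I would handle the bookkeeping by working with the isomorphisms $\tau_1, \tau_2$ between $W_k$ and $\P_k$ recorded at the end of Section \ref{section on homogeneity}, which package exactly this rank statement. Throughout, the degree being $2$ keeps the induction on the filtration of $G$ to a single step ($G_2 \to G_1$), so the "downward induction on the degree of subgroups $G_i$" mentioned before the proposition is essentially trivial here and the content is entirely in the $\P_1$/$\P_2$ linear algebra.
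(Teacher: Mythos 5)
Your proposal follows essentially the same route as the paper's proof: apply Theorem \ref{quantitative Leibman's equidistribution theorem} to $g^P$ on $G^P/\Gamma^P$, expand $\eta\circ g^P$ in the linearly independent polynomials $x$, $y$, $y^3$, ${{x}\choose{2}}$, $xy+{{y}\choose{2}}$, $xy^3+{{y^3}\choose{2}}$, $y^2$ (this linear independence, i.e.\ the triviality of $W^c$ --- not the fact that $\P_2=\RR^4$, which also holds for the inhomogeneous $(x,\,x+y,\,x+2y,\,x+y^2)$ --- is where homogeneity actually enters, as your final paragraph correctly diagnoses), invert the resulting integer linear system as in Lemma \ref{breaking an integral polynomial into a sum of integral polynomials} to make each $q\,\eta(g_i^{\vec{v}_j})$ nearly integral for some bounded $q$, and contradict the $(A,N)$-irrationality of $g$ through the induced maps $h_i\mapsto q\,\eta(h_i^{\vec{v}_j})$. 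The only step you gloss that the paper spells out is the verification that these induced maps are genuine second-level characters, in particular that they vanish on $[G_1,G_1]$, which rests on identities such as $\vec{v}_1\cdot\vec{v}_1=\vec{v}_1$ and $\vec{v}_4=(\vec{v}_2\cdot\vec{v}_2-\vec{v}_2)/2$.
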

The assumption that $G$ has a filtration of degree 2 is made to simplify the exposition, and because all the difficulties that emerge in higher-step cases are already present here.

We shall need the following lemma.

\begin{lemma}\label{breaking an integral polynomial into a sum of integral polynomials}
Let $t\in\NN_+$ and $\vec{P}\in\RR[x,y]^{t+1}$ be a homogeneous polynomial progression, $\epsilon>0$, and $s, N\in\NN_+$. Let $W_i\leqslant\RR[x,y]$ be as defined in Section \ref{section on homogeneity}, and for each $1\leqslant i\leqslant s$, let $Q_{i,1}, ..., Q_{i, t_i}$ be a basis for $W_i$ composed of integral polynomials. Suppose that $a_{ij}$ are real numbers such that the polynomial
\begin{align*}
Q(x,y) = \sum_{i=1}^s \sum_{j=1}^{t_i} a_{ij} Q_{i,j}(x,y)
\end{align*}
satisfies $||Q||_{C^\infty[N]}\leqslant\epsilon$. Then there exists a positive integer $q=O(1)$ with the property that $||q a_{sj} ||_{\RR/\ZZ}\ll\epsilon N^{-s}$ for all $1\leqslant j\leqslant t_s$.
\end{lemma}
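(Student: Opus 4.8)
The plan is to exploit the fact that, since $\vec{P}$ is homogeneous, the monomials $\binom{x}{j}, \binom{x+P_1(y)}{j}, \ldots, \binom{x+P_t(y)}{j}$ of a fixed degree $j$ span $W_j$ and, by Proposition \ref{equivalent conditions for homogeneity}, the sum $V_s = \bigoplus_{i=1}^s W_i$ is direct. Thus the decomposition $Q = \sum_{i=1}^s \sum_{j=1}^{t_i} a_{ij} Q_{i,j}$ is unique once we fix the integral bases $Q_{i,j}$, and in particular the top-degree part $Q_{\mathrm{top}}(x,y) := \sum_{j=1}^{t_s} a_{sj} Q_{s,j}(x,y)$ is the unique component of $Q$ lying in $W_s$. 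The strategy is: (1) isolate $Q_{\mathrm{top}}$ by a ``projection'' onto $W_s$ realized via partial discrete differencing in $x$ followed by evaluation, which is cheap in smoothness-norm terms; (2) rewrite $Q_{\mathrm{top}}$ in terms of the Taylor monomials $\binom{x+P_i(y)}{s}$, whose coefficients are controlled by the $a_{sj}$ up to a bounded invertible linear change (the matrix is the one representing the basis change between $\{Q_{s,j}\}$ and a spanning subset of $\{\binom{x+P_i(y)}{s}\}$, and has bounded integer entries, hence bounded denominators after inversion); (3) read off a single high-degree coefficient of $Q_{\mathrm{top}}$ as a polynomial in $x,y$ and use the hypothesis $\|Q\|_{C^\infty[N]} \le \epsilon$ together with Lemma 2.2-type estimates from \cite{green_tao_2012} on how smoothness norms behave under linear substitutions and restrictions to lower-degree pieces.

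Concretely, I would first observe that applying the partial discrete derivative $\partial_x$ and then specializing low-degree ``slices'' lets us recover each $a_{ij}$ as (a bounded rational multiple of) a coefficient of $Q$ in the $\binom{x}{a}\binom{y}{b}$-basis; the key point is that since $V_s$ is a direct sum of the $W_i$, no cancellation between degrees can occur, so a coefficient that picks out the degree-$s$ part is genuinely a bounded linear combination of the $a_{sj}$ alone. The smoothness norm hypothesis says every coefficient $c_{ab}$ of $Q$ in the $\binom{x}{a}\binom{y}{b}$-basis satisfies $N^{a+b}\|c_{ab}\|_{\RR/\ZZ} \le \epsilon$, so for the top-degree coefficients ($a+b = s$) we get $\|c_{ab}\|_{\RR/\ZZ} \le \epsilon N^{-s}$. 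Then, letting $q = O(1)$ be a common denominator clearing the bounded rational coefficients of the (inverse of the) bounded-integer change-of-basis matrix relating $\{Q_{s,j}\}$ to the Taylor monomials, we conclude $\|q a_{sj}\|_{\RR/\ZZ} \ll \epsilon N^{-s}$ for each $j$, using that $\|q(x+y)\|_{\RR/\ZZ} \le |q|\,\|x\|_{\RR/\ZZ} + |q|\,\|y\|_{\RR/\ZZ}$ and that all multiplicities $q$ and matrix sizes $t_i$ are bounded in terms of $t, s$ only.

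The main obstacle I expect is step (2): carefully pinning down that the change-of-basis matrix between the chosen integral basis $\{Q_{s,j}\}_{j=1}^{t_s}$ of $W_s$ and the spanning Taylor monomials $\binom{x+P_i(y)}{s}$ has bounded denominators upon inversion, so that the clearing integer $q$ really is $O(1)$ and does not secretly depend on $N$. This needs the homogeneity of $\vec{P}$ in an essential way (so that the $W_s$-component of $Q$ is well defined and the relevant linear algebra takes place inside the fixed finite-dimensional space $W_s$, whose dimension $t_s \le t+1$ and whose defining relations are fixed integer relations independent of $N$), together with the isomorphism $W_s \cong \P_s$ of Section \ref{section on homogeneity} which identifies these coefficients with coordinates of a fixed rational subspace of $\RR^{t+1}$. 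Once the boundedness of $q$ is established, the rest is the routine smoothness-norm bookkeeping sketched above.
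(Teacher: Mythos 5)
Your overall strategy is the paper's: use the smoothness norm to control the Taylor coefficients $c_{kl}$ of $Q$ in the $\binom{x}{k}\binom{y}{l}$-basis, then recover the $a_{sj}$ by inverting a fixed integer matrix and clearing denominators with some $q=O(1)$; and you are right that the boundedness of $q$ is not the serious issue, since all the linear algebra takes place in spaces whose dimension and integer data depend only on $\vec{P}$ and the chosen bases, not on $N$. The genuine gap is in the step you treat as automatic: the claim that ``a coefficient that picks out the degree-$s$ part is genuinely a bounded linear combination of the $a_{sj}$ alone'' because $V_s=\bigoplus_i W_i$. The direct-sum decomposition into the $W_i$ is not the total-degree grading of $\RR[x,y]$: basis elements of $W_i$ with $i<s$ typically contain Taylor monomials of total degree $\geqslant s$ (already $x+P_m(y)\in W_1$ has degree $\deg P_m$, which usually exceeds $s$). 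Concretely, for the homogeneous progression $(x,\,x+y,\,x+2y,\,x+y^3)$ with $s=2$ and the integral basis $\{x,y,y^3\}$ of $W_1$, the coefficient $c_{02}$ of $\binom{y}{2}$ in $Q$ equals $6a_{13}+a_{22}+2a_{23}+28a_{24}$, so it is not a combination of the $a_{2j}$ alone: the degree-$s$ coefficients of $Q$ mix all levels. Your restriction to coefficients with $a+b=s$ exactly makes this worse: in the same example the three degree-$2$ coefficients involve the five unknowns $a_{21},a_{22},a_{23},a_{24},a_{13}$, so they cannot determine the $a_{2j}$ even in principle, and one must also use coefficients of degree $>s$ (which still satisfy $\|c_{kl}\|_{\RR/\ZZ}\leqslant\epsilon N^{-(k+l)}\leqslant\epsilon N^{-s}$, so nothing is lost by including them).

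What is missing, and what the paper supplies, is the place where homogeneity actually does work beyond making the decomposition unique. The paper truncates: it sets $\tilde{Q}_{i,j}$ to be the part of $Q_{i,j}$ of Taylor degree $\geqslant s$ and proves that $\dim W_s/U_s=\dim W_s$, i.e.\ that $\tilde{Q}_{s,1},\ldots,\tilde{Q}_{s,t_s}$ remain linearly independent, via the chain $W_s\cong V_s/V_{s-1}\cong\tilde{W}_s$ together with the fact that every monomial of an element of $\tilde{W}_s=\Span_\RR\{(x+P_i(y))^s:0\leqslant i\leqslant t\}$ has degree $\geqslant s$; only then does it set up and invert an integer linear system over the rows with $k+l\geqslant s$. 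In your sketch this substantive input is absent: the directness of $\bigoplus_i W_i$ neither removes the low-level contributions from the degree-$\geqslant s$ rows nor guarantees that the truncations of the $Q_{s,j}$ stay independent. So before the ``routine bookkeeping'' you would still have to show that the functionals $\{c_{kl}:k+l\geqslant s\}$ pin down the $a_{sj}$ modulo bounded denominators despite the interference from the $a_{ij}$ with $i<s$ (equivalently, that any element of $V_s$ of total degree $<s$ has vanishing $W_s$-component); this is exactly where the homogeneity hypothesis, and not merely uniqueness of the decomposition, must be invoked, and it is the point at which your argument currently fails.
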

\begin{proof}
For $s\in\NN_+$, we let $W_s, V_s$ be as in Section \ref{section on homogeneity}. We also define 
\begin{align*}
\tilde{W}_s = \Span_\RR\{(x+P_i(y))^s: 0\leqslant i\leqslant t\}\quad {\rm{and}}\quad U_s = \Span_\RR\left\{{{x}\choose{i}}{{y}\choose{j}}: i+j < s\right\}.
\end{align*}
We want to show first that $\dim W_s/U_s = \dim W_s = t_s$, i.e. that the polynomials $Q_{s, 1}, ..., Q_{s, t_s}$ remain linearly independent when we subtract from them the monomials in the Taylor basis of degree less than $s$. While this claim may plausibly hold for any polynomial progression, we prove it for homogeneous progressions since this is the only case in which we need this result. The homogeneity of $\vec{P}$ implies that $W_s\cong V_s/V_{s-1}\cong \tilde{W}_s$. Therefore $W_s/U_s\cong V_s/U_s V_{s-1} \cong \tilde{W}_s/ U_s\cong \tilde{W}_s,$ where the last isomorphism follows from the fact no polynomial in $\tilde{W}_s$ has a nonzero monomial of degree less than $s$. The claim $\dim W_s/U_s = t_s$ follows.

Let $Q(x,y) = \sum\limits_{k,l} c_{kl} {{x}\choose{k}}{{y}\choose{l}}$ and $\tilde{Q}(x,y) = \sum\limits_{k+l\geqslant s} c_{kl} {{x}\choose{k}}{{y}\choose{l}}$. Thus, $\tilde{Q} = Q$ mod $U_s$, and it satisfies $||\tilde{Q}||_{C^\infty[N]}\leqslant\epsilon$. Setting $Q_{i,j}(x,y) = \sum\limits_{k,l} b_{klij} {{x}\choose{k}}{{y}\choose{l}}$, we similarly let $\tilde{Q}_{i,j}(x,y) = \sum\limits_{k+l\geqslant s} b_{klij} {{x}\choose{k}}{{y}\choose{l}}$. We deduce from $\dim W_k/U_k = t_k = \dim W_k$ that $\tilde{Q}_{s,1}$, ..., $\tilde{Q}_{s,t_s}$ are linearly independent.

From the definitions of $Q$ and $b_{klij}$ it follows that $c_{kl} = \sum\limits_{i,j} b_{klij} a_{ij} $, and that $||c_{kl}||_{\RR/\ZZ}\leqslant \epsilon N^{-(k+l)}\leqslant \epsilon N^{-s}$ whenever $k+l\geqslant s$.

Let $u$ be the number of pairs $(k,l)$ with $k+l\geqslant s$ for which $c_{kl}\neq 0$. The fact that $\dim W_s/U_s = t_s$ implies that $u\geqslant t_s$. Indexing these pairs as $(k_1, l_1), ..., (k_u, l_u)$ in some arbitrary fashion, we obtain an $u\times s$ matrix $B = (b_{k_r l_r i j})_{r}$ as well as a $t_s$-dimensional column vector $a = (a_{sj})_{j}$ and a $u$-dimensional column vector $c = (c_{k_r l_r})_{r}$ such that $Ba = c$. The linear independence of $\tilde{Q}_{s,1}, ..., \tilde{Q}_{s,t_s}$ implies that there exists an invertible $t_s\times t_s$ submatrix $\tilde{B}$ of $B$ and a $t_s$-dimensional column vector $\tilde{c}$ such that $\tilde{B}a = \tilde{c}$. Since the entries of $\tilde{B}$ are integers of size $O(1)$, the entries of $\tilde{B}^{-1}$ are rational numbers of height $O(1)$. Therefore, there exists a positive integer $q=O(1)$ for which the entries of the matrix $q\tilde{B}^{-1}$ are integers of size $O(1)$. The equality $a = \tilde{B}^{-1}\tilde{c}$ and the condition $||c_{kl}||_{\RR/\ZZ}\leqslant \epsilon N^{-s}$ whenever $k+l\geqslant s$ imply that $||q a_{sj}||_{\RR/\ZZ}\ll \epsilon N^{-s}$ for $1\leqslant j\leqslant t_s$, as claimed. 
\end{proof}

\begin{proof}[Proof of Proposition \ref{equidistribution for x, x+y, x+2y, x+y^3}]
Let $\vec{P} = (x, \; x+y,\; x+2y,\; x+y^3)$. We set 
\begin{align*}
\vec{v}_1 = (1,1,1,1),\quad \vec{v}_2 = (0,1,2,0),\quad \vec{v}_3 = (0,0,0,1) \quad {\rm{and}}\quad \vec{v}_4 = (0,0,1,0)
\end{align*}
and observe that
\begin{align*}
\vec{P}(x,y) &= \vec{v}_1 x + \vec{v}_2 y + \vec{v}_3 y^3\\ 
{{\vec{P}(x,y)}\choose{2}} &= \vec{v}_1 {{x}\choose{2}}+ \vec{v}_2 \left(xy+{{y}\choose{2}}\right)  + \vec{v}_3 \left(xy^3 + {{y^3}\choose{2}}\right) + \vec{v}_4 y^2.
\end{align*}

Thus, we have 
\begin{align*}
\P_1 = \Span_\RR\{\vec{v}_1, \vec{v}_2, \vec{v}_3\} \quad {\rm{and}}\quad \P_2 = \P_3 = \ldots = \Span_\RR\{\vec{v}_1, \vec{v}_2, \vec{v}_3, \vec{v}_4\} = \RR^4
\end{align*}
as well as
\begin{align*}
G^P = G^{\vec{v}_1} G^{\vec{v}_2} G^{\vec{v}_3} G_2^4,
\end{align*}
where $H^{\vec{w}} = \langle h^{\vec{w}}: h\in H\rangle$ for any subgroup $H\leq G$.

We shall prove Proposition \ref{equidistribution for x, x+y, x+2y, x+y^3} by applying Theorem \ref{quantitative Leibman's equidistribution theorem}.
Suppose that $g^P$ is not $(c_M A^{-C_M},N)$-equidistributed on $G^P/\Gamma^P$ for some constants $0<c_M<1<C_M$. By Theorem \ref{quantitative Leibman's equidistribution theorem}, there exists a nontrivial horizontal character ${\eta: G^P\to\RR}$ of modulus at most $cA$, for which $||\eta\circ g^P||_{C^\infty[N]}\leqslant cA$ for some constant $c>0$ that depends on $c_M$ and $C_M$. The constant $C_M$ is chosen in such a way as to match the exponents in the case (ii) of Theorem \ref{quantitative Leibman's equidistribution theorem}. We however have control over how we choose the constant $c_M$, and we shall pick it small enough to show that $g^P$ not being $(c_M A^{-C_M},N)$-equidistributed contradicts the $(A,N)$-irrationality of $g$.

Rewriting the expression for $\eta\circ g^P$, we see that
\begin{align*}
    \eta\circ g^P(x,y) &= \eta(g_1^{\vec{v}_1}) x + \eta(g_1^{\vec{v}_2}) y + \eta(g_1^{\vec{v}_3}) y^3\\
    &+ \eta(g_2^{\vec{v}_1}) {{x}\choose{2}}+ \eta(g_2^{\vec{v}_2}) \left(xy+{{y}\choose{2}}\right)  + \eta(g_2^{\vec{v}_3}) \left(xy^3 + {{y^3}\choose{2}}\right) + \vec{v}_4 y^2.
\end{align*}
Applying Lemma \ref{breaking an integral polynomial into a sum of integral polynomials} and the assumption $||\eta\circ g^P||_{C^\infty[N]}\leqslant cA$, and choosing $c_M$ in such a way that $c>0$ is sufficiently small, we deduce that there exists a positive integer $q=O(1)$ such that $||q \eta(g_i^{\vec{v}_{j}})||_{\RR/\ZZ} < A N^{-i}$ for all pairs
\begin{align*}
(i,j)\in\{(1,1), (1,2), (1,3), (2,1), (2,2), (2,3), (2,4)\}. 
\end{align*}

We aim to show that $\eta$ is trivial by showing that it vanishes on all of $G^P$. First, we want to show that $\eta$ vanishes on $G_2^4$. Suppose that $\eta|_{G_2^4}\neq 0$,  and define $\xi_{2,1}:G_2\to\RR$ by $\xi_{2,1}(h_2) =q \eta(h_2^{(1,1,1,1)})$. We claim that $\xi_{2,1}$ is a 2-nd level character. To prove this, we need to show that $\xi_{2,1}$ is a continuous group homomorphism, it vanishes on $G_3$,  it sends $(\Gamma_2)$ to $\ZZ$, and it vanishes on $[G_1,G_1]$. The first statement follows from the fact that $\eta$ is a continuous group homomorphism, the second is true since $G_3$ is trivial, and the third follows from the fact that $q\in\ZZ$, $\eta(\Gamma^P)\leqslant\ZZ$ and $(1,1,1,1)\in\ZZ^4$. To see the last statement, we note from $\vec{v}_1 \cdot \vec{v}_1 = \vec{v}_1$, the formula (C.2) in \cite{green_tao_2010a}, and the 2-step nilpotence of $G$ that for any $h_1, h'_1\in G_1$, we have
\begin{align*}
    [h_1^{\vec{v}_1}, {h_1'}^{\vec{v}_1}] = [h_1, h_1']^{\vec{v}_1}.
\end{align*}
Since $h_1^{\vec{v}_1}, {h_1'}^{\vec{v}_1}$ are both elements of $G^P$, we have
\begin{align*}
    \xi_{2,1}([h_1, h_1']) = \eta([h_1, h_1']^{\vec{v}_1}) = \eta([h_1^{\vec{v}_1}, {h_1'}^{\vec{v}_1}]) = 0,
\end{align*}
implying that $\xi_{2,1}$ vanishes on $[G_1, G_1]$. Thus, $\xi_{2,1}$ is a 2-rd level character.

Performing a similar analysis while looking at the coefficients of ${{x}\choose{2}}, xy+{{y}\choose{2}}, xy^3 + {{y^3}\choose{2}}$ and $y^2$ respectively, we conclude that for all $1\leqslant j\leqslant 4$, the maps $\xi_{2,j}(h_2) = q \eta(h_2^{\vec{v}_j})$ from $G_2$ to $\RR$ are 2-nd level characters. The nontriviality of $\eta$ on $G_2^4$ and the fact that $\vec{v}_1$, $\vec{v}_2$, $\vec{v}_3$ and $\vec{v}_4$ span $\P_2 = \RR^4$ imply that for at least one value $1\leqslant i\leqslant 4$, the character $\eta$ does not vanish on $G_2^{\vec{v}_i}$. We fix this $i$. From $||\xi_{2,i}(g_i)||_{\RR/\ZZ} = ||q \eta(g_i^{\vec{v}_{j}})||_{\RR/\ZZ} < A N^{-i}$ and the $(A,N)$-irrationality of $g_2$ we deduce that $|\xi_{2,i}|>A$. Together with the bounds $q=O(1)$ and $|\vec{v}_1|=O(1)$, this implies that $|\eta| > c' A$ for some constant $c'>0$. Choosing $c_M$ in such a way that $c<c'$ gives the desired contradiction. Hence $\eta$ vanishes on $G_2^4$.

 This leaves us with
\begin{align*}
    \eta\circ g^P(x,y) &= \eta(g_1^{\vec{v}_1})x + \eta(g_1^{\vec{v}_2})y + \eta(g_1^{\vec{v}_3})y^3.
\end{align*}
By analysing the coefficients of $x, y$ and $y^3$ as above, we see that $\eta$ vanishes on elements of the form $h_1^{\Vec{v}_i}$ with $h_1\in G_1$ and $1\leqslant i\leqslant 3$. Thus, $\eta$ vanishes on all of $G^P$. This contradicts the nontriviality of $\eta$, and so $g^P$ is $(c_M A^{-C_M},N)$-equidistributed on $G^P/\Gamma^P$. 
\end{proof}

We now prove Theorem \ref{finitary equidistribution on nilmanifolds} in full generality.

\begin{proof}[Proof of Theorem \ref{finitary equidistribution on nilmanifolds}]
Let $\vec{P}\in\RR[x,y]^{t+1}$ be an integral polynomial progression, $G_\bullet$ be a filtration of degree $s$ and $g\in\poly(\ZZ, G_\bullet)$. By (\ref{direct sum for homogeneous configurations}), we can find a family $\{Q_{i,j}:\; 1\leqslant i\leqslant s,\; 1\leqslant j\leqslant t_i\}$ of linearly independent integral polynomials such that $Q_{i,1}$, \ldots, $Q_{i, t_i}$ is a basis for $W_i = W'_i$ for $1\leqslant i\leqslant s$. It is crucial that these polynomials are linearly independent, which follows from homogeneity of $\vec{P}$. For each $i$, let $\tau_i: W_i\to\P_i$ be the map associated with $Q_{i,1}$, \ldots, $Q_{i, t_i}$ as defined in Section \ref{section on homogeneity}. We also let $\vec{v}_{i,j}\in\ZZ^{t+1}$ be the vectors such that $\tau_i(Q_{i,j}) = \vec{v}_{i,j}$. 

As in the proof of Proposition \ref{equidistribution for x, x+y, x+2y, x+y^3}, suppose that $g^P$ is not $(c_M A^{-C_M},N)$-equidistributed on $G^P/\Gamma^P$ for some constants $0<c_M<1<C_M$. We apply Theorem \ref{quantitative Leibman's equidistribution theorem} again to conclude that there exists a nontrivial horizontal character ${\eta: G^P\to\RR}$ of modulus at most $cA$, satisfying $||\eta\circ g^P||_{C^\infty[N]}\leqslant cA$ for some constant $c>0$ that depends on $c_M$ and $C_M$. The constant $C_M$ is chosen in such a way as to match the exponents in the case (ii) of Theorem \ref{quantitative Leibman's equidistribution theorem}, but the choice of $c_M$ is up to us again. We shall pick it small enough to show that the failure of $g^P$ to be $(c_M A^{-C_M},N)$-equidistributed contradicts the $(A,N)$-irrationality of $g$.

Thus,
\begin{align*}
    \eta\circ g^P(x,y) = \sum_{i=1}^s\sum_{j=1}^{t_i}\eta(g_i^{\vec{v}_{i,j}}) Q_{i,j}(x,y).
\end{align*}
Using Lemma \ref{breaking an integral polynomial into a sum of integral polynomials} and the assumption $||\eta\circ g^P||_{C^\infty[N]}\leqslant cA$, and choosing $c_M$ in such a way that $c>0$ is sufficiently small, we deduce that there exists a positive integer $q=O(1)$ such that $||q \eta(g_i^{\vec{v}_{i,j}})||_{\RR/\ZZ} < A N^{-i}$ for all $1\leqslant i\leqslant s$ and $1\leqslant j\leqslant t_i$.

Our goal now is to show by downward induction on $i$ that $\eta$ vanishes on the group $$H_i = \langle h_i^{\vec{v}_{i,j}}: h_i\in G_i, 1\leqslant j\leqslant t_i\rangle$$ for all $i\in\NN_+$. This is trivially true for $i\geqslant s+1$. Suppose that $\eta$ vanishes on $H_{i+1}$ for some $1\leqslant i\leqslant s$ but that it does not vanish on $H_i$. We define the maps $\xi_{i,j}:G_i\to\RR$ by $\xi_{i,j}(h_i) = \eta(q h_i^{\vec{v}_{i,j}})$ and claim that they are $i$-th level characters. They are continuous group homomorphisms because $\eta$ is, and they vanish on $G_{i+1}$ by induction hypothesis. Since $q\in\ZZ$ and $\vec{v}_{i,j}$ have integer entries, we also have $\xi_{i,j}(\Gamma_i)\subseteq\ZZ$. It remains to show that $\xi_{i,j}$ vanishes on $[G_l, G_{i-l}]$ for all $1\leqslant l < i$.
The fact that $\P_i\subseteq \P_l\cdot\P_{i-l}$ implies the existence of $\vec{u}_l\in\P_l$ and $\vec{u}_{i-l}\in\P_{i-l}$ for which $\vec{v}_{i,j}=\vec{u}_l\cdot\vec{u}_{i-l}$, and so we have
\begin{align*}
    [G_l^{\vec{u}_l}, G_{i-l}^{\vec{u}_{i-l}}] = [G_l, G_{i-l}]^{\vec{u}_l\cdot \vec{u}_{i-l}}\; {\rm{mod}}\; G_{i+1}^{t+1},
\end{align*}
from which it follows that $\xi_{i,j}|_{[G_l,G_{i-l}]}=0$. Therefore each $\xi_{i,j}$ is an $i$-th level character.

The nontriviality of $\eta$ on $H_i$ and the fact that $\P_i$ is spanned by the vectors $\vec{v}_{i,1}$, \ldots, $\vec{v}_{i,t_i}$ imply that for at least one value $1\leqslant j\leqslant t_i$, the character $\eta$ does not vanish on $G_i^{\vec{v}_{i,j}}$, and so $\xi_{i,j}$ is nontrivial. From $||\xi_{i,j}(g_i)||_{\RR/\ZZ} = ||q \eta(g_i^{\vec{v}_{i,j}})||_{\RR/\ZZ} < A N^{-i}$ and the $(A,N)$-irrationality of $g_i$ we deduce that $|\xi_{i,j}|>A$. Together with the bounds $q=O(1)$ and $|\vec{v}_{i,j}|=O(1)$, this implies that $|\eta| > c' A$ for some constant $c'>0$. We choose $c_M$ in such a way that $c<c'$; this contradicts the nontriviality of $\eta$ on $H_i$. This proves the inductive step; hence $\eta$ vanishes on all of $G^P$, contradicting the nontriviality of $\eta$. It follows that $g^P$ is $(c_M A^{-C_M},N)$-equidistributed on $G^P/\Gamma^P$. 
\end{proof}

\section{The failure of Theorem \ref{finitary equidistribution on nilmanifolds} in the inhomogeneous case}\label{section on failure in the inhomogeneous case}
Having derived Theorem \ref{finitary equidistribution on nilmanifolds}, we want to show why an analogous statement fails in the inhomogeneous case. We let 
\begin{align}\label{x, x+y, x+2y, x+y^2}
    \Vec{P}(x,y) = (x, \; x+y,\; x+2y,\; x+y^2),
\end{align}
with a square instead of a cube in the last position. It is an inhomogeneous progression because of the inhomogeneous relation (\ref{algebraic relation for x, x+y, x+2y, x+y^2}). Suppose that $g\in\poly(\ZZ, G_\bullet)$ is an irrational polynomial sequence with $g(0)=1$ on a connected group $G$ with a filtration $G_\bullet$ of degree 2. We shall try to show that $g^P$ is equidistributed on $G^P/\Gamma^P$ the same way as we argued in Proposition \ref{equidistribution for x, x+y, x+2y, x+y^3}, and we indicate where and why the argument fails.

Once again, we let
\begin{align*}
\vec{v}_1 = (1,1,1,1),\quad \vec{v}_2 = (0,1,2,0),\quad \vec{v}_3 = (0,0,0,1) \quad {\rm{and}}\quad \vec{v}_4 = (0,0,1,0),
\end{align*}
and we observe that $\P_1 = \Span_\RR\{\vec{v}_1, \vec{v}_2, \vec{v}_3\}$ and $\P_2 = \Span_\RR\{\vec{v}_1, \vec{v}_2, \vec{v}_3, \vec{v}_4\}$. Hence $G^P = G^{\vec{v}_1} G^{\vec{v}_2} G^{\vec{v}_3} G_2^4.$ Suppose that $g^P$ is not $(c_M A^{-C_M},N)$-equidistributed on $G^P/\Gamma^P$ for some constants $0<c_M<1<C_M$. Theorem \ref{quantitative Leibman's equidistribution theorem} once again implies the existence of a nontrivial horizontal character ${\eta: G^P\to\RR}$ of modulus at most $cA$, for which $||\eta\circ g^P||_{C^\infty[N]}\leqslant cA$ for some constant $c>0$ that depends on $c_M$ and $C_M$. 


Rewriting the expression for $\eta\circ g^P$, we see that
\begin{align*}
    \eta\circ g^P(x,y) &= \eta(g_1^{\vec{v}_1}) x + \eta(g_1^{\vec{v}_2}) y + \eta(g_1^{\vec{v}_3}) y^2\\
    &+ \eta(g_2^{\vec{v}_1}) {{x}\choose{2}}+ \eta(g_2^{\vec{v}_2}) \left(xy+{{y}\choose{2}}\right)  + \eta(g_2^{\vec{v}_3}) \left(xy^2 + {{y^2}\choose{2}}\right) + \vec{v}_4 y^2\\
    &= \eta(g_1^{\vec{v}_1}) x + \eta(g_1^{\vec{v}_2}) y + (\eta(g_1^{\vec{v}_3}) + \eta(g_2^{\vec{v}_4}))y^2\\
    &+ \eta(g_2^{\vec{v}_1}) {{x}\choose{2}}+ \eta(g_2^{\vec{v}_2}) \left(xy+{{y}\choose{2}}\right)  + \eta(g_2^{\vec{v}_3}) \left(xy^2 + {{y^2}\choose{2}}\right).
\end{align*}

Applying Lemma \ref{breaking an integral polynomial into a sum of integral polynomials} and the assumption $||\eta\circ g^P||_{C^\infty[N]}\leqslant cA$, and choosing $c_M$ in such a way that $c>0$ is sufficiently small, we deduce that there exists a positive integer $q=O(1)$ such that
\begin{align}\label{small coefficient}
||q \eta(g_i^{\vec{v}_{j}})||_{\RR/\ZZ} < A N^{-i}
\end{align}
 for all pairs
\begin{align*}
(i,j)\in\{(1,1), (1,2), (2,1), (2,2), (2,3)\}.
\end{align*}

By looking at the coefficient of ${{x}\choose{2}}$, $xy+{{y}\choose{2}}$ and $xy^2 + {{y^2}\choose{2}}$, we deduce that the maps
\begin{align*}
    h_2\mapsto q\eta(h_2^{\vec{v}_1}),\; q\eta(h_2^{\vec{v}_2}),\; q\eta(h_2^{\vec{v}_3})
\end{align*}
are trivial 2-nd level characters; the argument goes the exact same way as in the proof of Proposition \ref{equidistribution for x, x+y, x+2y, x+y^3}. Thus, $\eta$ vanishes on all elements of the form $h_2^{\Vec{w}_2}$ with $h_2\in G_2$ and
\begin{align*}
    \vec{w}_2 \in \P'_2 = \Span_\RR\{\vec{v}_1, \vec{v}_2, \vec{v}_3\}.
\end{align*}
By looking at the coefficients of $x$ and $y$, we similarly show that $\eta$ vanishes on all elements of the form $h_1^{\Vec{w}_1}$ with $h_1\in G_1$ and
\begin{align*}
    \vec{w}_1 \in \P'_1 = \Span_\RR\{\vec{v}_1, \vec{v}_2\}.
\end{align*}
We are left with
\begin{align*}
    \eta\circ g^P(x,y) = \left(\eta(g_1^{\vec{v}_3})+ \eta(g_2^{\vec{v}_4})\right)y^2.
\end{align*}

We would like to be able to say that $\eta$ vanishes on all elements of the form $h_1^{\Vec{w}_1}$ and $h_2^{\Vec{w}_2}$ with $h_i \in G_i$ and $\Vec{w}_i \in \P_i$; this would imply that $\eta$ is trivial. For this to be case, it would suffice to show that both $\eta(g_1^{\vec{v}_3})$ and $\eta(g_2^{\vec{v}_4})$ satisfy an estimate (\ref{small coefficient}), and then use $(A,N)$-irrationality of $g_1$ and $g_2$ to conclude that the characters $h_1\mapsto q \eta(h_1^{\vec{v}_3})$ and $h_2\mapsto q\eta(h_2^{\vec{v}_4})$ are trivial. Alas, this need not be true. In Proposition \ref{equidistribution for x, x+y, x+2y, x+y^3}, the number $\eta(h_1^{\vec{v}_3})$ was the coefficient of $y^3$ while $\eta(h_2^{\vec{v}_4})$ was the coefficient of $y^2$, from which it followed that they both satisfied  (\ref{small coefficient}). Now, however, all we can show is that
\begin{align}\label{small coefficient 2}
||q (\eta(g_1^{\vec{v}_3}) + \eta(g_2^{\vec{v}_4}))||_{\RR/\ZZ} < A N^{-1}
\end{align}
 because $\eta(g_1^{\vec{v}_3}) + \eta(g_2^{\vec{v}_4})$ is the coefficient of $y^2$. But it need not follow that either of $\eta(g_1^{\vec{v}_3})$ and $\eta(g_2^{\vec{v}_4})$ satisfies (\ref{small coefficient}); in particular, $g^P$ may take values in a proper rational subgroup of $G^P$.
 
 We illustrate this with a specific example, akin to the example in Section 11 of \cite{kuca_2020b}. Suppose that $G = G_1 = \RR^2$, $G_2 = 0\times \RR$, $G_3 = 0\times 0$. The sequence $g(n) = (a n, b {{n}\choose{2}})$ is adapted to the filtration $G_\bullet$, and it is irrational if and only if $a$ and $b$ are irrational. We identify $G^4$ with $\RR^8$ via the map
\begin{align*}
G^4 &\to\RR^8\\
((x_1, y_1), (x_2, y_2), (x_3, y_3), (x_4, y_4)) &\mapsto (x_1, x_2, x_3, x_4, y_1, y_2, y_3, y_4).
\end{align*}
Setting
\begin{align*}
\vec{v}_{11}  &= \vec{e}_1 + \vec{e}_2 + \vec{e}_3 + \vec{e}_4, \quad \vec{v}_{12} = \vec{e}_2 + 2\vec{e}_3, \quad \vec{v}_{13} = \vec{e}_4,\\ 
\vec{v}_{21} &= \vec{e}_5+\vec{e}_6+\vec{e}_7+\vec{e}_8,\quad \vec{v}_{22} = \vec{e}_6 + 2\vec{e}_7,\quad \vec{v}_{23} = \vec{e}_8,\quad \vec{v}_{24} = \vec{e}_7,
\end{align*}
we observe that $G^P = \Span_\RR\{\vec{v}_{11}, \vec{v}_{12}, \vec{v}_{13}, \vec{v}_{21}, \vec{v}_{22}, \vec{v}_{23}, \vec{v}_{24}\}$.

With these definitions, the coefficient of $y^2$ in $g^P$ becomes $ a\vec{v}_{13} + b\vec{v}_{24} = a\vec{e}_4 + b\vec{e}_7$. If $a, b, 1$ are rationally independent, then the closure of $g^P$ is the image of the 7-dimensional subspace $G^P$ in $(\RR/\ZZ)^8$. If $a$ and $b$ are rationally dependent, then the closure of $g^P$ is the image in $(\RR/\ZZ)^8$ of the 6-dimensional subspace 

$$\tilde{G} = \Span_\RR\{\vec{v}_{11}, \vec{v}_{12}, a\vec{v}_{13} + b\vec{v}_{24}, \vec{v}_{21}, \vec{v}_{22}, \vec{v}_{23}\}.$$ 
Finally, if some rational linear combination of $a$ and $b$ is a rational number $q/r$ in its lower terms with $r>1$, then the closure of $g^P$ is a union of at most $r$ translates of a 6-dimensional subtorus of $G^P/\Gamma^P$. For instance, if $a = \sqrt{2}$ and $b = \sqrt{2}+\frac{1}{3}$, then we define
\begin{align}\label{6-dimensional closure, 2}
    \tilde{G} = \Span_\RR\{\vec{v}_{11}, \vec{v}_{12}, \vec{v}_{13} + \vec{v}_{24}, \vec{v}_{21}, \vec{v}_{22}, \vec{v}_{23}\},
\end{align}
and observe that the sequences $g^P_0, g^P_1, g^P_2$ defined by $g^P_i(x,y) = g^P(x, 3y + i)$ are equidistributed on $\tilde{G}/\tilde{\Gamma}$, $\frac{1}{3}\vec{v}_{24} + \tilde{G}/\tilde{\Gamma}$ and $\frac{1}{3}\vec{v}_{24} + \tilde{G}/\tilde{\Gamma}$ respectively. In particular, for inhomogeneous progressions it is not true that the group $\tilde{G}$ depends only on the filtration $G_\bullet$ and the progression $\vec{P}$.


While annihilating the coefficients of $\eta\circ g^P$, we were able to deal with the coefficients of $x$ and $y$ as well as ${{x}\choose{2}}$, $xy+{{y}\choose{2}}$ and $xy^2 + {{y^2}\choose{2}}$, which span the spaces $W_1'$ and $W_2'$ respectively. The problematic coefficient was that of $y^2$, belonging to the space $W^c$. We have remarked below (\ref{direct sum for inhomogeneous configurations}) in Section \ref{section on homogeneity} that the nontriviality of the subspace $W^c$ prevents us from running the same argument as in Proposition \ref{equidistribution for x, x+y, x+2y, x+y^3} and Theorem \ref{finitary equidistribution on nilmanifolds} for inhomogeneous progressions; the problem with the coefficient of $y^2$ that we have encountered here illustrates this point. The reader should see from here how to generalise the aforementioned example to other inhomogeneous progression; this generalised construction proves part (ii) of Theorem \ref{dichotomy}.

 
\section{Finding closure in the inhomogeneous case}\label{section on inhomogeneous case}
Section \ref{section on failure in the inhomogeneous case} shows that we cannot always hope for the sequence $g^P$ to equidistribute in $G^P/\Gamma^P$ for an inhomogeneous progression $\vec{P}$. Here, we provide an inductive recipe for finding the closure of $g^P$ in the case of $\vec{P}(x,y) = (x,\; x+y,\; x+2y,\; x+y^2)$. We believe that this argument could be generalised to an arbitrary inhomogeneous progressions; while trying to do so, however, we have encountered significant technical issues of linear algebraic nature that we have not been able to overcome. 

Since the argument that we present here is already complicated enough, we prove it in an infinitary setting so as to avoid confusion coming from various quantitative parameters. In effect, we show the following. 

\begin{proposition}\label{closure for inhomogeneous}
Let $G$ be a connected group with filtration $G_\bullet$ of degree $s$, and $\vec{P}(x,y) = (x,\; x+y,\; x+2y,\; x+y^2)$. Suppose that $g\in\poly(\ZZ, G_\bullet)$ is irrational. There exists 
a subgroup $\tilde{G}\leqslant G^P$ and a decomposition $g^P = \tilde{g}\gamma$, where $\tilde{g}$ takes values in $\tilde{G}$ and is equidistributed on $\tilde{G}/\tilde{\Gamma}$ whereas $\gamma$ is periodic. Moreover, the group $\tilde{G}$ contains the subgroup
\begin{align*}
K = \langle h_i^{\vec{w}_i}: h_i\in G_i, \vec{w}_i\in \P'_i, 1\leqslant i\leqslant s\rangle,
\end{align*}
where
\begin{align*}
\P_1' &= \Span_\RR\{(1,1,1,1), (0,1,2,0)\}, \\
\P_2' &= \Span_\RR\{(1,1,1,1), (0,1,2,0), (0,0,0,1)\},\\
\P_3' &= \P_4' = \ldots = \RR^4.
\end{align*}
\end{proposition}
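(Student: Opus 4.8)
The plan is to push through, for this progression, the very argument that was shown to break down in Section~\ref{section on failure in the inhomogeneous case}, but now retaining --- instead of discarding --- the datum attached to the one ``bad'' coordinate and using it to manufacture $\tilde G$ and the periodic correction $\gamma$ explicitly. First I would record the relevant linear algebra. With $\vec v_1=(1,1,1,1)$, $\vec v_2=(0,1,2,0)$, $\vec v_3=(0,0,0,1)$, $\vec v_4=(0,0,1,0)$ one has $\binom{\vec P(x,y)}{1}=\vec v_1 x+\vec v_2 y+\vec v_3 y^2$ and $\binom{\vec P(x,y)}{2}=\vec v_1\binom x2+\vec v_2\bigl(xy+\binom y2\bigr)+\vec v_3\bigl(xy^2+\binom{y^2}2\bigr)+\vec v_4 y^2$, while for $i\geqslant 3$ the four polynomials $\binom{x+P_j(y)}{i}$ are linearly independent, so $\P_i=\P_i'=\RR^4$; moreover $W^c=\Span_\RR\{y^2\}$ and $W_k^c=0$ for $k\geqslant 3$. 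Consequently, writing $g(n)=\prod_i g_i^{\binom ni}$, expanding $g^P(x,y)=\prod_i g_i^{\binom{\vec P(x,y)}{i}}$, and regrouping into the linearly independent integral basis of $\sum_i W_i$ given by integral bases $\{Q_{i,j}\}_j$ of the $W_i'$ together with the single element $y^2$, one sees that
\begin{align*}
g^P(x,y)=\Bigl(\prod_{i,j}\bigl(g_i^{\vec v_{i,j}}\bigr)^{Q_{i,j}(x,y)}\Bigr)\cdot\bigl(g_1^{\vec v_3}g_2^{\vec v_4}\bigr)^{y^2},
\end{align*}
up to reordering and commutator corrections which get absorbed at higher levels in the standard way. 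The only coupling obstructing equidistribution on $G^P/\Gamma^P$ is the single coefficient $c:=g_1^{\vec v_3}g_2^{\vec v_4}$ --- a well-defined element, since $\vec v_3$ and $\vec v_4$ have disjoint support --- which mixes the level-$1$ datum $g_1^{\vec v_3}$ and the level-$2$ datum $g_2^{\vec v_4}$ and so need not be irrational even though $g$ is.

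Next I would split $c=c_{\mathrm{irr}}c_{\mathrm{rat}}$ into its irrational and rational parts relative to the filtration on $G^P$ carried by $g^P$ (structure theory of filtered nilmanifolds), take $\tilde G$ to be the rational --- and, after passing to one-parameter closures, connected --- subgroup generated by $K$ and $c_{\mathrm{irr}}$, let $\tilde g$ be the sequence obtained from the displayed formula by replacing $c$ with $c_{\mathrm{irr}}$, and let $\gamma(x,y)=c_{\mathrm{rat}}^{\,y^2}$, adjusted by periodic corrections so that $g^P=\tilde g\gamma$ holds exactly. Then $\gamma$ is periodic and $K\leqslant\tilde G$ by construction, so the content is to show that $\tilde g$ is equidistributed on $\tilde G/\tilde\Gamma$. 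This I would prove by the downward induction on filtration level used in the proof of Theorem~\ref{finitary equidistribution on nilmanifolds}, carried out intrinsically on the Mal'cev structure of $\tilde G$: if $\eta$ is a horizontal character of $\tilde G$ with $\eta\circ\tilde g$ constant, then --- commutator coefficients being killed automatically since $\eta$ is $\RR$-valued --- one has $\eta\circ\tilde g=\sum_{i,j}\eta(g_i^{\vec v_{i,j}})Q_{i,j}+\eta(c_{\mathrm{irr}})y^2$, so linear independence of $\{Q_{i,j}\}\cup\{y^2\}$ forces $\eta(g_i^{\vec v_{i,j}})=0$ for all $i,j$ and $\eta(c_{\mathrm{irr}})=0$; the maps $h\mapsto\eta(h^{\vec v_{i,j}})$ then descend to $i$-th level characters of $G_i$ annihilating $g_i$ (the factors $G_{i+1}^{\vec v_{i,j}}\leqslant K$ being killed by the induction hypothesis, using $\P_i'\subseteq\P_{i+1}'$), hence are trivial by irrationality of $g$, and the coupled character is nontrivial on $c_{\mathrm{irr}}$ unless $c_{\mathrm{irr}}$ itself is trivial; in every case $\eta$ vanishes on a generating set of $\tilde G$. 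This is consistent with the example in Section~\ref{section on failure in the inhomogeneous case}, where $c_{\mathrm{irr}}$ picks out exactly the diagonal direction $\vec v_{13}+\vec v_{24}$, recovering the group found there by hand.

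I expect the main obstacle to be the bookkeeping around the coupled coordinate, which is also where the obstruction to general inhomogeneous progressions lives. The element $g_1^{\vec v_3}$ sits at level $1$ of $G^P$ but is attached to the degree-$2$ monomial $y^2$, so it must be treated as a level-$2$ object for the sequence $g^P$; making the filtration on $G^P$, the commutator corrections arising from reordering, the decomposition $c=c_{\mathrm{irr}}c_{\mathrm{rat}}$, and the induced filtration on $\tilde G$ all mutually consistent --- so that $\tilde g$ really is a polynomial sequence adapted to $\tilde G_\bullet$, really is irrational there, with $\gamma$ really periodic --- is the delicate part. For a general inhomogeneous progression one would have not one coupled coefficient but a whole family, possibly coupled to one another across several levels, and organizing the induction so that each is split off correctly against the lattice is precisely the technical issue that resists a general treatment; a secondary point is that horizontal characters of $\tilde G$ need not extend to $G^P$, so the equidistribution step cannot simply be imported from $G^P$.
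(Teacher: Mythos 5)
Your overall architecture matches the paper's: isolate the single coupled coefficient $c=g_1^{\vec v_3}g_2^{\vec v_4}$ attached to $y^2$, peel off a periodic part, and verify equidistribution of the remainder by a Leibman-type character analysis in which the $K$-directions are annihilated exactly as in the homogeneous case. The genuine gap is the step you dispose of in one sentence: the decomposition $c=c_{\mathrm{irr}}c_{\mathrm{rat}}$ ``relative to the filtration on $G^P$ carried by $g^P$'' is not an off-the-shelf fact. Constructing it so that $\gamma(x,y)=c_{\mathrm{rat}}^{\,y^2}$ is genuinely periodic with respect to $\Gamma^4$, and so that $c_{\mathrm{irr}}$ is genuinely irrational with respect to the lattice and level structure of the group $\tilde G$ that $c_{\mathrm{irr}}$ itself helps define (with $g_1^{\vec v_3}$ forced to play the role of level-$2$ data), is precisely the technical content of the proposition; note also that there may be several independent rational couplings between the $g_1^{\vec v_3}$- and $g_2^{\vec v_4}$-data, so the rational part is in general multidimensional. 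The paper does not assume such a splitting exists: it builds it by iteration, via a chain $H^{(1)}\geqslant H^{(2)}\geqslant\cdots$, extracting at each stage one obstruction character $\eta^{(k)}$ (shown to be trivial off $H^{(k)}$), factorizing $h^{(k)}=h^{(k+1)}\gamma^{(k+1)}$ by an infinitary version of Proposition 9.2 of \cite{green_tao_2012}, and, crucially, maintaining two inductive claims that your one-shot version silently needs: that the level-$2$ components $a^{(k)}_2,b^{(k)}_2$ of the two factors of $h^{(k)}$ remain irrational after each periodic correction (this is what makes the final character step close), and that $H^{(k)}$ still equals $G_2^{\vec v_4}$ mod $G_1^{\vec v_3}$ and $G_1^{\vec v_3}$ mod $G_2^{\vec v_4}$ (this is what keeps the construction from eating into $K$ and guarantees $K\leqslant\tilde G$ at termination). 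Without these, ``$\eta(c_{\mathrm{irr}})$ rational forces $\eta$ to vanish on the $c_{\mathrm{irr}}$-direction'' is an assertion, not a deduction.

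A second, smaller gap: the claim that the induced maps $h_i\mapsto\eta(h_i^{\vec v})$ are $i$-th level characters is not automatic from $\eta$ being a horizontal character, nor are the commutator corrections ``absorbed in the standard way'' for free. A horizontal character of $\tilde G$ vanishes on $[\tilde G,\tilde G]$, but to promote $h_2\mapsto\eta(h_2^{\vec v_4})$ to a $2$-nd level character of $G_2$ one must check vanishing on $[G_1,G_1]$, and $[G_1,G_1]^{\vec v_4}$ is not obviously contained in $[\tilde G,\tilde G]$; the paper verifies this using the identity $\vec v_4=(\vec v_2\cdot\vec v_2-\vec v_2)/2$ together with $G_1^{\vec v_2}\leqslant K\leqslant\tilde G$, and the vanishing of $\eta$ on $G_2^{\vec v_3}$ uses $[H^{(k)},H^{(k)}]=[G_1^{\vec v_3},G_1^{\vec v_3}]$ mod $G_3^4$, which is available only because the inductive projection claim for $H^{(k)}$ is maintained. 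These commutator verifications are exactly where the issue you flag --- a level-$1$ element attached to a degree-$2$ monomial --- actually bites, so they cannot be waved through.
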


We will need the following lemma, which is similar in spirit to Lemma \ref{breaking an integral polynomial into a sum of integral polynomials}.
\begin{lemma}\label{breaking an integral polynomial into a sum of integral polynomials, 2}
Let $a_1, \ldots, a_s$ be nonzero real numbers. Let $Q_1, \ldots, Q_s\in\QQ[x,y]$ be linearly independent polynomials, and suppose that $Q = a_1 Q_1 + \ldots + a_s Q_s$ takes values in $\QQ$. Then $a_i\in\QQ$ for all $1\leqslant i\leqslant s$.
\end{lemma}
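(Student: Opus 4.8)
\textbf{Proof proposal for Lemma \ref{breaking an integral polynomial into a sum of integral polynomials, 2}.}

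The plan is to extract from the values of $Q$ enough rational linear constraints on the coefficients $a_1,\ldots,a_s$ to pin them down uniquely, then invoke the linear independence of $Q_1,\ldots,Q_s$ to conclude that those unique solutions must be rational. Concretely, write each $Q_k(x,y) = \sum_{(i,j)} b_{ij}^{(k)} x^i y^j$ with $b_{ij}^{(k)}\in\QQ$, and let $\{(i_1,j_1),\ldots,(i_r,j_r)\}$ be the set of monomials appearing in at least one $Q_k$. Then the coefficient of $x^{i_\ell}y^{j_\ell}$ in $Q$ is $c_\ell := \sum_{k=1}^s b_{i_\ell j_\ell}^{(k)} a_k$. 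Since $Q$ takes values in $\QQ$ (indeed, since it is a polynomial over $\RR$ taking rational values at enough points, one recovers that its coefficients $c_\ell$ are rational — evaluate $Q$ at a grid of integer points and solve the resulting Vandermonde-type linear system with rational coefficients), all the $c_\ell$ are rational. Thus we have the linear system $B\,\vec{a} = \vec{c}$, where $B = (b_{i_\ell j_\ell}^{(k)})_{\ell,k}$ is an $r\times s$ rational matrix and $\vec{c}\in\QQ^r$.

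The next step is to observe that the linear independence of $Q_1,\ldots,Q_s$ as polynomials is exactly the statement that the columns of $B$ are linearly independent over $\RR$ (equivalently over $\QQ$), so $\operatorname{rank} B = s$. Hence there is an invertible $s\times s$ rational submatrix $\tilde B$ of $B$ with corresponding subvector $\tilde c\in\QQ^s$ of $\vec c$, and $\tilde B\,\vec a = \tilde c$. Therefore $\vec a = \tilde B^{-1}\tilde c \in \QQ^s$, since $\tilde B^{-1}$ has rational entries and $\tilde c$ is rational. This gives $a_i\in\QQ$ for all $i$, completing the proof. The hypothesis that the $a_i$ are nonzero is not actually needed for the argument and is presumably present only to match the way the lemma will be applied.

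The only point requiring a little care — and the step I would flag as the main (mild) obstacle — is the justification that $Q$ having values in $\QQ$ forces its coefficients $c_\ell$ to be rational. This is standard: a polynomial in $\RR[x,y]$ of degree $d$ is determined by its values on $\{0,1,\ldots,d\}^2$ via a linear system whose matrix (a tensor product of one-dimensional Vandermonde matrices on $\{0,\ldots,d\}$) is invertible over $\QQ$; since all the sampled values lie in $\QQ$, Cramer's rule yields rational coefficients. One could also phrase this more cheaply by noting that $Q\in\RR[x,y]$ takes integer values at all sufficiently large integer pairs after clearing a common denominator, but the Vandermonde argument is the cleanest and mirrors the proof of Lemma \ref{breaking an integral polynomial into a sum of integral polynomials}. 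With that in hand the rest is routine linear algebra over $\QQ$.
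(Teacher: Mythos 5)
Your proposal is correct and follows essentially the same route as the paper: compare coefficients to get a rational linear system $B\vec a=\vec c$, use linear independence of $Q_1,\ldots,Q_s$ to extract an invertible rational $s\times s$ submatrix, and solve to conclude $\vec a\in\QQ^s$. The only (cosmetic) difference is that you justify rationality of the coefficients of $Q$ via evaluation on a grid and a Vandermonde argument in the standard monomial basis, while the paper works in the binomial basis $\binom{x}{k}\binom{y}{l}$ and invokes the classical fact about integer-valued polynomials; both are fine, and you are also right that the nonvanishing of the $a_i$ is not needed.
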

\begin{proof}
 Let $b_{kl i}$ be the coefficient of ${{x}\choose{k}}{{y}\choose{l}}$ in $Q_i$. Then $$Q(x, y) = \sum_{k, l} \left(\sum_{i=1}^s a_i b_{kli}\right) {{x}\choose{k}}{{y}\choose{l}}.$$
 The coefficient
 \begin{align*}
     c_{kl} = a_1 b_{kl1} + \ldots + a_s b_{kls}
 \end{align*}
  of ${{x}\choose{k}}{{y}\choose{l}}$ in $Q$ is rational, which can be seen as follows: there exists an integer $q>0$ such that $q Q\in \ZZ[x,y]$, and hence $q c_{kl}\in\ZZ$ by the classical fact that each integral polynomial is an integral linear combination of the Taylor monomials ${{x}\choose{k}}{{y}\choose{l}}$. Indexing the pairs $(k_1, l_1), \ldots, (k_u, l_u)$ in some arbitrary fashion, we obtain an $u\times s$ matrix $B = (b_{k_r l_r i})_{r i}$ as well as an $s$-dimensional column vector $a = (a_{i})_{i}$ and a $u$-dimensional column vector $c = (c_{j_l k_l})_{l}$ such that $Ba = c$. The linear independence of $Q_1, \ldots, Q_r$ implies that $B$ has full rank, and so there exists an invertible $s\times s$ submatrix $\tilde{B}$ of $B$ and an $s$-dimensional column vector $\tilde{c}$ such that $\tilde{B}a = \tilde{c}$. Since the entries of $\tilde{B}$ are integers, the entries of $\tilde{B}^{-1}$ are rational numbers. The equality $a = \tilde{B}^{-1}\tilde{c}$ then implies that $a_i\in\QQ$ for each $1\leqslant i\leqslant s$.
\end{proof}

\begin{proof}[Proof of Proposition \ref{closure for inhomogeneous}]
For each $i\geqslant 3$, we find a basis $\{Q_{i,1}, Q_{i,2}, Q_{i,3}, Q_{i,4}\}$ for $W_i$. The absence of an inhomogeneous algebraic relation of degree 3 or higher implies that $$\sum_{i=3}^s W_i = \bigoplus_{i=3}^s W_i,$$ from which it follows that the set $\{Q_{i,j}: 3\leqslant i\leqslant s,\; 1\leqslant j\leqslant 4\}$ is linearly independent.
For $3\leqslant i\leqslant s$ and $1\leqslant j\leqslant 4$, we let $\vec{v}_{i,j}=\tau_i(Q_{i,j})$. We also set
\begin{align*}
\vec{v}_1 = (1,1,1,1),\quad \vec{v}_2 = (0,1,2,0),\quad \vec{v}_3 = (0,0,0,1) \quad {\rm{and}}\quad \vec{v}_4 = (0,0,1,0).
\end{align*}

We want to find a subgroup $\Tilde{G}$ of $G^P$ on which we can guarantee equidistribution. Starting with $$H^{(1)} = \langle h_1^{\vec{v}_3},\; h_2^{\vec{v}_4}: h_1\in G_1, h_2\in G_2\rangle,$$ we inductively define a chain of subgroups 
\begin{align*}
    H^{(1)}\geqslant H^{(2)}\geqslant H^{(3)}\geqslant \ldots
\end{align*}
as well as groups $ G^{(k)} = \langle K, H^{(k)}\rangle$
and $\Gamma^{(k)}=\Gamma^P\cap G^{(k)}$. We note that $G^{(1)}=G^P$.

We also inductively define sequences $g^{(k)}$ and $h^{(k)}$, starting with $h^{(1)}(y) = {g_1^{\vec{v}_3}}^{y^2}{g_2^{\vec{v}_4}}^{y^2}$ and $g^{(1)} = g^P$. If $g^{(k)}$ is equidistributed in $G^{(k)}/\Gamma^{(k)}$, then we terminate the procedure. Otherwise Theorem \ref{Leibman's equidistribution theorem} implies the existence of a nontrivial horizontal character $\eta^{(k)}:G^{(k)}\to\RR$ that vanishes on all of $G^{(k)}$ except $H^{(k)}$, and for which $\eta^{(k)}\circ g^{(k)} =\eta^{(k)}\circ h^{(k)}$ takes values in $\ZZ$. We then take $G^{(k+1)}=\ker\eta^{(k)}$ and $H^{(k+1)}=\ker\eta^{(k)}|_{H^{(k)}}$, and we factorize $h^{(k)} = h^{(k+1)}\gamma^{(k+1)}$ using an infinitary version of Proposition 9.2 of \cite{green_tao_2012}, where $\eta^{k+1}\circ h^{(k+1)} = 0$ and $\gamma^{(k+1)}$ is periodic. We define
\begin{align*}
    g^{(k+1)}(x,y) = g^{(k)}(x,y)(\gamma^{(k+1)}(y))^{-1}
\end{align*}
and observe that 

\begin{align*}
    g^{(k+1)}(x,y) = g_1^{{\vec{v}_1}x+\vec{v}_2 y} h^{(k+1)}(y) g_2^{{\vec{v}_1}{{x}\choose{2}} + \vec{v}_2(xy+{{y}\choose{2}}) + \vec{v}_3(xy^2 +{{y^2}\choose{2}})}
    \prod_{i=3}^s \prod_{j=1}^4 g_i^{\vec{v}_{i,j} Q_{i,j}} \; {\rm{mod}} \; [G_1, G_2]^4.
\end{align*}

The sequence $g^{(k+1)}$ takes values in $G^{(k+1)}$. We also write 
\begin{align*}
h^{(k)}(y) = a^{(k)}(y)^{\vec{v}_4}b^{(k)}(y)^{\vec{v}_3},
\end{align*}
with $a^{(k)}$ being $G_2$-valued and $b^{(k)}$ being $G_1$-valued. Letting $a^{(k)}(y) = \prod\limits_{i=1}^s {a^{(k)}_i}^{{y}\choose{i}}$ and similarly for $b^{(k)}$, we claim that $a^{(k)}_2$ and $b^{(k)}_2$ are irrational elements of $G_2$ and $G_1$ respectively with regard to the filtration $G_\bullet$ on $G$.  
Finally, we claim that
\begin{align*}
    H^{(k)} = G_2^{\vec{v}_4} \mod G_1^{\vec{v}_3}\quad {\rm{and}}\quad    H^{(k)} = G_1^{\vec{v}_3} \mod G_2^{\vec{v}_4}
\end{align*}

First, we observe that all these properties hold at $k=1$. We assume that they hold for some $k\geqslant 1$, from which we aim to deduce that they also hold at $(k+1)$-th level.

If $g^{(k)}$ is equidistributed in $G^{(k)}/\Gamma^{(k)}$, then we are done. Otherwise there exists a nontrivial horizontal character $\eta^{(k)}:G^{(k)}\to\RR$ for which $\eta^{(k)}\circ g^{(k)}$ is $\ZZ$-valued. We have
\begin{align*}
    \eta^{(k)}\circ g^{(k)}(x,y) &= \eta^{(k)}(g_1^{{\vec{v}_1}})x + \eta^{(k)}(g_1^{\vec{v}_2})y + \eta^{(k)}(h^{(k)}(y)) \\
    &+ \eta^{(k)}(g_2^{{\vec{v}_1}}){{x}\choose{2}} + 2\eta^{(k)}(g_2^{\vec{v}_2}) \left(xy+{{y}\choose{2}}\right) + \eta^{(k)}(g_2^{\vec{v}_3})\left(xy^2 +{{y^2}\choose{2}}\right)\\
    &+\sum_{i=3}^k \sum_{j=1}^4 \eta^{(k)}(g_i^{\vec{v}_{i,j}}) Q_{i,j}(x,y).
\end{align*}
By looking at the coefficients of $Q_{i,j}$ for $3\leqslant i\leqslant s$, applying Lemma \ref{breaking an integral polynomial into a sum of integral polynomials, 2}, and following the same method as in the proof of Theorem \ref{finitary equidistribution on nilmanifolds}, we see that $\eta^{(k)}$ vanishes on elements of the form $h_i^{\vec{v}_{i,j}}$ for $h_i\in G_i$, $3\leqslant i\leqslant s$ and $1\leqslant j\leqslant 4$, and so $\eta^{(k)}$ vanishes on all of $G_3\times G_3\times G_3\times G_3$. This leaves us with
\begin{align*}
    \eta^{(k)}\circ g^{(k)}(x,y)
    &= \eta^{(k)}(g_1^{{\vec{v}_1}})x + \eta^{(k)}(g_1^{\vec{v}_2})y + \eta^{(k)}(h^{(k)}(y)) \\
    &+ \eta^{(k)}(g_2^{{\vec{v}_1}}){{x}\choose{2}} + 2\eta^{(k)}(g_2^{\vec{v}_2}) \left(xy+{{y}\choose{2}}\right) + \eta^{(k)}(g_2^{\vec{v}_3})\left(xy^2 +{{y^2}\choose{2}}\right).
\end{align*}
We now carry on. By looking at the coefficient of ${{x}\choose{2}}$ and $xy+{{y}\choose{2}}$, we see that $\eta^{(k)}(g_2^{{\vec{v}_1}})$ and $\eta^{(k)}(g_2^{\vec{v}_2})$ are both integers, and so $\eta^{(k)}$ vanishes on all elements of the form $h_2^{\Vec{v}_1}$ and $h_2^{\Vec{v}_2}$ with $h_2\in G_2$. By looking at the coefficients of $x$ and $y$, we can similarly show that $\eta^{(k)}$ vanishes on all elements of the form $h_1^{\Vec{v}_1}$ and $h_1^{\Vec{v}_2}$ with $h_1\in G_1$.
We are thus left with
\begin{align*}
    \eta^{(k)}\circ g^{(k)}(x,y) = \eta^{(k)}(h^{(k)}(y)) + \eta^{(k)}(g_2^{\vec{v}_3})\left(xy^2 +{{y^2}\choose{2}}\right).
\end{align*}

We first deal with the last term. Since $H^{(k)} = G_1^{\vec{v}_3}$ mod $G_2^{\vec{v}_3}$, we have $[H^{(k)}, H^{(k)}] = [G_1^{\vec{v}_3}, G_1^{\vec{v}_3}]$ mod $G_3^4$. Using the fact that $\eta^{(k)}$ vanishes on both $G_3^4$ and $[H^{(k)}, H^{(k)}]$, we deduce that it also vanishes on $[G_1^{\vec{v}_3}, G_1^{\vec{v}_3}]$. Hence the function ${\xi_{2,3}: G_2\to\RR}$ given by $\xi_{2,3}(h) = \eta^{(k)}(h^{\vec{v}_3})$ is a 2-nd level character. By irrationality of $g_2$, it follows that $\xi_{2,3}$ is trivial, and so $\eta^{(k)}$ vanishes on $G_2^{\vec{v}_3}$. We have thus proved that $\eta^{(k)}$ vanishes on all of $G^{(k)}$ except $H^{(k)}$, and consequently that $\eta^{(k)}\circ g^{(k)} = \eta^{(k)}\circ h^{(k)}$.

We now show that
\begin{align}\label{projection is as it should be}
    H^{(k+1)} = G_2^{\vec{v}_4}\! \mod G_1^{\vec{v}_3}
\end{align}
Suppose not; let $U$ be a proper rational subgroup of $G_2^{\vec{v}_4}$ such that 
\begin{align*}
    H^{(k+1)} = U\! \mod G_1^{\vec{v}_3}. 
\end{align*}
Then
\begin{align*}
    H^{(k+1)}\leqslant UG_1^{\vec{v}_3} \cap H^{(k)} \leqslant H^{(k)}.
\end{align*}
We know from the rank-nullity theorem that $\dim H^{(k+1)} = \dim H^{(k)} - 1$, and we have $H^{(k)} = G_2^{\vec{v}_4} \mod G_1^{\vec{v}_3}$ from the inductive hypothesis. These two facts, together with the assumption that $U$ is a proper rational subgroup of $G_2^{(0,0,1,0)}$, imply that $H^{(k+1)} = UG_1^{\vec{v}_3} \cap H^{(k)}$. It follows that

\begin{align*}
    \eta^{(k)}\circ g^{(k)}(x,y) = \eta^{(k)}(a^{(k)}(y)^{\vec{v}_4})+\eta^{(k)}(b^{(k)}(y)^{\vec{v}_3})= \eta^{(k)}(a^{(k)}(y)^{\vec{v}_4})
\end{align*}

We have already shown that $\eta^{(k)}$ vanishes on $G_3^4$. From the facts that $a^{(k)}(y) = \prod_{i=1}^s {a^{(k)}_i}^{{{y}\choose{i}}}$ with $a^{(k)}_i\in G_i$, we deduce that $\eta^{(k)}({a^{(k)}(y)}^{\vec{v}_4}) = \eta^{(k)}(a^{(k)}_1) y + \eta^{(k)}(a^{(k)}_2){{y}\choose{2}}$. The map $\xi_{2,4}(h_2) = \eta^{(k)}(h_2^{\vec{v}_4})$ is a continuous group homomorphism on $G_2$ that vanishes on $G_3$ and sends $\Gamma_2$ to $\ZZ$. Since $\vec{v}_4 = (\vec{v}_2\cdot\vec{v}_2 - \vec{v}_2)/2$, we also have
\begin{align*}
\xi_{2,4}([h_1, h_1']) = \frac{1}{2}\eta^{(k)}([h_1^{\vec{v}_2},{h'_1}^{\vec{v}_2}]) - \frac{1}{2}\eta^{(k)}([h_1, h'_1]^{\vec{v}_2}),
\end{align*}
for any $h_1, h'_1\in G_1$, and so $\xi_{2,4}$ vanishes on $[G_1, G_1]$. Thus $\xi_{2,4}$ is a 2-nd level character on $G_2$ with respect to the filtration $G_\bullet$ on $G$, and since $a^{(k)}_2$ is an irrational element of $G_2$ with respect to this filtration, it follows that $\eta^{(k)}$ is trivial, a contradiction; hence (\ref{projection is as it should be}) holds. The argument that 
\begin{align*}
    H^{(k+1)} = G_1^{\vec{v}_3} \mod G_2^{\vec{v}_4}
\end{align*}
is similar.

Finally, we factorize $h^{(k)} = h^{(k+1)}\gamma^{(k+1)}$, where $\gamma^{(k+1)}$ is periodic and $h^{(k+1)}$ takes values in $H^{(k+1)} = \ker \eta^{(k+1)}$. It remains to show that $a^{(k+1)}_2$ and $b^{(k+1)}_2$ are irrational elements of $G_2$ and $G_1$ with respect to the filtration $G_\bullet$ on $G$. We observe that
\begin{align*}
    a^{(k)}  = a^{(k+1)} \gamma_a^{(k+1)} \quad {\rm{and}} \quad b^{(k)}  = b^{(k+1)} \gamma_b^{(k+1)}
\end{align*}
for some periodic sequences $\gamma_a$ and $\gamma_b$ taking values in $G_2$ and $G_1$ respectively.
Suppose that $\xi: G_2\to\RR$ is a 2-nd level character with respect to the filtration $G_\bullet$, for which $\xi(a^{(k+1)}_2)\in\ZZ$. The sequence $\gamma_a^{(k+1)}$ is periodic, hence $\xi\circ\gamma_a^{(k+1)}$ is $\QQ$-valued, and so it follows that $\xi(a^{(k)}_2)\in\QQ$ as well. Therefore there exists an integer $l>0$ such that $l\xi(a^{(k)}_2)\in\ZZ$. Since $\xi':= l\cdot \xi$ is also a 2-nd level character, it follows from the irrationality of $a^{(k)}_2$ that $\xi'$ is trivial. This implies that $\xi$ is trivial as well, and hence $a^{(k+1)}_2$ is irrational. The argument showing that $b^{(k+1)}_2$ is irrational is identical. 

We have thus shows inductively that  $g^{(k)}$, $h^{(k)}$, $G^{(k)}$ and $H^{(k)}$ satisfy all the properties we want them to satisfy for all $k\geqslant 1$. Since $0\leqslant\dim G^{(k+1)}<\dim G^{(k)}$, the procedure eventually terminates, at which point the sequence $g^{(k)}$ takes values in $G^{(k)}$ and is equidistributed on $G^{(k)}/\Gamma^{(k)}$. Letting $\tilde{G} = G^{(k)}$ for this value of $k$ and $\gamma = \gamma^{(k)} \ldots \gamma^{(1)}$, and observing that a product of periodic sequences is periodic, we finish the proof.
\end{proof}


\section{The equivalence of Weyl and algebraic complexity}\label{section on Weyl complexity}
While we are not able to show that Host-Kra and true complexities equal algebraic complexity for inhomogeneous progression, we can show the equivalence of Weyl and algebraic complexities for all integral progressions. 

\begin{definition}[Weyl system]\label{Weyl system}
A \emph{Weyl system} is an ergodic system $(X, \X, \mu, T)$, where $X$ is a compact abelian Lie group and $T$ is a unipotent affine transformation on $X$, i.e. $Tx = \phi(x) + a$ for $a\in X$ and an automorphism $\phi$ of $X$ satisfying $(\phi - \rm{Id}_X)^s = 0$ for some $s\in\NN_+$.
\end{definition}

We recall that an integral polynomial progression $\vec{P}\in\RR[x,y]^{t+1}$ has Weyl complexity $s$ at $0\leqslant i\leqslant t$ if $s$ the smallest natural number for which the factor $\Z_s$ is characteristic for the weak convergence of $\vec{P}$ at $i$ for any Weyl system. 

Every disconnected Weyl system can be written as a finite union of isomorphic tori that are cyclically permuted by the transformation $T$, much the same way as each disconnected nilsystem is a union of connected nilsystems (cf. Proposition \ref{totally ergodic nilsystems} and the remark below Theorem 3.5 of \cite{bergelson_leibman_lesigne_2007}). Therefore we can restrict our attention to connected Weyl systems. These can in turn be reduced to standard Weyl systems, which are totally ergodic by Proposition \ref{totally ergodic nilsystems}. Throughout this section, we let $\TT = \RR/\ZZ$.
\begin{definition}[Standard Weyl system of order $s$]\label{standard Weyl system}
Let $s\in\NN_+$ and $X = \TT^s$. A \emph{standard Weyl system of order $s$} is a system $(X,\X,\mu,T)$, where $\X$ is the Borel $\sigma$-algebra on $X$, $\mu$ is the Lebesgue measure, and
\begin{align*}
T(a_1, \ldots, a_s) = (a_1 + a_0, a_2 + a_1, \ldots, a_s + a_{s-1})
\end{align*}
for some irrational $a_0$.
\end{definition}

\begin{proposition}[Lemma 4.1 of \cite{frantzikinakis_kra_2005}]
Each connected Weyl system is a factor of a product of several standard Weyl systems.
\end{proposition}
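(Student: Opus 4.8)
The plan is to work concretely with the structure of connected Weyl systems. Since $X$ is a connected compact abelian Lie group it is a torus, $X\cong\TT^d$, and the unipotent automorphism $\phi$ is multiplication by a matrix $M\in GL_d(\ZZ)$ with $N:=M-\mathrm{Id}$ nilpotent, say $N^s=0$; thus $Tx=Mx+a$ for some $a\in\TT^d$. Iterating gives $T^n x=M^n x+S_n a$ with $S_n=\sum_{j<n}M^j$, and from $M^n=\sum_{k=0}^{s-1}\binom{n}{k}N^k$ and $S_n=\sum_{k=0}^{s-1}\binom{n}{k+1}N^k$ one sees every orbit is a polynomial function of $n$ of degree at most $s$ with $\TT^d$-valued coefficients. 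In particular the orbit of $0$ is $T^n 0=\sum_{k=1}^{s}\binom{n}{k}v_k$ with $v_k=N^{k-1}a$, and since an ergodic affine map on a compact connected abelian group is minimal, this orbit is dense.

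\textbf{Construction of the factor map.} The key point is that the standard Weyl system $W_{s,\alpha}=(\TT^s,R_\alpha)$ of order $s$ has $R_\alpha^n 0=(\binom{n}{1}\alpha,\ldots,\binom{n}{s}\alpha)$, so finite products of such systems realise exactly the $\binom{n}{k}$-polynomials of degree $\le s$ driven by prescribed base frequencies. First I would take a $\QQ$-basis $1,\beta_1,\ldots,\beta_r$ of the finite-dimensional $\QQ$-vector space spanned by $1$ together with all coordinates (lifted to $\RR$) of all the $v_k$; here the $\beta_i$ are automatically irrational and rationally independent with $1$. Choosing a common denominator $q$ and setting $\alpha_i=\beta_i/q$ (still irrational, still rationally independent with $1$), every coordinate of every $v_k$ lies in $\QQ+\ZZ\alpha_1+\cdots+\ZZ\alpha_r$. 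Let $Y=W_{s,\alpha_1}\times\cdots\times W_{s,\alpha_r}$; rational independence of the $\alpha_i$ makes $Y$ ergodic. One then assembles a continuous affine map $\pi\colon Y\to\TT^d$, $\pi=\Lambda+b$ with $\Lambda\colon\TT^{rs}\to\TT^d$ an integer-matrix homomorphism built column by column from the expansions $v_k=\sum_i c_{k,i}\alpha_i+(\text{rational})$, so that $\Lambda$ sends the orbit of $0$ in $Y$ to $\sum_k\binom{n}{k}v_k$; the rational pieces get absorbed into $b$ together with a harmless finite adjustment. By construction $\pi$ intertwines the product transformation on $Y$ with $T$ on the dense orbit of $0$, hence everywhere by continuity; its image is a closed $T$-invariant subgroup containing that dense orbit, hence all of $\TT^d$, so $\pi$ is onto. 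A continuous surjective group homomorphism, followed by a translation, between compact abelian groups pushes Haar measure to Haar measure, so $\pi$ is a factor map and $(\TT^d,T)$ is a factor of $Y$, as claimed.

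\textbf{Main obstacle.} The delicate part is the simultaneous bookkeeping in the second step: one must keep the chosen frequencies $\alpha_i$ rationally independent (needed for ergodicity of $Y$), force $\Lambda$ to have integer entries (needed for well-definedness on the torus, which is exactly what the common denominator $q$ arranges), and still obtain surjectivity of $\pi$ (which uses minimality of the original system). Keeping these requirements compatible, and in particular disposing of the ``rational polynomial part'' of the orbit of $0$ — which a priori looks like a finite cyclic factor not available among standard Weyl systems — requires a short argument that ergodicity of the \emph{connected} system makes the induced rotation on $\TT^d/(M-\mathrm{Id})\TT^d$ totally irrational and hence removes that part. A more conceptual alternative would be to view $(\TT^d,T)$ as an $s$-step nilsystem, identify the product of standard Weyl systems with the nilsystem of the free $s$-step nilpotent group on $r$ generators, and invoke the fact that every connected $s$-step nilsystem is a factor of a free one; but the linear-algebraic argument above is self-contained and at the level of the surrounding section.
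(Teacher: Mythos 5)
Since the paper does not prove this proposition but simply quotes it from Lemma 4.1 of \cite{frantzikinakis_kra_2005}, there is no in-paper argument to compare yours with; judged on its own terms, your construction has a genuine gap, and it sits exactly at the point you flag as the "main obstacle". With your choice of frequencies $\alpha_1,\ldots,\alpha_r$ rationally independent together with $1$ and with the orbit of $0$ in $Y$, every coordinate of $\Lambda(R^n0)+b$ is of the form $c+\sum_{k,i}m_{k,i}\binom{n}{k}\alpha_i$ with $m_{k,i}\in\ZZ$; by rational independence such an expression can never reproduce a nonconstant rational polynomial term, while the orbit of $0$ in $X$ can genuinely contain one. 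Concretely, for $T(x,y)=(x+\alpha,\,y+x+\tfrac12)$ on $\TT^2$ (ergodic, connected) one has $T^n(0,0)=\bigl(n\alpha,\ \binom{n}{2}\alpha+\tfrac n2\bigr)$, even though the induced rotation on $\TT^2/(M-\mathrm{Id})\TT^2\cong\TT$ is by $\alpha$ and hence "totally irrational". So the one-line fix you propose — ergodicity makes the quotient rotation totally irrational "and hence removes that part" — is not an argument and is false as stated: irrationality of the quotient rotation says nothing about rational components of $a$ along $(M-\mathrm{Id})\TT^d$, which is precisely where they live; nor can the constant $b$ absorb a term like $n/2 \bmod 1$, which is periodic but nonconstant in $n$. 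The common-denominator device $\alpha_i=\beta_i/q$ does not touch this issue either.

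The repair is to normalise the base point before building $\pi$. Writing $N=M-\mathrm{Id}$, one has $T^nx=x+\sum_{k\geq1}\binom nk N^{k-1}(a+Nx)$, and $N\TT^d$ is a subtorus, so you may pick $x_0$ with $a':=a+Nx_0$ having zero component along $N\TT^d$ in an integer basis adapted to that subtorus; Hahn's ergodicity criterion (this is what ergodicity actually buys you) then says the remaining coordinates of $a'$ are rationally independent together with $1$. Since every $v_k'=N^{k-1}a'$ is an integer image of $a'$, no rational parts occur at all, and your construction goes through verbatim with these coordinates as the frequencies and $\pi=\Lambda+x_0$ (no denominator $q$ needed); in the toy example this is just conjugation by $(x,y)\mapsto(x+\tfrac12,y)$, which turns $T$ into the standard order-$2$ system. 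Alternatively one could keep the rational parts and adjoin auxiliary rotations with rationally dependent frequencies (e.g.\ $\gamma$ and $\gamma-\tfrac12$ to manufacture $n/2$), but then $Y$ is not ergodic, the orbit of $0$ is not dense, and both your "intertwining on a dense orbit, hence everywhere" step and your ergodicity claim for $Y$ fail, so the identities $\Lambda A=M\Lambda$ and $\Lambda c+b=Mb+a$ would have to be checked directly. As written, the proof is incomplete at its acknowledged crux; the rest of the skeleton (torus structure, binomial expansion of the orbit, minimality, surjectivity, and Haar pushforward) is fine.
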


Determining Weyl complexity therefore amounts to analysing standard Weyl systems. Since each standard Weyl system is totally ergodic, we immediately deduce the following.

\begin{proposition}\label{Weyl complexity smaller than Host-Kra complexity}
Let $t\in\NN_+$ and  $\vec{P}\in\RR[x,y]^{t+1}$ be an integral polynomial progression. Then ${\W_i(\vec{P})\leqslant\HK_i(\vec{P})}$ for all $0\leqslant i\leqslant t$.
\end{proposition}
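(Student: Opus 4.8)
The plan is to unwind the two relevant definitions and observe that the statement is essentially immediate once one knows that standard Weyl systems — which suffice for computing Weyl complexity — are totally ergodic. First I would recall Definition \ref{Host-Kra complexity}: $\HK_i(\vec{P})$ is the smallest $s$ such that $\Z_s$ is characteristic for the weak convergence of $\vec{P}$ at $i$ for \emph{all} invertible totally ergodic systems $(X,\X,\mu,T)$. Next I would recall Definition \ref{Weyl complexity}: $\W_i(\vec{P})$ is the smallest $s$ such that $\Z_s$ is characteristic for the weak convergence of $\vec{P}$ at $i$ for all Weyl systems. The point is that the class of systems over which $\W_i$ is defined is (after the reductions in this section) a subclass of the class over which $\HK_i$ is defined.

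The key reduction step is the chain of observations already recorded in this section: (1) every disconnected Weyl system decomposes into finitely many isomorphic tori cyclically permuted by $T$, so it suffices to consider connected Weyl systems; (2) by Lemma 4.1 of \cite{frantzikinakis_kra_2005}, every connected Weyl system is a factor of a product of standard Weyl systems; and (3) by Definition \ref{standard Weyl system} and Proposition \ref{totally ergodic nilsystems}, standard Weyl systems are totally ergodic. Moreover a product of totally ergodic systems of this form is totally ergodic, and total ergodicity is inherited by factors (a factor of a system with trivial rational Kronecker factor has trivial rational Kronecker factor, since $\K_{rat}$ of a factor embeds in $\K_{rat}$ of the ambient system). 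Hence: if $\Z_s$ is characteristic for the weak convergence of $\vec{P}$ at $i$ for \emph{all} totally ergodic systems, then in particular it is characteristic for all standard Weyl systems, and one checks that characteristicity passes up from factors to the systems they come from (projection onto the factor commutes with the ergodic averages and with conditional expectation onto $\Z_s$), so $\Z_s$ is characteristic for all connected Weyl systems, and finally for all Weyl systems by the cyclic-permutation decomposition.

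Putting this together: any $s$ that witnesses characteristicity for all totally ergodic systems also witnesses characteristicity for all Weyl systems, so the minimal such $s$ in the Weyl setting is no larger, i.e. $\W_i(\vec{P}) \leqslant \HK_i(\vec{P})$. I do not expect a genuine obstacle here — the one point requiring a line of care is step (3) combined with the claim that characteristicity descends to and ascends from factors and products; this is a routine manipulation of conditional expectations and the fact that weak limits are unchanged under projection onto a factor, and it is exactly the kind of bookkeeping that the excerpt flags as "a simple consequence of definitions." I would state the factor/product stability of total ergodicity explicitly, cite Proposition \ref{totally ergodic nilsystems} (or the analogue for Weyl systems referenced via Theorem 3.5 of \cite{bergelson_leibman_lesigne_2007}) for standard Weyl systems being totally ergodic, and conclude.
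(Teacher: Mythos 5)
Your overall route is the same as the paper's: the paper proves this proposition by exactly the reduction chain you describe (disconnected Weyl systems split into finitely many tori cyclically permuted by $T$; connected Weyl systems reduce to standard ones via \cite{frantzikinakis_kra_2005}; standard Weyl systems are totally ergodic), after which the inequality $\W_i(\vec{P})\leqslant\HK_i(\vec{P})$ is read off from Definitions \ref{Host-Kra complexity} and \ref{Weyl complexity}.

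One step of your elaboration is false as stated, though: a product of totally ergodic systems need not be totally ergodic --- it need not even be ergodic. For instance, the product of a standard Weyl system with itself has as a factor the product of two copies of the same irrational rotation (the first coordinates), which is not ergodic. So you cannot apply the defining property of $\HK_i(\vec{P})$, which quantifies over totally ergodic systems, to the product of standard Weyl systems. The gap is easily repaired, and for this particular inequality the factor-of-product lemma is not needed at all: a connected Weyl system is an ergodic system on a connected space and is therefore itself totally ergodic (this is what Proposition \ref{totally ergodic nilsystems} gives once the system is realised as a nilsystem, and it is the fact the paper actually leans on), so the hypothesis in the definition of $\HK_i(\vec{P})$ applies to it directly; alternatively one can pass through the ergodic components of the product. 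A smaller point: the transfer you need goes from the ambient system down to its factor, not ``up from factors to the systems they come from'' --- if $X$ is a factor of $Y$ via $\pi$ and $\Z_s(Y)$ is characteristic for $Y$, then $\Z_s(X)$ is characteristic for $X$, because $\EE(f_i|\Z_s(X))=0$ forces $\EE(f_i\circ\pi|\Z_s(Y))=0$ (the Gowers--Host--Kra seminorms are preserved under lifting along factor maps) and the weak limits for $X$ coincide with those of the lifted functions on $Y$; your parenthetical indicates you have this mechanism in mind, but the stated direction is backwards. With these two corrections your argument coincides with the paper's.
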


We now fix a standard Weyl system $(X,\X,\mu,T)$ of order $s$ with some irrational $a_0$. Then
\begin{align}\label{Weyl map}
T^n(a_1, \ldots, a_s) &= \left(a_1 + n a_0, a_2 + n a_1 + {{n}\choose{2}}a_0, \ldots, a_s + n a_{s-1} + \ldots + {{n}\choose{s}}a_0\right)\\
\nonumber &= g_0 + g_1 n + \ldots + g_s{{n}\choose{s}},
\end{align}
where $g_i = (a_{1-i}, \ldots, a_{s-i})$ and $a_{-k}=0$ for $k>0$. For almost all points $a=(a_1, \ldots, a_s)\in\RR^s$, the numbers $1, a_0, \ldots, a_s$ are rationally independent, and we fix a point $a\in\RR^s$ for which this is the case. The sequence $g(n) = T^n a$ is adapted to the filtration $G_i = \{0\}^{i-1} \times \RR^{s-i+1}$ for $1\leqslant i\leqslant s$ and $G_i = 0$ for $i>s$ on $G = G_0 = \RR^s$, and it is irrational due to the irrationality of $a_0$. Since the $\Z_i$ factor of $X$ consists of all the functions whose values depend only on the first $i$ coordinates, we have $Z_i = G/G_{i+1}\Gamma = \TT^i \times\{0\}^{s-i}$, where $\Gamma = \ZZ^s$.

What we aim to show is therefore the following.
\begin{proposition}\label{Weyl complexity equals algebraic complexity}
Let $t\in\NN_+$, $(X, \X, \mu, T)$ be a standard Weyl system of order $s$ and $\vec{P}\in\RR[x,y]^{t+1}$ be an integral polynomial progression. Fix $0\leqslant i\leqslant t$ and suppose that $\A_i(\vec{P}) = s'$. Then the image of the group $\{0\}^{i}\times G_{s'+1}\times \{0\}^{t-i}$ is contained in the closure of $g^P$ inside $(G/\Gamma)^{t+1}$.
\end{proposition}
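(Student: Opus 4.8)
The plan is to run the equidistribution argument of Proposition \ref{equidistribution for x, x+y, x+2y, x+y^3} and Theorem \ref{finitary equidistribution on nilmanifolds}, but in the simpler abelian setting of a standard Weyl system, and to keep track of \emph{which} coefficients we can annihilate. Recall that for a standard Weyl system of order $s$ the group $G = \RR^s$ is abelian, the filtration is $G_i = \{0\}^{i-1}\times\RR^{s-i+1}$, and the associated polynomial sequence $g(n) = T^n a$ is irrational since $a_0$ is irrational. The Leibman group $G^P\leqslant (\RR^s)^{t+1}$ is spanned by the vectors $g_i^{\vec v}$ with $g_i\in G_i$ and $\vec v\in\P_i$; since $G$ is abelian, $G^P$ is just a linear subspace and there are no commutator terms to worry about. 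By Leibman's equidistribution theorem (Theorem \ref{Leibman's equidistribution theorem}), if $g^P$ is not equidistributed on (a translate of a subgroup of) $G^P/\Gamma^P$, there is a nontrivial horizontal character $\eta:G^P\to\RR$ with $\eta\circ g^P$ constant mod $\ZZ$; but since $g^P(0,0)=1$ we may arrange $g(0)=1$, so in fact $\eta\circ g^P\equiv 0$ on $\ZZ^2$.

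**Key steps.** First I would write out $\eta\circ g^P(x,y) = \sum_{i=1}^s\sum_{j} \eta(g_i^{\vec v_{i,j}}) Q_{i,j}(x,y)$ where $\{Q_{i,j}\}$ runs over bases of the spaces $W_i$ (as in the proof of Theorem \ref{finitary equidistribution on nilmanifolds}), decomposed so that the $W_i'$-part is spanned by linearly independent Taylor monomials. Because $\eta\circ g^P$ vanishes identically (not merely approximately), Lemma \ref{breaking an integral polynomial into a sum of integral polynomials, 2} — or simply linear independence of the monomials appearing in $\bigoplus W_i'$ — forces $\eta(g_i^{\vec v_{i,j}})=0$ for every pair $(i,j)$ indexing $\bigoplus_i W_i'$. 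Second, I would run the downward induction on $i$ exactly as in Theorem \ref{finitary equidistribution on nilmanifolds}: the maps $\xi_{i,j}(h_i) = \eta(h_i^{\vec v_{i,j}})$ are $i$-th level characters (here trivially so, since $G$ is abelian there is no $[G_l,G_{i-l}]$ condition), and the irrationality of $g_i$ together with $\xi_{i,j}(g_i)\in\ZZ$ forces $\xi_{i,j}$ trivial whenever $\vec v_{i,j}\in\P_i'$. The upshot is that $\eta$ vanishes on the subgroup $K = \langle h_i^{\vec w_i}: h_i\in G_i,\ \vec w_i\in\P_i',\ 1\leqslant i\leqslant s\rangle$ of $G^P$, i.e. the closure of $g^P$ contains the image of $K$ in $(G/\Gamma)^{t+1}$ — this is the Weyl analogue of Proposition \ref{closure for inhomogeneous}. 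Third, and this is the only place the precise statement of the Proposition enters, I would invoke the hypothesis $\A_i(\vec P) = s'$: by the argument in Lemma \ref{Leibman group contains subgroups}, $\A_i(\vec P) = s'$ means $(x+P_i(y))^{s'+1}$ is linearly independent from the $(x+P_k(y))^{s'+1}$, $k\neq i$, so $\vec e_i\in\P_{s'+1}$; moreover for any $d\geqslant s'+1$ the homogeneous relation argument of Proposition \ref{bound on algebraic complexity} shows $\vec e_i\in\P_d$, and in fact $\vec e_i\in\P_d'$ because an inhomogeneous relation involving $(x+P_i(y))^d$ would, after partial differentiation in $x$, produce one in lower degree; iterating down to degree $s'+1$ and using $\A_i(\vec P)=s'$ shows the $\vec e_i$-coordinate is never involved in an inhomogeneous relation of degree $>s'$. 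Hence $\vec e_i\in\P_d'$ for all $d\geqslant s'+1$, so $K$ contains $h_d^{\vec e_i} = \{0\}^i\times G_d\times\{0\}^{t-i}$ for each such $d$; taking $d = s'+1$ and using $G_{s'+1}\geqslant G_{s'+2}\geqslant\cdots$ gives that $K$, hence the closure of $g^P$, contains $\{0\}^i\times G_{s'+1}\times\{0\}^{t-i}$.

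**Main obstacle.** The routine part is the equidistribution bookkeeping, which is a direct specialization of Theorem \ref{finitary equidistribution on nilmanifolds} with all the commutator subtleties removed. The genuinely load-bearing step is the third one: establishing that $\vec e_i$ lies not just in $\P_{s'+1}$ but in the ``homogeneous part'' $\P_{s'+1}'$ (equivalently, that no inhomogeneous relation of degree $>s'$ can involve the $i$-th coordinate), which is exactly what $\A_i(\vec P) = s'$ buys us via the partial-differentiation-in-$x$ trick from Section \ref{section on homogeneity}. One must be slightly careful here because $\vec P$ need not be homogeneous overall — only the behaviour at coordinate $i$ and degrees above $s'$ matters, and this is precisely controlled by the definition of $\A_i$. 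Once this is in hand, the inclusion $\{0\}^i\times G_{s'+1}\times\{0\}^{t-i}\subseteq K\subseteq\overline{g^P}$ follows immediately, completing the proof.
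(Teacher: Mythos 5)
Your route is genuinely different from the paper's, and as written it has a gap at its central step. The paper does not run a character/level-character induction for Weyl systems at all: it chooses the base point $a$ so that $1, a_0, \ldots, a_s$ are rationally independent, writes $g^P = g_0\vec{1} + \sum_{i,j} (g_i\vec{v}_{i,j})Q_{i,j} + \sum_j(\sum_i g_i\vec{w}_{i,j})R_j$ with respect to bases of the $W'_i$ and of $W^c$, and reads off the orbit closure directly (Proposition \ref{closure of orbits on Weyl systems}), the genericity of $a$ guaranteeing that every coefficient vector has irrational nonzero entries. You instead try to rerun the annihilation argument of Theorem \ref{finitary equidistribution on nilmanifolds} using only the irrationality of the sequence $g$ (i.e.\ of $a_0$). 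The problem is your claim that the maps $\xi_{i,j}(h)=\eta(h^{\vec{v}_{i,j}})$ are $i$-th level characters ``trivially, since $G$ is abelian''. Abelianness removes the commutator condition, but an $i$-th level character must still vanish on $G_i^{\nabla}\supseteq G_{i+1}$, and the irrationality of $g_i$ says nothing about homomorphisms that do not vanish on $G_{i+1}$: here $g_i=(0,\ldots,0,a_0,a_1,\ldots,a_{s-i})$, and an integer form seeing the coordinates beyond the $i$-th can take an integer value at $g_i$ without being trivial (e.g.\ if some $a_j$ is rational, which is allowed when one only assumes $a_0\notin\QQ$). In the homogeneous proof the needed vanishing on $G_{i+1}$ is exactly what the downward induction supplies, via $\P_i\subseteq\P_{i+1}=\Span_\RR\{\vec{v}_{i+1,j}\}$. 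In the inhomogeneous case — which is the whole point of this proposition — the induction only annihilates the directions in $\P'_{i+1}$, so you would need something like $\P'_i\subseteq\P'_{i+1}$ (no vector of $\P'_i$ has a component along $\tau_{i+1}(W^c_{i+1})$ inside $\P_{i+1}$). You neither state nor prove this, and it is precisely the kind of linear-algebraic structure the paper says it could not establish when attempting to push the analogue of Proposition \ref{closure for inhomogeneous} beyond a single example. So the induction step of your argument does not go through as proposed.

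The fix is essentially to use what the paper uses: with $a$ generic, any nonzero integer-coefficient form on $G_i$ evaluates irrationally at $g_i$, so once Lemma \ref{breaking an integral polynomial into a sum of integral polynomials, 2} gives rationality of the coefficients $\eta(g_i^{\vec{v}_{i,j}})$ you may conclude $\eta$ vanishes on $K$ with no level-character bookkeeping at all — but then you have reproduced the paper's proof in character language rather than given an alternative one. Two smaller points: Leibman's criterion yields $\eta\circ g^P$ constant mod $\ZZ$ (integer nonconstant Taylor coefficients), not $\eta\circ g^P\equiv 0$, even after normalising the constant term, so the passage through rationality of coefficients is unavoidable; and one must also quote the structure of orbit closures of polynomial sequences in tori (finite union of cosets of the subtorus cut out by the annihilating characters) to convert ``every annihilating character kills $K$'' into ``the closure contains a translate of the image of $K$''. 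Your final step, that $\A_i(\vec{P})=s'$ forces $\vec{e}_i\in\P'_{s'+1}$, matches the paper's corollary and is fine in spirit.
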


If $\vec{P}\in\RR[x,y]^{t+1}$ is a homogeneous progression, then the sequence $g^P$ is equidistributed in $G^P/\Gamma^P$ by Theorem \ref{infinitary equidistribution on nilmanifolds}, and Proposition \ref{Weyl complexity equals algebraic complexity} follows immediately; we want to say something about the closure of $g^P$ in the general case. We fix an integral progression $\vec{P}$ for the rest of this section. For each $1\leqslant i\leqslant s$, we pick linearly independent integral polynomials $Q_{i,1}, \ldots, Q_{i, t'_i}$ that form a basis for $W'_i$. We also let $\{R_1, \ldots, R_r\}$ be a basis for $W^c$ consisting of integral polynomials. Thus,
\begin{align*}
{{\vec{P}}\choose{i}} = \sum_{j=1}^{t'_i} \vec{v}_{i,j} Q_{i,j} + \sum_{j=1}^r \vec{w}_{i,j} R_j
\end{align*}
for some vectors $\vec{v}_{i,j}, \vec{w}_{i,j}\in\ZZ^{t+1}$, which follows from (\ref{direct sum for inhomogeneous configurations}).
Consequently,
\begin{align}\label{g^P in Weyl systems}
g^P = g_0 \vec{1} + \sum_{i=1}^s g_i \sum_{j=1}^{t'_i} \vec{v}_{i,j} Q_{i,j} + \sum_{j=1}^r \left(\sum_{i=1}^s g_i \vec{w}_{i,j}\right) R_j
\end{align}

We should explain the notation used in (\ref{g^P in Weyl systems}). For $h\in G$ and $\vec{v}\in\RR^{t+1}$, we interpret $h \vec{v}$ as the element of $(\RR^s)^{t+1}$ of the form $(h {v}(0), \ldots, h{v}(t))$, where $h {v}(i) = (h_1 {v}(i), \ldots, h_s {v}(i))$ is an element of $\RR^s$ for each $h=(h_1, \ldots, h_s)\in\RR^s$ and $\vec{v} = (v(0), \ldots, v(t))$. Thus, $h \vec{v}$ is the same as what we previously called $h^{\vec{v}}$. We use the additive notation $h \vec{v}$ now since we are working in an abelian setting. We also denote $\vec{1} = (1, \ldots, 1)$.

We let $A_{i,j} = \Span_\RR\{ g_i \vec{v}_{i,j} \}$ and $B_{j} = \Span_\RR\{\sum_{i=1}^s g_i \vec{w}_{i,j}\}$, and we denote the closure of their images in $(G/\Gamma)^{t+1}$ as $\overline{A}_{i,j}$ and $\overline{B}_{j}$ respectively. From the rational independence of $a_i$ and the rationality of the entries of $\vec{v}_{i,j}$ and $\vec{w}_{i,j}$, we deduce that nonzero entries of $g_i \vec{v}_{i,j}$ and $\sum_{i=1}^s g_i \vec{w}_{i,j}$ are irrational; therefore the sequences $(x,y)\mapsto g_i \vec{v}_{i,j} Q_{i,j}(x,y)$ and $(x,y)\mapsto \sum_{i=1}^s g_i \vec{w}_{i,j} R_j(x,y)$ are equidistributed on $\overline{A}_{i,j}$ and $\overline{B}_{j}$ respectively. The linear independence of $Q_{i,j}, R_j$ then implies the following.


\begin{proposition}\label{closure of orbits on Weyl systems}
The closure of $g^P$ is the image of $g_0 \vec{1} + \tilde{G}$ inside $(G/\Gamma)^{t+1}$, where $$\tilde{G} = \sum_{i=1}^s \sum_{j=1}^{t'_i}  {A}_{i,j} + \sum_{j=1}^r {B}_j.$$ In particular, the group $\tilde{G}$ contains
\begin{align*}
K =  \sum_{i=1}^s \sum_{j=1}^{t'_i}  {A}_{i,j} = \Span_\RR\{h_i \vec{v}_{i,j}: h_i\in G_i, 1\leqslant i\leqslant s, 1\leqslant j\leqslant t'_i\}. 
\end{align*}
\end{proposition}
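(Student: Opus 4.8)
The statement to prove is Proposition \ref{closure of orbits on Weyl systems}, which describes the closure of $g^P$ for a standard Weyl system, building on the structural decomposition \eqref{g^P in Weyl systems} and the subsidiary facts stated just before the proposition.

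\begin{proof}[Proof of Proposition \ref{closure of orbits on Weyl systems}]
The plan is to combine the decomposition \eqref{g^P in Weyl systems} with the equidistribution of each constituent sequence on its respective subtorus, and then upgrade these individual equidistribution statements to a joint one using the linear independence of the polynomials $Q_{i,j}$ and $R_j$. Concretely, write $\vec{Q} = (Q_{1,1}, \ldots, Q_{s, t'_s}, R_1, \ldots, R_r)$ for the tuple of these polynomials, regarded as a map $\RR^2 \to \RR^{D}$ with $D = r + \sum_i t'_i$; since $\vec{Q}(0,0) = 0$ and, by construction, $Q_{i,j}$ and $R_j$ form a basis of $\bigoplus_i W'_i \oplus W^c$ (using \eqref{direct sum for inhomogeneous configurations}), the polynomial map $\vec{Q}$ induces a surjective polynomial sequence $(x,y) \mapsto (Q_{i,j}(x,y), R_j(x,y))$ whose image is equidistributed modulo $\ZZ^D$ — this is the classical multidimensional Weyl equidistribution theorem for a linearly independent family of integral polynomials vanishing at the origin. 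Pushing this forward under the linear map $\RR^D \to (G/\Gamma)^{t+1} = \TT^{s(t+1)}$ sending the $(i,j)$-th coordinate to $g_i \vec{v}_{i,j}$ and the $j$-th coordinate (of the $R$-block) to $\sum_{i=1}^s g_i \vec{w}_{i,j}$, we conclude that $(x,y) \mapsto g^P(x,y) - g_0 \vec{1}$ is equidistributed on the closed subgroup which is the image of $\RR^D$ under this linear map, namely the closure of $\tilde{G} = \sum_{i,j} A_{i,j} + \sum_j B_j$. This gives the first assertion; the containment $K \leqslant \tilde{G}$ is then immediate since $K = \sum_{i,j} A_{i,j}$ is one of the two summands.

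First I would record the two preliminary facts stated in the excerpt just above the proposition: (a) for each $(i,j)$ the nonzero entries of $g_i \vec{v}_{i,j} \in (\RR^s)^{t+1}$ are irrational — because $g_i = (a_{1-i}, \ldots, a_{s-i})$ has irrational nonzero entries (the $a_k$'s being rationally independent together with $1$) and $\vec{v}_{i,j} \in \ZZ^{t+1}$ has rational entries — and similarly for $\sum_{i=1}^s g_i \vec{w}_{i,j}$; (b) consequently each single-polynomial sequence $(x,y) \mapsto g_i \vec{v}_{i,j} Q_{i,j}(x,y)$ equidistributes on $\overline{A}_{i,j}$ and each $(x,y) \mapsto (\sum_i g_i \vec{w}_{i,j}) R_j(x,y)$ equidistributes on $\overline{B}_j$. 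The substantive step is to pass from these marginal equidistributions to the joint statement. Here I would invoke Weyl's criterion: to show $(x,y) \mapsto g^P(x,y) - g_0\vec{1}$ equidistributes on $\overline{\tilde G}$, it suffices to show that for every nontrivial character $\chi$ of $\TT^{s(t+1)}$ that does not vanish identically on $\tilde{G}$, the exponential sum $\EE_{x,y\in[N]} \chi(g^P(x,y))$ tends to $0$. Pulling $\chi$ back through the decomposition \eqref{g^P in Weyl systems}, $\chi(g^P(x,y) - g_0\vec{1})$ becomes $e(R(x,y))$ where $R(x,y) = \sum_{i,j} c_{i,j} Q_{i,j}(x,y) + \sum_j d_j R_j(x,y)$ for real coefficients $c_{i,j}, d_j$ obtained by evaluating $\chi$ on the generators; non-vanishing of $\chi$ on $\tilde G$ forces at least one of these coefficients to be irrational (by the irrationality of the entries of the generators noted above), and then the linear independence of the family $\{Q_{i,j}, R_j\}$ guarantees $R$ has an irrational coefficient on some genuine monomial, so $e(R(x,y))$ is a genuinely equidistributed polynomial phase and the sum decays. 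Running this for all such $\chi$ identifies the closure with the image of $g_0\vec1 + \overline{\tilde G}$; and since $\tilde G$ is a sum of one-parameter subgroups with rational/irrational structure, its closure in $\TT^{s(t+1)}$ is itself a connected subgroup, so we may write the closure as the image of $g_0 \vec1 + \tilde G$ as stated.

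The main obstacle is the bookkeeping in the joint-equidistribution step: one must be careful that the linear map $\RR^D \to \TT^{s(t+1)}$ really does send the equidistributed image of $\vec{Q}$ onto the closure of $\tilde G$ and not merely into it, i.e. that no spurious algebraic relation among the coordinates of $\vec{Q}$ can conspire with the rational vectors $\vec v_{i,j}, \vec w_{i,j}$ to shrink the orbit closure below $\overline{\tilde G}$. This is precisely where the hypothesis that $\{Q_{i,j}\}_{i,j} \cup \{R_j\}_j$ is a \emph{linearly independent} family of integral polynomials vanishing at $0$ is essential — it is exactly the input that the multidimensional Weyl theorem needs — together with the rational independence of the $a_k$'s, which was built into the choice of the point $a$. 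Once these two inputs are in place the argument is routine, so I would state the marginal facts (a)–(b), cite Weyl's theorem for linearly independent integral polynomials, and then give the character computation above in two or three lines.
\end{proof}
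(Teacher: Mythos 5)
Your operative argument --- the Weyl-criterion computation in your second paragraph --- is correct and is essentially the paper's (largely implicit) proof: one checks that for every character $\chi_k$, $k\in\ZZ^{s(t+1)}$, nontrivial on the closure of the image of $\tilde G$, the phase $k\cdot(g^P(x,y)-g_0\vec1)=\sum_{i,j}c_{i,j}Q_{i,j}(x,y)+\sum_j d_j R_j(x,y)$ has an irrational nonconstant coefficient, and then Weyl gives decay of the exponential sum, hence equidistribution on that subtorus. Two points of precision, though. First, "irrationality of the entries of the generators" is not by itself the right justification that a nonzero $c_{i,j}$ or $d_j$ is irrational: you need that every \emph{integer combination} of the entries of $g_i\vec v_{i,j}$ (resp.\ $\sum_i g_i\vec w_{i,j}$) is either $0$ or irrational, which follows because these entries are integer multiples of $a_0,\ldots,a_s$ and $1,a_0,\ldots,a_s$ are rationally independent. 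Second, the passage from "some $c_{i,j}$ or $d_j$ is irrational" to "$R$ has an irrational coefficient on a genuine monomial" is exactly the content of Lemma \ref{breaking an integral polynomial into a sum of integral polynomials, 2} (in contrapositive form), using that the $Q_{i,j},R_j$ are linearly independent \emph{rational} polynomials vanishing at the origin; you gesture at this but should cite or reprove it, since linear independence over $\RR$ of arbitrary real polynomials would not suffice for this step.

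Your opening "plan," however, contains a claim that is simply false and should be deleted: the tuple $(Q_{i,j}(x,y),R_j(x,y))$ consists of \emph{integral} polynomials, so modulo $\ZZ^D$ it is identically $0$ and is certainly not equidistributed on $(\RR/\ZZ)^D$; moreover the linear map $\RR^D\to(\RR^s)^{t+1}$ with irrational entries $g_i\vec v_{i,j}$, $\sum_i g_i\vec w_{i,j}$ does not send $\ZZ^D$ into $\ZZ^{s(t+1)}$, so it does not descend to a map of tori and there is nothing to "push forward." Since your second paragraph does not use this step, the proof survives once the first paragraph is removed, but as written that sketch would not compile into a valid argument.
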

We observe that $K = \tilde{G} = G^P$ whenever $\vec{P}$ is homogeneous. 

\begin{corollary}
Fix $0\leqslant i\leqslant t$ and let $\A_i(\vec{P}) < s$. For $k\leqslant s$, we have $\{0\}^{i}\times G_{k}\times \{0\}^{t-i}\leqslant K$ if and only if $k > \A_i(\vec{P})$. 
\end{corollary}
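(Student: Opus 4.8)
The plan is to translate the assertion into linear algebra over $\RR^{t+1}$ and then read it off from the definition of algebraic complexity. Fix $0\le i\le t$, put $s'=\A_i(\vec P)$, and write $\P'_\ell=\Span_\RR\{\vec v_{\ell,1},\dots,\vec v_{\ell,t'_\ell}\}\subseteq\RR^{t+1}$; by Proposition~\ref{closure of orbits on Weyl systems} we then have $K=\sum_{\ell=1}^{s}G_\ell\cdot\P'_\ell$, where $G_\ell\cdot\P'_\ell:=\Span_\RR\{h\vec v:\ h\in G_\ell,\ \vec v\in\P'_\ell\}$, while $\{0\}^{i}\times G_k\times\{0\}^{t-i}=G_k\cdot\RR\vec e_i$.

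First I would reduce to a statement about the single vector $\vec e_i$, namely
\[
G_k\cdot\RR\vec e_i\subseteq K\quad\Longleftrightarrow\quad\vec e_i\in\mathcal{Q}_k:=\textstyle\sum_{\ell=1}^{k}\P'_\ell .
\]
The implication $\Leftarrow$ is clear: for $\vec e_i=\sum_{\ell\le k}\vec u_\ell$ with $\vec u_\ell\in\P'_\ell$ and $h\in G_k\subseteq G_\ell$ (using $\ell\le k\le s$) one has $h\vec e_i=\sum_{\ell\le k}h\vec u_\ell\in K$. For $\Rightarrow$ I would apply, coordinatewise on $(\RR^{s})^{t+1}$, the projection $\sigma_k\colon\RR^{s}\to\RR$ onto the $k$-th coordinate: since $\sigma_k(G_\ell)=\RR$ for $\ell\le k$ and $\sigma_k(G_\ell)=0$ for $\ell>k$, this carries $K$ onto $\mathcal{Q}_k$ and $G_k\cdot\RR\vec e_i$ onto $\RR\vec e_i$, forcing $\vec e_i\in\mathcal{Q}_k$. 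It would then remain to prove $\vec e_i\in\mathcal{Q}_k$ iff $k>s'$.

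For that, I would dualise. For $\vec c\in\RR^{t+1}$ set $\phi_\ell(\vec c)=\sum_{m=0}^{t}c(m)\binom{x+P_m(y)}{\ell}\in W_\ell$, feed the defining identity $\binom{\vec P}{\ell}=\sum_j\vec v_{\ell,j}Q_{\ell,j}+\sum_j\vec w_{\ell,j}R_j$ into it, and use that $\{Q_{\ell,j}\}\cup\{R_j\}$ is linearly independent (so in particular $\{\vec v_{\ell,j}\}_j$ is a basis of $\P'_\ell$) to get $\phi_\ell(\vec c)\in W^c\iff\vec c\perp\P'_\ell$; hence $\mathcal{Q}_k^{\perp}=\{\vec c:\ \phi_\ell(\vec c)\in W^c\text{ for all }1\le\ell\le k\}$. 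To finish I would use the two elementary facts $W^c\subseteq\sum_{j\ne\ell}W_j$ and $\partial_xW^c\subseteq W^c$ (the latter from $\partial_x\binom{x+P_m(y)}{j}=\binom{x+P_m(y)}{j-1}$, whence $\partial_xW_j^c\subseteq W_{j-1}^c$), together with $\phi_{\ell-1}=\partial_x\phi_\ell$: given an algebraic relation with $\deg Q_i\ge k$, apply $\partial_x$ repeatedly to bring it to $\deg Q_i=k$, and let $\vec c$ be its degree-$k$ Taylor-coefficient vector; grouping the relation by degree gives $\phi_k(\vec c)\in W_k\cap\sum_{j\ne k}W_j=W_k^c\subseteq W^c$, hence $\phi_\ell(\vec c)\in W^c$ for all $\ell\le k$ while $c(i)\ne0$, so $\vec e_i\notin\mathcal{Q}_k$; conversely a $\vec c\in\mathcal{Q}_k^{\perp}$ with $c(i)\ne0$ lets one rearrange $\phi_k(\vec c)=\sum_{j\ne k}\phi_j(\vec d_j)$ into an algebraic relation whose $Q_i$ has nonzero $\binom{u}{k}$-coefficient, i.e.\ $\deg Q_i\ge k$. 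Thus $\vec e_i\in\mathcal{Q}_k$ exactly when $\vec P$ admits no algebraic relation with $\deg Q_i\ge k$, which by Definition~\ref{algebraic complexity} means $\A_i(\vec P)<k$.

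The only real work I expect is the bookkeeping in the last step: establishing the dual identification $(\P'_\ell)^{\perp}=\{\vec c:\phi_\ell(\vec c)\in W^c\}$ cleanly from the definitions of $W'_\ell$, $W^c$ and the vectors $\vec v_{\ell,j},\vec w_{\ell,j}$, and verifying the stability of $W^c$ under $\partial_x$ and its containment in $\sum_{j\ne\ell}W_j$. This is essentially a reorganisation of the linear algebra of Section~\ref{section on homogeneity} together with the standard device of differentiating in $x$ to descend in degree, so I do not anticipate a genuine obstacle; the main hazard is keeping the several layers ($W_\ell$ versus $W'_\ell$ versus $W^c$, Taylor versus power monomials, and the filtration $G_\bullet$) straight.
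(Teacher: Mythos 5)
Your proposal is correct and follows essentially the same route as the paper: both reduce the group inclusion, using the explicit coordinate structure of the standard Weyl system, to the statement that $\vec{e}_i$ lies in the span of the vectors $\vec{v}_{k,j}$, and then identify that membership with the absence of algebraic relations having $\deg Q_i\geqslant k$, descending in degree via $\partial_x$. The only divergence is that you phrase the reduction with the cumulative space $\sum_{\ell\leqslant k}\P'_\ell$ and spell out the duality with $W^c$ (plus the stability $\partial_x W^c\subseteq W^c$ and $W_k\cap W^c=W^c_k$), where the paper works with $\P'_k$ alone and asserts the equivalence with the nonexistence of degree-$k$ relations at position $i$ in one line; your version is, if anything, a more careful rendering of the same argument, in particular of the ``only if'' direction of the paper's claim that the inclusion is equivalent to $\vec{e}_i\in\P'_k$.
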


\begin{proof}
For each $1\leqslant k\leqslant s$, we let $\P'_k = \Span_\RR\{\vec{v}_{k,1}, \ldots, \vec{v}_{k, t'_k}\}$. Thus
\begin{align*}
K = \Span_\RR\{h_k \vec{u}_k:\; 1\leqslant k \leqslant s,\; h_k\in G_k,\; \vec{u}_k\in\P'_k\},
\end{align*}
and so for $k\leqslant s$, we have the inclusion $\{0\}^{i}\times G_{k}\times \{0\}^{t-i}\leqslant K$ if and only if the vector $\vec{e}_i$ with 1 in the $i$-th position and 0 elsewhere is contained in $\P'_k$. The statement $\vec{e}_i\in\P'_k$ is equivalent to the inclusion ${{x+P_i(y)}\choose{k}}\in W'_k$. This is in turn equivalent to the statement that there are no algebraic relations of the form (\ref{algebraic relation}) with $\deg Q_i = k$, which is precisely the condition that $k> \A_i(\vec{P})$.
\end{proof}

\begin{corollary}
Let $t\in\NN_+$ and $\vec{P}\in\RR[x,y]^{t+1}$ be an integral polynomial progression. Then $\W_i(\vec{P})\leqslant\A_i(\vec{P})$ for each $0\leqslant i\leqslant t$.
\end{corollary}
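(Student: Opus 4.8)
The plan is to deduce this corollary from Proposition~\ref{Weyl complexity equals algebraic complexity} in exactly the way that Corollary~\ref{Host-Kra complexity equals algebraic complexity} was deduced from Theorem~\ref{infinitary equidistribution on nilmanifolds}. Set $s' = \A_i(\vec P)$. By the reductions recorded at the start of this section --- disconnected Weyl systems split into finite unions of connected ones, and every connected Weyl system is a factor of a product of standard Weyl systems, all of which are totally ergodic --- it suffices to show that $\Z_{s'}$ is characteristic for the weak convergence of $\vec P$ at $i$ for an arbitrary standard Weyl system $(X,\X,\mu,T)$ of order $s$ as in Definition~\ref{standard Weyl system}. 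So I would fix such a system and functions $f_0,\dots,f_t\in L^\infty(\mu)$ with $\EE(f_i|\Z_{s'})=0$ and show that the weak limit (\ref{weak limit}) vanishes. If $s\leqslant s'$ then $\Z_{s'}=\Z_s=\X$, so $f_i=0$ and there is nothing to prove; hence I may assume $s>s'$, so that $G_{s'+1}=\{0\}^{s'}\times\RR^{s-s'}$ is a nontrivial rational subgroup of $G=\RR^s$.

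Next I would convert the ergodic average into an equidistribution statement about $g^P$. Writing $F=f_0\otimes\cdots\otimes f_t$, the argument of Lemma~\ref{introducing a parameter to the integral} (which uses only that $T$ is measure preserving, so it applies verbatim to the affine map $T$) lets me introduce an auxiliary parameter and rewrite (\ref{weak limit}) as $\lim_{N}\EE_{x,y\in[N]}\int_X F(T^x z,\, T^{x+P_1(y)}z,\dots,T^{x+P_t(y)}z)\,d\mu(z)$. For a.e.\ $z\in X$ its coordinates are rationally independent together with $1$ and $a_0$, so the discussion preceding Proposition~\ref{closure of orbits on Weyl systems} applies: $g_z(n)=T^nz$ is irrational, and the two-parameter polynomial sequence $g_z^P$ equidistributes on its closure, which by Proposition~\ref{closure of orbits on Weyl systems} is a translate of a subgroup $\tilde G_z\leqslant G^P$ containing the fixed subgroup $K=\Span_\RR\{h_i\vec v_{i,j}:\ 1\leqslant i\leqslant s,\ h_i\in G_i,\ 1\leqslant j\leqslant t'_i\}$. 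Dominated convergence over $z$ (the integrands being uniformly bounded) then gives that (\ref{weak limit}) equals $\int_X\big(\int F\,d\nu_z^P\big)\,d\mu(z)$, where $\nu_z^P$ is the Haar measure on the closure of $g_z^P$, so it remains to see that the inner integral is $0$ for a.e.\ $z$.

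Finally I would run the disintegration step exactly as in the proof of Corollary~\ref{Host-Kra complexity equals algebraic complexity}. By Proposition~\ref{Weyl complexity equals algebraic complexity} the subgroup $H=\{0\}^i\times G_{s'+1}\times\{0\}^{t-i}$ lies inside $K$, hence inside $\tilde G_z$; since $H$ only moves the $i$-th coordinate, disintegrating $\nu_z^P$ over cosets of the image of $H$ and pulling the factors $f_j$ for $j\neq i$ out of the fibre integral bounds $\big|\int F\,d\nu_z^P\big|$ by $\big(\prod_{j\neq i}\norm{f_j}_\infty\big)$ times the average over cosets of $\big|\int_{w\,G_{s'+1}\Gamma}f_i\big|$. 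Because $Z_{s'}=G/G_{s'+1}\Gamma$, the hypothesis $\EE(f_i|\Z_{s'})=0$ says precisely that $f_i$ has zero mean on a.e.\ such coset, so the fibre integral vanishes and (\ref{weak limit}) is $0$. This shows $\Z_{s'}$ is characteristic for weak convergence at $i$ for every standard Weyl system, hence for every Weyl system, and therefore $\W_i(\vec P)\leqslant s'=\A_i(\vec P)$.

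The only genuinely delicate point --- the rest being bookkeeping mirroring Corollary~\ref{Host-Kra complexity equals algebraic complexity} --- is the second paragraph: one must note that although $\tilde G_z$ produced by Proposition~\ref{closure of orbits on Weyl systems} may depend on $z$ for inhomogeneous $\vec P$, the subgroup $K$ (and hence the relevant fibre direction $H$) does not, and one must justify interchanging the limit in $N$ with the integral over $z$ and with passing to the closure of $g_z^P$ on the full-measure set of rationally independent $z$. I expect no other obstacle.
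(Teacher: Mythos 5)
Your proposal is correct and is essentially the argument the paper leaves implicit: reduce to standard Weyl systems, pass to the orbit closure of $g_z^P$ via Proposition \ref{closure of orbits on Weyl systems}, observe that $\{0\}^{i}\times G_{s'+1}\times\{0\}^{t-i}$ lies in $K$ (note this containment is the content of the corollary immediately preceding the statement, applied with $k=s'+1$, rather than of Proposition \ref{Weyl complexity equals algebraic complexity} itself, which asserts containment in the closure of $g^P$), and then disintegrate exactly as in Corollary \ref{Host-Kra complexity equals algebraic complexity}. The interchange-of-limits and $L^\infty$-versus-continuous issues you flag are handled at the same level of detail in the paper's own proof of Corollary \ref{Host-Kra complexity equals algebraic complexity}, so they do not constitute a gap relative to the paper.
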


We finish this section by showing the converse.
\begin{proposition}\label{Weyl complexity no smaller than algebraic complexity}
Let $t\in\NN_+$ and $\vec{P}\in\RR[x,y]^{t+1}$ be an integral polynomial progression for which $\A_i(\vec{P}) = s$ for some $0\leqslant i\leqslant t$. Then for any standard Weyl system $(X,\X,\mu,T)$  of order $s$ there exist smooth functions $f_0, \ldots, f_t: X\to\CC$ such that $\EE(f_i|\Z_{s-1}) = 0$ but the expression (\ref{weak limit in the proof}) is 1. In particular, $\W_i(\vec{P})\geqslant s$.
\end{proposition}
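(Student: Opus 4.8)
The plan is to build the functions $f_0,\dots,f_t$ explicitly as additive characters of the torus $X=\TT^s$, read off from an algebraic relation of degree $s$ at the index $i$. First I would record the shape of the multiple average on a standard Weyl system: using the explicit form (\ref{Weyl map}) of $T^n$ together with the parameter‑insertion identity of Lemma \ref{introducing a parameter to the integral}, the weak limit (\ref{weak limit in the proof}) for functions $f_j$ on $\TT^s$ equals $\int_{\tilde G/\tilde\Gamma} f_0\otimes\cdots\otimes f_t$, where $\tilde G$ is the closure subgroup of $g^P$ described in Proposition \ref{closure of orbits on Weyl systems}. Expanding the $f_j$ in Fourier series on $\TT^s$, a term of frequency $w=(w^{(0)},\dots,w^{(t)})\in(\ZZ^s)^{t+1}$ survives the $n$‑average precisely when the character $\eta_w\colon z\mapsto\langle w,z\rangle$ satisfies $\eta_w\circ g^P\equiv\mathrm{const}$, and it contributes exactly $1$ when in addition $\sum_j w^{(j)}=0$. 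So it is enough to produce one such $w$ with $w^{(i)}_s\neq 0$: then $f_j:=e(\langle w^{(j)},\cdot\rangle)$ satisfy $\EE(f_i\mid\Z_{s-1})=0$ (nonvanishing of the last coordinate of $w^{(i)}$ forces $f_i$ to have mean zero on every $G_s$‑fibre, and $\Z_{s-1}=G/G_s\Gamma$), while the weak limit is $1$.

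Next I would match these frequencies with algebraic relations. Writing $T^n$ via (\ref{Weyl map}) and using that $1,a_0,\dots,a_s$ are rationally independent, the condition $\eta_w\circ g^P\equiv\mathrm{const}$ unwinds coordinate by coordinate into precisely the polynomial identities expressing that $(Q_0,\dots,Q_t)$, with $Q_j(u):=\sum_{l=1}^{s} w^{(j)}_l\binom{u}{l}$, is an algebraic relation satisfied by $\vec P$; and then $\sum_j w^{(j)}=0$ follows automatically by setting $y=0$ (since $P_j(0)=0$). Conversely, starting from a relation that certifies $\A_i(\vec P)=s$, one first deletes the constant terms (legitimate because $\sum_j Q_j(0)=\sum_j Q_j(P_j(0))=0$) and then must bring every $Q_j$ down to degree at most $s$; the vector $w$ of Taylor coefficients then has $w^{(i)}_s\neq0$ exactly when $\deg Q_i=s$, which gives the desired character and completes the construction. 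The concluding checks—that $f_0(b)\prod_{j\ge1}f_j(T^{P_j(n)}b)$ is independent of $n$ and integrates to $1$ over $X$—are then routine, following from $\eta_w\circ g^P$ being constant and $\sum_j w^{(j)}=0$.

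The delicate point, and the main obstacle, is exactly the degree reduction in the last step: a relation witnessing $\A_i(\vec P)=s$ only guarantees $\deg Q_i=s$, while some $\deg Q_j$ with $j\neq i$ may be larger, and a character of $\TT^s$ can only encode Taylor data up to degree $s$. For homogeneous progressions this causes no trouble—by Definition \ref{homogeneity} and Lemma \ref{equivalent descriptions of algebraic relations} one may take $(Q_0,\dots,Q_t)=(a_0\binom{u}{s},\dots,a_t\binom{u}{s})$ with $a_i\neq0$, which is already in normal form and also reproves the case covered by Theorem \ref{infinitary equidistribution on nilmanifolds}. In general I would reduce by peeling off the top degree: the degree‑$d$ leading coefficients of any relation have zero sum and, in combination with the structure of the spaces $W_k$ and $W^c_k$ from Section \ref{section on homogeneity}, yield a relation of strictly smaller top degree that can be subtracted without lowering $\deg Q_i$; iterating brings all degrees down to $s$. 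This linear‑algebraic reduction—equivalently, the assertion that $\{0\}^{i}\times G_s\times\{0\}^{t-i}$ is not contained in the closure group $\tilde G$ of Proposition \ref{closure of orbits on Weyl systems}—is the heart of the argument and the step most sensitive to the fine algebra of $\vec P$; it is where I expect the real work to lie.
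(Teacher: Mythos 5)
Your construction is the same as the paper's: pick a relation $(Q_0,\ldots,Q_t)$ with $\deg Q_i=s$, expand each $Q_k$ in the Taylor basis, let $f_k$ be (a character composed with) the linear form on $\TT^s$ whose coefficients are these Taylor coefficients, note via (\ref{Weyl map}) that $f_0\cdot\prod_k T^{P_k(n)}f_k\equiv 1$ because the relation and all its discrete derivatives vanish, and get $\EE(f_i|\Z_{s-1})=0$ from $b_{i,s}\neq 0$. (The paper uses $\xi(u)=e(\alpha u)$ with $\alpha$ irrational instead of integer frequencies, and it does not need your Fourier-expansion/closure-group framing; these differences are cosmetic.) So up to the point you flag, your argument and the paper's coincide.

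The genuine gap is exactly the step you call the heart of the argument, and your proposed fix does not work. ``Peeling off the top degree'' requires the degree-$d$ Taylor coefficients $(b_{0,d},\ldots,b_{t,d})$ to form a relation by themselves; all one knows is that $\sum_k b_{k,d}\binom{x+P_k(y)}{d}$ lies in $W_d^c$, and for inhomogeneous $\vec{P}$ this space is nontrivial, so there is nothing to subtract. Worse, the statement you are reducing to is false in general, including in your dual formulation: for the paper's running example $\vec{P}=(x,\,x+y,\,x+2y,\,x+y^2)$ and $i=3$ one has $\A_3(\vec{P})=1$, yet no linear relation involves the $x+y^2$ term (its $y^2$-coefficient must vanish), so every relation with $Q_3\neq 0$, such as (\ref{algebraic relation for x, x+y, x+2y, x+y^2}), has quadratic entries at the other positions; correspondingly, on the order-$1$ standard Weyl system (an irrational rotation of $\TT$) the closure group $\tilde{G}$ of Proposition \ref{closure of orbits on Weyl systems} does contain $\{0\}^{3}\times G_1$ (the direction $(0,0,0,1)$ comes from the $W^c$-part $B_1$, not from $K$), and a direct Fourier computation shows the average vanishes whenever $\int f_3=0$. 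Hence no degree-reduction argument can rescue the claim on the order-$s$ standard Weyl system for inhomogeneous progressions; the correct move is to run the identical character construction on a standard Weyl system of order $d=\max_k\deg Q_k$, where $\deg Q_i=s$ still forces $\EE(f_i|\Z_{s-1})=0$ (the frequency of $f_i$ has a nonzero $a_s$-component and no components beyond it) and one still concludes $\W_i(\vec{P})\geqslant s$, since Weyl complexity quantifies over all Weyl systems. For homogeneous progressions your reduction is valid (by Lemma \ref{equivalent descriptions of algebraic relations} the top part is itself a homogeneous relation) and your argument is complete. Note that the paper's own proof writes every $Q_k$ as $b_{k,1}u+\ldots+b_{k,s}\binom{u}{s}$ at precisely this juncture without justification, so you have correctly isolated the step it leaves implicit — but your resolution of that step is not a proof, and as stated it cannot be one.
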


Before we prove Proposition \ref{Weyl complexity no smaller than algebraic complexity}, we define $\partial Q(x) = Q(x+1) - Q(x)$ for $Q\in\RR[x]$. From the identity $\partial{{x}\choose{k}} = {{x+1}\choose{k}} - {{x}\choose{k}} = {{x}\choose{k-1}}$ we deduce that
\begin{align*}
\partial \left(a_0 + a_1 {{x}\choose{1}} + \ldots + a_d {{x}\choose{d}}\right) = a_1 + a_2 {{x}\choose{1}} + \ldots + a_d {{x}\choose{d-1}}.
\end{align*}

\begin{proof}[Proof of Proposition \ref{Weyl complexity no smaller than algebraic complexity}]
Let $T$ be as in (\ref{Weyl map}) for some irrational $a_0$. From  $\A_i(\vec{P}) = s$ it follows that $\vec{P}$ satisfies an algebraic relation (\ref{algebraic relation}) with $\deg Q_i = s$. For each $0\leqslant k\leqslant t$, we let $Q_k(u) = b_{k,1} u + \ldots + b_{k,s}{{u}\choose{s}}$. We define $\xi(u) = e(\alpha u)$ for some irrational $\alpha$, and we let $$f_k(a_1, \ldots, a_s) = \xi(b_{k,1} a_1 + \ldots + b_{k,s} a_s).$$
Thus, we have
\begin{align*}
f_k(T^{x+P_k(y)}a) = \xi(a_0 Q_k(x+P_k(y)) + a_1 \partial Q_k(x+P_k(y)) + \ldots + a_s \partial^s Q_k(x+P_k(y))),
\end{align*}
and so
\begin{align*}
\prod_{i=0}^t f_i(T^{x+P_i(y)} a) = \xi \left(\sum_{j=0}^s a_j \partial^j \sum_{k=0}^t Q_k(x+P_k(y)) \right) = 1.
\end{align*}
On the other hand, we have $$|\EE(f_i|\Z_{s-1})(a_1, \ldots, a_s)| = \left|\int_\TT f_i(a_1, \ldots, a_s) da_s \right| = \left|\int_\TT \xi(b_{i,s} a_s) da_s \right| = 0$$ for a.e. $a_s$. 
\end{proof}


\section{The proof of Theorem \ref{popular common differences}}\label{section on multiple recurrence}

We conclude the paper with the proof of Theorem \ref{popular common differences}. Throughout this section, we let $t\in\NN_+$ and $\vec{P}\in\RR[x,y]^{t+1}$ be an integral progression of algebraic complexity at most 1. We also let $Q_1, \ldots, Q_k$ be integral polynomials as in the statement of Theorem \ref{popular common differences}. Thus, $P_i = \sum_j a_{ij} Q_j$ and $Q_i = \sum_j a'_{ij} P_j$ for $a_{ij}, a'_{ij}\in\ZZ$. The second part of the theorem follows from the first part and the Furstenberg Correspondence Principle. We therefore proceed to prove part (i), followed by part (iii). Our argument for part (i) follows closely the proof of Theorem C of \cite{frantzikinakis_2008}.


\begin{proof}[Proof of Theorem \ref{popular common differences}(i)]
We first prove part (i) of Theorem \ref{popular common differences} in the totally ergodic case. Suppose that $(X, \X, \mu, T)$ is a totally ergodic system with the Kronecker factor $(Z_1, \Z_1, \nu, S)$. The space $Z_1$ can be assumed to be a connected compact abelian group with an ergodic translation $Sx = x+b$. For each $\delta>0$, let $B_\delta$ be the $\delta$-neighbourhood of the identity in $Z_1$, and let 
\begin{align*}
    \tilde{B}_\delta = \{n\in\NN: Q_1(n) b, \ldots, Q_{k}(n) b\in B_\delta\}.
\end{align*}
It follows from the ergodicity of $S$ and linear independence of $Q_1, \ldots, Q_k$ that $$\lim_{N-M\to\infty}\frac{|\tilde{B}_\delta\cap [M,N)|}{N-M} = \nu(B_\delta)^k>0$$ for any $\delta>0$. In particular, $\tilde{B}_\delta$ is syndetic for any $\delta>0$, otherwise we would have $\liminf_{N-M\to\infty}\frac{|\tilde{B}_\delta\cap [M,N)|}{N-M} = 0$.

We aim to show that for any $A\in\X$ with $\mu(A)>0$ and any $\epsilon>0$, we have
\begin{align}\label{bound in multiple recurrence for B_delta}
    \lim_{N-M\to\infty}\EE_{n\in\tilde{B}_\delta\cap [M,N)} \mu(A\cap T^{P_1(n)}A \cap \cdots \cap T^{P_t(n)}A) \geqslant \mu(A)^{t+1}-\epsilon
\end{align}
for all sufficiently small $\delta > 0$. This implies part (i) of Theorem \ref{popular common differences} as follows: if there is a sequence $K_N$ of intervals in $\NN$ of length converging to infinity, with the property that
\begin{align}\label{failed lower bound for multiple recurrence}
    \mu(A\cap T^{P_1(n)}A \cap \cdots \cap T^{P_t(n)}A) < \mu(A)^{t+1}-\epsilon
\end{align}
 for all $n\in\bigcup_{N\in\NN}K_N$, then the sets $\tilde{K}_N = K_N\cap \tilde{B}_\delta$ are nonempty for all sufficiently large $N$ due to the syndecticity of $B_\delta$ (in fact, their cardinalities also converge to infinity). Since (\ref{failed lower bound for multiple recurrence}) holds for all $n\in\bigcup_{N\in\NN}\tilde{K}_N$, the inequality (\ref{bound in multiple recurrence for B_delta}) fails, leading to a contradiction. 

We first show that if $\EE(f_i|\Z_1) = 0$, then
\begin{align}\label{sum in multiple recurrence 0}
    \lim_{N-M\to\infty}\EE_{n\in[M,N)} 1_{\tilde{B}_\delta}(n) \prod_{i=1}^t T^{P_i(n)}f_i = 0
\end{align}
in $L^2$ for any $f_1, \ldots, f_t\in L^\infty(\mu)$. From the measurability of $B_\delta$ it follows that we can approximate $1_{\tilde{B}_\delta}(n) = \prod_{i=1}^k 1_{{B}_\delta}(Q_i(n)b)$ arbitrarily well by linear combinations of $\prod_{i=1}^k \xi_i(Q_i(n)b)$  for some characters $\xi_1, \ldots, \xi_k$ on $Z_1$. Using the fact that each $Q_i$ is an integral linear combination of $P_1, \ldots, P_t$, we can rewrite $\prod_{i=1}^k \xi_i(Q_i(n)b) = \prod_{i=1}^t \tilde{\xi}_i(P_i(n)b)$ for some characters $\tilde{\xi}_1, \ldots, \tilde{\xi}_t$.

In effect, it suffices to show that
\begin{align}\label{sum in multiple recurrence 1a}
    \lim_{N-M\to\infty}\EE_{n\in[M,N)} \prod_{i=1}^t \tilde{\xi}_i(P_i(n)b)\prod_{i=1}^t T^{P_i(n)}f_i = 0.
\end{align}
We can rephrase the limit in (\ref{sum in multiple recurrence 1a}) as
\begin{align}\label{sum in multiple recurrence 1}
    \lim_{N-M\to\infty} \prod_{i=1}^t \tilde{\xi}_i(-y) \EE_{n\in[M,N)} \prod_{i=1}^t R^{P_i(n)}(f_i(x)\tilde{\xi}_i(y)),
\end{align}
where $R = T\times S$. Let $(R_t)_t$ be the ergodic components of $R$ and $(f_i \otimes \xi_i) (x,y) = f_i(x)\xi_i(y)$; then $\EE(f_i \otimes \xi_i|\Z_1(R_t)) = 0$ whenever $\EE(f_i|\Z_1(T)) = 0$ for a.e. $t$. It thus follows from Corollary \ref{factors for ergodic systems} that if $\EE(f_i|\Z_1) = 0$ for some $i$, then the limit in (\ref{sum in multiple recurrence 1}) is 0, which proves the claim. 

We therefore deduce that
\begin{align}\label{sum in multiple recurrence 2}
    \nonumber\lim_{N-M\to\infty}\EE_{n\in\tilde{B}_\delta\cap[M,N)}\int_X \prod_{i=0}^t T^{P_i(n)}1_A d\mu
    &= \lim_{N-M\to\infty}\EE_{n\in\tilde{B}_\delta\cap[M,N)} \int_{Z_1} \prod_{i=0}^t S^{P_i(n)}\tilde{1}_A d\nu\\
    &= \lim_{N-M\to\infty}\EE_{n\in\tilde{B}_\delta\cap[M,N)} \int_{Z_1} \prod_{i=0}^t S^{\sum_j a_{ij} Q_j(n)}\tilde{1}_A d\nu,
\end{align}
where $\tilde{1}_A = \EE(1_A|\Z_1)$. 
Due to the ergodicity of $S$ and the linear independence of $Q_1, \ldots, Q_k$, the limit in (\ref{sum in multiple recurrence 2}) equals
\begin{align}\label{sum in multiple recurrence 3}
    \frac{1}{\nu({B}_\delta)^k}\int_{B_\delta^k}\int_{Z_1} \prod_{i=0}^t \tilde{1}_A(x+\sum_j a_{ij} y_j) d\nu(x) d\nu^k(y).
\end{align}
In the limit $\delta\to 0$, the expression in (\ref{sum in multiple recurrence 3}) converges to $\int_{Z_1}(\tilde{1}_A)^{t+1}$; hence for every $\epsilon>0$ and sufficiently small $\delta>0$, we have
\begin{align}\label{sum in multiple recurrence 4}
    \frac{1}{\nu({B}_\delta)^k}\int_{B_\delta^k}\int_{Z_1} \prod_{i=0}^t \tilde{1}_A(x+\sum_j a_{ij} y_j) d\nu(x) d\nu^k(y)\geqslant \int_{Z_1}(\tilde{1}_A)^{t+1} - \epsilon.
\end{align}
Using the H\"{o}lder inequality, we obtain that $\int_{Z_1}(\tilde{1}_A)^{t+1}\geqslant(\int_{Z_1}\tilde{1}_A)^{t+1}=\mu(A)^{t+1}$, which implies (\ref{bound in multiple recurrence for B_delta}). This finished the totally ergodic case; the derivation of the ergodic case from the totally ergodic one proceeds in the same way as in the proof of Theorem C of \cite{frantzikinakis_2008}.

\end{proof}

We now proceed to the proof of part (iii) of Theorem \ref{popular common differences}. The argument can below can be seen as a finitary version of the argument above, with all the necessary modifications coming from working in the finitary setting. It follows the proof of the 3-term arithmetic progression case in Theorem 1.12 of \cite{green_tao_2010a}.

\begin{proof}[Proof of Theorem \ref{popular common differences}(iii)]
Let $\alpha, \epsilon > 0$, and suppose that $A\subset\ZZ/N\ZZ$ has size $|A|\geqslant\alpha N$ for a prime $N>N_0(\alpha,\epsilon)$. Let $\F:\RR_+\to\RR_+$ be a growth function to be specified later. By Theorem 5.1 of \cite{candela_sisask_2012}, the irrational and periodic version of the celebrated arithmetic regularity lemma of Green and Tao (Theorem 1.2 of \cite{green_tao_2010a}), there exists a positive number $M = O_{\epsilon, \F}(1)$ and a decomposition
\begin{align}\label{ARL decomposition}
    1_A = f_{nil} + f_{sml} + f_{unf}
\end{align}
into 1-bounded functions such that
\begin{enumerate}
    \item $f_{nil} = F(g(n)\Gamma)$ is an $\F(M)$-irrational, $N$-periodic nilsequence of degree 1 and complexity $M$;
    \item $\norm{f_{sml}}_1\leqslant\epsilon$;
    \item $\norm{f_{unf}}_{U^2}\leqslant\frac{1}{\F(M)}$.
\end{enumerate}
Moreover, $f_{nil}$ takes values in $[0,1]$.
Unpacking the definition of $f_{nil}$, we see that $F:(\RR/\ZZ)^m\to [0,1]$ is $M$-Lipschitz, $1\leqslant m\leqslant M$, and $g(n) = b n$ for some $\F(M)$-irrational element $b\in (\frac{1}{N}\ZZ/\ZZ)^m$.

Our strategy is as follows. We shall define a weight $\tilde{\mu}:\ZZ/N\ZZ\to\RR_{\geqslant 0}$ which satisfies
\begin{align}\label{estimate for mu}
\EE_{y\in \ZZ/N\ZZ} \tilde{\mu}(y) = 1 + O(\epsilon)    
\end{align}
and
\begin{align}\label{equation in popular common differences, 1}
\EE_{x,y\in\ZZ/N\ZZ} \tilde{\mu}(y)\prod_{i=0}^t 1_A(x + P_i (y)) \geqslant \alpha^{t+1} - O(\epsilon).
\end{align}
Using the pigeonhole principle and (\ref{estimate for mu}), it can be deduced from (\ref{equation in popular common differences, 1}) that for $\Omega_{\alpha,\epsilon}(N)$ values of $y$, we have 
\begin{align*}
\EE_{x\in\ZZ/N\ZZ}\prod_{i=0}^t 1_A(x + P_i (y)) \geqslant \alpha^{t+1} - O(\epsilon),
\end{align*}
which proves part (iii) of Theorem \ref{popular common differences}.

We shall prove (\ref{equation in popular common differences, 1}) by splitting each $1_A$ using (\ref{ARL decomposition}) and showing that terms involving $f_{sml}$ or $f_{unf}$ have contributions at most $O(\epsilon)$ while the term
\begin{align}\label{equation in popular common differences, 2}
\EE_{x,y\in\ZZ/N\ZZ} \tilde{\mu}(y)\prod_{i=0}^t f_{nil}(x + P_i (y))
\end{align}
has size at least $\alpha^{t+1}-O(\epsilon)$. Showing that the terms involving $f_{sml}$ or $f_{unf}$ make negligible contributions to (\ref{equation in popular common differences, 1}) is akin to showing (\ref{sum in multiple recurrence 0}) for all functions with $\EE(f_i|\Z_1) = 0$ in the proof of part (i) of Theorem \ref{popular common differences}. In doing so, we shall use the idea that while we fix $\epsilon>0$, we have control over how fast we choose $\F$ to grow - and we choose it to grow fast enough depending on $\alpha$ and $\epsilon$ to ensure that all the estimates work.

Let $\delta>0$ be fixed later. 
We define $\psi:(\RR/\ZZ)^m\to\RR_+$ to be a nonnegative, 1-bounded, $O_M(\delta^{-1})$-Lipschitz function that is 1 on $[-\frac{1}{4}\delta, \frac{1}{4}\delta]^m$ and 0 outside $[-\frac{1}{2}\delta, \frac{1}{2}\delta]^m$. We let $c = \int_{(\RR/\ZZ)^m}\psi$; thus $(\frac{1}{2}\delta)^m\leqslant c\leqslant \delta^m$. We then let $\mu(y) = \frac{\psi(b y)}{c}$. Since $b$ can be picked without the loss of generality from $[0,1]^m$, the function $\mu$ is $O_M(\delta^{-M-1})$-Lipschitz. 

We let $\tilde{\mu}(y) = \mu(Q_1(y))\ldots\mu(Q_{k}(y))$. It is a weight that picks out all the values $y$ for which $Q_1(y) b$, ..., $Q_k(y) b$ are close to being an integer, and it plays a similar role as the function $1_{\tilde{B}_\delta}$ in the proof of part (i) of Theorem \ref{popular common differences}, except that it is constructed using a Lipschitz function rather than an indicator function. To show (\ref{estimate for mu}), we observe that
\begin{align}\label{integral of the weight}
    \EE_{y\in \ZZ/N\ZZ}\tilde{\mu}(y) = \frac{1}{c^{k}}\EE_{y\in[N]}\prod_{i=1}^{k}\psi(b Q_i(y)).
\end{align}
Using the $\F(M)$-irrationality of $g$, linear independence of $Q_1$, ..., $Q_{k}$ as well as Theorem \ref{Leibman's equidistribution theorem}, we deduce that (\ref{integral of the weight}) equals 
\begin{align*}
    \frac{1}{c^{k}}\left(\left(\int \psi\right)^{k} + O_M(\delta^{-1} \F(M)^{-c_M})\right) = 1 + O_M(\delta^{-M-2} \F(M)^{-c_M})
\end{align*}
for some $c_M>0$. The estimate (\ref{estimate for mu}) follows from choosing $\F$ growing fast enough depending on $\delta$ and picking $\delta = c'_M \epsilon$ for an appropriately chosen $c'_M>0$.

We decompose each $1_A$ in (\ref{equation in popular common differences, 1}) using (\ref{ARL decomposition}) and split (\ref{equation in popular common differences, 1}) into $3^t$ terms accordingly using multilinearity. We first estimate (\ref{equation in popular common differences, 2}), and subsequently we bound contributions of $f_{sml}$ and $f_{unf}$.

Taking $\F$ growing fast enough, we assume that $\norm{f_{unf}}_{U^2}\leqslant\epsilon$, and thus $|\EE_{x\in\ZZ/N\ZZ} f_{unf}(x)|=\norm{f_{unf}}_{U^1}\leqslant\norm{f_{unf}}_{U^2}\leqslant \epsilon$. From the H\"{o}lder inequality and the bound on the $L^1$ norm of $f_{sml}$, we obtain a bound $|\EE_{x\in\ZZ/N\ZZ} f_{sml}|\leqslant\epsilon$. From these bounds and (\ref{ARL decomposition}) we deduce that $\EE_{x\in\ZZ/N\ZZ} f_{nil}(x) \geqslant \alpha - 2\epsilon$. 

We observe that by $M$-Lipschitzness of $F$ and the definitions of $\mu$, $\tilde{\mu}$ and $Q_j$, we have $f(x+P_i(y)) = f(x + \sum_j a_{ij} Q_j (y)) = f(x) + O_M(\delta) = f(x) + O(\epsilon)$ whenever $\tilde{\mu}(y)>0$. It follows from this that 
\begin{align}\label{equation in popular common differences, 2.5}
\nonumber \EE_{x,y\in\ZZ/N\ZZ} \tilde{\mu}(y)\prod_{i=0}^t f(x + \sum_j a_{ij} Q_j (y))\\
= \left(\EE_{x\in \ZZ/N\ZZ} f(x)^{t+1} + O(\epsilon)\right) \EE_{y\in \ZZ/N\ZZ} \tilde{\mu}(y).
\end{align}
Using the estimate for (\ref{estimate for mu}) and the H\"{o}lder inequality, we deduce that (\ref{equation in popular common differences, 2.5}) is bounded from below by
\begin{align*}
\left(\EE_{x\in \ZZ/N\ZZ} f_{nil}(x)\right)^{t+1} - O(\epsilon)\geqslant \alpha^{t+1} - O(\epsilon),
\end{align*}
where the last inequality follows from the H\"{o}lder inequality.

We now bound terms involving $f_{sml}$. Suppose without loss of generality that $f_{sml}$ is in the $i=0$ position, and let ${f}_1, \ldots, {f}_t\in\{f_{nil}, f_{sml}, f_{unf}\}$. Then
\begin{align}
    \left|\EE_{x,y\in\ZZ/N\ZZ} \tilde{\mu}(y) f_{sml}(x)\prod_{i=1}^t {f}_i(x + P_i (y))\right|\leqslant \norm{f_{sml}}_1 \EE_{y\in\ZZ/N\ZZ}\tilde{\mu}(y)\leqslant\epsilon,
\end{align}
where the first inequality follows from the H\"{o}lder inequality, positivity of $\tilde{\mu}$ and 1-boundedness of ${f}_1, \ldots, {f}_t$. 

It remains to bound the contributions of $f_{unf}$. Using a standard argument (see e.g. the proof of Proposition 3.1 of \cite{green_tao_2012}), we want to approximate $f_{unf}$ by a trigonometric polynomial, which allows us to essentially replace $f_{unf}$ by additive characters.  Let $K\in\NN_+$ be fixed later. Since $\mu$ is an $O_M(\epsilon^{-M})$-Lipschitz function, there exists a trigonometric polynomial $\mu_1:\ZZ/N\ZZ\to\CC$ such that $||\mu-\mu_1||_\infty\ll_M \epsilon^{-C^{(1)}_M} K^{-c}$ for some $0<c, C^{(1)}_M$. Moreover, $\mu_1$ has degree at most $K^M$ and its coefficients satisfy $||\widehat{\mu_1}||_\infty\leqslant||\mu||_\infty\ll_M \epsilon^{-M}$. 

Let $f_0, \ldots, f_t\in\{f_{nil}, f_{sml}, f_{unf}\}$, with at least one of them being $f_{unf}$. We then bound
\begin{align}\label{equation in popular common differences, 3}
    \left|\EE_{y\in\ZZ/N\ZZ}\tilde{\mu}(y)\prod_{i=0}^t f_i(x+P_i(y))\right|&= \left|\EE_{y\in\ZZ/N\ZZ}\prod_{i=1}^k\mu(Q_i(y))\prod_{i=0}^t f_i(x+P_i(y))\right|\\
    \nonumber\leqslant k \max(\norm{\mu}_\infty, \norm{\mu_1}_\infty)^{k-1}\norm{\mu-\mu}_\infty
    &+\left|\EE_{y\in\ZZ/N\ZZ}\prod_{i=1}^k\mu_1(Q_i(y))\prod_{i=0}^t f_i(x+P_i(y))\right|. 
\end{align}
The first term has size at most $C^{(2)}_M \epsilon^{-C^{(2)}_M} K^{-c}$ for some $C^{(2)}_M>0$. The second term is bounded by
\begin{align}\label{equation in popular common differences, 4}
    K^M \norm{\widehat{\mu_1}}_\infty \left|\EE_{y\in\ZZ/N\ZZ}\prod_{i=1}^k\xi_i(Q_i(y))\prod_{i=1}^t f_i(x+P_i(y))\right|
\end{align}
for some characters $\xi_i$ on $\ZZ/N\ZZ$. Since each $Q_i$ is an integral linear combination of $P_i$'s, we can rewrite $\prod_{i=1}^k\xi_i(Q_i(y)) = \prod_{i=1}^t \tilde{\xi}_i(x+P_i(y))$. We let $\tilde{f}_i = f_i \tilde{\xi}_i$. Since each $\tilde{\xi}_i$ is a linear character, we have $\norm{f_i}_{U^2} = ||\tilde{f}_i||_{U^2}$ for each $i$.

We recall from Theorem \ref{Main theorem} that $\vec{P}$ has true complexity 1. Combining this fact with (\ref{equation in popular common differences, 3}), (\ref{equation in popular common differences, 4}) and the bound $||\tilde{f}_i||_{U^2}\leqslant 1/\F(M)$ for some $i$, we deduce that there is some decreasing function $\omega:\RR_+\to\RR_+$, depending only on $\vec{P}$, such that 
\begin{align}\label{equation in popular common differences, 5}
    \left|\EE_{y\in\ZZ/N\ZZ}\tilde{\mu}(y)\prod_{i=0}^t f_i(x+P_i(y))\right|&\leqslant C^{(2)}_M \epsilon^{-C^{(2)}_M} K^{-c} + C^{(2)}_M \epsilon^{-M} K^M \omega(1/\F(M)),
\end{align}
increasing the constant $C^{(2)}_M$ if necessary. We note that the existence of $\omega$ is equivalent to the statement that $\vec{P}$ is controlled by $U^2$ at $i$. We now show that we can choose $K$ large enough and $\F$ growing fast enough so that the right-hand side of (\ref{equation in popular common differences, 5}) is bounded by $O(\epsilon)$. 

For any given $M$, we find a constant $C^{(3)}_M$ such that $(C^{(3)}_M)^c \geqslant C^{(2)}_M$ and $c C^{(3)}_M - C^{(2)}_M \geqslant 1$. We then let $K_M = C^{(3)}_m \epsilon^{-C^{(3)}_M}$, so that
\begin{align*}
    C^{(2)}_M \epsilon^{-C^{(2)}_M} K_M^{-c} = C^{(2)}_M {C^{(3)}_M}^{-c} \epsilon^{c {{C^{(3)}_M} {-C^{(2)}_M}}}\leqslant \epsilon.
\end{align*}
 Picking $\F$ growing sufficiently fast depending on $\epsilon$, we can ensure that $C^{(2)}_M \epsilon^{-M} K_M^M \omega(1/\F(M))\leqslant \epsilon$. We thus set $K = K_M$ for the value of $M$ induced by $\epsilon$ and $\F$, and so 
 \begin{align*}
    \left|\EE_{y\in\ZZ/N\ZZ}\tilde{\mu}(y)\prod_{i=0}^t f_i(x+P_i(y))\right|\leqslant 2\epsilon.
\end{align*}
\end{proof}

\bibliography{library}
\bibliographystyle{alpha}
\end{document}